\def\Xint#1{\mathchoice {\XXint\displaystyle\textstyle{#1}}%
 {\XXint\textstyle\scriptstyle{#1}}% 
{\XXint\scriptstyle\scriptscriptstyle{#1}}% 
{\XXint\scriptscriptstyle\scriptscriptstyle{#1}}% 
\!\int}
\def\XXint#1#2#3{{\setbox0=\hbox{$#1{#2#3}{\int}$} \vcenter{\hbox{$#2#3$}}\kern-.5\wd0}}
 \def\dashint{\Xint-}
\newcommand{\avg}{\dashint} %average
\renewcommand{\phi}{\varphi}
\renewcommand{\P}{\mathbb{P}}%projection
\newcommand{\nm}[1]{\left\|#1\right\|} % norm
\newcommand{\abs}[1]{\left|#1\right|} % absolute value
\newcommand{\f}{\frac}%%I use this as an abbreviation for fraction
\renewcommand{\bar}[1]{\overline{#1}}
\newcommand{\lec}{\lesssim}
\newcommand{\Th}{\Theta}
\newcommand{\paren}[1]{\left(#1\right)}
\newcommand{\bracket}[1]{\left[#1\right]}
\newcommand{\braces}[1]{\left\{#1\right\}}
\newcommand{\HH}{\mathbb{H}}
\renewcommand{\a}{\alpha} %variable in lagrangian coordinates
\renewcommand{\b}{\beta} %variable in lagrangian coordinates
\renewcommand{\aa}{{\alpha '}} %variable in Riemannian coordinates
\newcommand{\bb}{{\beta '}} %variable in Riemannian coordinates
\newcommand{\rb}{{\sf b}}
\newcommand{\rh}{{\sf h}}
\let\Re=\undefined\DeclareMathOperator*{\Re}{Re}
\let\Im=\undefined\DeclareMathOperator*{\Im}{Im}
\newcommand{\Thj}{\Theta^{(j)}}
\newcommand{\Thja}{\Theta^{(j+1)}}
\newcommand{\Thjb}{\Theta^{(j+2)}}
\newcommand{\Gj}{G^{(j)}}
\newcommand{\Tha}{\Theta^{(0)}}
\newcommand{\Thb}{\Theta^{(1)}}
\newcommand{\Thc}{\Theta^{(2)}}
\theoremstyle{plain}
\newtheorem{theorem}{Theorem}
\newtheorem{proposition}[theorem]{Proposition}
\newtheorem{lemma}[theorem]{Lemma}
\theoremstyle{definition}
\newtheorem{remark}[theorem]{Remark}
\newcounter{smalllist}
\numberwithin{equation}{section} \numberwithin{theorem}{section}
\begin{document}

\title[The quartic integrability and long time existence of steep water waves in 2D]{The quartic integrability and  long time existence of steep water waves in 2D}
\author{Sijue Wu
}
\address{Department of Mathematics, University of Michigan, Ann Arbor, MI}

\thanks{Financial support in part by NSF grants DMS-1764112.}

\begin{abstract}
It is known since the work of Dyachenko \& Zakharov \cite{zd} that for the weakly nonlinear 2d infinite depth water waves,  there are no 3-wave interactions and all of the  4-wave interaction coefficients vanish on the non-trivial resonant manifold. In this paper we study this partial integrability from a different point of view. 
 We construct, directly  in the physical space,  a sequence of energy functionals $\mathfrak E_j(t)$ which are explicit in the Riemann mapping variable and involve material derivatives of order $j$ of the solutions for the 2d water wave equation,  so that  $\frac d{dt} \mathfrak E_j(t)$
is quintic or higher order.  
We show that if some scaling invariant norm, and a norm involving one  spatial derivative above the scaling of the initial data are of size no more than $\varepsilon$, then the lifespan of the solution for the 2d water wave equation is at least of order $O(\varepsilon^{-3})$, and the solution remains as regular as the initial data during this time. If only the scaling invariant norm of the data is of size $\varepsilon$, then the lifespan of the solution is at least of order $O(\varepsilon^{-5/2})$. 
Our long time existence results do not impose size restrictions on the slope of the initial interface and the magnitude of the initial velocity, they allow the interface to have arbitrary large steepnesses  and initial velocities to have arbitrary large magnitudes.  
\end{abstract}

\maketitle

\baselineskip15pt

\section{Introduction}\label{intro}

A class of water wave problems concerns the
motion of the 
interface separating an inviscid, incompressible, irrotational fluid,
under the influence of gravity, 
from a region of zero density (i.e. air) in 
$n$-dimensional space. It is assumed that the fluid region is below the
air region. Assume that
the density  of the fluid is $1$, the gravitational field is
$-{\bold k}$, where ${\bold k}$ is the unit vector pointing in the  upward
vertical direction, and at  
 time $t\ge 0$, the free interface is $\partial\Omega(t)$, and the fluid
occupies  region
$\Omega(t)$. When surface tension is
zero, the motion of the fluid is  described by 
\begin{equation}\label{euler}
\begin{cases}   \ \bold v_t + (\bold v\cdot \nabla) \bold v = -\bold k-\nabla P
\qquad  \text{on } \Omega(t),\ t\ge 0,
\\
\ \text{div}\,\bold v=0 , \qquad \text{curl}\,\bold v=0, \qquad  \text{on }
\Omega(t),\ t\ge 
0,
\\  
\ P=0, \qquad\qquad\qquad\qquad\qquad\text{on }
\partial\Omega(t) \\ 
\ (1, \bold v) \text{ 
is tangent to   
the free surface } (t, \partial\Omega(t)),
\end{cases}
\end{equation}
where $ \bold v$ is the fluid velocity, $P$ is the fluid
pressure. 
There is an important condition for these problems:
\begin{equation}\label{taylor}
-\frac{\partial P}{\partial\bold n}\ge 0
\end{equation}
pointwise on the interface, where $\bold n$ is the outward unit normal to the fluid interface 
$\partial\Omega(t)$ \cite{ta};
it is well known that when surface tension is neglected and the Taylor sign condition \eqref{taylor} fails, the water wave motion can be subject to the Taylor instability \cite{ ta, bi, bhl, ebi}.

The study of water waves dates back centuries to  Newton \cite{newton}, Stokes \cite{st}, Levi-Civita \cite{le}, and G.I. Taylor \cite{ta}.   Nalimov
\cite{na},  Yosihara \cite{yo} and Craig \cite{cr} gave early 
 local in time existence and uniqueness results for the 2d water wave equation \eqref{euler} for small and smooth initial data. In  \cite{wu1, wu2}, Wu showed that in  dimensions $n\ge 2$, the strong Taylor sign condition
 \begin{equation}\label{taylor-s}
-\frac{\partial P}{\partial\bold n}\ge c_0>0
\end{equation}
always holds for the
 infinite depth water wave problem \eqref{euler},\footnote{More precisely, the strong Taylor sign condition \eqref{taylor-s} holds for $C^{1+\gamma}$ interfaces, where $\gamma>0$. For less regular interfaces, such as interfaces with angled crests, only the weak Taylor sign condition \eqref{taylor} holds, with degeneracies at the singularities, cf. \cite{kw, wu8}.}  and the initial value problem of equation \eqref{euler} is locally well-posed in Sobolev spaces $H^s$, $s\ge 4$ for arbitrary given data.  Since then,
local wellposedness for water waves with additional effects such as the surface tension, bottom and non-zero vorticity,  under the assumption \eqref{taylor-s},\footnote{When there is surface tension, or bottom, or vorticity,  \eqref{taylor-s} does not always hold, it needs to be assumed. See \cite{la}, \cite{su1}, \cite{su2} for some interesting computations of $-\frac{\partial P}{\partial \bold n}$ in the cases of finite depth and point vortices.} were obtained, c.f. \cite{am, cl, cs, ig1, la, li, ot, sz, zz};
 local wellposedness of \eqref{euler} in low regularity Sobolev spaces,   cf. \cite{abz, abz14, hit, ai}, and in a regime allowing for non-$C^1$ interfaces, cf. \cite{kw, wu8, agr1, agr2, agr3} were proved. 
  
  There are also numerous studies on the long time existence and regularity of solutions for the water wave equation \eqref{euler} over the last ten years. Wu \cite{wu3, wu4}, Germain, Masmoudi \& Shatah \cite{gms}, Ionescu \& Pusateri  \cite{ip} and Alazard \& Delort  \cite{ad} proved almost global and global existence of solutions for two and three dimensional water waves equation \eqref{euler} for small, smooth and sufficiently localized data, further developments were given in 
\cite{hit, it, dipp, wang1, wang2, bmsw, bd, su1, fz, ai, ait}.  There are two main ideas involved in these works.   The first is to use a normal form transformation to remove all the quadratic nonlinearities in the equation \eqref{euler}\footnote{For water waves with surface tension \cite{dipp} or non-zero vorticity \cite{su1}, only part of the quadratic nonlinearities can be removed.}; and the second is to use the dispersive decay properties of the spatially localized solutions for the equation \eqref{euler} to obtain extended life spans of the solutions. 

Despite these advances, many important questions remain open. For instance, in all the aforementioned works on the long time existence of water waves, norms involving derivatives both above and below the scaling are assumed small at the initial time; 
such smallness cannot be preserved after a rescaling. However physical phenomena do not disappear with a rescaling, a valid description of a physical phenomenon should only involve scaling invariant quantities and norms; also in all of these works,  
 it is assumed that the steepness of the initial wave is sufficiently small. The questions  are whether the smallness assumption on the steepness can be removed? and whether a long time existence result holds assuming smallness only on some scaling invariant quantities?

The study of the 2d water wave equation \eqref{euler} in the Hamiltonian point of view began in \cite{za}, where
Zakharov discovered that the 2d equation \eqref{euler} can be written as a Hamiltonian system. In \cite{zd} Dyachenko  \& Zakharov showed that (formally) there are no three-wave resonant interactions in the Hamiltonian 2d periodic water wave equation \eqref{euler} and all of the four-wave interaction coefficients vanish on the non-trivial resonant manifold.\footnote{While there are no 3-wave resonant interactions, there are two types of 4-wave resonances: the trivial ones and the Benjamin \& Feir resonances. What \cite{zd} proved was that all the coefficients of the 4th order terms in the power expansion of the  Hamiltonian vanish on the Benjamin \& Feir resonance manifold, cf. \cite{bfp} for further explanations. In the whole line case, there are in addition near resonances.} 
   Dyachenko \& Zakharov  \cite{zd}  and Craig \& Wolfolk \cite{cw} derived a formal \footnote{That is, it is unbounded and non-invertible.} symplectic transformation that maps the Hamiltonian system of the 2d periodic water waves to its Birkhoff normal form of order 4; and mapping properties of the transformation were studied in \cite{cs2}. Building on \cite{zd, cw, cs2}, Berti, Feola \& Pusateri \cite{bfp} gave a rigorous construction of a bounded and invertible (non-symplectic) transformation in a neighborhood of the origin in phase space, mapping the 2d periodic water wave equation \eqref{euler} to its Birkhoff normal form up to order 4, and showed that  for sufficiently small and smooth initial data of size $\varepsilon$,\footnote{In \cite{bfp}, the steepness, and quantities both above and below the scaling are  assumed small at the initial time.}  the 2d periodic water wave equation \eqref{euler} is solvable for time of order $O(\varepsilon^{-3})$.  
\cite{bfp} is the first and the only work putting the formal computations in \cite{zd, cw}  in a rigorous framework.  The transformation in \cite{bfp} is constructed via composing several paradifferential flow conjugations. The final resonant Poincar\'e-Birkhoff normal form system constructed in \cite{bfp}  is not a priori explicit;  an important step in \cite{bfp} is a normal form uniqueness argument that allows the authors to identify the Poincar\'e-Birkhoff normal form system of \cite{bfp} with the Birkhoff normal form system constructed in \cite{zd, cw}, up to degree 4 of homogeneity.

The computations in \cite{zd, cw, cs2, bfp} 
were all carried out in the Fourier space using tools such as the Birkhoff normal forms from the Hamiltonian system. 
As the water wave equation \eqref{euler} is explicitly formulated in the physical space,  
it is natural to ask then how do the structural results on the 3-wave and 4-wave interactions  in \cite{zd} manifest, explicitly,  in the physical space? and what are the specific features in  equation \eqref{euler} that lead to these structural  properties? The motivation is clear: by better understanding the structure of equation \eqref{euler} we hope to obtain better information on the long time behavior of  the water waves. Moreover, we hope to gain insight on the non-linear structure of a wider class of equations.  %As is well-known,  modern advances on nonlinear PDEs are almost all based on better understandings on the nonlinear structures of the equations.
 
In this paper
we construct,  directly in the physical space,  an infinite sequence of energy functionals $\mathfrak E_j(t)$ that involve material derivatives  of order $j$ of the solutions of the 2d water wave equation \eqref{euler},  so that $\frac d{dt}\mathfrak E_j(t)$ is quintic.\footnote{If not specified, the "quintic" in \S\ref{intro} means it is a finite sum of terms homogeneous of degree 5 or higher of the unknown functions and their derivatives; similar for "quartic".}  
%\footnote{Constructing appropriate energy functionals to obtain controls of the sizes of the solutions is a classical method in PDE.}
Our energy functionals $\mathfrak E_j(t)$ and their time derivatives $\frac d{dt}\mathfrak E_j(t)$ are explicit in the Riemann mapping variable, and the process of constructing  the sequence of our energy functionals $\mathfrak E_j(t)$,  as well as $\mathfrak E_j(t)$, $\frac d{dt}\mathfrak E_j(t)$ themselves, reveal  remarkable novel symmetric structures of the 2d water wave equation \eqref{euler}. Moreover for $j\ge 2$, $\mathfrak E_j(t)$ are energy functionals in terms of the derivatives of the velocity and the derivatives of the steepness,  water waves with large velocity and large steepness can be small  in $\mathfrak E_j(t)$; similarly, $\frac d{dt}\mathfrak E_j(t)$ are quintic in terms of only the sizes of  the derivatives of the velocity and the derivatives of the steepness of the interface. As a consequence we show that 
if some scaling invariant norm and a norm involving one order spatial derivative above the scaling 
 of the initial data  are of size no more than $\varepsilon$, then the lifespan of the solution for the 2d water wave equation \eqref{euler} is at least of order $O(\varepsilon^{-3})$; if only the scaling invariant norm of the initial data is of size $\varepsilon$, then the solution will exist up to time of order $O(\varepsilon^{-5/2})$.\footnote{No dispersive properties of  equation \eqref{euler} are used.}  
 Our long time existence results do not impose size restrictions on the slope of the initial interface and the magnitude of the initial velocity, the initial interface can have arbitrary large steepness, and the magnitude of the initial velocity can be arbitrarily large.   To the best of our knowledge, this is the first long time existence result assuming smallness only on a scaling invariant quantity at the initial time, and it is the first allowing steep initial interfaces.

We will give rigorous statements of our main results, in the whole line setting,  in \S\ref{main1} and \S\ref{main2}. The same construction and derivation 
as in this paper work and analogous results  hold for the 2d periodic water wave equation \eqref{euler}. The only difference 
is that in the periodic case, there are some additional (spatially) constant terms  arising from the means of the quantities involved, c.f. Proposition 1 and equation (290) of \cite{kw}.  However these constants are harmless, and results analogous to Theorem 2.8 and Theorem 3.1, as well as the identities in  \S\ref{main1}  leading to Theorem 2.8 hold for the periodic case.\footnote{In the periodic case,  there are some extra (spatially) constant terms $c(t)$ in the corresponding quantities $\P_H G^{(j)}$, cf. \S\ref{main1} for the definition, however they do not affect the energy functional sequence, since they show up in the energy functionals as $\int_I\partial_\aa\Theta\, c(t)\,d\aa$. Since $\int_I\partial_\aa\Theta \,c(t)\,d\aa=0$ for any periodic function $\Theta$ on $I$,  these constants do not affect the outcome.}

Just as this work continues the investigation initiated in \cite{wu3, wu4, bmsw, wu8},  we think that the results in \S\ref{main1}
will themselves have further consequences. 
The next subsection gives a more detailed explanation of the main results,  motivation for the method and  relations to earlier work.

\subsection{Further explanations on the motivation,  the ideas and the main results in this paper}
In this paper we regard $\mathbb R^2$ as a complex plane, and identify a point $(x,y)\in\mathbb R^2$ with its complex form $x+iy\in \mathbb C$. $\Re z$, $\Im z$ are the real and imaginary parts of $z$, $\bar z=\Re z-i\,\Im z$.

Let $z=x+iy=z(\alpha,t)$, $\alpha\in\mathbb R$ be the interface $\partial\Omega(t)$ in Lagrangian coordinate $\alpha$, so $z_t(\alpha,t)=\bold v(z(\alpha,t), t)$ is the velocity of the fluid particle on the interface, and $z_{tt}(\alpha,t)=( \bold v_t+\bold v\cdot\nabla\bold v)(z(\alpha,t), t)$ is the acceleration. 
We know (cf. \cite{wu3}) the interface equation for the 2d water waves \eqref{euler} is given by
\begin{equation}\label{interface-intro}
\begin{cases}
z_{tt}+i=i\frak a z_\a,\\
\bar z_t=\mathfrak H \bar z_t,
\end{cases}
\end{equation}
where $\frak a=-\frac{\partial P}{\partial \bold n} \frac1{|z_\a|}$, and $\mathfrak H$ is the Hilbert transform associated to the fluid domain $\Omega(t)$; and the quasilinear equation, obtained by applying $\partial_t$ to the first equation in \eqref{interface-intro},  is 
\begin{equation}\label{quasi-intro1}
(\partial_t^2+i\frak a\partial_\a)\bar z_t=\frac{\frak a_t}{\frak a} (\bar z_{tt}-i).
\end{equation}
Let $\Phi: \Omega(t)\to \mathscr P_-$ be the Riemann mapping from the fluid domain $\Omega(t)$ to the lower half plane $\mathscr P_-$, satisfying $\lim_{z\to\infty}\Phi_{z}(z)=1$ and $\Phi(z(0,t), t)=0$; let $\rh(\a,t):=\Phi(z(\a,t),t)$, $b(\aa,t):=\rh_t\circ \rh^{-1}(\aa,t)$. And let $Z(\aa,t)=X(\aa,t)+iY(\aa,t):=z(\rh^{-1}(\aa,t),t)$, $Z_t(\aa,t):=z_t(\rh^{-1}(\aa,t),t)$ and $Z_{tt}(\aa,t):=z_{tt}(\rh^{-1}(\aa,t),t)$ be the position, velocity and acceleration of  the interface in the Riemann mapping variable $\aa$;   let $D_t:=\partial_t+b\partial_\aa$ be the material derivative, i.e. $Z_t=D_t Z$ and $Z_{tt}=D_t Z_t$. Let \begin{equation}\label{op-intro}
\mathfrak P:=D_t^2+i\,\frac{A_1}{|Z_{,\aa}|^2} \partial_\aa,
\end{equation}
and $\mathbb H$ be the Hilbert transform associated to the lower half plane $\mathscr P_-$:
\begin{equation}\label{ht-intro}
\mathbb Hf(\aa)=\frac1{\pi i}p.v.\int\frac1{\aa-\bb}f(\bb)\,d\bb,
\end{equation}
$\P_H=\frac12(I+\mathbb H)$, $\P_A=\frac12(I-\mathbb H)$ be the projections onto the space of holomorphic, and respectively, anti-holomorphic functions in $\mathscr P_-$. 
The quasilinear equation for the 2d water waves in the Riemann mapping variable $\alpha'$ is (cf \cite{wu1, wu8}) 
\begin{equation}\label{quasi-intro}
\mathfrak P {\bar Z}_{t}=\dfrac{\frak a_t}{\frak a}\circ \rh^{-1} ({\bar Z}_{tt}-i), 
\end{equation}
where $A_1$, $b$ and $\frac{\frak a_t}{\frak a}\circ\rh^{-1}$ are as given in \eqref{A1b} and \eqref{at}. The main operator in \eqref{quasi-intro1} is $\partial_t^2+i\frak a\partial_\a$ and   in \eqref{quasi-intro}, $\mathfrak P$.  It is clear that the quasilinear equation \eqref{quasi-intro1} or \eqref{quasi-intro} contains quadratic nonlinear terms. 

 In \cite{wu3} we found a pair of transformations which can be used to completely transform away the quadratic nonlinearities in \eqref{quasi-intro1}.  
Let $\phi$ be the velocity potential, $\psi:=\phi(z(\alpha,t),t)$ be the trace of the velocity potential on the interface. Let
 \begin{equation}\label{2d-trans}
 \Lambda=(I-\mathfrak H)\psi,\qquad \Pi=(I-\mathfrak H)(z-\bar z);\qquad \text{and} \quad \kappa=2\Re z-\rh.
 \end{equation}
In \cite{wu3} we showed that the quantities $\Lambda$ and $\Pi$, after the change of coordinates $\kappa$,  satisfy  equations of the type
 \begin{equation}\label{cubic-intro}
 (\partial_t^2-i\partial_{\tilde \alpha})(\Lambda\circ\kappa^{-1})=G_1,\qquad  (\partial_t^2-i\partial_{\tilde\alpha})(\Pi\circ\kappa^{-1})=G_2,
 \end{equation}
 where $G_1$ and $G_2$ consist of only cubic and higher order terms, $\tilde\alpha=\kappa(\alpha,t)$, c.f.  Propositions 2.3 and 2.4 of \cite{wu3}. Using equations \eqref{cubic-intro} and  applying the basic energy functional\footnote{cf. Lemma 4.1 of \cite{wu3}. Upon changing to the Riemann mapping coordinate yields
  \begin{equation}\label{basic-ef1}
 \tilde E_0(t)=\Re \int (\frac1{\frak a}|\partial_t\theta|^2+i\theta\partial_\alpha\bar{\theta})\,d\alpha=\Re \int(\frac1{A_1}|D_t\Theta|^2+i\Theta\partial_\aa\bar{\Theta})\,d\aa,
 \end{equation}
 where $\Theta=\theta\circ \rh^{-1}$. As was explained in \cite{wu3}, it is advantageous to replace the Dirichlet-to-Neumann operator $\nabla_n$ by $i\partial_\a$ when dealing with the cancellations in the quadratic nonlinearities of the 2d water wave equation, since $\nabla_n$ is nonlinear, while $i\partial_\a$ is linear. }
 \begin{equation}\label{basic-ef}
 \tilde E_0(t)=\Re \int \paren{\frac1{\frak a}|\partial_t\theta|^2+i\theta\partial_\alpha\bar{\theta}}\,d\alpha
  \end{equation}
 on $\Lambda$, $\Pi$ and their derivatives,  we constructed an infinite sequence of energy functionals $\tilde E_j(t)$, so that 
 \begin{equation}\label{quartic-energy}
 \frac d{dt}\tilde E_j(t)=\text{quartic};\footnote{The "quartic" in this paragraph means it is a finite sum of terms homogeneous of degree 4 or higher of  the amplitude, the steepness and the velocity of the interface, as well as their derivatives. }
 \end{equation}
 we then proved our almost global existence result  by employing the quartic energy identities \eqref{quartic-energy}\footnote{The quartic energy identities obtained by directly applying $E_0(t)$ to the derivatives of the basic quantities $\Lambda$ and $\Pi$ did not quite work, some further modifications were introduced to produce desired estimates, cf. \cite{wu3}.}
 and the method of vector fields, cf. \cite{wu3}. Quartic energy estimates\footnote{The basic mechanism behind these quartic energy estimates is the absence of three-wave interactions.} similar to \eqref{quartic-energy} also played key roles in the subsequent works \cite{ip, ad, hit, it}. While \cite{ip} directly improved upon the quartic energy estimates  in \cite{wu3}, \cite{ad} obtained theirs using the Zakharov-Craig-Sulem formulation of the water wave equation and a normal form transformation via resonant analysis of the 3-wave interactions in the Fourier space. In \cite{hit, it} the authors used 
$$Q:=(I+\mathbb H)(\psi\circ \rh^{-1}), \qquad\text{and }\quad  W:=\frac12(I+\mathbb H)(Z-\bar Z)=Z-\aa,$$
which are  the 
counter-parts\footnote{ Applying the basic energy functional \eqref{basic-ef1} to $D_t^j Q$  yields  $$\tilde{\tilde E}_j(t)=\Re \int  \paren{\frac1{A_1}\abs{D_t^{j+1}Q}^2+i\,(D_t^j \bar{Q})\partial_\aa{(D_t^j Q)}}\,d\aa, $$  which satisfies the quartic energy identity \eqref{quartic-energy} due to \eqref{cubic-intro}, and analogous result holds for  $D_t^j W$. However in the Riemann mapping coordinate, not all derivatives of $Q$ or $W$ will lead to energy functionals  satisfying \eqref{quartic-energy}. This is solely because of the change of the coordinates.}  of the quantities  $\Lambda$ and $\Pi$ in \eqref{2d-trans} in the Riemann mapping  coordinate, 
and a basic energy functional similar to \eqref{basic-ef};\footnote{ In the basic energy functional of \cite{hit, it} the first term is slightly different from the first term in \eqref{basic-ef1} while the second term is  the same as that in \eqref{basic-ef1}.   } they first applied their basic energy functional to $(W, Q)$ and their derivatives, and then directly modified the resulting energy functionals to obtain their quartic energy estimates.  In all of \cite{ip, ad, hit, it} dispersive properties of the water wave equation and modified scatterings were additionally used to obtain their global existence results.\footnote{The work \cite{wu3, ip, ad, hit, it} discussed here 
are all about small, smooth and sufficiently localized solutions, the smallness assumption is not scaling invariant, and the steepness of the interface is assumed small.}

In what follows we call an energy functional $\tilde E_j(t)$ satisfying \eqref{quartic-energy} a quartic energy functional. Similarly if $\frac d{dt} \mathfrak E_j(t)$ is quintic, we call $\mathfrak E_j(t)$ a quintic energy functional.

 However none of the existing quartic energy functionals  $\tilde E_j(t)$, including those  in  \cite{wu3, ip, ad, hit}, can be easily modified to turn them into quintic, since the remainders $\frac d{dt}\tilde E_j(t)$  are too complex to be used to identify  further cancellations. 
 Our strategy 
  in constructing our quintic energy functional sequence $\mathfrak E_j(t)$ is to first  re-understand and re-construct appropriate quartic energy functionals; we choose the Riemann mapping coordinate since it is advantageous in understanding the structure of the 2d water wave equation, cf. \cite{wu1, kw, wu6, wu8}. We begin with the quantity $Q$ (the counter-part of $\Lambda$ in the Riemann mapping coordinates), and look for useful symmetries and structures hidden in the equations of the quantities involved and in the energy functionals.  The construction of our new quartic energy functionals $E_j(t)$ involve three new interrelated ingredients: the sequence of quantities $\Theta^{(j)}:=(\P_H D_t)^jQ$,  their associated equations
 \begin{equation}\label{Gj-intro}
 D_t\P_HD_t\Theta^{(j)}+i\,\frac1{|Z_{,\aa}|^2}\partial_\aa\Theta^{(j)}:=G^{(j)}, 
 \end{equation}
 cf. \eqref{Thj}, \eqref{Gj},
 and the symmetric basic energy functional, cf. \eqref{energy},
\begin{equation}\label{energy-intro}
E(t)=\Re \paren{\int i\,\partial_\aa\Th_2 \overline{D_t\Th_1}\,d\aa-\int i\,\partial_\aa\Th_1 \overline{D_t\Th_2}\,d\aa}.
\end{equation}
 By  Proposition~\ref{prop:G},  $\P_H G^{(j)}$ is cubic and possesses  remarkable symmetries. Applying the basic energy $E(t)$ to $\Theta_1=\Theta^{(j)}$ and $\Theta_2=\Theta^{(j+1)}$, and using \eqref{energyid} we arrive at the new quartic energy identity \eqref{16}-\eqref{ej1}-\eqref{17}.\footnote{The "quartic" here is in terms of only the derivatives of the steepness of the interface and  the derivatives of the velocity; similar for the "quintic" energies $\frak E_j(t)$. Observe that the construction of $E_j(t)$ and $\frak E_j(t)$ is based on a sequence of equations, instead of repeated derivatives to a single transformed equation.}  What is important is that \eqref{ej1}-\eqref{17} possesses desirable symmetric structure that allows us, with further use of the new identities \eqref{eq:lemma1}-\eqref{eq:lemma2}, to systematically construct the quartic correcting functionals $C_{1,j}$ and $C_{2,j}$ in \eqref{c1}, \eqref{c2} and obtain our quintic energy functionals $\frak E_j(t)$, cf. \eqref{energyj1}.  The main result of \S\ref{main1} is stated in Theorem~\ref{th:main1}. 
 
The main difficulty in proving our second main theorem, Theorem~\ref{thm:main2},  is that the quantity $\nm{\frac1{Z_{,\aa}}-1}_{L^\infty}$ is not  small, and no norms below the scaling are assumed small.
 This requires us to explore further cancelations in the quintic energy identities in Theorem~\ref{th:main1} and employ stronger inequalities, including those in Propositions~\ref{quartic-inq} and ~\ref{prop:hhalf-2}. The details of the proof for Theorem~\ref{thm:main2} can be found in \S\ref{proof}.

 Finally we mention that the transformations  in \cite{wu3}, cf. \eqref{2d-trans},  are used in \cite{su1, chensu} to prove the recent important results on the long time behavior of point vortices and the nonlinear modulation instability of the Stokes waves  in 2d water waves. Analogous transformations as those in \eqref{2d-trans} were also constructed to remove the quadratic nonlinearities  in the 3d water wave equation \cite{wu4} for perturbations near the flat equilibrium and in the free boundary problem of a self-gravitating incompressible fluid \cite{bmsw} for perturbations near the circular equilibrium.
  
 \subsection{Outline of the paper}
We present our results  on the algebraic aspects of the water wave equation, namely the quartic integrability, and their proofs in   \S\ref{main1}. In \S\ref{main2} we state our long time existence theorem, its proof is given in \S\ref{proof}. A complete list of notations and conventions can be found 
in Appendix~\ref{notations}. 
Some of the basic equations and formulas derived in our earlier works \cite{wu1, wu3, wu8}, as well as some additional identities that are used in the derivations in this paper are collected in Appendix~\ref{iden}.  Appendix~\ref{ineq} contains the  inequalities that are used for our proofs. 
 In Appendix~\ref{quantities} we  summarize the estimates obtained in \S\ref{quan} and \S\ref{step1-4} for easy referencing.

\subsection{Conventions}
We consider solutions of 2d the water wave equation \eqref{euler} in the setting where the fluid domain $\Omega(t)$ is simply connected in  $\mathbb R^2$, with the free interface $\partial\Omega(t):=\partial\Omega(t)$ being a non-self-intersecting curve,
 $${\bold v}(z, t)\to 0,\qquad\text{as } |z|\to\infty$$
 and the interface $\partial\Omega(t)$ tending to horizontal lines at infinity.
We will primarily use the Riemann mapping variable in this paper. 

\subsection{Acknowledgement} The author would like to thank Jeffrey Rauch for carefully going through the first draft of the paper and for his helpful suggestions.

\section{The main results on the structure of the water wave equation}\label{main1}
In this section we  
construct a sequence of energy functionals $\mathfrak E_j(t)$ which involve material derivatives of order $j$  
of the solutions,  such that  $\frac d{dt} \mathfrak E_j(t)$ is quintic.\footnote{The "quintic" in \S\ref{main1} in general means it is a finite sum of terms homogeneous of degree 5 or higher of the unknown functions $Z_t$, $\frac1{Z_{,\aa}}-1$ and their derivatives. However at the end what we construct in this section are the energy functionals $\frak E_j(t)$, cf. Theorem~\ref{th:main1},  which are quintic in terms of only the spatial derivatives of $\paren{Z_t, \frac1{Z_{,\aa}}-1}$.}
  The construction is based on  a series of observations, given in Lemma~\ref{prop:q} through Proposition~\ref{prop:ic}, and equations \eqref{ej1}-\eqref{17}, \eqref{24}-\eqref{25}.

We continue with the notations introduced in \S\ref{intro}, further notations can be found in Appendix~\ref{notations}.

As derived in earlier work,  cf. \cite{wu1, wu6} or \S2.2 of \cite{wu8}, we have
\begin{equation}\label{b}
b=\Re (I-\mathbb H)\paren{\frac{Z_t}{Z_{,\aa}}}=\mathbb P_H \paren{\frac{\bar Z_t}{\bar Z_{,\aa}}}+ \mathbb P_A \paren{\frac{Z_t}{Z_{,\aa}}}.
\end{equation}
 
 Since the fluid is incompressible and irrotational, so $\bold v=\nabla \phi$, where the velocity potential $\phi$ satisfies the Bernoulli 
equation in the fluid domain $\Omega(t)$:
\begin{equation}\label{bernoulli}
\begin{cases}
\phi_t+\frac12|\nabla\phi|^2+P+y=0,\qquad
\Delta \phi=0,\qquad\text{in }\Omega(t),\\
P=0,\qquad \text{on }\partial\Omega(t).
\end{cases}
\end{equation}
Let $\psi(\alpha,t):=\phi(z(\alpha,t),t)$ be the trace of the velocity potential on the interface, and   
$$Q:=(I+\mathbb H)(\psi\circ \rh^{-1})=2\,\P_H (\psi\circ \rh^{-1}).$$
\begin{lemma}\label{prop:q}
We have\footnote{Applying $D_\aa$ to both sides of \eqref{q1} gives the first equation in the interface system \eqref{2dinterface}. }
\begin{equation}\label{q1}
D_tQ=i (Z-\aa) +\P_A \paren{|Z_t|^2},
 \end{equation}
\begin{equation}\label{q2}
D_t\mathbb P_H D_t Q+i\,\frac1{|Z_{,\aa}|^2}\partial_\aa Q=i\,\mathbb P_A\paren{Z_t\paren{1-\frac1{Z_{,\aa}}}+\bar Z_t\paren{\frac1{\bar{Z}_{,\aa}}-1}}.
\end{equation}
\end{lemma}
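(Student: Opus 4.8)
The plan is to derive \eqref{q1} by differentiating $Q$ along the flow and commuting $D_t$ past the (time‑independent) Hilbert transform $\mathbb H$ in the Riemann mapping variable, and then to read off \eqref{q2} by applying $\P_H$ to \eqref{q1} and differentiating once more. Throughout write $\Psi:=\psi\circ\rh^{-1}$, so $Q=(I+\mathbb H)\Psi=2\,\P_H\Psi$.

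First I would collect the elementary ingredients. The Bernoulli equation \eqref{bernoulli}, evaluated on $\partial\Omega(t)$ where $P=0$, gives $\partial_t\psi=\tfrac12|z_t|^2-y$, hence $D_t\Psi=\tfrac12|Z_t|^2-Y$. Differentiating the trace identity $\psi(\a,t)=\phi(z(\a,t),t)$ gives $\Psi_{,\aa}=\Re(\bar Z_t Z_{,\aa})=\tfrac12(\bar Z_t Z_{,\aa}+Z_t\bar Z_{,\aa})$, hence $\partial_\aa Q=2\,\P_H\Psi_{,\aa}=\bar Z_t Z_{,\aa}$ (the antiholomorphic piece $Z_t\bar Z_{,\aa}$ being annihilated by $\P_H$). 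I would then record the holomorphy facts from Appendix~\ref{iden}: $\bar Z_t$, $Z_{,\aa}$, $\tfrac1{Z_{,\aa}}$, $\partial_t Z$ and $Z-\aa$ are boundary values of functions holomorphic in $\mathscr P_-$ with the expected limits at infinity, so $\mathbb H$ fixes each of them and negates each of their conjugates (modulo constants); in particular $\P_A\bar Z_t=0$, $\P_H Z_t=0$, $\P_H(Z-\aa)=Z-\aa$. Finally I use the kinematic identity $bZ_{,\aa}=Z_t-\partial_t Z$, which is just $D_t Z=\partial_t Z+bZ_{,\aa}=Z_t$ rearranged, along with its conjugate $b\bar Z_{,\aa}=\bar Z_t-\partial_t\bar Z$ ($b$ being real).

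For \eqref{q1}: since $\mathbb H$ is $t$‑independent and translation invariant, $[D_t,\mathbb H]=[b\partial_\aa,\mathbb H]=[b,\mathbb H]\partial_\aa$, so $D_t Q=(I+\mathbb H)D_t\Psi+[b,\mathbb H]\Psi_{,\aa}$. In the first term, $Y=\Im(Z-\aa)$ and the holomorphy of $Z-\aa$ give $(I+\mathbb H)(-Y)=i(Z-\aa)$, while $(I+\mathbb H)(\tfrac12|Z_t|^2)=\P_H|Z_t|^2$. For the commutator I would substitute the formula for $\Psi_{,\aa}$, use the kinematic identity to eliminate $bZ_{,\aa}$ and $b\bar Z_{,\aa}$, and compute $\mathbb H$ on the resulting products $\bar Z_t Z_{,\aa}$, $Z_t\bar Z_{,\aa}$, $\bar Z_t\partial_t Z$, $Z_t\partial_t\bar Z$ via their holomorphic type; the $\partial_t Z$‑contributions then cancel between $b\,\mathbb H\Psi_{,\aa}$ and $\mathbb H(b\Psi_{,\aa})$, leaving $[b,\mathbb H]\Psi_{,\aa}=-\mathbb H|Z_t|^2$. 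Adding this to the first term and using $\P_H|Z_t|^2-\mathbb H|Z_t|^2=\P_A|Z_t|^2$ gives \eqref{q1}.

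For \eqref{q2}: applying $\P_H$ to \eqref{q1} and using $\P_H\P_A=0$, $\P_H(Z-\aa)=Z-\aa$ collapses it to the compact identity $\P_H D_t Q=i(Z-\aa)$; applying $D_t$ and using $D_t Z=Z_t$, $D_t\aa=b$ then gives $D_t\P_H D_t Q=i(Z_t-b)$. Since $\partial_\aa Q=\bar Z_t Z_{,\aa}$ one has $i\,\frac1{|Z_{,\aa}|^2}\partial_\aa Q=i\,\frac{\bar Z_t}{\bar Z_{,\aa}}$; adding the two, substituting \eqref{b} for $b$, and using $\P_A+\P_H=I$ with $\P_A Z_t=Z_t$, $\P_A\bar Z_t=0$, the expression collects into $i\,\P_A\paren{Z_t\paren{1-\frac1{Z_{,\aa}}}+\bar Z_t\paren{\frac1{\bar{Z}_{,\aa}}-1}}$, which is \eqref{q2}. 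The step I expect to be the real obstacle is the commutator identity $[b,\mathbb H]\Psi_{,\aa}=-\mathbb H|Z_t|^2$ in \eqref{q1}: it depends delicately on tracking which boundary‑value factors lie in the range of $\P_H$ versus $\P_A$ so that $\mathbb H$ acts correctly on each product, and on the precise cancellation of the $\partial_t Z$‑terms; once \eqref{q1} holds in this form, \eqref{q2} is a routine manipulation of the projections.
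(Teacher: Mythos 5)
Your argument is correct, and for \eqref{q2} it coincides exactly with the paper's: project \eqref{q1} with $\P_H$ to get $\P_H D_t Q=i(Z-\aa)$, apply $D_t$ to obtain $i(Z_t-b)$, add $i\,\frac{\bar Z_t}{\bar Z_{,\aa}}$ coming from $\partial_\aa Q=\bar Z_t Z_{,\aa}$, and substitute \eqref{b} with $\P_A Z_t=Z_t$, $\P_A\bar Z_t=0$. For \eqref{q1} you take a genuinely different route through the commutator. The paper invokes the prepared decomposition \eqref{eq:c21}, $[D_t,\HH]=[\frac{Z_t}{Z_{,\aa}},\HH]\partial_\aa\P_H+[\frac{\bar Z_t}{\bar Z_{,\aa}},\HH]\partial_\aa\P_A$, and then Proposition~\ref{prop:comm-hilbe} together with \eqref{qa}, so the commutator immediately produces $\P_A(\frac{Z_t}{Z_{,\aa}}\partial_\aa Q)-\P_H(\frac{\bar Z_t}{\bar Z_{,\aa}}\partial_\aa\bar Q)=\P_A|Z_t|^2-\P_H|Z_t|^2=-\HH|Z_t|^2$. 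You instead keep $[b,\HH]\Psi_{,\aa}$ in raw form, substitute the kinematic identity $bZ_{,\aa}=Z_t-\partial_t Z$ (and its conjugate), and read off the Hilbert transforms of the resulting four products term by term; the $\partial_t Z$-contributions cancel and you land on the same $-\HH|Z_t|^2$. I checked this and the cancellation does go through. Your version is more elementary in that it avoids the representation of $b$ in \eqref{b} at this stage, but it pays by needing the additional facts that $\partial_t Z$ is holomorphic and decays at infinity; these are true (time derivative of the holomorphic family $Z(\cdot,t)-\aa$) but are not listed among the holomorphy facts in Appendix~\ref{iden}, so you would want to state and justify them explicitly. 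The paper's route via \eqref{eq:c21} has the advantage that all the holomorphy bookkeeping is already packaged in \eqref{b} and Proposition~\ref{prop:comm-hilbe}.
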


\begin{proof}
By chain rule, \eqref{bernoulli} and the fact that $\nabla\phi(z,t)=z_t$ on the interface $z$, we have
\begin{equation}\label{10}
\begin{cases}
\psi_t=\phi_t+ \nabla\phi \cdot z_t=-y+\frac12|z_t|^2,\\
\psi_\a= \nabla\phi \cdot z_\a= z_t \cdot z_\a.
\end{cases}
\end{equation}
Changing to the Riemann mapping variable $\aa$ in the second equation yields $\partial_\aa (\psi\circ \rh^{-1})=Z_t\cdot Z_{,\aa}=\Re (\bar Z_t Z_{,\aa})$, which in turn gives
\begin{equation}\label{qa}
\partial_\aa Q=\bar Z_t Z_{,\aa},
\end{equation}
because the holomorphic quantities $\partial_\aa Q$ and $\bar Z_t Z_{,\aa}$ have the same real parts. A change of variables in the first equation in \eqref{10} yields $D_t(\psi\circ \rh^{-1})=-Y+\frac12|Z_t|^2$.

We compute, by first commuting $D_t$ with $\HH$, then applying  \eqref{b}, Proposition~\ref{prop:comm-hilbe}, \eqref{paph} and \eqref{qa}, that 
$$\begin{aligned}
D_tQ&=(I+\mathbb H)D_t(\psi\circ \rh^{-1})+[D_t, \mathbb H](\psi\circ \rh^{-1})\\
&=(I+\mathbb H)(-Y+\frac12|Z_t|^2)+\bracket{\frac {Z_t}{Z_{,\aa}},\HH}\partial_\aa \mathbb P_H(\psi\circ \rh^{-1})+\bracket{\frac {\bar Z_t}{\bar Z_{,\aa}},\HH}\partial_\aa \mathbb P_A(\psi\circ \rh^{-1})\\
&= (I+\mathbb H)(-Y+\frac12|Z_t|^2)+\P_A\paren{\frac {Z_t}{Z_{,\aa}}\partial_\aa Q }-\P_H\paren{\frac {\bar Z_t}{\bar Z_{,\aa}}\partial_\aa \bar Q }\\
&=i (Z-\aa) +\P_A \paren{|Z_t|^2},
\end{aligned}
$$
where $(I+\mathbb H)(-Y)=i(Z-\aa)$ because both of the holomorphic quantities $(I+\mathbb H)(-Y)$ and $i(Z-\aa)$ have the same real parts. This gives \eqref{q1}.\footnote{Equation \eqref{q1} was also derived in \cite{hit} using a slightly different approach.} 

Observe that $\P_H D_t Q=i(Z-\aa)$ by \eqref{q1}. We further compute, by \eqref{b} and the fact that $\P_A Z_t=Z_t$ and $\P_A \bar Z_t=0$, that 
$$
D_t \braces{i\,(Z-\aa)}=i\, (Z_t-b)=-i\,\frac {\bar Z_t}{\bar Z_{,\aa}}+i\,\mathbb P_A\paren{Z_t\paren{1-\frac1{Z_{,\aa}}}+\bar Z_t\paren{\frac1{\bar{Z}_{,\aa}}-1}}.
$$
Observe that $ i\,\frac {\bar Z_t}{\bar Z_{,\aa}}=i \,\frac1{|Z_{,\aa}|^2}\partial_\aa Q   $ by \eqref{qa}. This gives \eqref{q2}.
\end{proof}
 
 Let 
\begin{equation}\label{Thj}
\Theta^{(0)}:=Q,\qquad \Thj:=(\P_HD_t)^jQ, \qquad 
\qquad\text{and }
\end{equation}
\begin{equation}\label{Gj}
D_t\mathbb P_H D_t \Thj+i\,\frac1{|Z_{,\aa}|^2}\partial_\aa \Thj=:G^{(j)}.
\end{equation}
We know $\P_H G^{(0)}=0$ by \eqref{q2}. We would like to find a formula for $\P_H \Gj$. To this end we derive a recursive relation, which in turn gives the formula for $\P_H \Gj$.

\begin{proposition}\label{prop:G}
1. Let $\Theta$ be holomorphic, i.e.  $\P_A\Th=0$, and $\Theta_{1}=\P_HD_t\Theta$. Assume that
\begin{equation}\label{eq:1}
D_t\mathbb P_H D_t \Theta+i\,\frac1{|Z_{,\aa}|^2}\partial_\aa \Theta=G,\qquad \text{and }\quad D_t\mathbb P_H D_t \Theta_{1}+i\,\frac1{|Z_{,\aa}|^2}\partial_\aa \Theta_{1}=G_{1}.
\end{equation}
Then 
\begin{equation}\label{eq:G}
\P_H G_{1}-\P_H D_t \P_H G=\frac12\P_H\braces{\frac1{\bar Z_{,\aa}} \paren{<\bar Z_t, i\,\frac1{\bar Z_{,\aa}}, D_\aa \Theta>+  <-i\,\frac1{ Z_{,\aa}}, Z_t,  D_\aa \Theta>  }  },
\end{equation}
where the cubic-form $<\cdot,\cdot,\cdot>$ is defined by
$$
<f,g,h>:=\frac1{\pi i}\int\frac{(f(\aa)-f(\bb))(g(\aa)-g(\bb))(h(\aa)-h(\bb))}{(\aa-\bb)^2}\,d\bb.
$$
2. We have 
\begin{equation}\label{eq:Gj}
\P_H(\Gj)=\sum_{l=0}^{j-1}(\P_H D_t)^l\paren{\P_H (G^{(j-l)})-\P_H D_t\P_H(G^{(j-l-1)})},
\end{equation}
where $\P_H (G^{(j-l)})-\P_H D_t\P_H(G^{(j-l-1)})$ is given by \eqref{eq:G} with $\Th=\Th^{(j-l-1)}$. 

\end{proposition}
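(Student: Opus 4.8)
I would prove Part 1 as follows. The plan is to start from a formal cancellation. Writing $\mathcal M:=i\,\frac1{|Z_{,\aa}|^2}\partial_\aa$ and $\mathcal L:=D_t\P_HD_t+\mathcal M$, equations \eqref{eq:1} become $\mathcal L\Theta=G$, $\mathcal L\Theta_1=G_1$. Setting $\Theta_2:=\P_HD_t\Theta_1$ and using $\Theta_1=\P_HD_t\Theta$, expanding $\P_HG_1$ and $\P_HD_t\P_HG$ each produces the same term $\P_HD_t\Theta_2$, so these cancel and
\[
\P_HG_1-\P_HD_t\P_HG=\P_H(\mathcal M\Theta_1)-\P_HD_t\P_H(\mathcal M\Theta);
\]
thus the triple $D_t$'s are gone and it remains only to identify this with the right side of \eqref{eq:G}. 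Next I would expand both terms: since $\partial_\aa$ commutes with $\P_H$ the first is $\P_H\big(\tfrac i{|Z_{,\aa}|^2}\P_H\partial_\aa D_t\Theta\big)$; in the second I would commute $D_t$ out of the outer $\P_H$ by Proposition~\ref{prop:comm-hilbe} (the two-term formula for $[D_t,\HH]=2[D_t,\P_H]$ used already in the proof of Lemma~\ref{prop:q}) — whose holomorphic half dies under the outer $\P_H$, leaving $\P_H[D_t,\P_H]\mathcal M\Theta=-\P_H\big(\tfrac{\bar Z_t}{\bar Z_{,\aa}}\partial_\aa\P_A\mathcal M\Theta\big)$ — and compute $D_t\mathcal M\Theta$ by the Leibniz rule using $D_tZ_{,\aa}=\partial_\aa Z_t-(\partial_\aa b)Z_{,\aa}$ and $[D_t,\partial_\aa]=-(\partial_\aa b)\partial_\aa$, which after cancellation of several $(\partial_\aa b)$-terms gives
\[
D_t\mathcal M\Theta=\tfrac i{|Z_{,\aa}|^2}\partial_\aa D_t\Theta+\tfrac{i(\partial_\aa b)}{|Z_{,\aa}|^2}\partial_\aa\Theta-\tfrac{i\,\partial_\aa Z_t}{Z_{,\aa}^2\bar Z_{,\aa}}\partial_\aa\Theta-\tfrac{i\,\partial_\aa\bar Z_t}{Z_{,\aa}\bar Z_{,\aa}^2}\partial_\aa\Theta.
\]
The two $\tfrac i{|Z_{,\aa}|^2}\partial_\aa D_t\Theta$ pieces then combine to $-\P_H\big(\tfrac i{|Z_{,\aa}|^2}\P_A\partial_\aa D_t\Theta\big)$, and since $\Theta$ is holomorphic I would use Proposition~\ref{prop:comm-hilbe} again (with $\P_A\Theta=0$) to get $\P_AD_t\Theta=\tfrac12[\tfrac{Z_t}{Z_{,\aa}},\HH]\partial_\aa\Theta=\P_A(Z_t\,D_\aa\Theta)$, where $D_\aa:=\frac1{Z_{,\aa}}\partial_\aa$.

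After substituting this formula and \eqref{b} for $b$, and rewriting every $\P_A$-projection and every product $\P_H(\text{holomorphic}\times\text{antiholomorphic})$ as a commutator with the flat Hilbert transform (via $[a,\HH]h=2\P_A(a\P_Hh)-2\P_H(a\P_Ah)$), every remaining summand will be of the form $\P_H\big(a\,[b,\HH]\partial_\aa c\big)$ or $\P_H\big([b,\HH]\partial_\aa(ac)\big)$ with $a,b,c$ among $\bar Z_t,\,Z_t,\,\tfrac1{Z_{,\aa}},\,\tfrac1{\bar Z_{,\aa}},\,D_\aa\Theta$. The mechanism that produces $<\cdot,\cdot,\cdot>$ is the second Leibniz defect of $\HH$: with $C(f,g)(\aa):=[f,\HH]\partial_\aa g+[g,\HH]\partial_\aa f=\tfrac1{\pi i}\int\tfrac{(f(\aa)-f(\bb))(g(\aa)-g(\bb))}{(\aa-\bb)^2}\,d\bb$, one has $<f,g,h>=f\,C(g,h)+g\,C(f,h)-C(fg,h)$ (equivalently $<f,g,h>=[f,[g,\HH]\partial_\aa]h+h\,[g,\HH]\partial_\aa f+\tfrac1{\pi i}\int\tfrac{\partial_\bb g\,(f(\aa)-f(\bb))(h(\aa)-h(\bb))}{\aa-\bb}\,d\bb$). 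Plugging the combinations above into this identity, together with the Hilbert-transform identities of Appendix~\ref{iden}, should extract exactly $\tfrac12\P_H\{\tfrac1{\bar Z_{,\aa}}(<\bar Z_t,i\tfrac1{\bar Z_{,\aa}},D_\aa\Theta>+<-i\tfrac1{Z_{,\aa}},Z_t,D_\aa\Theta>)\}$. The hard part will be checking that \emph{all} of the roughly dozen leftover bilinear terms — coming from the two Leibniz expansions, the $[D_t,\HH]$ commutator, and the holomorphic/antiholomorphic splittings — cancel exactly; this is where one must use that $\P_H$ annihilates holomorphic-times-antiholomorphic commutator remainders, the explicit formulas for $\P_AD_t\Theta$ and $\partial_\aa b$, and the symmetry of $<\cdot,\cdot,\cdot>$.

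Part 2 is then an immediate induction. By \eqref{Thj}, $\Thj=\P_HD_t\Theta^{(j-1)}$, so for each $l$ the pair $(\Theta^{(j-l-1)},\Theta^{(j-l)})$ fits the hypotheses of Part 1 with $G=G^{(j-l-1)}$, $G_1=G^{(j-l)}$; hence by \eqref{eq:G},
\[
\P_HG^{(k)}=\P_HD_t\P_HG^{(k-1)}+R_{k-1},
\]
where $R_{k-1}$ denotes the right side of \eqref{eq:G} evaluated at $\Th=\Theta^{(k-1)}$. Iterating for $k=j,j-1,\dots,1$ and using $\P_HG^{(0)}=0$ (which is \eqref{q2}) telescopes to $\P_H\Gj=\sum_{l=0}^{j-1}(\P_HD_t)^lR_{j-1-l}$, which is \eqref{eq:Gj}.
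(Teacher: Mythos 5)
Your reduction is correct as far as it goes, and it lands exactly on the paper's intermediate display (12): the three terms you reach ($-\P_H\big(\tfrac{i}{|Z_{,\aa}|^2}\P_A\partial_\aa D_t\Theta\big)$, $-\P_H\big(\tfrac{i(b_\aa-2\Re D_\aa Z_t)}{|Z_{,\aa}|^2}\partial_\aa\Theta\big)$, $+\P_H\big(\tfrac{\bar Z_t}{\bar Z_{,\aa}}\partial_\aa\P_A\mathcal M\Theta\big)$) are, after $\P_A D_t\Theta=\P_A\big(\tfrac{Z_t}{Z_{,\aa}}\partial_\aa\Theta\big)$, precisely the negative of (12). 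Your $D_t\mathcal M\Theta$ computation recovers \eqref{eq:c28}, your observation that the ``holomorphic half'' of $[D_t,\P_H]$ dies under the outer $\P_H$ is right, and the Part~2 induction from $\P_H G^{(0)}=0$ is complete and correct.

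The gap is in the last and hardest step, which you only plan rather than carry out. You propose to substitute \eqref{b} for $b$, turn every $\P_A$ into a $[\cdot,\HH]$-commutator, feed the result into the identity $<f,g,h>=fC(g,h)+gC(f,h)-C(fg,h)$, and then verify that ``roughly a dozen leftover bilinear terms cancel exactly.'' That verification is exactly what the proposition asserts, so deferring it leaves the proof unfinished, and the route you have chosen makes it unnecessarily painful. The paper does this step with a single targeted lemma, \eqref{eq:c30}: for holomorphic $h$, $\P_H[f,g;h]=-2\P_H\big(f\,\partial_\aa\P_A(gh)\big)-2\P_H\big(g\,\partial_\aa\P_A(fh)\big)$. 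Since $\bar Z_t$, $\tfrac1{Z_{,\aa}}$, and $D_\aa\Theta$ are all holomorphic, Proposition~\ref{prop:comm-hilbe} part 2 makes $\P_A(\bar Z_t D_\aa\Theta)$ and $\P_A(\tfrac1{Z_{,\aa}}D_\aa\Theta)$ vanish, so each of your first and third terms collapses in one line to $\tfrac12\P_H\big(\tfrac1{\bar Z_{,\aa}}[\cdot,\cdot;D_\aa\Theta]\big)$; there are no extra bilinear terms to cancel. Then \eqref{ba} rewrites the $b_\aa-2\Re D_\aa Z_t$ coefficient as $\tfrac12[\tfrac1{Z_{,\aa}},Z_t;1]-\tfrac12[\bar Z_t,\tfrac1{\bar Z_{,\aa}};1]$, and $<f,g,h>+[f,g;h]=h\,[f,g;1]$ combines the $[\cdot,\cdot;D_\aa\Theta]$ pieces with the $D_\aa\Theta\cdot[\cdot,\cdot;1]$ pieces into the stated $<\cdot,\cdot,\cdot>$ with the factor $\tfrac12$. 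You should replace your brute-force cancellation plan with this argument; as written, the crucial identification of (12) with the right side of \eqref{eq:G} is asserted, not proved.
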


\begin{proof}
Let $F=\P_A D_t\Th$ and $F_1=\P_A D_t\Th_1$. We know by \eqref{b} and the identity \eqref{projid} that
\begin{equation}\label{11}
F=\P_A D_t\Th=\P_A\paren{\frac{Z_t}{Z_{,\aa}}\partial_\aa\Th}.
\end{equation}
It is easy to check, by the definitions and the decomposition identity \eqref{paph} that
$$D_t\P_H G- G_1=-D_t\P_A G+D_t F_1+i\,\frac1{|Z_{,\aa}|^2}\partial_\aa F+\bracket{D_t, \, i\,\frac1{|Z_{,\aa}|^2}\partial_\aa}\Th;
$$
using the definitions again yields
$$\P_A G=F_1+\P_A\paren{i\,\frac1{|Z_{,\aa}|^2}\partial_\aa\Th},$$
so
$$D_t\P_H G- G_1=- D_t \P_A\paren{i\,\frac1{|Z_{,\aa}|^2}\partial_\aa\Th}+i\,\frac1{|Z_{,\aa}|^2}\partial_\aa F+\bracket{D_t, \, i\,\frac1{|Z_{,\aa}|^2}\partial_\aa}\Th;
$$
this gives,  by \eqref{b}, \eqref{projid}, \eqref{11} and \eqref{eq:c28}, that
\begin{equation}\label{12}
\begin{aligned}
\P_H (D_t\P_H G- G_1)&=- \P_H\paren{ \frac{\bar Z_t}{\bar Z_{,\aa}}\partial_\aa \P_A\paren{i\,\frac1{|Z_{,\aa}|^2}\partial_\aa\Th}}\\&+\P_H\paren{i\,\frac1{|Z_{,\aa}|^2}\partial_\aa \P_A \paren{\frac{Z_t}{Z_{,\aa}}\partial_\aa\Th}}+i\,\P_H  \paren{\frac{b_\aa-2\Re D_\aa Z_t}{|Z_{,\aa}|^2}\partial_\aa   \Th}.
\end{aligned}
\end{equation}
Now by identity \eqref{projid}  and the definition of $D_\aa\Th$ we can rewrite the first two terms in \eqref{12} as
$$
\begin{aligned}-\P_H\paren{ \frac{\bar Z_t}{\bar Z_{,\aa}} \partial_\aa\P_A\paren{i\,\frac1{|Z_{,\aa}|^2}\partial_\aa\Th}}&=-\P_H\paren{ \frac1{\bar Z_{,\aa}}\P_H \paren{\bar Z_t \partial_\aa \P_A\paren{i\,\frac1{\bar Z_{,\aa}}D_\aa\Th}}}, \\
\P_H\paren{i\,\frac1{|Z_{,\aa}|^2}\partial_\aa \P_A \paren{\frac{Z_t}{Z_{,\aa}}\partial_\aa\Th}}&=\P_H\paren{\frac1{\bar Z_{,\aa}}\P_H\paren{i\, \frac 1{Z_{,\aa}}\partial_\aa \P_A \paren{Z_tD_\aa\Th}}}.
\end{aligned}
$$
We use \eqref{eq:c30} and the fact that $\bar Z_t$ and $\frac1{Z_{,\aa}}$ are holomorphic to further rewrite  
$$-\P_H \paren{\bar Z_t \partial_\aa \P_A\paren{i\,\frac1{\bar Z_{,\aa}}D_\aa\Th}}=\frac12\P_H\bracket{\bar Z_t, i\,\frac1{\bar Z_{,\aa}}; D_\aa\Th}
$$
and
$$\P_H\paren{i\, \frac 1{Z_{,\aa}}\partial_\aa \P_A \paren{Z_tD_\aa\Th}}=\frac12\P_H\bracket{ - i\,\frac1{Z_{,\aa}}, Z_t; D_\aa\Th}.
$$
Replacing $b_\aa-2\Re D_\aa Z_t$ in \eqref{12} by \eqref{ba} and using part 2. of Proposition~\ref{prop:comm-hilbe} gives \eqref{eq:G}.

It is easy to check \eqref{eq:Gj} using the fact that $\P_H G^{(0)}=0$. 
\end{proof}

We next present a new energy  identity. The basic energy form, such as $\int |\partial_t\theta|^2+\frak a \nabla_n \theta\, \bar\theta\,d\a$, is commonly used in the proof of the local well-posedness of the water wave equations. 
In \cite{wu3} we found that the energy form
$$\int \frac1{\frak a}|\partial_t\theta|^2+i\partial_\a \theta\, \bar\theta\,d\a,$$
with the coefficient $\frak a$ moved to the first term and the Dirichlet-Neumann operator $\nabla_n$ replaced by $i\partial_\a$ was advantageous in the study of the quadratic and cubic cancellations in the energy functionals and proving long time existence, see Lemma 4.1 of \cite{wu3}; however it is not adequate for the purpose of this paper. 
In the following Proposition we introduce  a new, symmetric,  basic energy form. We will see that this new energy form makes it easier for us to find the quartic correcting functionals to cancel out the quartic terms in the time derivatives of the  basic energies.

\begin{proposition}\label{prop:energyid} 
Let $\Theta_1$, $\Theta_2$ be holomorphic, i.e. $\P_A\Th_1=\P_A\Th_2=0$, smooth and decay fast at infinity. 
Define
\begin{equation}\label{energy}
E(t)=\Re \paren{\int i\,\partial_\aa\Th_2 \overline{D_t\Th_1}\,d\aa-\int i\,\partial_\aa\Th_1 \overline{D_t\Th_2}\,d\aa}.\end{equation}
Then 
\begin{equation}\label{energyid}
\frac d{dt} E(t)=\Re\paren{\int i\,\partial_\aa\Th_2\overline{(\P_HG_1)}\,d\aa-\int i\,\partial_\aa\Th_1\overline{(\P_HG_2)}\,d\aa}.
\end{equation}
where $G_{k}:=D_t\mathbb P_H D_t \Theta_{k}+i\,\frac1{|Z_{,\aa}|^2}\partial_\aa \Theta_{k}$, for $k=1,2$.
\end{proposition}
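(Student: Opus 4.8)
The plan is to differentiate $E(t)$ directly. I would use the transport identity $\f{d}{dt}\int f\,d\aa=\int(D_t+b_\aa)f\,d\aa$ (valid since all quantities decay at infinity) together with the commutator relation $[D_t,\partial_\aa]=-b_\aa\partial_\aa$, which combine to give $(D_t+b_\aa)\partial_\aa\Th_k=\partial_\aa D_t\Th_k$. Writing $E=\Re(I_1-I_2)$ with $I_1=\int i\,\partial_\aa\Th_2\,\overline{D_t\Th_1}\,d\aa$ and $I_2=\int i\,\partial_\aa\Th_1\,\overline{D_t\Th_2}\,d\aa$, this yields
\[
\f{d}{dt}I_1=\int i\,\partial_\aa(D_t\Th_2)\,\overline{D_t\Th_1}\,d\aa+\int i\,\partial_\aa\Th_2\,\overline{D_t^2\Th_1}\,d\aa,
\]
and symmetrically for $I_2$. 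An integration by parts in $\aa$ shows that the first integral above equals the complex conjugate of the corresponding term coming from $\f{d}{dt}I_2$, so these cancel upon taking $\Re$ of the antisymmetric combination. Hence $E'(t)=\Re(K_1-K_2)$, with $K_1=\int i\,\partial_\aa\Th_2\,\overline{D_t^2\Th_1}\,d\aa$ and $K_2=\int i\,\partial_\aa\Th_1\,\overline{D_t^2\Th_2}\,d\aa$.

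Next I would split $D_t^2\Th_k=D_t\P_HD_t\Th_k+D_t\P_AD_t\Th_k$; by the definition of $G_k$ the first summand equals $G_k-i\,\f1{|Z_{,\aa}|^2}\partial_\aa\Th_k$. Thus $K_1$ is a sum of three pieces: (a) $\int i\,\partial_\aa\Th_2\,\overline{G_1}\,d\aa$, which equals $\int i\,\partial_\aa\Th_2\,\overline{\P_HG_1}\,d\aa$ because $\partial_\aa\Th_2$ is holomorphic and $\P_H$ is self-adjoint with respect to $\int\,\cdot\,\overline{\,\cdot\,}\,d\aa$ — this is exactly the desired term; (b) $-\int\f{\partial_\aa\Th_2\,\overline{\partial_\aa\Th_1}}{|Z_{,\aa}|^2}\,d\aa$, whose antisymmetrization against the analogous piece of $K_2$ has the form $\bar A-A$, hence is purely imaginary and contributes nothing to $\Re$; and (c) the anti-holomorphic piece $B_{12}:=\int i\,\partial_\aa\Th_2\,\overline{D_t\P_AD_t\Th_1}\,d\aa$. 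It remains to show $\Re(B_{12}-B_{21})=0$, where $B_{21}$ is defined symmetrically.

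For this last step I would set $F_k=\P_AD_t\Th_k$ and first establish that $\P_HD_t\P_AD_t\Th_k=\P_H\paren{\f{\bar Z_t}{\bar Z_{,\aa}}\partial_\aa F_k}$: indeed $\P_HD_tF_k=-[D_t,\P_H]F_k=-\f12[D_t,\mathbb H]F_k$, and since $F_k$ is anti-holomorphic, Proposition~\ref{prop:comm-hilbe} gives $[D_t,\mathbb H]F_k=\bracket{\f{\bar Z_t}{\bar Z_{,\aa}},\mathbb H}\partial_\aa F_k=-2\P_H\paren{\f{\bar Z_t}{\bar Z_{,\aa}}\partial_\aa F_k}$. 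Substituting this into $B_{12}$ (again using that $\partial_\aa\Th_2$ is holomorphic), then integrating by parts in $\aa$, then using self-adjointness of $\P_A$ together with $\partial_\aa\P_A=\P_A\partial_\aa$ and \eqref{11}, I expect to arrive at $B_{12}=-i\int\partial_\aa F_2\,\overline{D_t\Th_1}\,d\aa$. Since $\partial_\aa F_2$ and $\overline{\P_HD_t\Th_1}$ are both anti-holomorphic and decay at infinity, their product integrates to zero, so $B_{12}=-i\int\partial_\aa F_2\,\overline{F_1}\,d\aa$ and, symmetrically, $B_{21}=-i\int\partial_\aa F_1\,\overline{F_2}\,d\aa$; one final integration by parts gives $\int\partial_\aa F_2\,\overline{F_1}\,d\aa-\int\partial_\aa F_1\,\overline{F_2}\,d\aa=-2\Re\int F_2\,\overline{\partial_\aa F_1}\,d\aa\in\R$, so $B_{12}-B_{21}\in i\R$ and $\Re(B_{12}-B_{21})=0$. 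Collecting (a)--(c) yields the asserted identity.

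The one non-routine point is step (c): taming the anti-holomorphic component $D_t\P_AD_t\Th$ of $D_t^2\Th$. It works precisely because three structural facts combine — the commutator formula for $[D_t,\mathbb H]$ (which turns the anti-holomorphic contribution into $\P_H$ of a product), the identity $\P_AD_t\Th=\P_A\paren{\f{Z_t}{Z_{,\aa}}\partial_\aa\Th}$ of \eqref{11} (which lets the derivative be moved outside the projection), and the vanishing of $\int_\R$ of a product of two anti-holomorphic decaying functions — so that the offending term ends up purely imaginary. Everything else is bookkeeping with the transport formula, integration by parts, and the self-adjointness/orthogonality properties of $\P_H$ and $\P_A$; this also makes transparent why this particular basic energy form is the convenient one to work with.
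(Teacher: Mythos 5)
Your proposal is correct and follows essentially the same route as the paper: differentiate with \eqref{dte} and the commutator $[D_t,\partial_\aa]=-b_\aa\partial_\aa$ to reach $\Re\paren{\int i\,\partial_\aa\Th_2\overline{D_t^2\Th_1}\,d\aa-\int i\,\partial_\aa\Th_1\overline{D_t^2\Th_2}\,d\aa}$, split off $G_k$, observe the $\tfrac{1}{|Z_{,\aa}|^2}$ piece is antisymmetric hence contributes nothing to the real part, and show the $D_t\P_AD_t$ piece is purely imaginary by rewriting $\P_HD_t\P_AD_t\Theta_k=\P_H\paren{\tfrac{\bar Z_t}{\bar Z_{,\aa}}\partial_\aa\P_AD_t\Theta_k}$ and using \eqref{11} plus the vanishing of integrals of (anti-)holomorphic products. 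The only tiny imprecision is attributing $[D_t,\HH]F_k=\bigl[\tfrac{\bar Z_t}{\bar Z_{,\aa}},\HH\bigr]\partial_\aa F_k$ to Proposition~\ref{prop:comm-hilbe} rather than to \eqref{eq:c21} (which itself is derived from \eqref{b}, \eqref{paph} and Proposition~\ref{prop:comm-hilbe}), but the mathematics is sound.
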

\begin{proof}
We use \eqref{dte} to compute $E'(t)$. We have, after applying \eqref{eq:c7} and cancelling out equal terms,\footnote{Observe that \eqref{14} holds without the assumption that $\Th_1$, $\Th_2$ are holomorphic.}
\begin{equation}\label{14}
\begin{aligned}\frac d{dt} E(t)&=
\Re \paren{\int i\,\partial_\aa\Th_2 \overline{D_t^2\Th_1}\,d\aa-\int i\,\partial_\aa\Th_1 \overline{D_t^2\Th_2}\,d\aa}\\
&=\Re \paren{\int i\,\partial_\aa\Th_2 \overline{(D_t^2\Th_1+i\,\frac1{|Z_{,\aa}|^2}\partial_\aa \Theta_{1})}\,d\aa-\int i\,\partial_\aa\Th_1 \overline{(D_t^2\Th_2+i\,\frac1{|Z_{,\aa}|^2}\partial_\aa \Theta_{2})}\,d\aa};
\end{aligned}
\end{equation}
it is clear that the terms inserted in the second step above sum up to zero. We now want to show
\begin{equation}\label{13}
\Re \int i\,\partial_\aa\Th_2 \overline{D_t\P_A D_t \Th_1}\,d\aa=\Re \int i\,\partial_\aa\Th_1 \overline{D_t\P_A D_t\Th_2}\,d\aa.
\end{equation}
We begin with the term on the left hand side. Using Cauchy integral formula to insert a $\P_H$ gives
$$\int i\,\partial_\aa\Th_2 \overline{D_t\P_A D_t \Th_1}\,d\aa=\int i\,\partial_\aa\Th_2 \overline{\P_H D_t\P_A D_t \Th_1}\,d\aa.$$
By \eqref{b} and \eqref{paph},
$\P_H D_t\P_A D_t \Th_1=\P_H(\frac{\bar Z_t}{\bar Z_{,\aa}}\partial_\aa\P_A D_t \Th_1)$, and using Cauchy integral formula again to remove the $\P_H$ and insert a $\P_A$, and then use \eqref{11}, we get 
\begin{equation}\label{15}\int i\,\partial_\aa\Th_2 \overline{\P_H D_t\P_A D_t \Th_1}\,d\aa=\int i\,\partial_\aa\Th_2 \overline{\frac{\bar Z_t}{\bar Z_{,\aa}}\partial_\aa\P_A D_t \Th_1}\,d\aa=i\,\int\P_AD_t\Th_2\partial_\aa\overline{\P_A D_t \Th_1}\,d\aa.
\end{equation}
 Exchanging roles  between $\Th_1$ and $\Th_2$ in \eqref{15} gives us a similar identity for the right hand term in \eqref{13},  a further integration by parts then yields \eqref{13}. From \eqref{14} and \eqref{13}, and using Cauchy integral formula again to insert a $\P_H$ gives \eqref{energyid}. 
\end{proof}

Let $\Th_1=\Thj$, $\Th_2=\Thja$, and define
\begin{equation}\label{16}
E_j(t)=\Re \paren{\int i\,\partial_\aa\Thja \overline{\Thja}\,d\aa-\int i\,\partial_\aa\Thj \overline{\Thjb}\,d\aa}.\end{equation}
From \eqref{energyid}  we get, 
$$\frac d{dt} E_j(t)=\Re\paren{\int i\,\partial_\aa\Thja \overline{\P_H\Gj}\,d\aa-\int i\,\partial_\aa\Thj\overline{\P_HG^{(j+1)}}\,d\aa};
$$
compute further by using  \eqref{dte}, \eqref{eq:Gj}, \eqref{q2} and Proposition~\ref{prop:cif}, we obtain
\begin{equation}\label{ej1}
\begin{aligned}
\frac d{dt} E_j(t)&=\frac d{dt}\Re\int i\,\partial_\aa\Thj\overline{\P_H \Gj}\,d\aa\\
&+2\Re\sum_{l=0}^{j-1}\braces{\int i\, \partial_\aa\bar\Thj(\P_H D_t)^{l+1}\P_H\paren{G^{(j-l)}-D_t\P_H G^{(j-1-l)}}\,d\aa}\\&+
\Re\braces{\int i\,\partial_\aa \bar\Thj\P_H\paren{G^{(j+1)}-D_t\P_H G^{(j)}}\,d\aa}
.
\end{aligned}
\end{equation}
where  
\begin{equation}\label{17}
\begin{aligned}
&\P_H\paren{G^{(j-l)}-D_t\P_H G^{(j-1-l)}}\\&=\frac12 \P_H\braces{\frac1{\bar Z_{,\aa}} \paren{<\bar Z_t, i\,\frac1{\bar Z_{,\aa}}, D_\aa \Theta^{(j-l-1)}>+  <-i\,\frac1{ Z_{,\aa}}, Z_t,  D_\aa \Theta^{(j-l-1)}>  }  },
\end{aligned}
\end{equation}
for $l=-1, 0,\dots, j-1$, by Proposition~\ref{prop:G}.

Observe that $\frac d{dt} E_j(t)$ is quartic with a few desirable features: 1. the integrands depend only on the spatial derivatives of the quantities $\bar Z_t$, $\frac 1{Z_{,\aa}}$ and $\Thj$; 2. it has remarkable symmetries, with the order of derivatives evenly distributed among the factors: we note that $D_\aa\Tha=\bar Z_t$. 
This should allow us to readily derive very low regularity estimates. 
 However our main interest here is to find a quartic correcting functional for $E_j(t)$, 
 so that the time derivative of the corrected energy functional is quintic in terms of the spatial derivatives of the quantities $\bar Z_t$, $\frac 1{Z_{,\aa}}$ and $\Thj$  %scaling invariant norm $\nm{\frac1{Z_{,\aa}}-1}_{\dot H^{1/2}}+\nm{Z_{t,\aa}}_{L^2}$ and norms involving derivatives of orders higher than the scaling 
 and prove the new long time existence result. The feature of $\frac d{dt} E_j(t)$ allows us to do so, thanks to a few additional observations on the structure of the water wave equation. 
 
 We  note that by \eqref{2dinterface}, $-i\,\frac1{Z_{,\aa}}+i= \bar Z_{tt}+\text{\it quadratic}$, and by \eqref{qa}, $D_\aa \Theta^{(k)}=D_t^{k}\bar Z_t+\text{\it quadratic}$.
 Let\footnote{Observe that for any function $f=f(\aa,t)$, $\mathfrak D_t f=D_t f$ and $\mathcal Pf=(D_t^2+i\,\frac{A_1}{|Z_{,\aa}|^2}\partial_\aa)f=:\mathfrak P f$, where $\mathfrak P$ is defined in \eqref{op}.}
\begin{align}\label{19}
\mathfrak D_t&:=\partial_t+b(\aa,t)\partial_\aa+b(\bb,t)\partial_\bb,
\\
\label{20}
\mathcal P&:=\mathfrak D_t^2+i\,\frac{A_1(\aa,t)}{|Z_{,\aa}|^2}\partial_\aa+i\,\frac{A_1(\bb,t)}{|Z_{,\bb}|^2}\partial_\bb.
\end{align}
And  let 
 \begin{equation}\label{18}
 \theta:=  \bar Z_t(\aa,t)-\bar Z_t(\bb,t),
 \end{equation}
 observe that 
\begin{equation}\label{23}
\mathfrak D_t^j\theta=D_t^j\bar Z_t(\aa,t)-D_t^j\bar Z_t(\bb,t), \qquad \text{for }j\ge 0. 
\end{equation}
 So the quartic terms in 
 $$2\Re\sum_{l=0}^{j-1}\braces{\int i\, \partial_\aa\bar\Thj(\P_H D_t)^{l+1}\P_H\paren{G^{(j-l)}-D_t\P_H G^{(j-1-l)}}\,d\aa}$$
 consists of 
 \begin{equation}\label{21}
I_{1,j}:=\frac1{\pi } \Re\sum_{l=0}^{j-1}\iint D_t^j Z_t(\aa,t) \frac{\mathfrak D_t^{l+1}\braces{\mathfrak D_t\paren{\theta(\aa,\bb,t)\overline{\theta(\aa, \bb,t)}}\mathfrak D_t^{j-l-1}\theta(\aa,\bb,t)}}{(\aa-\bb)^2} \,d\bb\,d\aa.
\end{equation}
 Similarly the quartic terms in $\Re\int i\,\partial_\aa \bar\Thj\P_H\paren{G^{(j+1)}-D_t\P_H G^{(j)}}\,d\aa$ 
 are
 \begin{equation}\label{22}
I_{2,j}:=\frac1{2\pi } \Re\iint D_t^jZ_t(\aa,t) \frac{\mathfrak D_t\paren{\theta(\aa,\bb,t)\overline{\theta(\aa, \bb,t)}}\mathfrak D_t^{j}\theta(\aa,\bb,t)}{(\aa-\bb)^2} \,d\bb\,d\aa.
\end{equation}
We know it is possible to find quartic functionals $C_{i,j}$, $i=1,2$, so that $\frac d{dt} C_{i, j}(t)=I_{i,j}+\text{\it quintic}$, based on the following observations.

\begin{lemma}\label{lemma1}
Assume that $f$ and $\mathcal G$ are smooth and decay fast at infinity. We have
\begin{equation}\label{eq:lemma1}
\begin{aligned}
&\frac d{dt}\iint \frac{\bar f \,\mathfrak D_t \mathcal G-\mathfrak D_t \bar f \,\mathcal G}{(\aa-\bb)^2}\,d\aa d\bb=\iint\frac{\bar f\mathcal P \mathcal G}{(\aa-\bb)^2} \,d\aa d\bb\\&
+\iint\paren{-\frac{\overline{\mathcal P f}\mathcal G}{(\aa-\bb)^2}+\paren{b_\aa+b_\bb-2\frac{b(\aa)-b(\bb)}{\aa-\bb}}\frac{\bar f \,\mathfrak D_t \mathcal G-\mathfrak D_t \bar f \,\mathcal G}{(\aa-\bb)^2}}\,d\aa d\bb\\&+
i\,\iint\paren{\partial_\aa\frac{A_1(\aa)}{|Z_{,\aa}|^2}+\partial_\bb\frac{A_1(\bb)}{|Z_{,\bb}|^2}-2\frac{\frac{A_1(\aa)}{|Z_{,\aa}|^2}-\frac{A_1(\bb)}{|Z_{,\bb}|^2}}{\aa-\bb}}\frac{\bar f \,\mathcal G}{(\aa-\bb)^2}\,d\aa d\bb.
\end{aligned}
\end{equation}
\end{lemma}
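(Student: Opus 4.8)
The plan is to prove \eqref{eq:lemma1} by a direct computation, organized in three steps: (1) bring the outer $\frac d{dt}$ inside the double integral, converting $\partial_t$ into the transport operator $\mathfrak D_t$ of \eqref{19} and picking up a commutator term coming from how $\mathfrak D_t$ acts on the kernel; (2) expand $\mathfrak D_t$ on the integrand $\bar f\,\mathfrak D_t\mathcal G-\mathfrak D_t\bar f\,\mathcal G$, noting that the first-order cross terms cancel; (3) replace $\mathfrak D_t^2$ by $\mathcal P$ via \eqref{20} and integrate by parts once more in $\aa$ and $\bb$ to regroup the $A_1$-contributions.

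For step (1): since the measure $d\aa\,d\bb$ and the kernel $(\aa-\bb)^{-2}$ do not depend on $t$, for any $H=H(\aa,\bb,t)$ decaying fast at infinity I would write $\partial_t H=\mathfrak D_tH-b(\aa)\partial_\aa H-b(\bb)\partial_\bb H$ and integrate by parts in $\aa$ and in $\bb$ in the last two terms; using $b(\aa)\,\partial_\aa(\aa-\bb)^{-2}+b(\bb)\,\partial_\bb(\aa-\bb)^{-2}=-2\,(b(\aa)-b(\bb))(\aa-\bb)^{-3}$ this gives the commutation identity
\begin{equation*}
\frac d{dt}\iint\frac{H}{(\aa-\bb)^2}\,d\aa\,d\bb=\iint\frac{\mathfrak D_tH}{(\aa-\bb)^2}\,d\aa\,d\bb+\iint\paren{b_\aa+b_\bb-2\,\frac{b(\aa)-b(\bb)}{\aa-\bb}}\frac{H}{(\aa-\bb)^2}\,d\aa\,d\bb,
\end{equation*}
which I apply with $H=\bar f\,\mathfrak D_t\mathcal G-\mathfrak D_t\bar f\,\mathcal G$; the second integral on the right is already the corresponding term in \eqref{eq:lemma1}. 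For step (2): $\mathfrak D_t$ is a real first-order operator, so by the Leibniz rule the cross terms cancel and $\mathfrak D_tH=\bar f\,\mathfrak D_t^2\mathcal G-\mathfrak D_t^2\bar f\,\mathcal G$. For step (3): from \eqref{20}, $\mathfrak D_t^2=\mathcal P-i\frac{A_1(\aa)}{|Z_{,\aa}|^2}\partial_\aa-i\frac{A_1(\bb)}{|Z_{,\bb}|^2}\partial_\bb$; since $A_1$, $|Z_{,\aa}|^2$, $|Z_{,\bb}|^2$ are real, the $i$-terms change sign under complex conjugation, so $\mathfrak D_t^2\bar f=\overline{\mathfrak D_t^2 f}=\overline{\mathcal P f}+i\frac{A_1(\aa)}{|Z_{,\aa}|^2}\partial_\aa\bar f+i\frac{A_1(\bb)}{|Z_{,\bb}|^2}\partial_\bb\bar f$. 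Substituting, the $A_1(\aa)\,\partial_\aa\mathcal G$ and $A_1(\aa)\,(\partial_\aa\bar f)\,\mathcal G$ contributions combine into $-i\frac{A_1(\aa)}{|Z_{,\aa}|^2}\partial_\aa(\bar f\mathcal G)$, and likewise in $\bb$, leaving
\begin{equation*}
\mathfrak D_tH=\bar f\,\mathcal P\mathcal G-\overline{\mathcal P f}\,\mathcal G-i\,\frac{A_1(\aa)}{|Z_{,\aa}|^2}\partial_\aa(\bar f\mathcal G)-i\,\frac{A_1(\bb)}{|Z_{,\bb}|^2}\partial_\bb(\bar f\mathcal G).
\end{equation*}
Inserting this into $\iint(\aa-\bb)^{-2}\mathfrak D_tH\,d\aa\,d\bb$ produces the $\bar f\,\mathcal P\mathcal G$ and $-\overline{\mathcal P f}\,\mathcal G$ terms of \eqref{eq:lemma1}, and integrating the last two terms by parts in $\aa$ resp. $\bb$, using $\partial_\aa(\aa-\bb)^{-2}=-\partial_\bb(\aa-\bb)^{-2}=-2(\aa-\bb)^{-3}$, assembles the remaining four pieces into the last line of \eqref{eq:lemma1}.

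The computation is conceptually routine; the only real obstacle is the bookkeeping. The two places demanding care are: the opposite orientation of the kernel derivatives $\partial_\aa(\aa-\bb)^{-2}$ and $\partial_\bb(\aa-\bb)^{-2}$, which is what produces the singular factors $2\frac{b(\aa)-b(\bb)}{\aa-\bb}$ and $2\frac{A_1(\aa)/|Z_{,\aa}|^2-A_1(\bb)/|Z_{,\bb}|^2}{\aa-\bb}$ with the correct sign; and the sign flip of the $iA_1$-terms under conjugation, which is precisely the mechanism that makes $\bar f\,\partial_\aa\mathcal G$ and $(\partial_\aa\bar f)\,\mathcal G$ combine into a single $\partial_\aa(\bar f\mathcal G)$ rather than cancel. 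No boundary terms arise in the integrations by parts because $f$ and $\mathcal G$ decay fast at infinity; moreover, in the applications of the lemma $f$ and $\mathcal G$ are differences such as $\theta$ in \eqref{18}, which vanish on the diagonal $\aa=\bb$, so all the integrands are genuinely integrable.
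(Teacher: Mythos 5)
Your proof is correct and follows the same route the paper intends: apply \eqref{dteab} (equivalently, push $\partial_t$ inside and integrate by parts to convert it into $\mathfrak D_t$ plus the kernel commutator), observe the cross-term cancellation in $\mathfrak D_t(\bar f\,\mathfrak D_t\mathcal G-\mathfrak D_t\bar f\,\mathcal G)$, replace $\mathfrak D_t^2$ by $\mathcal P$ using \eqref{20} (with the sign flip on the $iA_1$-terms under conjugation turning $\bar f\,\partial\mathcal G+(\partial\bar f)\mathcal G$ into $\partial(\bar f\mathcal G)$), and integrate by parts once more. All signs and the kernel derivatives check out; the paper simply states "\eqref{eq:lemma1} follows easily from \eqref{dteab} and integration by parts" and omits exactly the bookkeeping you supplied.
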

\eqref{eq:lemma1} follows easily from \eqref{dteab} and integration by parts. We omit the details.

\begin{lemma}\label{lemma2}
Let $\mathcal G= g \bar h  q$. Then 
\begin{equation}\label{eq:lemma2}
\mathcal P\mathcal  G=  2\mathfrak D_t\paren{g\,\bar h }\,\mathfrak D_t q+2 \mathfrak D_t\paren{g\,\mathfrak D_t \bar h }\, q
+ \bar h\, q\,\mathcal P g+ g\,\bar h \,\mathcal P q- g\,q\,\overline{\mathcal P h}.
\end{equation}
\end{lemma}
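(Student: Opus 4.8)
The identity \eqref{eq:lemma2} is a Leibniz-type expansion of the second-order operator $\mathcal P = \mathfrak D_t^2 + i\,\frac{A_1(\aa)}{|Z_{,\aa}|^2}\partial_\aa + i\,\frac{A_1(\bb)}{|Z_{,\bb}|^2}\partial_\bb$ applied to the product $\mathcal G = g\bar h q$. I would simply expand both pieces of $\mathcal P$ using the product rule and then re-collect the cross terms into the symmetric combinations $\mathfrak D_t(g\bar h)\,\mathfrak D_t q$, $\mathfrak D_t(g\,\mathfrak D_t\bar h)\,q$ and the three "diagonal" terms $\bar h q\,\mathcal P g$, $g\bar h\,\mathcal P q$, $-g q\,\overline{\mathcal P h}$. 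Observe first that $\mathfrak D_t$ is a genuine derivation — it is $\partial_t + b(\aa)\partial_\aa + b(\bb)\partial_\bb$ — so it obeys the ordinary Leibniz rule on products of functions of $(\aa,\bb,t)$; similarly each of $\partial_\aa$ and $\partial_\bb$ is a derivation. The key structural point to exploit is that $g$, $q$ are (to be taken as) functions to which $\mathfrak D_t$, $\partial_\aa$, $\partial_\bb$ apply directly, while the appearance of $\bar h$ rather than $h$ forces us to conjugate: one writes $\overline{\mathcal P h} = \mathfrak D_t^2 \bar h - i\,\frac{A_1(\aa)}{|Z_{,\aa}|^2}\partial_\aa\bar h - i\,\frac{A_1(\bb)}{|Z_{,\bb}|^2}\partial_\bb\bar h$, which accounts for the sign in the last term and for why the $i$-terms combine cleanly.

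First I would compute $\mathfrak D_t^2(g\bar h q)$. Applying the derivation $\mathfrak D_t$ once gives $\mathfrak D_t g\,\bar h q + g\,\mathfrak D_t\bar h\,q + g\bar h\,\mathfrak D_t q$; applying it again and expanding produces nine terms, which I group as $\mathfrak D_t^2 g\,\bar h q + g\bar h\,\mathfrak D_t^2 q - g q\,\mathfrak D_t^2\bar h$ (the "diagonal" part, with the $\bar h$ second-derivative term carrying a provisional sign to be reconciled below) plus the six off-diagonal terms $2\mathfrak D_t g\,\mathfrak D_t\bar h\, q + 2\mathfrak D_t g\,\bar h\,\mathfrak D_t q + 2 g\,\mathfrak D_t\bar h\,\mathfrak D_t q + 2 g q\,\mathfrak D_t^2 \bar h$ — here I have added and subtracted $2 g q\,\mathfrak D_t^2\bar h$ to split the $\mathfrak D_t^2\bar h$ term between the diagonal and the off-diagonal bookkeeping. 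The off-diagonal terms are exactly $2\,\mathfrak D_t(g\bar h)\,\mathfrak D_t q + 2\,\mathfrak D_t(g\,\mathfrak D_t\bar h)\,q$ once one re-expands those two products by Leibniz: $\mathfrak D_t(g\bar h) = \mathfrak D_t g\,\bar h + g\,\mathfrak D_t\bar h$ and $\mathfrak D_t(g\,\mathfrak D_t\bar h) = \mathfrak D_t g\,\mathfrak D_t\bar h + g\,\mathfrak D_t^2\bar h$. So the quadratic-derivative part of $\mathcal P$ already yields the first two terms on the right of \eqref{eq:lemma2} plus $\bar h q\,\mathfrak D_t^2 g + g\bar h\,\mathfrak D_t^2 q - g q\,\mathfrak D_t^2\bar h$.

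Next I would handle the first-order part, $i\,\frac{A_1(\aa)}{|Z_{,\aa}|^2}\partial_\aa(g\bar h q) + i\,\frac{A_1(\bb)}{|Z_{,\bb}|^2}\partial_\bb(g\bar h q)$. Since $\partial_\aa$ and $\partial_\bb$ are derivations, each expands into three terms; the crucial observation is that the coefficients $i\,\frac{A_1(\aa)}{|Z_{,\aa}|^2}$ and $i\,\frac{A_1(\bb)}{|Z_{,\bb}|^2}$ are purely a function of $\aa$ (resp. $\bb$), so the product rule distributes them verbatim onto each factor. Collecting the three "$g$-differentiated" first-order contributions gives $\bar h q\cdot\big(i\,\frac{A_1(\aa)}{|Z_{,\aa}|^2}\partial_\aa g + i\,\frac{A_1(\bb)}{|Z_{,\bb}|^2}\partial_\bb g\big)$, which combines with the $\bar h q\,\mathfrak D_t^2 g$ above to give precisely $\bar h q\,\mathcal P g$; likewise for the $q$-differentiated ones, yielding $g\bar h\,\mathcal P q$. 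The $\bar h$-differentiated first-order contributions give $g q\cdot\big(i\,\frac{A_1(\aa)}{|Z_{,\aa}|^2}\partial_\aa\bar h + i\,\frac{A_1(\bb)}{|Z_{,\bb}|^2}\partial_\bb\bar h\big) = -g q\,\overline{\big({-i}\,\frac{A_1(\aa)}{|Z_{,\aa}|^2}\partial_\aa h - i\,\frac{A_1(\bb)}{|Z_{,\bb}|^2}\partial_\bb h\big)}$; wait — since $A_1/|Z_{,\aa}|^2$ is real, conjugation flips the $i$, so this is $-g q\cdot\big(\overline{-i\,\frac{A_1(\aa)}{|Z_{,\aa}|^2}\partial_\aa h} + \overline{-i\,\frac{A_1(\bb)}{|Z_{,\bb}|^2}\partial_\bb h}\big)$, which combines with $-g q\,\mathfrak D_t^2\bar h = -g q\,\overline{\mathfrak D_t^2 h}$ to give exactly $-g q\,\overline{\mathcal P h}$. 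Summing the three groups reproduces the right-hand side of \eqref{eq:lemma2}.

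**Main obstacle.** There is nothing deep here — it is a bookkeeping identity — so the only real risk is a sign or conjugation slip, concentrated entirely in the $\bar h$ terms: one must consistently track that $\mathfrak D_t$ and $\partial_\aa,\partial_\bb$ commute with conjugation (they have real coefficients), that the factor $i$ in $\mathcal P$ becomes $-i$ under the bar, and that the $\mathfrak D_t^2\bar h$ cross terms are split correctly between the $2\,\mathfrak D_t(g\,\mathfrak D_t\bar h)\,q$ term and the diagonal $-g q\,\overline{\mathcal P h}$ term via the add-and-subtract trick above. I would double-check by specializing to $g = q = 1$ (so $\mathcal P g = \mathcal P q = 0$ since $\mathcal P$ annihilates constants, and the identity must collapse to $\mathcal P\bar h = -\,\overline{\mathcal P h}\cdot(-1)$... which forces a consistency check that $\mathcal P\bar h = \overline{\mathcal P h}$ fails unless the $i$-terms are handled as stated), and by the case $h = 1$, where \eqref{eq:lemma2} should reduce to the ordinary two-factor product rule $\mathcal P(gq) = \mathcal P g\cdot q + g\cdot\mathcal P q + 2\,\mathfrak D_t g\,\mathfrak D_t q$ modulo the first-order cross terms — this last check also explains the coefficient $2$ in front of $\mathfrak D_t(g\bar h)\,\mathfrak D_t q$. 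Modulo these sanity checks the computation is routine and I would simply write "expand $\mathcal P(g\bar h q)$ by the Leibniz rule and re-collect terms."
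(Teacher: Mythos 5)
Your proof is correct, and it is exactly the routine Leibniz expansion the paper has in mind when it says the verification is straightforward and omits the details: expand $\mathfrak D_t^2$ and the first-order part of $\mathcal P$ on the product, split the $gq\,\mathfrak D_t^2\bar h$ term as $(-1)+2$, and use that $\mathfrak D_t$ and the real coefficients commute with conjugation. One intermediate display has a stray sign: the correct rewriting is $gq\paren{i\,\tfrac{A_1(\aa)}{|Z_{,\aa}|^2}\partial_\aa\bar h+i\,\tfrac{A_1(\bb)}{|Z_{,\bb}|^2}\partial_\bb\bar h}=-gq\,\overline{\paren{i\,\tfrac{A_1(\aa)}{|Z_{,\aa}|^2}\partial_\aa h+i\,\tfrac{A_1(\bb)}{|Z_{,\bb}|^2}\partial_\bb h}}$ (your version, $-gq\,\overline{\paren{-i\cdots-i\cdots}}$, carries one minus too many and equals the negative), after which adding $-gq\,\mathfrak D_t^2\bar h=-gq\,\overline{\mathfrak D_t^2 h}$ gives $-gq\,\overline{\mathcal P h}$ exactly as you conclude; likewise your $g=q=1$ sanity check should read $\mathcal P\bar h=2\mathfrak D_t^2\bar h-\overline{\mathcal P h}$ (the term $2\mathfrak D_t(g\,\mathfrak D_t\bar h)\,q$ survives), which indeed holds.
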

 The verification of \eqref{eq:lemma2} is straightfoward. We omit the details. 
 
To find the quartic functionals $C_{i,j}$, we will use Lemma~\ref{lemma1} with $\mathcal G=g\bar h q$.   
From \eqref{eq:lemma1}, \eqref{eq:lemma2},  we have
\begin{equation}\label{27}
\frac d{dt}\iint \frac{\bar f \,\mathfrak D_t \mathcal (g\,\bar h\, q)-(\mathfrak D_t \bar f) \,g\,\bar h\, q }{(\aa-\bb)^2}\,d\aa d\bb
=2\iint\frac{\bar f\,  \mathfrak D_t\paren{g\,\bar h }\,\mathfrak D_t q+ \bar f \,\mathfrak D_t\paren{g\,\mathfrak D_t \bar h }\, q  }{(\aa-\bb)^2} \,d\aa d\bb
+quintic,
\end{equation}
provided $\mathcal P f$, $\mathcal P g$, $\mathcal P h$ and $\mathcal P q$ are quadratic.
Observe that on the right hand side of \eqref{27} the operator 
$\mathfrak D_t$ acting on $q$ in the first term  is moved to the second term, acting on $\bar h$.  And we know 
$\mathcal P\mathfrak D_t^j\theta=\text{\it quadratic}$ by \eqref{quasi}. Instead of going through the straightforward but tedious process of finding the correcting functionals $C_{i,j}$, we will directly give the results. Before doing so, we use Lemmas~\ref{lemma1} and ~\ref{lemma2} to write the following equation. 
 We have, for $j, k, i, l, m\ge 0$, and $D_t^j Z_t=D_t^j Z_t(\aa,t)$,
\begin{equation}\label{24}
\begin{aligned}
\frac d{dt}&\iint\frac{\paren{D_t^j Z_t\,\mathfrak D_t-\mathfrak D_t (D_t^j Z_t)}\mathfrak D_t^{m}\paren{\mathfrak D_t^l\theta\,\mathfrak D_t^i\bar\theta\,\mathfrak D_t^k\theta }}{(\aa-\bb)^2}\,d\aa\,d\bb\\&=
2\iint\frac{D_t^j Z_t\,\mathfrak D_t^m\braces{\mathfrak D_t\paren{\mathfrak D_t^l\theta\,\mathfrak D_t^i\bar\theta}\mathfrak D_t^{k+1}\theta+ \mathfrak D_t\paren{\mathfrak D_t^l\theta\,\mathfrak D_t^{i+1}\bar\theta}\mathfrak D_t^{k}\theta  }}{(\aa-\bb)^2}\,d\aa\,d\bb+ R^{(m)}_{\bar j; l, \bar i, k},
\end{aligned}
\end{equation}
where
\begin{equation}\label{25}
\begin{aligned}
&R^{(m)}_{\bar j; l, \bar i, k}=\iint\frac{ D_t^jZ_t\, \bracket{\mathcal P, \mathfrak D_t^{m}}\paren{\mathfrak D_t^l\theta\,\mathfrak D_t^i\bar\theta\,\mathfrak D_t^k\theta }}{(\aa-\bb)^2}\,d\aa\,d\bb\\&+
\iint\frac{ D_t^jZ_t\,  \mathfrak D_t^{m}\braces{(\mathcal P\mathfrak D_t^l\theta)\,\mathfrak D_t^i\bar\theta\,\mathfrak D_t^k\theta - \mathfrak D_t^l\theta\, \overline{(\mathcal P\mathfrak D_t^i\theta)}\,\mathfrak D_t^k\theta +\mathfrak D_t^l\theta\,\mathfrak D_t^i\bar\theta\,(\mathcal P\mathfrak D_t^k\theta )}}{(\aa-\bb)^2}\,d\aa\,d\bb\\&
-\iint\frac{\overline{(\mathcal P D_t^j\bar Z_t)}\,\mathfrak D_t^{m}\paren{\mathfrak D_t^l\theta\,\mathfrak D_t^i\bar\theta\,\mathfrak D_t^k\theta }}{(\aa-\bb)^2}\,d\aa\,d\bb\\&
+\iint\paren{b_\aa+b_\bb-2\frac{b(\aa)-b(\bb)}{\aa-\bb}}\frac{\paren{D_t^j Z_t\mathfrak D_t-\mathfrak D_t (D_t^j Z_t)}\mathfrak D_t^{m}\paren{\mathfrak D_t^l\theta\,\mathfrak D_t^i\bar\theta\,\mathfrak D_t^k\theta }}{(\aa-\bb)^2}\,d\aa\,d\bb
\\&
+i\iint \paren{\partial_\aa\frac{A_1(\aa)}{|Z_{,\aa}|^2}+\partial_\bb\frac{A_1(\bb)}{|Z_{,\bb}|^2}-2\frac{\frac{A_1(\aa)}{|Z_{,\aa}|^2}-\frac{A_1(\bb)}{|Z_{,\bb}|^2}}{\aa-\bb}}\frac{D_t^j Z_t\,\mathfrak D_t^{m}\paren{\mathfrak D_t^l\theta\,\mathfrak D_t^i\bar\theta\,\mathfrak D_t^k\theta }}{(\aa-\bb)^2}\,d\aa\,d\bb.
\end{aligned}
\end{equation}
It is clear that $R^{(m)}_{\bar j; l, \bar i, k}$ is quintic.

We now give the correcting functionals. 
Let\footnote{We define $\sum_{l=0}^{-1}=\sum_{l=0}^{-2}=0$. $D_t^j Z_t=D_t^j Z_t(\aa,t)$, $D_t^{j+1} Z_t=D_t^{j+1} Z_t(\aa,t)$ in \eqref{c1} and \eqref{c2}. }
\begin{equation}\label{c1}
\begin{aligned}
C_{1,j}&=\frac1{2\pi}\sum_{l=0}^{j-1}\sum_{k=0}^l\iint \frac{\paren{D_t^j Z_t\mathfrak D_t- D_t^{j+1}Z_t}\mathfrak D_t^{l-k} \paren{\mathfrak D_t^k\theta\,\,\bar \theta \,\mathfrak D_t^{j-l-1}\theta}}{(\aa-\bb)^2}\,d\bb\,d\aa\\
&+\frac1{4\pi} \sum_{l=0}^{j-2}\sum_{k=0}^{j-l-2}(-1)^k \iint\frac{\paren{D_t^jZ_t\mathfrak D_t-D_t^{j+1}Z_t}\mathfrak D_t^{1+l}\theta\,\mathfrak D_t^k\bar\theta \,\mathfrak D_t^{j-l-2-k}\theta}{(\aa-\bb)^2}\,d\bb\,d\aa\\&
-\frac1{8\pi} \sum_{l=0}^{j-2}\sum_{k=0}^{j-l-2}(-1)^{k}\iint\frac{\paren{\theta\mathfrak D_t-\mathfrak D_t\theta}\mathfrak D_t^{j-l-1}\bar\theta \,\mathfrak D_t^{j-k-1}\theta\,\mathfrak D_t^{k+l+1}\bar\theta}{(\aa-\bb)^2}\,d\bb\,d\aa\\&
+\frac1{2\pi}\sum_{l=0}^{j-1}\iint D_t^jZ_t\, \frac{ \,\mathfrak D_t^{j-l-1}\theta\,\,\bar\theta \,\mathfrak D_t^{1+l}\theta}{(\aa-\bb)^2}\,d\bb\,d\aa,
\end{aligned}
\end{equation}
and 
\begin{equation}\label{c2}
\begin{aligned}
C_{2,j}&=\frac1{4\pi}\sum_{k=0}^{j-1}(-1)^k\iint \frac{\paren{D_t^jZ_t\mathfrak D_t-D_t^{j+1}Z_t} \theta \,\mathfrak D_t^k\bar\theta\,\mathfrak D_t^{j-k-1}\theta}{(\aa-\bb)^2}\,d\bb\,d\aa\\
&+\frac1{4\pi}(-1)^j\iint D_t^jZ_t\frac{ \,\theta\, \mathfrak D_t^j\bar\theta\, \,\theta}{(\aa-\bb)^2}\,d\bb\,d\aa.
\end{aligned}
\end{equation}

\begin{proposition}\label{prop:ic}  Let $j\ge 0$.  We have
\begin{equation}\label{eq:ic}
I_{1,j}+I_{2,j}- \frac d{dt}  \Re \paren{ C_{1,j}+C_{2,j}}=  R_{IC,j}
\end{equation}
where 
\begin{equation}\label{remainder1}
\begin{aligned}
&  R_{IC,j}=-\frac1{2\pi}\sum_{l=0}^{j-1}\Re\iint \paren{b_\aa+b_\bb-2\frac{b(\aa)-b(\bb)}{\aa-\bb}}\frac{  D_t^j Z_t\,\mathfrak D_t^{j-l-1}\theta\,\,\bar\theta \, \mathfrak D_t^{l+1}\theta}{(\aa-\bb)^2}\,d\aa\,d\bb\\&-
\frac1{4\pi}\Re\paren{2\sum_{l=0}^{j-1}\sum_{k=0}^l R^{(l-k)}_{\bar j; k, \bar 0, j-l-1}+  \sum_{l=0}^{j-2}\sum_{k=0}^{j-l-2} (-1)^k \paren{R^{(0)}_{\bar j; 1+l, \bar k, j-l-2-k}-   \overline {R^{(0)}_{\bar 0; j-l-1, \bar {j-k-1}, l+k+1}}}}\\&
-(-1)^j\frac{1}{4\pi}\Re\iint \paren{b_\aa+b_\bb-2\frac{b(\aa)-b(\bb)}{\aa-\bb}}\frac{  D_t^j Z_t\,\,\theta \, \mathfrak D_t^{j}\bar \theta\, \, \theta}{(\aa-\bb)^2}\,d\aa\,d\bb\\&-
\frac1{4\pi}\Re\sum_{k=0}^{j-1}(-1)^k R^{(0)}_{\bar j; 0, \bar k, j-k-1}.
\end{aligned}
\end{equation}

\end{proposition}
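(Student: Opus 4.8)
\textbf{Proof proposal for Proposition~\ref{prop:ic}.}

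The plan is to match the quartic integrands in $I_{1,j}+I_{2,j}$ against the time derivative $\frac{d}{dt}\Re(C_{1,j}+C_{2,j})$ term by term, using \eqref{24}--\eqref{25} as the basic bookkeeping device, and to collect everything that is genuinely quintic into $R_{IC,j}$. The mechanism is exactly \eqref{27}: whenever $\frac{d}{dt}$ hits a functional of the form $\iint \frac{(D_t^jZ_t\,\mathfrak D_t - \mathfrak D_t D_t^jZ_t)\mathfrak D_t^m(\mathfrak D_t^l\theta\,\mathfrak D_t^i\bar\theta\,\mathfrak D_t^k\theta)}{(\aa-\bb)^2}$, it produces the "principal" term $2\iint\frac{D_t^jZ_t\,\mathfrak D_t^m\{\mathfrak D_t(\mathfrak D_t^l\theta\,\mathfrak D_t^i\bar\theta)\mathfrak D_t^{k+1}\theta + \mathfrak D_t(\mathfrak D_t^l\theta\,\mathfrak D_t^{i+1}\bar\theta)\mathfrak D_t^k\theta\}}{(\aa-\bb)^2}$ plus the quintic remainder $R^{(m)}_{\bar j;l,\bar i,k}$, since $\mathcal P$ applied to $\mathfrak D_t^l\theta$, to $D_t^j\bar Z_t$, etc., is quadratic by \eqref{quasi} and \eqref{2dinterface}. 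So the entire computation is: differentiate each of the four sums defining $C_{1,j}$ (and the two defining $C_{2,j}$), record the principal terms and the $R^{(m)}$-type remainders, and verify that the principal terms telescope onto $I_{1,j}+I_{2,j}$ exactly.

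First I would handle $C_{2,j}$, which is simpler and isolates the key cancellation pattern. Differentiating the first sum of \eqref{c2} via \eqref{24} (with $l=0$, $i=k$, $k\mapsto j-k-1$, $m=0$) gives, for each $k$, a principal term $2\iint\frac{D_t^jZ_t(\mathfrak D_t(\theta\,\mathfrak D_t^k\bar\theta)\mathfrak D_t^{j-k}\theta + \mathfrak D_t(\theta\,\mathfrak D_t^{k+1}\bar\theta)\mathfrak D_t^{j-k-1}\theta)}{(\aa-\bb)^2}$ times $(-1)^k/(4\pi)$, plus $(-1)^kR^{(0)}_{\bar j;0,\bar k,j-k-1}/(4\pi)$. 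Summing over $k$, the alternating signs make the second group of each summand cancel against the first group of the next, leaving only the $k=0$ boundary term $\frac1{2\pi}\iint\frac{D_t^jZ_t\,\mathfrak D_t(\theta\bar\theta)\mathfrak D_t^j\theta}{(\aa-\bb)^2}$ — which is exactly $I_{2,j}$ after taking $\Re$ — together with a leftover $\pm\frac{(-1)^{j-1}}{4\pi}\iint\frac{D_t^jZ_t\,\mathfrak D_t(\theta\,\mathfrak D_t^j\bar\theta)\theta}{(\aa-\bb)^2}$; this last leftover is killed by the principal term coming from differentiating the second line of \eqref{c2} (where $\frac{d}{dt}$ hitting the bare $D_t^jZ_t$ produces $D_t^{j+1}Z_t$ contributions absorbed into the $\mathfrak D_t$-difference structure, and $\frac{d}{dt}$ hitting the $\theta\,\mathfrak D_t^j\bar\theta\,\theta$ block plus the $b$-weight terms from \eqref{dteab} give the two $b$-commutator lines of \eqref{remainder1}). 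The bookkeeping for $C_{1,j}$ is the same telescoping argument run on the double sums: the first sum of \eqref{c1} telescopes in $k$ (for fixed $l$) down to boundary terms at $k=0$ and $k=l$, the $k=0$ boundary terms feed $I_{1,j}$, the $k=l$ boundary terms become the $\mathfrak D_t^{1+l}\theta$ inputs that match the second sum, and the second and third sums of \eqref{c1} are designed so that their principal terms cancel the residual $\mathfrak D_t\bar\theta$-transported pieces — the third sum is the one that moves a $\mathfrak D_t$ off $q=\mathfrak D_t^{j-l-1}\theta$ onto $\bar h$, as flagged after \eqref{27}, and the fourth sum of \eqref{c1} (the one with a bare $D_t^jZ_t$, no $\mathfrak D_t$-difference) supplies the counterterm whose $\frac{d}{dt}$ reproduces the remaining half of $I_{1,j}$ together with more $b$-weight contributions.

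The main obstacle is purely combinatorial: keeping the four nested sums in $C_{1,j}$ aligned so that every principal term either telescopes, matches an $I_{i,j}$ integrand, or cancels against a principal term from another sum, leaving \emph{only} the explicitly quintic objects $R^{(m)}_{\bar j;\cdot,\bar\cdot,\cdot}$ and the three $b$-commutator integrals $\iint(b_\aa+b_\bb-2\frac{b(\aa)-b(\bb)}{\aa-\bb})\frac{(\cdots)}{(\aa-\bb)^2}$. Two points require care. One: when $\frac{d}{dt}$ falls on the bare factor $D_t^jZ_t$ in the $\mathfrak D_t$-difference $(D_t^jZ_t\,\mathfrak D_t - \mathfrak D_t D_t^jZ_t)$, one uses $\mathfrak D_t D_t^jZ_t = D_t^{j+1}Z_t$ on the $\aa$-slot (this is why \eqref{c1}, \eqref{c2} are written with $D_t^{j+1}Z_t$ rather than $\mathfrak D_t(D_t^jZ_t)$), and the $\overline{(\mathcal P D_t^j\bar Z_t)}$ term in \eqref{25} together with \eqref{2dinterface} shows the resulting discrepancy is quintic. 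Two: the $\frac{d}{dt}$ of the $b$ and $\frac{A_1}{|Z_{,\aa}|^2}$ weights implicit in passing $\frac{d}{dt}$ through $\iint$ (formula \eqref{dteab}) produces precisely the last two lines of \eqref{25}, so no term is lost. Once the telescoping identity is written out with indices, \eqref{eq:ic} and the formula \eqref{remainder1} for $R_{IC,j}$ — with its factor-$2$ on the $\sum_{l}\sum_{k\le l}R^{(l-k)}$ block (from $I_{1,j}$'s $2\Re$) and the $R^{(0)}_{\bar 0;\cdots}$ conjugate term (from the third sum of \eqref{c1}, where the roles of $D_t^jZ_t$ and $\theta$ are exchanged so $j$ is replaced by $0$) — follow by inspection. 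I would present the $C_{2,j}$ computation in full as the model case and then state that $C_{1,j}$ is handled identically, displaying only the telescoping pattern and the list of remainder terms.
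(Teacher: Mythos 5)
Your plan coincides with the paper's own (deliberately terse) proof: the author states that \eqref{eq:ic}--\eqref{remainder1} follow from \eqref{24}--\eqref{25}, \eqref{dteab}, and the symmetry \eqref{26}, and the footnote attached to the proposition supplies precisely the Leibniz-rule rewriting you use for the $k$-telescoping inside $C_{1,j}$. Aside from a small coefficient slip in your $k=j-1$ boundary term from the first sum of \eqref{c2} (it should carry $\tfrac{(-1)^{j-1}}{2\pi}$, not $\tfrac{(-1)^{j-1}}{4\pi}$, since the principal term in \eqref{24} has a factor $2$), the structure you describe — identifying the anchor integrals with bare $D_t^jZ_t$, tracking the $R^{(m)}_{\bar j;\cdot,\cdot,\cdot}$ remainders, recognizing that $\mathfrak D_t D_t^jZ_t = D_t^{j+1}Z_t$, and noting that the $\overline{R^{(0)}_{\bar 0;\cdot,\cdot,\cdot}}$ terms come from the third sum of \eqref{c1} where $\theta$ replaces $D_t^jZ_t$ — is exactly what the paper intends.
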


Observe that $\mathfrak D_t D_t^jZ_t=D_t^{j+1}Z_t$. It is easy to prove Proposition~\ref{prop:ic} by \eqref{eq:lemma1}-\eqref{eq:lemma2} or \eqref{24}-\eqref{25},\footnote{When computating $\frac d{dt}C_{1,j}$, use the identity
$$\mathfrak D_t^{l-k}\paren{\mathfrak D_t(\mathfrak D_t^k\theta\,\bar\theta)\mathfrak D_t^{j-l}\theta+\mathfrak D_t(\mathfrak D_t^k\theta\,\mathfrak D_t\bar\theta)\mathfrak D_t^{j-l-1}\theta}=\mathfrak D_t^{l-k+1}\paren{ \mathfrak D_t(\mathfrak D_t^k\theta\,\bar\theta)\mathfrak D_t^{j-l-1}\theta   } -\mathfrak D_t^{l-k}\paren{ \mathfrak D_t(\mathfrak D_t^{k+1}\theta\,\bar\theta)\mathfrak D_t^{j-l-1}\theta   }.
$$
 Also, when using \eqref{eq:lemma1}-\eqref{eq:lemma2} or \eqref{24}-\eqref{25} to compute, apply $\mathfrak D_t$ to the factors in the exact order given in the right hand side of \eqref{eq:lemma2} or \eqref{24} to facilitate cancelations.}
  \eqref{dteab} and the symmetry,\footnote{By interchanging  $\aa$ with $\bb$ and use symmetry we have
 \begin{equation}\label{26}
 \begin{aligned}
&\iint\frac{f(\aa)(g(\aa)-g(\bb))(h(\aa)-h(\bb))(q(\aa)-q(\bb))}{(\aa-\bb)^2}\,d\aa\,d\bb\\&=
\frac12 \iint\frac{ (f(\aa)-f(\bb))(g(\aa)-g(\bb))(h(\aa)-h(\bb))(q(\aa)-q(\bb))  }{(\aa-\bb)^2}\,d\aa\,d\bb.
\end{aligned}
\end{equation}
}
 we omit the details.
 \begin{remark}
 It is clear from \eqref{remainder1} that the remainder $R_{IC,j}$ is quintic. 
 \end{remark}

We now sum up \eqref{ej1}-\eqref{17}-\eqref{21}-\eqref{22} and Proposition~\ref{prop:ic}, 
and present it in the following. Let 
\begin{align}\label{energyj1}
\mathfrak E_j(t)=E_j(t)-\Re\paren{\int i\partial_\aa\Thj\bar{\P_H \Gj}\,d\aa+C_{1,j}(t)+ C_{2,j}(t)}.
\end{align}
\begin{theorem}\label{th:main1} 
We have
\begin{equation}\label{eq:main1}
\frac d{dt} \mathfrak E_j(t)=\mathfrak R_j(t),
\end{equation}
where
\begin{equation}\label{remainder2}
\begin{aligned}
\mathfrak R_j&=\paren{2\Re\sum_{l=0}^{j-1}\int i\, \partial_\aa\bar\Thj(\P_H D_t)^{l+1}\P_H\paren{G^{(j-l)}-D_t\P_H G^{(j-1-l)}}\,d\aa-I_{1,j}}\\&+
\paren{\Re\int i\,\partial_\aa \bar\Thj\P_H\paren{G^{(j+1)}-D_t\P_H G^{(j)}}\,d\aa-I_{2,j}}+ R_{IC,j}
\end{aligned}
\end{equation}
 is quintic. 
\end{theorem}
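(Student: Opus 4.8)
The plan is to assemble Theorem~\ref{th:main1} purely as a bookkeeping consequence of the identities already established, so the proof is almost entirely algebraic. First I would recall the starting point: from \eqref{16} and Proposition~\ref{prop:energyid} applied with $\Th_1=\Thj$, $\Th_2=\Thja$, one has $E_j'(t)$ expressed through $\P_H\Gj$ and $\P_H G^{(j+1)}$, and then \eqref{ej1} rewrites this, via \eqref{eq:Gj}, \eqref{q2}, \eqref{dte} and Proposition~\ref{prop:cif}, as the sum of a total time derivative $\frac d{dt}\Re\int i\,\partial_\aa\Thj\overline{\P_H\Gj}\,d\aa$ plus the two sums over $l$ that, by \eqref{17} and Proposition~\ref{prop:G}, have the explicit cubic-form structure. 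Thus $\frac d{dt}\bigl(E_j(t)-\Re\int i\,\partial_\aa\Thj\overline{\P_H\Gj}\,d\aa\bigr)$ equals the two families of integrals whose quartic parts are precisely $I_{1,j}$ and $I_{2,j}$ as defined in \eqref{21}--\eqref{22}; the discrepancy between those integrals and $I_{1,j}+I_{2,j}$ is quintic because, as noted in the text, $-i/Z_{,\aa}+i=\bar Z_{tt}+\text{\it quadratic}$ and $D_\aa\Theta^{(k)}=D_t^k\bar Z_t+\text{\it quadratic}$, and replacing the genuine factors by their leading terms in the definitions of $I_{1,j},I_{2,j}$ produces only higher-order corrections.

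Next I would invoke Proposition~\ref{prop:ic}, which says exactly that $I_{1,j}+I_{2,j}-\frac d{dt}\Re(C_{1,j}+C_{2,j})=R_{IC,j}$ with $R_{IC,j}$ quintic. Combining this with the previous step and the definition \eqref{energyj1} of $\mathfrak E_j(t)=E_j(t)-\Re\bigl(\int i\,\partial_\aa\Thj\overline{\P_H\Gj}\,d\aa+C_{1,j}+C_{2,j}\bigr)$ gives $\frac d{dt}\mathfrak E_j(t)$ as the sum of: (i) the two ``integral minus $I$'' differences, which collect exactly into the first two parenthesized groups of \eqref{remainder2}; and (ii) $R_{IC,j}$. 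This is \eqref{eq:main1} with $\mathfrak R_j$ as in \eqref{remainder2}. The remaining claim is that $\mathfrak R_j$ is quintic, and this splits into checking the three pieces: $R_{IC,j}$ is quintic by the Remark following Proposition~\ref{prop:ic} (visible from \eqref{remainder1}, where every $R^{(m)}$ is quintic and the explicit $b_\aa+b_\bb-2\frac{b(\aa)-b(\bb)}{\aa-\bb}$ factors are quadratic multiplying quartic integrands); and each of the two ``integral minus $I$'' terms is quintic by the degree-counting just described, keeping track that each substitution of $\bar Z_{tt}$ for $-i/Z_{,\aa}+i$, and of $D_t^k\bar Z_t$ for $D_\aa\Theta^{(k)}$, inside a quartic expression costs one extra degree.

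The step I expect to be the main obstacle — though it is conceptual rather than deep — is the careful identification in step one: verifying that the quartic parts of the explicit cubic-form integrals appearing in \eqref{ej1}--\eqref{17} are \emph{exactly} $I_{1,j}$ and $I_{2,j}$ after the substitutions, and that no quartic terms are lost or double-counted when passing from $\P_H$ applied to products of $\bar Z_t$, $1/\bar Z_{,\aa}$, $1/Z_{,\aa}$ and $D_\aa\Theta^{(j-l-1)}$ to the bilinear-in-$\theta$ integrands against $D_t^jZ_t(\aa,t)$. This requires unwinding the cubic form $<f,g,h>$ into its double-integral kernel $\frac1{\pi i}\int\frac{(f(\aa)-f(\bb))(g(\aa)-g(\bb))(h(\aa)-h(\bb))}{(\aa-\bb)^2}\,d\bb$, recognizing $\theta(\aa,\bb,t)=\bar Z_t(\aa,t)-\bar Z_t(\bb,t)$ and its $\mathfrak D_t^k$-iterates via \eqref{23}, and using the symmetry identity \eqref{26} to move a free factor of $D_t^jZ_t(\aa,t)$ into the symmetric difference form. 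Once the pairing is made transparent, the rest is the routine degree bookkeeping already flagged, and the conclusion that $\mathfrak R_j$ is a finite sum of terms homogeneous of degree $\ge 5$ in $Z_t$, $\frac1{Z_{,\aa}}-1$ and their derivatives follows. I would therefore present the proof as: (1) reduce $E_j'$ via Propositions~\ref{prop:energyid} and \ref{prop:G} to \eqref{ej1}; (2) extract quartic parts and match to $I_{1,j}+I_{2,j}$ up to quintic error; (3) apply Proposition~\ref{prop:ic}; (4) collect into \eqref{remainder2} and do the degree count — omitting the lengthy but mechanical verifications, exactly as the paper's surrounding text already does for its component lemmas.
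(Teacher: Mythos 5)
Your proposal is correct and follows essentially the same approach as the paper, which obtains Theorem~\ref{th:main1} by directly summing \eqref{ej1}, \eqref{17}, \eqref{21}, \eqref{22}, and Proposition~\ref{prop:ic}: the total time derivative in \eqref{ej1} cancels against the corresponding subtracted term in \eqref{energyj1}, adding and subtracting $I_{1,j}+I_{2,j}$ produces the two parenthesized differences of \eqref{remainder2}, and Proposition~\ref{prop:ic} converts $I_{1,j}+I_{2,j}-\frac{d}{dt}\Re(C_{1,j}+C_{2,j})$ into $R_{IC,j}$. Your degree-count justification for why the two ``integral minus $I$'' differences are quintic is the same observation the paper records just before \eqref{21} (namely that $-i/Z_{,\aa}+i=\bar Z_{tt}+\text{quadratic}$ and $D_\aa\Theta^{(k)}=D_t^k\bar Z_t+\text{quadratic}$), so nothing is missing.
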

In particular for $j=0$, equations \eqref{energyj1}-\eqref{remainder2} gives
\begin{equation}\label{28}
\mathfrak E_0(t)=\int\paren{ i\,\partial_\aa(Z-\aa)\bar{(Z-\aa)}+|Z_t|^2} \,d\aa-\frac1{8\pi}\iint\frac{|Z_t(\aa)-Z_t(\bb)|^4}{(\aa-\bb)^2}\,d\aa\,d\bb
\end{equation}
and
\begin{equation}\label{29}
\begin{aligned}
\frac d{dt} \mathfrak E_0(t)&=\frac12\int i\,Z_t\paren{<\bar Z_t, i\,\frac{1-A_1}{\bar Z_{\aa}}, \bar Z_t>+< -i\,\frac{1-A_1}{ Z_{\aa}}, Z_t,\bar Z_t>}\,d\aa\\&-\frac1{8\pi}\iint\paren{b_\aa+b_\bb-2\frac{b(\aa)-b(\bb)}{\aa-\bb}}\frac{|Z_t(\aa)-Z_t(\bb)|^4}{(\aa-\bb)^2}\,d\aa\,d\bb.
\end{aligned}
\end{equation}
In our proof for the long time existence result, Theorem~\ref{thm:main2}, we will only use \eqref{eq:main1}-\eqref{remainder2} for $1\le j\le 4$.
Similar to  \eqref{ej1}-\eqref{17}, equations \eqref{eq:main1}-\eqref{remainder2} can be readily used to derive very low regularity results. However our focus in this paper is on the new long time existence result for solutions to the water wave equation.

\subsection{Scaling} The solutions for the 2d water wave equation \eqref{2dinterface}-\eqref{A1b} obey the following scaling law: 
If $\paren{\bar Z_t, Z}$ is a solution of  \eqref{2dinterface}-\eqref{A1b}, then 
\begin{equation}\label{39}
\paren{\bar Z_t^\lambda, Z^\lambda}:=\paren{\lambda^{-1/2}\bar Z_t(\lambda\aa,\lambda^{1/2} t), \lambda^{-1}Z(\lambda\aa,\lambda^{1/2} t)}
\end{equation}
is also a solution of the equation \eqref{2dinterface}-\eqref{A1b}. The following  norms are scaling invariant for the water wave equation \eqref{2dinterface}-\eqref{A1b}:
$\nm{\frac1{Z_{,\aa}}}_{\dot H^{1/2}(\mathbb R)}$, $\nm{\bar Z_{t,\aa}}_{L^2(\mathbb R)}$, and $\nm{\frac1{Z_{,\aa}}-1}_{L^\infty(\mathbb R)}$.

\section{The  long time existence of steep water waves}\label{main2}

We are now ready to present  our new long time existence result, Theorem~\ref{thm:main2}.  %which allows for, in particular,  arbitrarily steep initial waves, for solutions of the 2d water wave equation \eqref{euler}. 

Let
\begin{equation}\label{283}
L(t)=\nm{\frac1{Z_{,\aa}}(t)}_{\dot H^{1/2}(\mathbb R)}+\nm{\bar Z_{t,\aa}(t)}_{L^2(\mathbb R)}+\nm{\partial_\aa \frac1{Z_{,\aa}}(t)}_{\dot H^{1/2}(\mathbb R)}+\nm{\partial_\aa^2\bar Z_{t}(t)}_{L^2(\mathbb R)}.
\end{equation}

\begin{theorem}\label{thm:main2}
1. Let $J\ge 2$. Assume that  the initial data $\paren{\bar Z_t(0), \frac 1{Z_{,\aa}}(0)-1}\in \cap_{\frac12\le s\le J}\dot H^{s}(\mathbb R)\times \dot H^{s-\frac12}(\mathbb R)$. Then there are constants $m_0>0$, and $0<\varepsilon_0\le 1$, such that for all $0<\varepsilon\le \varepsilon_0$, if the data satisfies 
\begin{equation}\label{37}
L(0)\le \varepsilon,\qquad \nm{\frac1{Z_{,\aa}}(0)-1}_{L^\infty}<1,\qquad\text{and }\quad  E_1(0) E_3(0)\le m_0^2,
\end{equation}
there is a constant $\mathcal T_0>0$, depending only on $m_0$, so that the initial value problem for the water wave equation \eqref{2dinterface}-\eqref{A1b} or equivalently \eqref{euler} has a unique classical solution for the time period $ [0, \frac{\mathcal T_0}{\varepsilon^3}]$. During this time, the solution is as regular as the initial data, $L(t)\lec\varepsilon$, $\nm{\frac1{Z_{,\aa}}(t)-1}_{L^\infty}< 1$    and $E_1(t)E_3(t)\lec m_0^2$.

2. If instead of \eqref{37} the data satisfies 
\begin{equation}\label{38}
\nm{\frac1{Z_{,\aa}}(0)}_{\dot H^{1/2}(\mathbb R)}+\nm{\bar Z_{t,\aa}(0)}_{L^2(\mathbb R)}\le \varepsilon, \qquad \nm{\frac1{Z_{,\aa}}(0)-1}_{L^\infty}< 1,\quad\text{and }\quad E_1(0) E_3(0)\le m_0^2,
\end{equation}
then there is a constant $\mathcal T_1>0$, depending on $m_0$, $\nm{\partial_\aa \frac1{Z_{,\aa}}(0)}_{\dot H^{1/2}}$ and $\nm{\partial_\aa^2\bar Z_{t}(0)}_{L^2}$, 
so that the initial value problem for the water wave equation \eqref{2dinterface}-\eqref{A1b} or equivalently \eqref{euler} has a unique classical solution for the time period $[0, \frac{\mathcal T_1}{\varepsilon^{5/2} }]$. During this time, the solution is as regular as the initial data.

\end{theorem}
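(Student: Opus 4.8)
The plan is to combine an a priori energy estimate — built on the quartic‑cancelled energies $\mathfrak E_j(t)$ of Theorem~\ref{th:main1} — with a continuity argument and the local well‑posedness theory for \eqref{euler}, and then deduce Part~2 from Part~1 by rescaling. First I would set up the master quantity obtained from $L(t)^2$ together with the corrected energies $\mathfrak E_j(t)$ for $1\le j\le 4$ (only this range is used), and establish, as long as $\|\tfrac1{Z_{,\aa}}(t)-1\|_{L^\infty}$ stays below $1$ and $L(t)$ stays bounded: (i) the correction terms $\int i\,\partial_\aa\Thj\,\overline{\P_H\Gj}\,d\aa$, $C_{1,j}$, $C_{2,j}$ in \eqref{energyj1} are quartic and, by \eqref{eq:Gj}–\eqref{eq:G} and the cubic‑form bound, are $\lesssim L(t)E_j(t)$, so $\mathfrak E_j\approx E_j$ up to an error $\lesssim\varepsilon E_j$; (ii) each $E_j$ is comparable to a sum of Sobolev norms of $(\bar Z_t,\tfrac1{Z_{,\aa}}-1)$, via $D_\aa\Theta^{(k)}=D_t^k\bar Z_t+(\text{quadratic})$ and $-i\tfrac1{Z_{,\aa}}+i=\bar Z_{tt}+(\text{quadratic})$ together with the estimates for $b$, $A_1-1$, $\tfrac1{Z_{,\aa}}$ in terms of the unknowns collected in \S\ref{quan}; in particular $E_3,E_4\lesssim P\big(\|\tfrac1{Z_{,\aa}}-1\|_{L^\infty}\big)L^2$, while $E_1$ sits one order below the scaling of $L$; and (iii) the convexity inequalities among the $E_j$, e.g. $E_2\lesssim\sqrt{E_1E_3}$, so that the single bound $E_1E_3\le m_0^2$ together with $L\lesssim\varepsilon$ controls every norm the remainder estimate needs. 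The non‑degeneracy $\|\tfrac1{Z_{,\aa}}-1\|_{L^\infty}<1$ and the chord‑arc lower bound on $|Z_{,\aa}|$ are part of the hypotheses and will be propagated.

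The core of the proof is the estimate of the quintic remainder. Using \eqref{remainder2}, \eqref{remainder1}, \eqref{21}–\eqref{22}, \eqref{25}, I would bound $\mathfrak R_j$ ($1\le j\le4$) in $L^1$ by placing the two highest‑order factors in $L^2$ (controlled by $\sqrt{E_\bullet}$) and treating the remaining factors through $L^\infty$ bounds or the bilinear/trilinear singular‑integral structure, using crucially that in every term of $\mathfrak R_j$ the velocity $\bar Z_t$ enters only through the differences $\theta(\aa,\bb)=\bar Z_t(\aa)-\bar Z_t(\bb)$, the cubic forms $\langle\cdot,\cdot,\cdot\rangle$, the commutator weights $b_\aa+b_\bb-2\tfrac{b(\aa)-b(\bb)}{\aa-\bb}$ and $\partial_\aa\tfrac{A_1(\aa)}{|Z_{,\aa}|^2}+\partial_\bb(\cdots)-2(\cdots)$, and the quadratic factor $A_1-1$; hence the (arbitrarily large) velocity never appears through $\|Z_t\|_{L^\infty}$ but only through scaling‑invariant quantities such as $\|\bar Z_{t,\aa}\|_{L^2}$ and $\|b_\aa\|_{L^2}$, estimated with the inequalities of Appendix~\ref{ineq}. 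The target is $|\mathfrak R_j(t)|\lesssim P\big(\|\tfrac1{Z_{,\aa}}-1\|_{L^\infty}\big)\big(\|\tfrac1{Z_{,\aa}}\|_{\dot H^{1/2}}+\|\bar Z_{t,\aa}\|_{L^2}\big)L(t)^2\big(E_1+\cdots+E_4\big)(t)$ — three powers of the scaling‑invariant size — which under $L\lesssim\varepsilon$ becomes $|\mathfrak R_j|\lesssim\varepsilon^3\cdot(\text{master quantity})$. \textbf{The main obstacle is precisely this step: using the quartic‑integrable structure of \S\ref{main1} to extract three factors of the scaling‑invariant norm (two would give only an $\varepsilon^{-2}$ lifespan) while simultaneously keeping the large velocity and large slope under control.} It is also what makes it possible to impose no smallness on the slope of the interface or the magnitude of the velocity.

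With the remainder estimate in hand, Part~1 follows from a continuity argument: on a maximal interval $[0,T]$ on which $L(t)\le C_1\varepsilon$, $\|\tfrac1{Z_{,\aa}}(t)-1\|_{L^\infty}<1$ and $E_1(t)E_3(t)\le 4m_0^2$, the differential inequalities $\tfrac{d}{dt}\big(L(t)^2+\sum_{1\le j\le4}\mathfrak E_j(t)\big)\lesssim\varepsilon^3(\cdots)$ and $\tfrac{d}{dt}\big(E_1E_3\big)\lesssim\varepsilon^3 E_1E_3$ (the latter because $E_3$ carries the smallness, so $E_1\cdot\tfrac{d}{dt}E_3$ and $\tfrac{d}{dt}E_1\cdot E_3$ are each of size $\varepsilon^3 E_1E_3$) give, by Gronwall, that all three bounds hold with strictly improved constants on $[0,\mathcal T_0\varepsilon^{-3}]$ for a suitable $\mathcal T_0=\mathcal T_0(m_0)$; the non‑degeneracy is propagated from the transport equation $D_t\tfrac1{Z_{,\aa}}=\tfrac1{Z_{,\aa}}\big(b_\aa-D_\aa Z_t\big)$ and the a priori bounds. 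Feeding these uniform bounds into the local well‑posedness theory for \eqref{euler} and its persistence‑of‑regularity statement produces the unique classical solution on $[0,\mathcal T_0\varepsilon^{-3}]$, as regular as the data (persistence of the higher $\dot H^s$, $s\le J$, regularity following from linear‑type energy inequalities with coefficients controlled by the already‑closed low‑order bounds).

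Finally, Part~2 follows from Part~1 by rescaling. The last two terms of $L$ are exactly one spatial derivative above scaling, so under \eqref{39} they are multiplied by $\lambda$, while the scaling‑invariant norms, the product $E_1E_3$ (a direct computation from \eqref{39} and the formula for $b$ shows $E_1$ scales like $\lambda^{-1}$ and $E_3$ like $\lambda$), and $\|\tfrac1{Z_{,\aa}}-1\|_{L^\infty}$ are unchanged. Given data as in \eqref{38}, set $N_0:=\|\partial_\aa\tfrac1{Z_{,\aa}}(0)\|_{\dot H^{1/2}}+\|\partial_\aa^2\bar Z_t(0)\|_{L^2}$; rescaling with $\lambda=\varepsilon/N_0$ (if $N_0\le\varepsilon$ one is already in the situation of Part~1) turns the data into one satisfying the hypotheses of Part~1 with the same $m_0$, so the rescaled solution exists for rescaled time $\gtrsim\varepsilon^{-3}$, hence for original time $\gtrsim\lambda^{1/2}\varepsilon^{-3}\sim N_0^{-1/2}\varepsilon^{-5/2}$; thus $\mathcal T_1\sim\mathcal T_0(m_0)N_0^{-1/2}$, and regularity persists as before.
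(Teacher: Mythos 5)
Your overall skeleton---an a priori estimate via the quartic-cancelled energies, a continuity argument to close the bootstrap on the timescale $\varepsilon^{-3}$, persistence of regularity, and a rescaling of the form \eqref{39} to deduce Part~2 (your choice $\lambda=\varepsilon/N_0$ is a valid variant of the paper's Remark~\ref{3.3})---is the right shape. However, there are three genuine gaps in the part that actually closes the estimate.

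First, you propose to run the continuity argument directly on the $\mathfrak E_j$ of Theorem~\ref{th:main1}, claiming $|\mathfrak R_j(t)|\lesssim \varepsilon^3\cdot(\text{master quantity})$. This cannot be made to work: the remainder $\mathfrak R_j$ in \eqref{remainder2} contains, through $R^{(m)}_{\bar j;l,\bar i,k}$ in \eqref{25}, factors $D_t^{j+1}Z_t$ and $\overline{\mathcal P D_t^j\bar Z_t}\approx D_t^{j+2}\bar Z_t$ that carry one more material derivative than is controlled by $\mathfrak E_j$; there is also a top-order loss coming from the term $\frac{1}{\pi}\Re\iint D_t^j Z_t\,\mathfrak D_t^{j+1}\theta\cdots$ sitting inside $I_{1,j}$. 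The paper must introduce the additional quintic corrections $F_j$, $D_j$, $H_j$ (see \eqref{f}, \eqref{225}, \eqref{63}), replace $\mathfrak E_j$ by $\mathcal E_j$ in \eqref{energyj}, and only then obtain the closable remainder $\mathcal R_j$ of \eqref{33}. Your proposal never constructs these corrections, and the Gronwall step as you state it would not close.

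Second, your mechanism for propagating $\|\tfrac1{Z_{,\aa}}-1\|_{L^\infty}<1$---through the transport identity $D_t\tfrac1{Z_{,\aa}}=\tfrac1{Z_{,\aa}}(b_\aa-D_\aa Z_t)$ together with a priori bounds---fails on the timescale in question: one only has $\|b_\aa-D_\aa Z_t\|_{L^\infty}\lesssim\varepsilon$, so the transport estimate degenerates like $e^{C\varepsilon t}$, which is useless for $t\sim\varepsilon^{-3}$. This is precisely the role of the hypothesis $E_1(0)E_3(0)\le m_0^2$: via the Sobolev inequality of Lemma~\ref{lemma:4.2}, $\|\tfrac1{Z_{,\aa}}-1\|_{L^\infty}^4\lesssim E_1 E_3$ (using the identifications of $E_1$, $E_3$ in Proposition~\ref{prop:4.4}), and the product $\mathfrak E_1\mathcal E_3$ obeys a Gronwall inequality with rate $\varepsilon^3$, see \eqref{51}. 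Your proposed ``convexity inequalities'' such as $E_2\lesssim\sqrt{E_1 E_3}$ are not used and are not the point of the $E_1E_3$ hypothesis.

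Third, your proposed master quantity $L^2+\sum_{j=1}^4\mathfrak E_j$ lumps in $\mathfrak E_1$, which controls the below-scaling quantities $\|\tfrac1{Z_{,\aa}}-1\|_{L^2}$ and $\|Z_t\|_{\dot H^{1/2}}$ and is therefore $O(1)$ (and can be as large as $m_0^2/E_3$), while $L^2\sim\varepsilon^2$. A Gronwall bound on such a master quantity of the form $\frac{d}{dt}(\cdot)\lesssim\varepsilon^3(\cdot)$ would yield only $O(1)$ total growth, which does not recover the bootstrap bound $L(t)\lesssim\varepsilon$. The paper separates the scales: it proves the \emph{absolute} bound $\frac{d}{dt}\sum_{j=2}^4\mathcal E_j\lesssim\varepsilon^5$ in \eqref{47} (no Gronwall needed; note $\sum_{j=2}^4\mathcal E_j\approx L^2\sim\varepsilon^2$), and separately runs Gronwall only on $\mathfrak E_1\mathcal E_3$ with rate $\varepsilon^3$. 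These two estimates are decoupled and each closes its own piece of the bootstrap. This separation is essential and is missing from your proposal.
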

\begin{remark}
Observe that $ E_1(t) E_3(t)$ is scaling invariant. The sole reason for  the assumption on
$ E_1(0) E_3(0)$ is to control the evolution of the norm $\nm{\frac1{Z_{,\aa}}(t)-1}_{L^\infty}$.
We will show  that for $\varepsilon$ small enough, $ E_1(t)$ controls $\nm{\frac1{Z_{,\aa}}(t)-1}_{L^2}^2 +\nm{\Theta^{(2)}(t)}_{\dot H^{1/2}}^2$, and 
$ E_3(t)$ controls $\dfrac14 \nm{D_\aa \frac1{Z_{,\aa}^2}(t)}_{L^2}^2+\nm{\Theta^{(4)}(t)}_{\dot H^{1/2}}^2$, see Proposition~\ref{prop:4.4}; and the quantities $\Theta^{(2)}(t)$, $\Theta^{(4)}(t)$ are mainly related to the velocity $Z_t$, see {\sf Step 1}, {\sf Step 3} in \S\ref{step1-4}. We know by Lemma~\ref{lemma:4.2} that, assuming $\nm{1-\frac1{Z_{,\aa}}}_{L^\infty}\le 1$,    there is a constant $c>0$, such that
\begin{equation}\label{40}
\nm{\frac1{Z_{,\aa}}-1}_{L^\infty(\mathbb R)}^2\le c \nm{\frac1{Z_{,\aa}}-1}_{L^2(\mathbb R)}\nm{D_\aa \frac1{Z_{,\aa}^2}}_{L^2(\mathbb R)},
\end{equation}
so
\begin{equation}\label{41}
\nm{\frac1{Z_{,\aa}}(t)-1}_{L^\infty}^4\le 4c^2 E_1(t) E_3(t).
\end{equation}
 We will show  that the growth of $E_1(t) E_3(t)$ can be controlled for time of order $O(\varepsilon^{-3})$, which in turn gives control of $\nm{\frac1{Z_{,\aa}}(t)-1}_{L^\infty}$ for the same time period.  The constants $m_0>0$ is chosen so that $ \nm{\frac1{Z_{,\aa}}(0)-1}_{L^\infty}<1$.\footnote{Assuming $c$ is the optimal constant so that \eqref{40} holds, we choose $m_0$ such that $m_0^2<\frac{1}{4c^2}$.}  
Since  $ \nm{\frac1{Z_{,\aa}}(0)-1}_{L^\infty}$  can be arbitrarily close to 1, the slope of the (initial) interface can be arbitrary large.  Observe also that no assumption is imposed on the magnitude of $Z_t$. So the magnitude of the (initial) velocity can be arbitrary large.
\end{remark}
\begin{remark}\label{3.3}
Part 2 of Theorem~\ref{thm:main2} is a direct consequence of part 1. Observe that the rescaled data $(\bar Z_t^\varepsilon(0), Z^\varepsilon(0))$ satisfies 
$$\nm{\partial_\aa \frac1{Z^\varepsilon_{,\aa}}(0)}_{\dot H^{1/2}}+\nm{\partial_\aa^2 \bar Z_t^\varepsilon(0)}_{L^2}=\varepsilon \nm{\partial_\aa \frac1{Z_{,\aa}}(0)}_{\dot H^{1/2}}+\varepsilon\nm{\partial_\aa^2 \bar Z_t(0)}_{L^2}.$$
 So by part 1, the assumption \eqref{38} implies that the lifespan of the rescaled solution $(\bar Z_t^\varepsilon, Z^\varepsilon)$ is of order $O(\varepsilon^{-3})$, therefore the solution $(\bar Z_t, Z)$ has a lifespan of order $O(\varepsilon^{-5/2})$. In fact, the same scaling argument shows that for data satisfying \eqref{38} and $(Z_t(0),\frac 1{Z_{,\aa}}(0)-1)\in\cap_{\frac12\le s\le J}\dot H^s\times \dot H^{s-1/2}$, the life span of the solution is at least of order $O(\varepsilon^{-3+\frac1{2J-2}})$, and the solution remains as smooth as the initial data during this time.
\end{remark}

By Remark~\ref{3.3}, it suffices to prove part 1 of Theorem~\ref{thm:main2}.

\begin{remark}
In part 1 of Theorem~\ref{thm:main2}, the smallness assumption is only imposed on the scaling invariant quantity $\nm{\frac1{Z_{,\aa}}(0)}_{\dot H^{1/2}(\mathbb R)}+\nm{\bar Z_{t,\aa}(0)}_{L^2(\mathbb R)}$ and the quantity 
$\nm{\partial_\aa \frac1{Z_{,\aa}}(0)}_{\dot H^{1/2}(\mathbb R)}+\nm{\partial_\aa^2\bar Z_{t}(0)}_{L^2(\mathbb R)}$
which involves one higher order derivative.  In this paper we do not try to lower this higher than scaling derivative to avoid technicality. We note that in all earlier works on long time existence \cite{wu3, wu4,  gms, ad, ip, dipp, hit,  it, wang1,  ai, bfp}, smallness is assumed on norms involving derivatives both above and below the scaling,  so the smallness of the quantities can not be preserved after rescaling. And in all these earlier results, smallness is assumed on the slope of the initial interface. 

\end{remark}

\begin{remark}
In the regime where $\nm{\frac1{Z_{,\aa}}(t)-1}_{L^\infty}< 1$, the interface $Z=Z(\aa,t)$, $\aa\in\mathbb R$ is a graph.
\end{remark}

\section{The proof of Theorem~\ref{thm:main2}}\label{proof}

\subsection{Some additional quintic correcting functionals}

 To prove Theorem~\ref{thm:main2}, we need some additional quintic correcting functionals to the energies $\mathfrak E_j(t)$, for $j\ge 2$. The goal is to have no loss of derivatives when controlling the higher order energy growth,  to not involve $\nm{\frac 1 {Z_{,\aa}}-1}_{L^\infty(\mathbb R)}$ and any norms of  order  lower than scaling
 when bounding the cubic growth rates for the lower order energies,  and to have proper control on the growth of $E_1(t)E_3(t)$.

Let $j\ge 2$. Observe that the factors $ D_t^{j+1}Z_t$ and $\bar{\mathcal P D_t^j\bar Z_t}$ in $R^{(m)}_{\bar j, l, \bar i, k}$ of \eqref{25} have more derivatives than are controlled by $\mathfrak E_j(t)$;
we perform an "integration by parts" in the time variable, moving one $D_t$  from the factors $ D_t^{j+1}Z_t$ and $\bar{\mathcal P D_t^j\bar Z_t}$  
onto  others,  resulting in the following quintic correcting functional.

 Let 
\begin{equation}\label{f}
\begin{aligned}
F^{(m)}_{\bar j;l,\bar i, k}(t)&=
\iint \frac{\bar{\mathcal P D_t^{j-1}\bar Z_t} \mathfrak D_t^{m}(\mathfrak D_t^l\theta\,\mathfrak D_t^i \bar\theta\,\mathfrak D_t^{k}\theta)}{(\aa-\bb)^2}\,d\aa d\bb
\\&+
\iint\paren{b_\aa+b_\bb-2\frac{b(\aa)-b(\bb)}{\aa-\bb}}\frac{D_t^jZ_t\,\mathfrak D_t^{m}(\mathfrak D_t^l\theta\,\mathfrak D_t^i\bar\theta\,\mathfrak D_t^{k}\theta)}{(\aa-\bb)^2}\,d\aa d\bb
\end{aligned}
\end{equation}
Computing using \eqref{dteab} and \eqref{eq:c7} yields
\begin{equation}\label{31}
\begin{aligned}
&\frac d{dt} F^{(m)}_{\bar j;l,\bar i, k}(t)=
\iint \frac{\paren{\bar{\mathcal P D_t^j\bar Z_t+\bracket{D_t,\mathcal P}D_t^{j-1}\bar Z_t}} \mathfrak D_t^{m}(\mathfrak D_t^l\theta\,\mathfrak D_t^i \bar\theta\,\mathfrak D_t^{k}\theta)}{(\aa-\bb)^2}\,d\aa d\bb
\\&+
\iint \frac{ \bar{\mathcal P D_t^{j-1}\bar Z_t} \mathfrak D_t^{m+1}(\mathfrak D_t^l\theta\,\mathfrak D_t^i \bar\theta\,\mathfrak D_t^{k}\theta)}{(\aa-\bb)^2}\,d\aa d\bb
\\&+
\iint \paren{b_\aa+b_\bb-2\frac{b(\aa)-b(\bb)}{\aa-\bb}}\frac{ \bar{\mathcal P D_t^{j-1}\bar Z_t}   \mathfrak D_t^{m}(\mathfrak D_t^l\theta\,\mathfrak D_t^i \bar\theta\,\mathfrak D_t^{k}\theta)}{(\aa-\bb)^2}\,d\aa d\bb
\\&+
\iint\paren{b_\aa+b_\bb-2\frac{b(\aa)-b(\bb)}{\aa-\bb}}\frac{D_t^{j+1}Z_t\,\mathfrak D_t^{m}(\mathfrak D_t^l\theta\,\mathfrak D_t^i\bar\theta\,\mathfrak D_t^{k}\theta)}{(\aa-\bb)^2}\,d\aa d\bb\\&+
\iint\paren{b_\aa+b_\bb-2\frac{b(\aa)-b(\bb)}{\aa-\bb}}\frac{D_t^{j}Z_t\,\mathfrak D_t^{m+1}(\mathfrak D_t^l\theta\,\mathfrak D_t^i\bar\theta\,\mathfrak D_t^{k}\theta)}{(\aa-\bb)^2}\,d\aa d\bb\\&+
\iint\paren{\partial_\aa D_t b+\partial_\bb D_tb-2\frac{D_tb(\aa)-D_t b(\bb)}{\aa-\bb}}\frac{D_t^{j}Z_t\,\mathfrak D_t^{m}(\mathfrak D_t^l\theta\,\mathfrak D_t^i\bar\theta\,\mathfrak D_t^{k}\theta)}{(\aa-\bb)^2}\,d\aa d\bb\\&+
\iint
\paren{6\frac{(b(\aa)-b(\bb))^2}{(\aa-\bb)^2}-4(b_\aa+b_\bb)\frac {(b(\aa)-b(\bb))}{(\aa-\bb)} +2b_\aa b_\bb}\frac{D_t^{j}Z_t\,\mathfrak D_t^{m}(\mathfrak D_t^l\theta\,\mathfrak D_t^i\bar\theta\,\mathfrak D_t^{k}\theta)}{(\aa-\bb)^2}\,d\aa d\bb;
\end{aligned}
\end{equation}
in the sum $R^{(m)}_{\bar j; l, \bar i, k}(t)+ \frac d{dt} F^{(m)}_{\bar j;l,\bar i, k}(t)$ those terms containing the factors $D_t^{j+1}Z_t$ and $\mathcal P D_t^j \bar Z_t$ are all cancelled out. 

We also need the following correcting term for $\frac1{4\pi}\bar{R^{(0)}_{\bar 0; 1,\bar 1,1}}$ in $R_{IC, 2}$, cf. \eqref{remainder1}-\eqref{25}. Define
\begin{equation}\label{225}
D_2(t)=\frac1{4\pi}\iint\paren{\mathbb H b_\aa }
\frac{\bar\theta\,\mathfrak D_t\theta\mathfrak D_t\bar\theta\mathfrak D_t\theta}{(\aa-\bb)^2}\,d\aa\,d\bb,\qquad D_3(t)=D_4(t)=0;
\end{equation}
we have by \eqref{dteab}, \eqref{eq:c7},
\begin{equation}\label{226}
\begin{aligned}
\frac d{dt} D_2(t)&= \frac1{4\pi}\iint \partial_\aa D_t \mathbb H b 
\frac{\bar\theta\,\mathfrak D_t\theta\mathfrak D_t\bar\theta\mathfrak D_t\theta}{(\aa-\bb)^2}\,d\aa\,d\bb+\frac1{4\pi}\iint  \mathbb H b_\aa 
\frac{\mathfrak D_t\paren{\bar\theta\,\mathfrak D_t\theta\mathfrak D_t\bar\theta\mathfrak D_t\theta}}{(\aa-\bb)^2}\,d\aa\,d\bb\\&+
\frac1{4\pi}\iint \paren{b_\bb-2\frac{b(\aa)-b(\bb)}{\aa-\bb}} \mathbb H b_\aa
\frac{\bar\theta\,\mathfrak D_t\theta\mathfrak D_t\bar\theta\mathfrak D_t\theta}{(\aa-\bb)^2}\,d\aa\,d\bb.
\end{aligned}
\end{equation}

Now define
\begin{equation}\label{62}
\lambda^{j}:=D_\aa\Theta^{(j)}(\aa,t)-D_\bb \Theta^{(j)}(\bb, t).
\end{equation}
 We next introduce 
 \begin{equation}\label{63}
H_j(t):=\frac1{4\pi}\Re\paren{\iint  \frac{\bar{\lambda^j}\, \lambda^j \,\theta\,\bar\theta+\bar{\lambda^j}\,\bar{\lambda^j  }\,\theta\,\theta}{(\aa-\bb)^2}\,d\aa\,d\bb- \iint  \frac{\mathfrak D_t^j \bar\theta\,\mathfrak D_t^j \theta\, \theta\,\bar\theta+\mathfrak D_t^j \bar\theta\,\mathfrak D_t^j \bar{\theta}\,\theta\,\theta}{(\aa-\bb)^2}\,d\aa\,d\bb}
\end{equation}
 to deal with the derivative loss in the term 
$$2\Re\sum_{l=0}^{j-1}\int i\, \partial_\aa\bar\Thj(\P_H D_t)^{l+1}\P_H\paren{G^{(j-l)}-D_t\P_H G^{(j-1-l)}}\,d\aa-I_{1,j},
$$
and to ensure the control of $E_1(t)E_3(t)$. We compute by \eqref{dteab} and using the symmetry \eqref{26} to get
\begin{equation}\label{64} 
\begin{aligned}
\frac d{dt} H_j&=\frac1{\pi}\Re\paren{ \iint  \bar{D_\aa\Theta^{(j)} }\frac{\mathfrak D_t \lambda^j \,\theta\,\bar\theta+\mathfrak D_t\bar{\lambda^j  }\,\theta\,\theta}{(\aa-\bb)^2}\,d\aa\,d\bb- \iint D_t^j Z_t \frac{\mathfrak D_t^{j+1} \theta\, \theta\,\bar\theta+\mathfrak D_t^{j+1} \bar{\theta}\,\theta\,\theta}{(\aa-\bb)^2}\,d\aa\,d\bb}\\&+R_{H, j}
\end{aligned}
\end{equation}
where
\begin{equation}\label{rhj}
\begin{aligned}
R_{H,j}&=\frac1{2\pi}\Re\iint\frac{\paren{|\lambda^j|^2-|\mathfrak D_t^j\theta|^2}\,\theta\,\mathfrak D_t\bar\theta+\paren{(\bar{\lambda^j})^2-(\mathfrak D_t^j\bar\theta)^2}\,\theta\,\mathfrak D_t\theta}{(\aa-\bb)^2}\,d\aa\,d\bb\\&+
\frac1{4\pi}\Re\iint\paren{b_\aa+b_\bb-2\frac{b(\aa)-b(\bb)}{\aa-\bb}}\frac{\paren{|\lambda^j|^2-|\mathfrak D_t^j\theta|^2}\,|\theta|^2+\paren{(\bar{\lambda^j})^2-(\mathfrak D_t^j\bar\theta)^2}\,\theta^2}{(\aa-\bb)^2}\,d\aa\,d\bb.
\end{aligned}
\end{equation}
Observe that in the sum $ I_{1,j}+\frac d{dt}H_j$, the term 
$$\frac1{\pi}\Re \iint D_t^j Z_t \frac{\mathfrak D_t^{j+1} \theta\, \theta\,\bar\theta+\mathfrak D_t^{j+1} \bar{\theta}\,\theta\,\theta}{(\aa-\bb)^2}\,d\aa\,d\bb$$
in $ I_{1,j}$ is replaced with 
$$\frac1{\pi}\Re \iint  \bar{D_\aa\Theta^{(j)} }\frac{\mathfrak D_t \lambda^j \,\theta\,\bar\theta+\mathfrak D_t\bar{\lambda^j  }\,\theta\,\theta}{(\aa-\bb)^2}\,d\aa\,d\bb,$$
which matches better with the corresponding term in $2\Re \int i\, \partial_\aa\bar\Thj(\P_H D_t)^{j}\P_H(G^{(1)})\,d\aa$, and the harmless terms in $R_{H,j}$.

Let
\begin{equation}\label{dj}
F_j(t):=\frac1{2\pi}\sum_{l=0}^{j-1}\sum_{k=0}^l F^{(l-k)}_{\bar j; k, \bar 0, j-l-1}+
\frac1{4\pi}\sum_{l=0}^{j-2}\sum_{k=0}^{j-l-2} (-1)^k F^{(0)}_{\bar j; 1+l,\bar k, j-l-2-k}+
\frac1{4\pi}\sum_{k=0}^{j-1}(-1)^k F^{(0)}_{\bar j; 0,\bar k, j-k-1},
\end{equation}
\begin{equation}\label{32}
\mathcal R^{(m)}_{\bar j; l, \bar i, k}(t):=R^{(m)}_{\bar j; l, \bar i, k}(t)+ \frac d{dt} F^{(m)}_{\bar j;l,\bar i, k}(t);
\end{equation}
and define
\begin{equation}\label{energyj}
\begin{aligned}
\mathcal E_j(t):&=\mathfrak E_j(t)-\Re F_j(t)+\Re D_j(t)-H_j(t)\\&=E_j(t)-\Re\paren{\int i\partial_\aa\Thj\bar{\P_H \Gj}\,d\aa+C_{1,j}(t)+ C_{2,j}(t)+F_j(t)-D_j(t)+H_j(t)}.
\end{aligned}
\end{equation}
By Theorem~\ref{th:main1} and \eqref{32}, we have
\begin{equation}\label{34}
\frac d{dt} \mathcal E_j(t)=\mathcal R_j(t)
\end{equation}
where
\begin{equation}\label{33}
\begin{aligned}
\mathcal R_j(t)&=\paren{2\Re\sum_{l=0}^{j-1}\int i\, \partial_\aa\bar\Thj(\P_H D_t)^{l+1}\P_H\paren{G^{(j-l)}-D_t\P_H G^{(j-1-l)}}\,d\aa-I_{1,j}-\frac d{dt}H_j}\\&+
\paren{\Re\int i\,\partial_\aa \bar\Thj\P_H\paren{G^{(j+1)}-D_t\P_H G^{(j)}}\,d\aa-I_{2,j}}+ \mathcal R_{IC,j}+\Re\frac d{dt} D_j(t),
\end{aligned}
\end{equation}
and 
\begin{equation}\label{remainder3}
\mathcal R_{IC,j}:=R_{IC,j}-\frac d{dt} \Re F_j(t)
\end{equation}
 is as given in \eqref{remainder1}, except that the quantities $R^{(l-k)}_{\bar j; k, \bar 0, j-l-1}$, $R^{(0)}_{\bar j; 1+l, \bar k, j-l-2-k}$ and $R^{(0)}_{\bar j; 0, \bar k, j-k-1}$ are replaced by $\mathcal R^{(l-k)}_{\bar j; k, \bar 0, j-l-1}$, $\mathcal R^{(0)}_{\bar j; 1+l, \bar k, j-l-2-k}$ and $\mathcal R^{(0)}_{\bar j; 0, \bar k, j-k-1}$ respectively.

\subsection{The main components in the proof of Theorem~\ref{thm:main2}}We are now ready to prove Theorem~\ref{thm:main2}. We begin with the following lemmas.

\begin{lemma}\label{lemma:4.1}
Let $0<\delta<1$. Assume that 
\begin{equation}\label{59}
\nm{\frac1{Z_{,\aa}}-1}_{L^\infty(\mathbb R)}\le 1-\delta.
\end{equation}
Then for any holomorphic function $f$, i.e. $\P_H f=f$, we have 
\begin{equation}\label{58}
\delta\nm{f}_{\dot H^{1/2}(\mathbb R)}\le \nm{\P_H\paren{\frac f{\bar Z_{,\aa}}}}_{\dot H^{1/2}(\mathbb R)}.
\end{equation}

\end{lemma}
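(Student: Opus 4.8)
The plan is to prove the lower bound \eqref{58} by a duality/commutator argument that isolates the obstruction to $f\mapsto \P_H(f/\bar Z_{,\aa})$ being essentially invertible on $\dot H^{1/2}$. Since $f$ is holomorphic and $1/\bar Z_{,\aa}$ is antiholomorphic, write $\frac1{\bar Z_{,\aa}} = 1 + w$ where $w := \frac1{\bar Z_{,\aa}} - 1$ is antiholomorphic with $\nm{w}_{L^\infty}\le 1-\delta$. Then $\P_H(f/\bar Z_{,\aa}) = f + \P_H(wf)$, and the task is to show $\delta\nm{f}_{\dot H^{1/2}}\le \nm{f + \P_H(wf)}_{\dot H^{1/2}}$. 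The first step is to pass to the $L^2$ setting via $|\partial_\aa|^{1/2}$: letting $g := |\partial_\aa|^{1/2} f$ (still essentially holomorphic, i.e.\ supported on nonnegative frequencies up to the usual $\P_H$ bookkeeping), one reduces to an inequality of the form $\delta\nm{g}_{L^2}\le \nm{g + \text{(lower order)}}_{L^2}$ where the lower order piece is $|\partial_\aa|^{1/2}\P_H(wf)$; the point is that $|\partial_\aa|^{1/2}\P_H(w\,|\partial_\aa|^{-1/2}g)$ differs from $\P_H(wg)$ only by a commutator $[|\partial_\aa|^{1/2}, w]|\partial_\aa|^{-1/2}$ applied appropriately, and such commutators are bounded on $L^2$ with a constant controlled by a norm of $w_\aa$, hence can be absorbed. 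Actually the cleaner route: test $f + \P_H(wf)$ against a suitable dual element.

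Here is the duality version I would actually carry out. To bound $\nm{f}_{\dot H^{1/2}}$ from below, pair $|\partial_\aa|^{1/2}f$ with $|\partial_\aa|^{1/2}\bar f$ and insert the identity cleverly. Write $\nm{f}_{\dot H^{1/2}}^2 = \Re\int |\partial_\aa|^{1/2}f\cdot \overline{|\partial_\aa|^{1/2} f}\,d\aa$. Since $f$ is holomorphic, $|\partial_\aa|f = -i\,\partial_\aa f$ on its frequency support, so $\nm{f}_{\dot H^{1/2}}^2 = \Re\int (-i\partial_\aa f)\bar f\,d\aa = \Re\int i\,\partial_\aa \bar f\, f\,d\aa$ — but more useful is to relate it to $\Re\int \frac1{\bar Z_{,\aa}} (-i\partial_\aa f)\bar f\, d\aa$ and absorb the difference. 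The key identity is that $\P_H(\frac{f}{\bar Z_{,\aa}})$ and $\frac{f}{\bar Z_{,\aa}}$ have the same pairing against holomorphic test functions (by the Cauchy integral/$\P_H$ self-adjointness against $\dot H^{\pm 1/2}$ holomorphic functions), so
\[
\langle |\partial_\aa|^{1/2}\P_H(\tfrac{f}{\bar Z_{,\aa}}),\ |\partial_\aa|^{1/2} f\rangle = \langle |\partial_\aa|^{1/2}(\tfrac{f}{\bar Z_{,\aa}}),\ |\partial_\aa|^{1/2} f\rangle + (\text{commutator error}).
\]
Then expand $\frac{f}{\bar Z_{,\aa}} = f + wf$ and observe $\Re\langle |\partial_\aa|^{1/2}(wf), |\partial_\aa|^{1/2} f\rangle$: I would integrate by parts to move half a derivative, using that $w$ is antiholomorphic and $f$ holomorphic so that $w f$ has controlled frequency structure, reducing $\Re\langle |\partial_\aa|^{1/2}(wf),|\partial_\aa|^{1/2}f\rangle$ to $\frac12\int (\text{Re }w)\,\big||\partial_\aa|^{1/2}f\big|^2$ plus commutator terms, and then bound $|\int (\Re w)||\partial_\aa|^{1/2}f|^2| \le \nm{w}_{L^\infty}\nm{f}_{\dot H^{1/2}}^2 \le (1-\delta)\nm{f}_{\dot H^{1/2}}^2$. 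Combining, $\nm{f}_{\dot H^{1/2}}^2 - (1-\delta)\nm{f}_{\dot H^{1/2}}^2 = \delta\nm{f}_{\dot H^{1/2}}^2 \le \Re\langle |\partial_\aa|^{1/2}\P_H(\tfrac{f}{\bar Z_{,\aa}}), |\partial_\aa|^{1/2}f\rangle + (\text{errors}) \le \nm{\P_H(\tfrac{f}{\bar Z_{,\aa}})}_{\dot H^{1/2}}\nm{f}_{\dot H^{1/2}} + (\text{errors})$, giving \eqref{58} after dividing.

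The main obstacle is handling the commutator errors cleanly: the naive computation generates terms like $[|\partial_\aa|^{1/2}, w]$, and controlling these requires a norm of $\partial_\aa w = \partial_\aa\frac1{\bar Z_{,\aa}}$, which is \emph{not} allowed to appear in an estimate that must hold purely under the $L^\infty$ hypothesis \eqref{59}. So the real work is to show that all such commutator contributions in fact cancel exactly, using the holomorphic/antiholomorphic splitting — i.e.\ that the relevant bilinear form is \emph{exactly} diagonalized by the projections and the only surviving term is the harmless $\int(\Re w)||\partial_\aa|^{1/2}f|^2$. This is where one should exploit that $|\partial_\aa|^{1/2}$ acts as $\pm(\mp i\partial_\aa)^{1/2}$ on the two half-spaces and that $\P_H(wf)$, $\P_A(\bar w \bar f)$ can be computed via the Cauchy integral formula, so products of a holomorphic and an antiholomorphic factor paired with a holomorphic test function collapse by contour shifting. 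I would set up the argument entirely on the Fourier side (or via the Cauchy projections and the triple-commutator/product identities already in the paper's Appendix), verify the exact cancellation of the frequency-mismatched pieces, and only then invoke $\nm{w}_{L^\infty}\le 1-\delta$ for the one genuinely nontrivial term. A secondary, minor obstacle is the precise meaning of $\P_H$-self-adjointness in the homogeneous spaces $\dot H^{\pm 1/2}$, which I would justify by the decay/smoothness assumption on $f$ and a density argument.
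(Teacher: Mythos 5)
Your decomposition $\frac1{\bar Z_{,\aa}}=1+w$ with $w$ antiholomorphic and $\nm{w}_{L^\infty}\le 1-\delta$ is exactly the right starting point, and it is also the paper's starting point. However, your proposed mechanism — pairing $|\partial_\aa|^{1/2}\P_H(f/\bar Z_{,\aa})$ against $|\partial_\aa|^{1/2}f$ and hoping that the commutator contributions "cancel exactly" — has a genuine gap, and in fact cannot work as stated. Using $|\partial_\aa|f=i\partial_\aa f$ for holomorphic $f$ and integrating by parts, one computes $\Re\langle|\partial_\aa|^{1/2}(wf),|\partial_\aa|^{1/2}f\rangle = -\tfrac12\int(\Im w_\aa)|f|^2\,d\aa - \int(\Re w)\,\Im(\partial_\aa f\,\bar f)\,d\aa$. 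The second term is indeed controlled by $\nm{w}_{L^\infty}\nm{f}_{\dot H^{1/2}}^2$, because $-\Im(\partial_\aa f\,\bar f)\ge 0$ pointwise for holomorphic $f$ and integrates to $\nm{f}_{\dot H^{1/2}}^2$ — but the first term genuinely contains $w_\aa$ and does not cancel; the holomorphic/antiholomorphic frequency supports of $w_\aa$, $f$, $\bar f$ are $[0,\infty)$, $(-\infty,0]$, $[0,\infty)$, and their sum can be zero, so $\int w_\aa|f|^2\,d\aa$ is not killed by contour shifting. (You can also sanity-check your provisional identity with $w$ constant: the left side is $(\Re w)\nm{f}_{\dot H^{1/2}}^2$, the right side is $\tfrac12(\Re w)\nm{f}_{\dot H^{1/2}}^2$, and for constant $w$ every commutator vanishes, so the discrepancy cannot be blamed on "commutator terms".)

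The paper avoids commutators altogether. It writes $f=\P_H f=\P_H\bigl(\frac{f}{\bar Z_{,\aa}}\bigr)+\P_H\bigl\{f\bigl(1-\tfrac1{\bar Z_{,\aa}}\bigr)\bigr\}$ and then, instead of a duality pairing, computes $\nm{\P_H\{f(1-\frac1{\bar Z_{,\aa}})\}}_{\dot H^{1/2}}^2$ \emph{exactly} as a double integral using the Cauchy/Plemelj identity of Lemma~\ref{hhalf2} (equation \eqref{halfholo} in Appendix~\ref{ineq}). The crucial feature of that identity is that in the resulting double integral the antiholomorphic factor $1-\frac1{\bar Z_{,\aa}}$ appears only pointwise, \emph{undifferentiated}, so the bound $\nm{1-\frac1{Z_{,\aa}}}_{L^\infty}\le 1-\delta$ can be inserted directly, giving $\nm{\P_H\{f(1-\frac1{\bar Z_{,\aa}})\}}_{\dot H^{1/2}}\le(1-\delta)\nm{f}_{\dot H^{1/2}}$. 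The reverse triangle inequality then gives $\nm{f}_{\dot H^{1/2}}\le\nm{\P_H(\frac{f}{\bar Z_{,\aa}})}_{\dot H^{1/2}}+(1-\delta)\nm{f}_{\dot H^{1/2}}$, which is \eqref{58}. So the fix is to replace your pairing step by the exact $\dot H^{1/2}$-norm identity from the Appendix; once you have that, the argument is a three-line triangle inequality.
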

\begin{proof}
We know 
$$f=\P_H f=\P_H\paren{\frac f{\bar Z_{,\aa}}}+\P_H \braces{f\paren{1-\frac1{\bar Z_{,\aa}}}}$$
By \eqref{halfholo},
$$\nm{\P_H \braces{f\paren{1-\frac1{\bar Z_{,\aa}}}}}_{\dot H^{1/2}}^2=\frac1{2\pi}\iint\frac{|f(\aa)-f(\bb)|^2}{(\aa-\bb)^2}\abs{1-\frac1{Z_{,\aa}}}^2\,d\aa\,d\bb\le(1- \delta)^2\nm{f}_{\dot H^{1/2}}^2.
$$
This gives \eqref{58}.
\end{proof}

\begin{lemma}[Sobolev]\label{lemma:4.2} Assume that $\nm{1-\frac1{Z_{,\aa}}}_{L^\infty}\le 1$, and $\frac1{Z_{,\aa}}$ is sufficiently smooth.
Then
\begin{equation}\label{53}
\nm{\frac1{Z_{,\aa}}-1}_{L^\infty(\mathbb R)}^2\le 18 \nm{\frac1{Z_{,\aa}}-1}_{L^2(\mathbb R)}\nm{D_\aa \frac1{Z_{,\aa}^2}}_{L^2(\mathbb R)}.
\end{equation}

\end{lemma}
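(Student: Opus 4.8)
The inequality \eqref{53} is a one-dimensional Gagliardo--Nirenberg/Sobolev estimate of the familiar type $\nm{g}_{L^\infty}^2\lesssim \nm{g}_{L^2}\nm{g'}_{L^2}$, applied to $g:=\frac1{Z_{,\aa}}-1$, but with the derivative expressed in the more convenient form $D_\aa\frac1{Z_{,\aa}^2}$ (note $\partial_\aa\frac1{Z_{,\aa}}=-\tfrac12 Z_{,\aa}\,\partial_\aa\frac1{Z_{,\aa}^2}=-\tfrac12 \frac1{Z_{,\aa}}\,D_\aa\frac1{Z_{,\aa}^2}$, using $D_\aa=\frac1{Z_{,\aa}}\partial_\aa$). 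The plan is: first establish the scalar calculus fact, then relate $\partial_\aa g$ to $D_\aa\frac1{Z_{,\aa}^2}$ and absorb the resulting factor $\abs{\frac1{Z_{,\aa}}}$ using the hypothesis $\nm{1-\frac1{Z_{,\aa}}}_{L^\infty}\le 1$, which gives the explicit constant $18$.

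\textbf{Step 1 (scalar estimate).} For $g\in \dot H^1(\mathbb R)$ decaying at infinity, $g(\aa)^2=2\int_{-\infty}^{\aa} g\,g'\,d\beta$, hence $\abs{g(\aa)}^2\le 2\int_{\mathbb R}\abs{g}\abs{g'}\le 2\nm{g}_{L^2}\nm{g'}_{L^2}$ by Cauchy--Schwarz; thus $\nm{g}_{L^\infty}^2\le 2\nm{g}_{L^2}\nm{g'}_{L^2}$. Apply this with $g=\frac1{Z_{,\aa}}-1$, which is sufficiently smooth and decaying by assumption.

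\textbf{Step 2 (change the derivative).} We have $\partial_\aa\paren{\frac1{Z_{,\aa}}-1}=\partial_\aa\frac1{Z_{,\aa}}$, and since $\partial_\aa\frac1{Z_{,\aa}^2}=\frac{-2Z_{,\aa\aa}}{Z_{,\aa}^3}$ while $\partial_\aa\frac1{Z_{,\aa}}=\frac{-Z_{,\aa\aa}}{Z_{,\aa}^2}$, we get $\partial_\aa\frac1{Z_{,\aa}}=\tfrac12 Z_{,\aa}\,\partial_\aa\frac1{Z_{,\aa}^2}=\tfrac12\,\frac1{Z_{,\aa}}\cdot Z_{,\aa}^2\,\partial_\aa\frac1{Z_{,\aa}^2}=\tfrac12\,\frac1{Z_{,\aa}}\,D_\aa\frac1{Z_{,\aa}^2}$ after recalling $D_\aa=\frac1{Z_{,\aa}}\partial_\aa$ (so $D_\aa\frac1{Z_{,\aa}^2}=\frac1{Z_{,\aa}}\partial_\aa\frac1{Z_{,\aa}^2}$). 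Therefore
\[
\nm{\partial_\aa\frac1{Z_{,\aa}}}_{L^2}\le \tfrac12\nm{\frac1{Z_{,\aa}}}_{L^\infty}\nm{D_\aa\frac1{Z_{,\aa}^2}}_{L^2}\le \tfrac12\paren{1+\nm{\tfrac1{Z_{,\aa}}-1}_{L^\infty}}\nm{D_\aa\frac1{Z_{,\aa}^2}}_{L^2}\le \nm{D_\aa\frac1{Z_{,\aa}^2}}_{L^2},
\]
where in the last step we used $\nm{1-\frac1{Z_{,\aa}}}_{L^\infty}\le 1$ to bound $\nm{\frac1{Z_{,\aa}}}_{L^\infty}\le 2$.

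\textbf{Step 3 (combine).} Inserting the bound of Step 2 into Step 1 gives $\nm{\frac1{Z_{,\aa}}-1}_{L^\infty}^2\le 2\nm{\frac1{Z_{,\aa}}-1}_{L^2}\nm{D_\aa\frac1{Z_{,\aa}^2}}_{L^2}$, which is even stronger than \eqref{53}; the constant $18$ in the statement leaves ample room (and would also accommodate the cruder estimate $\nm{\partial_\aa\frac1{Z_{,\aa}}}_{L^2}\le 3\nm{D_\aa\frac1{Z_{,\aa}^2}}_{L^2}$ that would arise from a less careful algebraic identity). I do not anticipate a genuine obstacle here; the only point requiring a little care is the algebra in Step 2 relating $\partial_\aa\frac1{Z_{,\aa}}$ to $D_\aa\frac1{Z_{,\aa}^2}$ together with tracking where the uniform bound on $\frac1{Z_{,\aa}}$ is used to make all constants explicit.
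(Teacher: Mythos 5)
Your Step 2 contains a fatal algebraic error, and correcting it exposes a genuine obstruction that your approach cannot overcome. Recall from \eqref{da-daa} that $D_\aa=\frac1{Z_{,\aa}}\partial_\aa$, so
\[
D_\aa\frac1{Z_{,\aa}^2}=\frac1{Z_{,\aa}}\,\partial_\aa\frac1{Z_{,\aa}^2}=\frac{-2Z_{,\aa\aa}}{Z_{,\aa}^4},
\]
whereas
\[
\partial_\aa\frac1{Z_{,\aa}}=\frac{-Z_{,\aa\aa}}{Z_{,\aa}^2}=\frac12\,Z_{,\aa}^2\,D_\aa\frac1{Z_{,\aa}^2}.
\]
Your chain of equalities replaces $Z_{,\aa}^2 D_\aa$ by $\frac1{Z_{,\aa}}D_\aa$, which is not what the definition gives: the correct conversion factor is $Z_{,\aa}^2$, not $\frac1{Z_{,\aa}}$. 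This matters because the hypothesis $\nm{1-\frac1{Z_{,\aa}}}_{L^\infty}\le 1$ only gives $\abs{\frac1{Z_{,\aa}}}\le 2$, i.e.\ $\abs{Z_{,\aa}}\ge\frac12$; it provides \emph{no upper bound} on $\abs{Z_{,\aa}}$, since $\frac1{Z_{,\aa}}$ is permitted to approach $0$. So the inequality $\nm{\partial_\aa\frac1{Z_{,\aa}}}_{L^2}\lesssim\nm{D_\aa\frac1{Z_{,\aa}^2}}_{L^2}$ that your Step 2 relies on simply does not follow from the hypotheses, and the naive Gagliardo--Nirenberg route is blocked.

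The paper sidesteps this by choosing a smarter antiderivative. Rather than applying the Fundamental Theorem of Calculus to $(f-1)^2$ with $f=\frac1{Z_{,\aa}}$, it applies it to $\frac14(f^4-1)-\frac13(f^3-1)$, whose derivative is $(f^3-f^2)\partial_\aa f=f^2(f-1)\partial_\aa f$. The key point is that $f^2\partial_\aa f=\frac1{Z_{,\aa}^2}\partial_\aa\frac1{Z_{,\aa}}=\frac12 D_\aa\frac1{Z_{,\aa}^2}$ \emph{exactly}, with no stray $Z_{,\aa}$ factors, so Cauchy--Schwarz gives an upper bound in terms of $\nm{f-1}_{L^2}\nm{D_\aa\frac1{Z_{,\aa}^2}}_{L^2}$. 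The lower bound on the left comes from the algebraic identity $\frac14(f^4-1)-\frac13(f^3-1)=(f-1)^2\big(\frac14f^2+\frac16f+\frac1{12}\big)$ together with the elementary fact that $\abs{1-f}\le 1$ forces $\Re\big(\frac14f^2+\frac16f+\frac1{12}\big)\ge\frac1{36}$; this is where the $L^\infty$ hypothesis is actually used, and where the constant $18$ originates. To repair your argument you would need to reproduce some version of this maneuver: you cannot trade $\partial_\aa\frac1{Z_{,\aa}}$ for $D_\aa\frac1{Z_{,\aa}^2}$ at the level of $L^2$ norms, so the factor of $\frac1{Z_{,\aa}^2}$ must be built into the quantity you integrate by parts.
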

\begin{proof}
\eqref{53} is essentially a Sobolev inequality. By Fundamental Theorem of Calculus and  Cauchy-Schwarz inequality, we know for any smooth complex valued function $f$ tending to 1 at infinity, 
$$\abs{\frac14\paren{f^4-1}-\frac13\paren{f^3-1}}\le \int \abs{\paren{f^3-f^2}\partial_\aa f}\,d\aa\le \nm{f-1}_{L^2}\nm{f^2\partial_\aa f}_{L^2}.$$
On the other hand,
$$\frac14\paren{f^4-1}-\frac13\paren{f^3-1}=\paren{f-1}^2\paren{\frac14 f^2+\frac16 f+\frac1{12}}.$$
It is easy to check that if $\abs{1-f}\le 1$, then $\Re \paren{\frac14 f^2+\frac16 f+\frac1{12}}\ge \frac 1{36}$, 
which implies 
$$\abs{\frac14\paren{f^4-1}-\frac13\paren{f^3-1}}\ge \frac1{36} \abs{f-1}^2.$$
Replacing $f$ by $\frac1{Z_{,\aa}}$ yields \eqref{53}.
\end{proof}

Part 1 of Theorem~\ref{thm:main2} follows from the following  Propositions.

\begin{proposition}\label{prop:4.3}
Let $0<\delta<1$, and $T_0>0$. Assume that 
\begin{equation}\label{42}
\sup_{t\in [0, T_0]}\nm{\frac1{Z_{,\aa}}(t)-1}_{L^\infty}\le 1-\delta,\qquad \sup_{t\in[0, T_0]} L(t)\le 2\epsilon.
\end{equation}
There is a constant $0<\epsilon_0(\delta)\le 1$, such that for all $0<\epsilon\le\epsilon_0(\delta)$, we have 1.
\begin{equation}\label{46}
c_2(\delta)L(t)^2\le \sum_{j=2}^4\mathcal E_j(t)\le c_1 L(t)^2, \qquad\forall\  t\in [0, T_0]
\end{equation}
where $c_1$ and  $c_2(\delta)$ are some positive constants. 2.
\begin{equation}\label{47}
\sum_{j=2}^4 \mathcal E_j(t)\le \sum_{j=2}^4 \mathcal E_j(0)+\epsilon^5 t, \qquad\forall \ t\in [0, T_0]
\end{equation}

\end{proposition}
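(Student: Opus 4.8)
\textbf{Proof proposal for Proposition~\ref{prop:4.3}.}

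The plan is to establish the two assertions essentially independently: part 1 is a coercivity/equivalence statement for the corrected energies, and part 2 is a differential inequality obtained by integrating \eqref{34} and estimating $\mathcal R_j$. Throughout I would work under the bootstrap hypotheses \eqref{42}, which give uniform control of $\nm{1/Z_{,\aa}-1}_{L^\infty}$ and of the scaling norm plus one derivative above scaling, and I would freely use the summarized quantitative estimates promised for Appendix~\ref{quantities} (the bounds on $b$, $D_t b$, $A_1-1$, $Z_{tt}$, the commutators $[D_t,\mathcal P]$, and the chord-arc/Hilbert-transform inequalities in Appendix~\ref{ineq}).

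\emph{Part 1 (energy equivalence).} First I would expand $\mathcal E_j(t)$ via \eqref{energyj}: the leading quadratic part is $E_j(t)-\Re\int i\,\partial_\aa\Thj\,\overline{\P_H\Gj}\,d\aa$, and all the remaining terms $C_{1,j}, C_{2,j}, F_j, D_j, H_j$ are at least cubic (in fact quartic, except the $H_j$ and $F_j$ pieces which are quartic by inspection of \eqref{c1}-\eqref{c2}, \eqref{f}-\eqref{dj}, \eqref{63}). Using \eqref{halfholo} to rewrite $\Re\int i\,\partial_\aa\Thj\,\overline{\Thj}\,d\aa=\nm{\Thj}_{\dot H^{1/2}}^2$ for holomorphic $\Thj$, the quadratic part of $E_j+E_{j+1}$-type combinations is manifestly $\sim \nm{\Thja}_{\dot H^{1/2}}^2 + \nm{\Thj}_{\dot H^{1/2}}^2$ up to the cross term $\int i\,\partial_\aa\Thj\,\overline{\Thjb}$, which one absorbs by Cauchy–Schwarz and the structure of the sum $\sum_{j=2}^4$. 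Then I would invoke Proposition~\ref{prop:4.4} (as flagged in the Remark after Theorem~\ref{thm:main2}) to identify $\sum_{j=2}^4$ of these quadratic parts with $\sim \nm{1/Z_{,\aa}-1}_{L^2}^2+\nm{\Theta^{(2)}}_{\dot H^{1/2}}^2 + \tfrac14\nm{D_\aa(1/Z_{,\aa}^2)}_{L^2}^2+\nm{\Theta^{(4)}}_{\dot H^{1/2}}^2 + \nm{\Theta^{(3)}}_{\dot H^{1/2}}^2$ plus lower-order-in-$j$ terms, and via Lemma~\ref{lemma:4.1} (the $\delta$-dependent lower bound) together with the relations $\Theta^{(k)}\approx D_t^k\bar Z_t+\text{quadratic}$ and $-i/Z_{,\aa}+i\approx\bar Z_{tt}+\text{quadratic}$, convert this into $\sim L(t)^2$; the term $\nm{\P_H(\,\cdot\,/\bar Z_{,\aa})}_{\dot H^{1/2}}$ is exactly where the factor $\delta$ enters, producing $c_2(\delta)$. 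Finally, all the genuinely cubic-or-higher correcting terms are bounded by $\epsilon\cdot L(t)^2$ using the multilinear estimates of Appendix~\ref{ineq} and \eqref{42}, hence are absorbable once $\epsilon\le\epsilon_0(\delta)$ is small; this yields \eqref{46}.

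\emph{Part 2 (the differential inequality).} From \eqref{34}, $\sum_{j=2}^4\mathcal E_j(t)=\sum_{j=2}^4\mathcal E_j(0)+\int_0^t\sum_{j=2}^4\mathcal R_j(s)\,ds$, so it suffices to prove the pointwise bound $|\mathcal R_j(t)|\lesssim \epsilon^5$ for $2\le j\le 4$ under \eqref{42}. By \eqref{33} and \eqref{remainder3}, $\mathcal R_j$ is a finite sum of: (a) the ``difference'' terms $2\Re\sum_l\int i\,\partial_\aa\bar\Thj(\P_HD_t)^{l+1}\P_H(\cdots)\,d\aa - I_{1,j}-\tfrac{d}{dt}H_j$ and its $I_{2,j}$ analogue, which by construction (the matching described after \eqref{rhj}, using \eqref{23}, \eqref{qa}, \eqref{2dinterface} to replace $-i/Z_{,\aa}+i$ and $D_\aa\Theta^{(k)}$ by $\bar Z_{tt}$, $D_t^k\bar Z_t$ up to quadratic errors) reduce to genuinely quintic multilinear expressions in $\{Z_{t,\aa}, D_t^kZ_{t,\aa}, 1/Z_{,\aa}-1, \ldots\}$; and (b) the ``remainder'' terms $\mathcal R_{IC,j}$ built from $\mathcal R^{(m)}_{\bar j;l,\bar i,k}=R^{(m)}_{\bar j;l,\bar i,k}+\tfrac{d}{dt}F^{(m)}_{\bar j;l,\bar i,k}$, where the whole point of \eqref{f}-\eqref{31} is that the top-order factors $D_t^{j+1}Z_t$ and $\overline{\mathcal P D_t^j\bar Z_t}$ cancel, leaving expressions in which the commutator $[D_t,\mathcal P]$, the quantities $b_\aa\pm b_\bb-2\tfrac{b(\aa)-b(\bb)}{\aa-\bb}$, $D_tb$, and $\partial_\aa(A_1/|Z_{,\aa}|^2)$-type factors supply the ``extra'' smallness so that no more than $J$ derivatives and no norm below scaling appear. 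For each such term I would apply the relevant inequality from Appendix~\ref{ineq} (Hardy/chord-arc estimates for the $\iint(\cdot)/(\aa-\bb)^2$ forms, $\dot H^{1/2}$–$L^\infty$–$L^2$ product estimates, and the estimate \eqref{quasi}-type bounds $\mathcal P\mathfrak D_t^k\theta=$ quadratic) to distribute derivatives so that each of the five factors carries either a scaling-invariant norm or a once-above-scaling norm, and none carries more; each factor is then $\le C\epsilon$ or $\le C L(t)\le 2C\epsilon$ by \eqref{42}, giving the product $\lesssim\epsilon^5$. Summing over the finitely many terms and over $j=2,3,4$ and integrating in $t$ yields \eqref{47}.

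\emph{Main obstacle.} The delicate point is part 2, specifically the bookkeeping that \emph{no} term in $\mathcal R_j$ loses a derivative (stays within $\cap_{1/2\le s\le J}\dot H^s$) and \emph{no} term forces a norm of order below scaling or the $L^\infty$ norm $\nm{1/Z_{,\aa}-1}_{L^\infty}$ — the latter being essential since we only control that $L^\infty$ norm through the separate quantity $E_1E_3$ and cannot afford it in the quintic estimate for the top energies. This requires carefully tracking, term by term, which of the five slots in each multilinear form absorbs the surplus derivative coming from $D_t^{j+1}Z_t$-cancellation and from the commutator $[D_t,\mathcal P]$, and checking that the ``divided difference'' coefficients (e.g. $b_\aa+b_\bb-2\tfrac{b(\aa)-b(\bb)}{\aa-\bb}$) are controlled in the right scaling-invariant norm via the chord-arc inequalities; once the correct pairing of factors to norms is fixed, the individual estimates are routine applications of Appendix~\ref{ineq}.
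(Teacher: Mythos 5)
Part 2 of your plan is essentially the paper's Steps 5--8 and is sound: integrate \eqref{34} and bound each $\mathcal R_j$ by $\epsilon^5$ under \eqref{42}, using the cancellation of the top-order factors $D_t^{j+1}Z_t$ and $\overline{\mathcal P D_t^j\bar Z_t}$ built into $\mathcal R^{(m)}_{\bar j;l,\bar i,k}=R^{(m)}_{\bar j;l,\bar i,k}+\frac d{dt}F^{(m)}_{\bar j;l,\bar i,k}$, and being careful never to invoke a norm below scaling or $\nm{\frac1{Z_{,\aa}}-1}_{L^\infty}$ except in terms of order at least six.

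Part 1, however, has a genuine error. You identify the quadratic part of $\sum_{j=2}^4\mathcal E_j$ with $\nm{\frac1{Z_{,\aa}}-1}_{L^2}^2+\nm{\Theta^{(2)}}_{\dot H^{1/2}}^2+\frac14\nm{D_\aa\frac1{Z_{,\aa}^2}}_{L^2}^2+\nm{\Theta^{(4)}}_{\dot H^{1/2}}^2+\nm{\Theta^{(3)}}_{\dot H^{1/2}}^2$ by appealing to Proposition~\ref{prop:4.4}. But \eqref{48}--\eqref{49} describe what $E_1$ and $E_3$ control, not $\mathcal E_2,\mathcal E_3,\mathcal E_4$, and the distinction is material: both $\nm{\frac1{Z_{,\aa}}-1}_{L^2}$ and $\nm{\Theta^{(2)}}_{\dot H^{1/2}}\sim\nm{Z_t}_{\dot H^{1/2}}$ are \emph{below} scaling, are not part of $L(t)$, and are not controlled by \eqref{42}. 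If $\sum_{j=2}^4\mathcal E_j$ really dominated $\nm{\frac1{Z_{,\aa}}-1}_{L^2}^2$, the upper bound in \eqref{46}, $\sum_{j=2}^4\mathcal E_j\le c_1 L(t)^2$, would be false. What the paper actually proves (Steps 2--4, starting from the identity \eqref{69}, which rewrites the cross term $-\Re\int i\partial_\aa\Theta^{(j)}\overline{\Theta^{(j+2)}}\,d\aa$ as $\int|D_\aa\Theta^{(j)}|^2\,d\aa+\Re\int i\partial_\aa\overline{\Theta^{(j)}}\,\P_H(G^{(j)})\,d\aa$ via the recursion \eqref{68} --- an algebraic step, not a Cauchy--Schwarz absorption) is $\mathcal E_2\sim\nm{1-\frac1{Z_{,\aa}}}_{\dot H^{1/2}}^2+\nm{Z_{t,\aa}}_{L^2}^2$, $\mathcal E_3\sim\nm{\partial_\aa\frac1{Z_{,\aa}}}_{L^2}^2+\nm{Z_{t,\aa}}_{\dot H^{1/2}}^2$, and $\mathcal E_4\sim\nm{\partial_\aa\frac1{Z_{,\aa}}}_{\dot H^{1/2}}^2+\nm{Z_{t,\aa\aa}}_{L^2}^2$; summing gives exactly $L(t)^2$. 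Finally, invoking Proposition~\ref{prop:4.4} inside the proof of Proposition~\ref{prop:4.3} is logically fragile, since the paper establishes both coercivity statements simultaneously by direct estimation of the $\Theta^{(j)}$ and $G^{(j)}$, not by deriving one from the other.
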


\begin{proposition}\label{prop:4.4}
Assume that the assumption of Proposition~\ref{prop:4.3} holds. There is a constant $0<\epsilon_0(\delta)\le 1$, such that for all $0<\epsilon\le \epsilon_0(\delta)$,  we have
1. 
\begin{align}\label{48}
(1+\delta)^{-1}E_1(t)\le 
\nm{\frac1{Z_{,\aa}}(t)-1}_{L^2}^2 +\nm{\Theta^{(2)}(t)}_{\dot H^{1/2}}^2\le (1+\delta) E_1(t), \qquad\forall\  t\in[0, T_0];\\
\label{49}
(1+\delta)^{-1}E_3(t)\le   
\frac14 \nm{D_\aa \frac1{Z_{,\aa}^2}(t)}_{L^2}^2+\nm{\Theta^{(4)}(t)}_{\dot H^{1/2}}^2  \le (1+\delta) E_3(t), \qquad\forall \ t\in[0, T_0];\\ \label{50}
(1+\delta)^{-1}E_1(t)E_3(t)\le \mathfrak E_1(t)\mathcal E_3(t)\le (1+\delta)E_1(t)E_3(t),\qquad\forall \ t\in[0, T_0].
\end{align}
2. There is a constant $c_3(\delta^{-1})>0$, such that
\begin{equation}\label{51}
\mathfrak E_1(t)\mathcal E_3(t)\le \mathfrak E_1(0)\mathcal E_3(0)e^{c_3(\delta^{-1})\epsilon^3t},\qquad\forall \ t\in[0, T_0].
\end{equation}

\end{proposition}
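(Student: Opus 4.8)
The goal is to establish Proposition~\ref{prop:4.4}, the key energy estimates underlying the long-time existence. There are two parts: part~1 says the modified energies $\mathcal E_j(t)$ and products $\mathfrak E_1(t)\mathcal E_3(t)$ are equivalent to the natural scaling-invariant quantities (with constants close to $1$ when $\epsilon$ is small), and part~2 is the Gr\"onwall-type bound giving only cubic growth for $\mathfrak E_1(t)\mathcal E_3(t)$.

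\textbf{Part 1: equivalence of energies.} I would start from the definition \eqref{energyj} of $\mathcal E_j(t)$, i.e. $E_j(t)$ minus the correction terms $\Re\int i\partial_\aa\Thj\overline{\P_H\Gj}\,d\aa$, $C_{1,j}$, $C_{2,j}$, $F_j$, $D_j$, $H_j$. The leading part is $E_j(t)$ from \eqref{16}, which for holomorphic $\Thja$ equals $\nm{\Thja}_{\dot H^{1/2}}^2 + \Re(\cdots)$ after using \eqref{halfholo}-type identities; the point is that $E_j(t)$ controls $\nm{\Theta^{(j+1)}}_{\dot H^{1/2}}^2$ plus a term involving $\partial_\aa\Thj$ paired with $\Thjb$. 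Using \eqref{q1}-\eqref{q2} and the definitions \eqref{Thj}, one identifies $\Theta^{(2)}$ with (quadratic corrections to) $\bar Z_t$ and $i((1/Z_{,\aa})-1)$, so $E_1$ controls $\nm{(1/Z_{,\aa})-1}_{L^2}^2+\nm{\Theta^{(2)}}_{\dot H^{1/2}}^2$; similarly $\Theta^{(4)}$ relates to $D_\aa(1/Z_{,\aa}^2)$ and higher material derivatives of $Z_t$, giving \eqref{49}. Each correction term in \eqref{energyj} must then be shown to be $O(\epsilon)$ times the quadratic energy — this is where the inequalities of Appendix~\ref{ineq}, the cubic-form bounds for $\langle\cdot,\cdot,\cdot\rangle$, and the hypothesis \eqref{42} (which gives $L(t)\le 2\epsilon$ and $\nm{(1/Z_{,\aa})-1}_{L^\infty}\le 1-\delta$, hence Lemma~\ref{lemma:4.1} is available) enter. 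For \eqref{50} one simply multiplies \eqref{48} and \eqref{49} and absorbs the $(1+\delta)$ factors, noting $\mathfrak E_1=\mathcal E_1$ modulo a term not needing the $H_1$, $F_1$ corrections (since for $j=1$ the sums defining $F_j$, $H_j$, $D_j$ degenerate).

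\textbf{Part 2: the Gr\"onwall bound.} Here I would use \eqref{34}: $\frac{d}{dt}\mathcal E_j(t)=\mathcal R_j(t)$, with $\mathcal R_j$ given by \eqref{33}. The design of all the correcting functionals — $C_{i,j}$, $F_j$, $D_j$, $H_j$ — was precisely to arrange that $\mathcal R_j(t)$ is quintic \emph{and} loses no derivatives \emph{and} involves only norms at or above the scaling (no $\nm{(1/Z_{,\aa})-1}_{L^\infty}$, no sub-scaling norms). So the task is to bound $|\mathcal R_j(t)|$, for $j=1$ and $j=3$ (more precisely $j=2,3,4$ as in Prop~\ref{prop:4.3}, but for \eqref{51} what matters is $\mathfrak E_1$ and $\mathcal E_3$), by $C(\delta^{-1})\,L(t)^3\cdot(\text{energy of the appropriate order})$. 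Then $\frac{d}{dt}(\mathfrak E_1\mathcal E_3)=\mathcal R_1\mathcal E_3+\mathfrak E_1\mathcal R_3\lesssim \epsilon^3(\mathfrak E_1\mathcal E_3)$ using $L(t)\le 2\epsilon$ and part~1, and Gr\"onwall gives \eqref{51}. The estimate on $\mathcal R_1\mathcal E_3+\mathfrak E_1\mathcal R_3$ must be structured so that the three ``extra'' derivatives in a quintic term are distributed as $L(t)^3$ on three low-order factors while the remaining two factors carry the top-order derivatives and are bounded by $\sqrt{\mathfrak E_1}\sqrt{\mathcal E_3}$ — this is the role of the symmetrization \eqref{26} and the careful placement of $\mathfrak D_t$'s noted in the footnotes after Proposition~\ref{prop:ic}.

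\textbf{Main obstacle.} The hard part is Part~2: controlling $\mathcal R_j(t)$ without loss of derivatives. The remainder $\mathcal R_j$ contains the terms $\mathcal R^{(m)}_{\bar j;l,\bar i,k}$ from \eqref{32}, which after the ``integration by parts in time'' via $F^{(m)}_{\bar j;l,\bar i,k}$ no longer contain the dangerous factors $D_t^{j+1}Z_t$ and $\overline{\mathcal P D_t^j\bar Z_t}$ — but one must verify that the replacement terms in \eqref{31} (involving $[D_t,\mathcal P]D_t^{j-1}\bar Z_t$, $D_t b$, quadratics in $b$, etc.) are genuinely controllable by $L(t)^3$ times a product of two order-$j$-type energies, using the structure equations for $b$, $A_1$ in Appendix~\ref{iden} and the commutator identities. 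Equally delicate is showing the term $2\Re\sum_l\int i\partial_\aa\bar\Thj(\P_HD_t)^{l+1}\P_H(G^{(j-l)}-D_t\P_HG^{(j-1-l)})\,d\aa - I_{1,j}-\frac{d}{dt}H_j$ is quintic and derivative-balanced: the whole reason for introducing $\lambda^j$ and $H_j$ in \eqref{62}-\eqref{64} is that this combination replaces a badly-matched top-order term with one that pairs correctly against $\P_H G^{(1)}$, and verifying that matching is the crux. I would handle this by expanding everything in the cubic-form notation $\langle\cdot,\cdot,\cdot\rangle$, applying the $\dot H^{1/2}\times\dot H^{1/2}\times$(stuff) multilinear estimates from Appendix~\ref{ineq}, and bookkeeping the total derivative count term-by-term; the estimates are summarized, as the paper says, in Appendix~\ref{quantities}.
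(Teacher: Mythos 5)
Your plan matches the paper's approach: Part~1 is proved exactly as you describe, by writing out $E_j$ via \eqref{69}, identifying $\Theta^{(2)}$ and $\Theta^{(4)}$ with the scaling-invariant quantities through \eqref{66}--\eqref{68}, bounding the cubic term $\int i\partial_\aa\bar\Theta^{(j)}\P_H G^{(j)}$ and the correction functionals by $O(\epsilon^2)$ times the quadratic energy via Lemma~\ref{lemma:4.1} and the Appendix~\ref{ineq} estimates, and multiplying \eqref{48} by \eqref{49} to get \eqref{50}; Part~2 proceeds by establishing $\frac{d}{dt}\mathfrak E_1\lec\epsilon^3\mathfrak E_1$ (via \eqref{180}), $\frac{d}{dt}\mathcal E_3\lec\epsilon^3\mathcal E_3$ (via \eqref{191} and \eqref{110}), the Leibniz rule on $\mathfrak E_1\mathcal E_3$, and Gr\"onwall.

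One small correction to your reasoning. You say the paper uses $\mathfrak E_1$ rather than a $\mathcal E_1$ ``since for $j=1$ the sums defining $F_j$, $H_j$, $D_j$ degenerate.'' They do not: plugging $j=1$ into \eqref{f}--\eqref{dj} and \eqref{63} gives nonzero $F_1$ and $H_1$. The actual reason the extra corrections are only built for $j\ge 2$ is that the potentially dangerous factors $D_t^{j+1}Z_t$ and $\overline{\mathcal P D_t^j\bar Z_t}$ appearing in $R^{(m)}_{\bar j;l,\bar i,k}$ have, for $j=1$, $L^2$ norms ($\nm{D_t^2 Z_t}_{L^2}\lec\epsilon$, $\nm{\mathcal P\bar Z_{tt}}_{L^2}\lec\epsilon^2$ from \eqref{78}, \eqref{122}) that are already $O(\epsilon)$ without invoking higher-order energy control, so the quintic bound $|\mathfrak R_1|\lec\epsilon^3\bigl(\nm{Z_t}_{\dot H^{1/2}}+\nm{1-\tfrac1{Z_{,\aa}}}_{L^2}\bigr)^2$ follows directly (see \eqref{184}, \eqref{188}, \eqref{189}); by contrast, for $j\ge 2$ the analogous factors would require controlling norms one derivative above what $\mathcal E_j$ provides, which is precisely what the $F_j$, $H_j$, $D_j$ corrections eliminate. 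This distinction matters when you go to fill in details, but it does not alter the structure of your plan.
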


Let $J\ge 2$ and assume that  the initial data satisfies $\paren{\bar Z_t(0), \frac 1{Z_{,\aa}}(0)-1}\in \cap_{\frac12\le s\le J}\dot H^{s}(\mathbb R)\times \dot H^{s-\frac12}(\mathbb R)$. By the local existence result in \cite{agr2}, we know there is a unique classical solution for the Cauchy problem of the water wave equation \eqref{2dinterface}-\eqref{A1b} for some positive time period  $[0, T]$, and the solution exists and is as regular as the initial data for as long as $L(t)$ and $\nm{Z_{,\aa}(t)}_{L^\infty}$ remain finite.

Now assume that the initial data satisfies \eqref{37}. 
Let $\delta>0$ be fixed, such that $\nm{1-\frac1{Z_{,\aa}}(0)}_{L^\infty}\le 1-2\delta$,  
 $\epsilon:= \max\braces{1, \sqrt{\frac{c_1}{c_2(\delta)}}}\varepsilon$, so $L(0)\le \min\braces{1, \sqrt{\frac{c_2(\delta)}{c_1}}}\epsilon$. 
Let $T_0>0$ be the maximum time  such  that  the solution satisfies 
\begin{equation}\label{60}
\sup_{t\in [0, T_0]}\nm{\frac1{Z_{,\aa}}(t)-1}_{L^\infty}\le 1-\delta,\qquad \sup_{t\in[0, T_0]} L(t)\le 2\epsilon.
\end{equation}
By Proposition ~\ref{prop:4.4},   there is a $\epsilon_0=\epsilon_0(\delta)>0$, such that for all $\epsilon\le \epsilon_0$, 
\begin{equation}\label{54}
E_1(t)E_3(t)\le (1+\delta)^2E_1(0)E_3(0)e^{c_3(\delta^{-1})\epsilon^3t}, \qquad\forall\ \, 0\le t\le T_0
\end{equation}
so 
\begin{equation}\label{55}
E_1(t)E_3(t)\le (1+\delta)^2 m_0^2 (1+\delta),\qquad\forall \ \, 0\le t\le T_1:=\min\{T_0, \frac{\delta}{2c_3(\delta^{-1})\epsilon^3}\}.
\end{equation}
By Lemma~\ref{lemma:4.2} and \eqref{48}, \eqref{49},  there is a constant $c_0$ such that 
\begin{equation}\label{56}
\nm{\frac1{Z_{,\aa}}(t)-1}_{L^\infty(\mathbb R)}^4\le c_0  (1+\delta)^2 E_1(t) E_3(t), \qquad\forall\ \, t\in[0, T_0].
\end{equation}
Let
$m_0^2=\frac{(1-2\delta)^4}{c_0(1+\delta)}$. Then we have
\begin{equation}\label{57} 
\nm{\frac1{Z_{,\aa}}(t)-1}_{L^\infty(\mathbb R)}^4\le c_0 (1+\delta)^5m_0^2 =(1+\delta)^4(1-2\delta)^4=(1-\delta-2\delta^2)^4,\qquad \forall\ \, t\in [0, T_1].
\end{equation}
By Proposition~\ref{prop:4.3}, there is $\epsilon_0=\epsilon_0(\delta)$, such that for all $\epsilon\le\epsilon_0$,
\begin{equation}
L(t)^2\le \frac1{c_2(\delta)}\paren{\sum_{j=2}^4 \mathcal E_j(0)+ \epsilon^5 t}\le \epsilon^2+\frac{1}{c_2(\delta)} \epsilon^5 t\le 2\epsilon^2,\quad\text{for all }t\in [0, T_2],
\end{equation}
 where $T_2=\min\{T_0, \frac{c_2(\delta)}{\epsilon^3}\}$. By the maximally of $T_0$ we must have 
 $\min\{T_1, T_2\}< T_0$, so there is $\mathcal T_0=\mathcal T_0(\delta)>0$, such that $T_0\ge \frac{\mathcal T_0}{\epsilon^3}$. 
 This proves part 1 of Theorem~\ref{thm:main2}. 
 
In the remainder of this paper we prove Propositions~\ref{prop:4.3} and~\ref{prop:4.4}.\footnote{We will be brief on straightforward but tedious details  for the sake of a concise presentation. We suggest the reader get familiar with the basic tools in Appendices~\ref{iden},~\ref{ineq} before continuing.
}
 We assume that  \eqref{42} holds, namely for some $0<\delta<1$,  $0<\epsilon\le 1$ and $T_0>0$,  
 \begin{equation}\label{65}
\sup_{t\in [0, T_0]}\nm{\frac1{Z_{,\aa}}(t)-1}_{L^\infty}\le 1-\delta,\qquad \sup_{t\in[0, T_0]} L(t)\le 2\epsilon.
\end{equation}
We  assume  $t\in [0, T_0]$ in what follows.

 \subsection{The proof of Propositions~\ref{prop:4.3} and ~\ref{prop:4.4}}

From definition \eqref{Thj},  \eqref{Gj}, and equations \eqref{qa}, \eqref{q1} we know 
\begin{align}\label{66}
\Theta^{(0)}=Q, \quad \Theta^{(1)}=i(Z-\aa),\quad  \Theta^{(2)}=-i\P_H b=-i\P_H\frac{\bar Z_t}{\bar Z_{,\aa}},\\
\label{67}
D_\aa\Theta^{(0)}=\bar Z_t,\qquad D_\aa\Theta^{(1)}=i\paren{1-\frac1{Z_{,\aa}}};\qquad\text{and}
\\ \label{68}
\Theta^{(j+2)}=-i\P_H(\frac1{| Z_{,\aa}|^2}\partial_\aa\Theta^{(j)})+\P_H(G^{(j)}), \qquad \text{for }j\ge 1.
\end{align}
Therefore
\begin{equation}\label{69}
\begin{aligned}
E_j(t)&=\Re \paren{\int i\,\partial_\aa\Thja \overline{\Thja}\,d\aa-\int i\,\partial_\aa\Thj \overline{\Thjb}\,d\aa}
\\&=\Re \paren{\int i\,\partial_\aa\Thja \overline{\Thja}\,d\aa+\int |D_\aa\Thj|^2\,d\aa+\int  i\,\partial_\aa\bar\Thj \P_H(G^{(j)})\,d\aa}.
\end{aligned}
\end{equation}

\subsubsection{Quantities controlled by $L(t)$, $\nm{1-\frac1{Z_{,\aa}}}_{L^2}+\nm{Z_t}_{\dot H^{1/2}}$, and $\|Z_{t,\aa}\|_{\dot H^{1/2}}+\nm{\partial_\aa\frac1{Z_{,\aa}}}_{L^2}$}\label{quan}
We begin with deriving some basic estimates for  the quantities involved in the proof of Propositions~\ref{prop:4.3}, ~\ref{prop:4.4}. First note that by assumption \eqref{65}, 
\begin{equation}\label{75}
\sup_{t\in [0, T_0]}\nm{\frac1{Z_{,\aa}}(t)}_{L^\infty}\le 2, \qquad \sup_{t\in [0, T_0]}\nm{{Z_{,\aa}}(t)}_{L^\infty}\le \delta^{-1};
\end{equation}
and by interpolation and Sobolev embedding \eqref{eq:sobolev}, 
\begin{equation}\label{76}
\nm{Z_{t,\aa}}_{L^\infty}+\|Z_{t,\aa}\|_{\dot H^{1/2}}+\nm{\partial_\aa\frac1{Z_{,\aa}}}_{L^2}\lec L(t)\le\epsilon.
\end{equation}
We estimate, by \eqref{2dinterface}, 
$$\bar Z_{tt}=-i\frac{A_1-1}{Z_{,\aa}}+i\paren{1-\frac1{Z_{,\aa}}},$$
\eqref{A1b} and \eqref{eq:b10}, that
\begin{align}\label{185}
\nm{A_1-1}_{L^2}&\lec \nm{Z_t}_{\dot H^{1/2}}\|Z_{t,\aa}\|_{L^2}\lec \epsilon\nm{Z_t}_{\dot H^{1/2}},\\
\label{71}
\nm{Z_{tt}}_{L^2}&\lec \nm{1-\frac1{Z_{,\aa}}}_{L^2}+\nm{A_1-1}_{L^2} \lec \nm{1-\frac1{Z_{,\aa}}}_{L^2}+\epsilon\nm{Z_t}_{\dot H^{1/2}};
\end{align}
 by \eqref{eq:b13}, \eqref{hhalf42}, \eqref{A1b} that
\begin{equation}\label{73}
A_1\ge 1, \quad \nm{A_1-1}_{L^\infty}+\nm{A_1}_{\dot H^{1/2}}\lec \|Z_{t,\aa}\|_{L^2}^2\lec \epsilon^2;
\end{equation}
by \eqref{hhalf-1}, \eqref{2dinterface} and \eqref{73},
\begin{equation}\label{74}
\nm{Z_{tt}}_{\dot H^{1/2}}\lec \|Z_{t,\aa}\|_{L^2}^2+\nm{\frac1{Z_{,\aa}}}_{\dot H^{1/2}}\lec \epsilon, \quad \nm{Z_{tt}}_{L^\infty}\lec 1;
\end{equation}
by \eqref{dta1}, \eqref{ba}, \eqref{eq:b22} and \eqref{eq:b12} that
\begin{align}\label{72}
\nm{D_t A_1}_{L^2}&\lec \|Z_{t,\aa}\|_{L^2}\|Z_{tt}\|_{\dot H^{1/2}}+\|Z_{t,\aa}\|_{L^2}^2\nm{b_\aa}_{L^2}, \\\label{79} \ \nm{b_\aa-2\Re D_\aa Z_t}_{L^2}&\lec \|Z_{t,\aa}\|_{L^2}\nm{\frac1{Z_{,\aa}}}_{\dot H^{1/2}}\lec \epsilon^2;
\end{align}
this implies that 
\begin{equation}\label{77}
\nm{b_\aa}_{L^2}\lec \|Z_{t,\aa}\|_{L^2}\nm{\frac1{Z_{,\aa}}}_{\dot H^{1/2}}+\|Z_{t,\aa}\|_{L^2}\lec\epsilon, \quad\text{and} \ 
 \ \nm{D_t A_1}_{L^2}\lec \epsilon^2;
\end{equation}
and by \eqref{66} and  \eqref{quasi} that
\begin{equation}\label{78}
\nm{\partial_\aa\Th^{(2)}}_{L^2}\lec \epsilon,\quad\nm{\mathcal P \bar Z_t}_{L^2}\lec \epsilon^2,\quad  \nm{D_t^2 Z_t}_{L^2}\lec \|Z_{t,\aa}\|_{L^2}+ \epsilon^2\lec \epsilon.
\end{equation}

In what follows we pay particular attention to  the bounds $\nm{\partial_\aa\frac1{Z_{,\aa}}}_{L^2}$ and $\|Z_{t,\aa}\|_{\dot H^{1/2}}$ in the estimates for the sake of proving Proposition~\ref{prop:4.4}.
We  estimate, by \eqref{ba},  and \eqref{hhalf42}, \eqref{eq:b13}, 
\begin{equation}\label{98}
\nm{b_\aa-2\Re D_\aa Z_t}_{\dot H^{1/2}}+\nm{b_\aa-2\Re D_\aa Z_t}_{L^\infty}\lec \|Z_{t,\aa}\|_{L^2}\nm{\partial_\aa\frac1{Z_{,\aa}}}_{L^2}\lec\epsilon \nm{\partial_\aa\frac1{Z_{,\aa}}}_{L^2},
\end{equation}
so by \eqref{hhalf-2}, \eqref{eq:c26},
\begin{equation}\label{99}
\nm{b_\aa}_{\dot H^{1/2}}+ \nm{D_t\frac1{Z_{,\aa}}}_{\dot H^{1/2}}\lec \epsilon\nm{\partial_\aa\frac1{Z_{,\aa}}}_{L^2}+\|Z_{t,\aa}\|_{\dot H^{1/2}};
\end{equation}
this gives, by using \eqref{66} and \eqref{hhalf-2}, that 
\begin{equation}\label{145}
\nm{\partial_\aa\Th^{(2)}}_{\dot H^{1/2}}+ \nm{D_\aa\Th^{(2)}}_{\dot H^{1/2}}\lec \nm{b_\aa}_{\dot H^{1/2}}+\nm{\partial_\aa\frac1{Z_{,\aa}}}_{L^2}\nm{b_\aa}_{L^2}\lec \epsilon\nm{\partial_\aa\frac1{Z_{,\aa}}}_{L^2}+\|Z_{t,\aa}\|_{\dot H^{1/2}};
\end{equation}
and as a consequence of \eqref{98} and \eqref{76}, we also have
\begin{equation}\label{116}
\nm{b_\aa}_{L^\infty}\lec  \|Z_{t,\aa}\|_{L^2}\nm{\partial_\aa\frac1{Z_{,\aa}}}_{L^2}+\nm{Z_{t,\aa}}_{L^\infty}\lec \epsilon;
\end{equation}
and we record, by \eqref{eq:c26},
\begin{equation}\label{178}
\nm{D_t\frac1{Z_{,\aa}}}_{L^\infty}\lec \epsilon.
\end{equation}
Now by \eqref{A1b}, \eqref{eq:b10}, \eqref{eq:b11}, 
\begin{equation}\label{146}
\nm{\partial_\aa A_1}_{L^2}
\lec \nm{Z_{t,\aa}}_{L^2}\nm{Z_{t,\aa}}_{\dot H^{1/2}}\lec\epsilon \nm{Z_{t,\aa}}_{\dot H^{1/2}};
\end{equation}
and by \eqref{2dinterface}, 
\begin{equation}\label{111}
\nm{Z_{tt,\aa}}_{L^2}\lec \nm{\partial_\aa\frac1{Z_{,\aa}}}_{L^2}+\nm{Z_{t,\aa}}_{L^2}\nm{Z_{t,\aa}}_{\dot H^{1/2}}\lec  \nm{\partial_\aa\frac1{Z_{,\aa}}}_{L^2}+\epsilon\nm{Z_{t,\aa}}_{\dot H^{1/2}}.
\end{equation}
We write
\begin{equation}\label{112}
\bracket{Z_t, b; \bar Z_{t,\aa}}=-<Z_t, \,b,\, \bar Z_{t,\aa}>+\bar Z_{t,\aa}\bracket{Z_t, b; 1},
\end{equation}
estimating $\nm{<Z_t, \,b,\, \bar Z_{t,\aa}>}_{\dot H^{1/2}}$ by \eqref{eq:b20}, and the $\dot H^{1/2}$ norm of the second term by \eqref{hhalf-2} and \eqref{eq:b12}, \eqref{eq:b22} yield
\begin{equation}\label{113}
\nm{\bracket{Z_t, b; \bar Z_{t,\aa}}}_{\dot H^{1/2}}\lec \nm{Z_{t,\aa}}_{\dot H^{1/2}}\nm{b_\aa}_{L^2}\nm{Z_{t,\aa}}_{L^2}+
\nm{Z_{t,\aa}}_{L^2}\nm{Z_{t,\aa}}_{L^2}\nm{b_\aa}_{\dot H^{1/2}};
\end{equation}
from \eqref{dta1}, further using \eqref{hhalf42} gives
\begin{equation}\label{114}
\nm{D_t A_1}_{\dot H^{1/2}}\lec \nm{Z_{t,\aa}}_{L^2} \nm{Z_{tt,\aa}}_{L^2}+\nm{\bracket{Z_t, b; \bar Z_{t,\aa}}}_{\dot H^{1/2}}\lec \epsilon \nm{\partial_\aa\frac1{Z_{,\aa}}}_{L^2}+\epsilon\nm{Z_{t,\aa}}_{\dot H^{1/2}};
\end{equation}
by \eqref{quasi} and \eqref{hhalf-1}, \eqref{hhalf-2}, this gives
\begin{equation}\label{115}
\nm{Z_{ttt}}_{\dot H^{1/2}}\lec \epsilon \nm{\partial_\aa\frac1{Z_{,\aa}}}_{L^2}+\nm{Z_{t,\aa}}_{\dot H^{1/2}}.
\end{equation}
We also record, by \eqref{quasi}, \eqref{98}, \eqref{114}, \eqref{146} and  other relevant earlier estimates that
\begin{equation}\label{117}
\nm{\mathcal P\bar Z_t}_{\dot H^{1/2}}\lec \epsilon \nm{\partial_\aa\frac1{Z_{,\aa}}}_{L^2}+\epsilon\nm{Z_{t,\aa}}_{\dot H^{1/2}}.
\end{equation}
By \eqref{dta1}, \eqref{eq:b13}, \eqref{eq:b115}, \eqref{hhalf44},
\begin{equation}\label{169}
\nm{D_t A_1}_{L^\infty}\lec \nm{Z_{t,\aa}}_{L^2}\nm{Z_{tt,\aa}}_{L^2}+\nm{Z_{t,\aa}}_{L^4}^2\nm{b_\aa}_{L^2}\lec \epsilon \nm{\partial_\aa\frac1{Z_{,\aa}}}_{L^2}+\epsilon\nm{Z_{t,\aa}}_{\dot H^{1/2}},
\end{equation}
therefore by \eqref{quasi}, we also have
\begin{equation}\label{170}
\nm{\mathcal P\bar Z_t}_{L^\infty}\lec \epsilon \nm{\partial_\aa\frac1{Z_{,\aa}}}_{L^2}+\epsilon\nm{Z_{t,\aa}}_{\dot H^{1/2}},
\end{equation}
and 
\begin{equation}\label{171}
\nm{Z_{ttt}}_{L^\infty}\lec \nm{Z_{t,\aa}}_{L^\infty}+ \epsilon^2\lec \epsilon.
\end{equation}
Now by \eqref{ba}, \eqref{eq:c14}, and \eqref{eq:b10}, \eqref{eq:b11}, \eqref{eq:b12}, \eqref{99},
\begin{equation}\label{118}
\begin{aligned}
&\nm{D_t(b_\aa-2\Re D_\aa Z_t)}_{L^2}\lec \nm{Z_{tt}}_{\dot H^{1/2}}\nm{\partial_\aa\frac1{Z_{,\aa}}}_{L^2}+\nm{Z_{t,\aa}}_{L^2}\nm{D_t\frac1{Z_{,\aa}}}_{\dot H^{1/2}}\\&+\nm{Z_{t,\aa}}_{L^2}\nm{b_\aa}_{L^2}\nm{\partial_\aa\frac1{Z_{,\aa}}}_{L^2}\lec \epsilon \nm{\partial_\aa\frac1{Z_{,\aa}}}_{L^2}+\epsilon\nm{Z_{t,\aa}}_{\dot H^{1/2}},
\end{aligned}
\end{equation}
this gives, by \eqref{111}, \eqref{eq:c1-1},  \eqref{eq:c7}, \eqref{hhalf44} and earlier relevant estimates in this section that
\begin{equation}\label{120}
\nm{\partial_\aa D_t b}_{L^2}\lec \nm{D_t(b_\aa-2\Re D_\aa Z_t)}_{L^2}+ \nm{D_t D_\aa Z_t}_{L^2}+\nm{b_\aa}_{L^4}^2\lec  \nm{\partial_\aa\frac1{Z_{,\aa}}}_{L^2}+\epsilon\nm{Z_{t,\aa}}_{\dot H^{1/2}}.
\end{equation}
We estimate similarly, by \eqref{dta1}, \eqref{eq:c14}, \eqref{eq:c40}, and \eqref{eq:b10}, \eqref{eq:b11}, \eqref{eq:b12}, \eqref{eq:b115}, \eqref{hhalf44} and the estimates above in this section to obtain 
\begin{equation}\label{121}
\nm{D_t^2A_1}_{L^2} \lec \epsilon \nm{\partial_\aa\frac1{Z_{,\aa}}}_{L^2}+\epsilon\nm{Z_{t,\aa}}_{\dot H^{1/2}},
\end{equation}
this in turn gives, by \eqref{quasi}, \eqref{eq:c32}, because $\mathcal P \bar Z_{tt}=D_t\mathcal P\bar Z_t+\bracket{\mathcal P, D_t}\bar Z_t$,  that
\begin{align}\label{122}
\nm{\mathcal P\bar Z_{tt}}_{L^2}+\nm{\bracket{\mathcal P, D_t}\bar Z_{t}}_{L^2} &\lec \epsilon \nm{\partial_\aa\frac1{Z_{,\aa}}}_{L^2}+\epsilon\nm{Z_{t,\aa}}_{\dot H^{1/2}},\\ \label{123}
\nm{D_t^3 Z_t}_{L^2}&\lec \nm{\partial_\aa\frac1{Z_{,\aa}}}_{L^2}+\epsilon\nm{Z_{t,\aa}}_{\dot H^{1/2}}.
\end{align}
We compute $D_t^2\frac1{Z_{,\aa}}$ starting from \eqref{eq:c26} and use \eqref{eq:c28}, \eqref{hhalf44} and the earlier estimates in this section and get
\begin{equation}\label{119}
\nm{D_t^2\frac1{Z_{,\aa}}}_{L^2}\lec \nm{\partial_\aa\frac1{Z_{,\aa}}}_{L^2}+\nm{Z_{t,\aa}}_{\dot H^{1/2}}.
\end{equation}
From \eqref{81} we estimate by \eqref{eq:G}, \eqref{eq:b18}, \eqref{eq:b21} to get
\begin{equation}\label{156}
\nm{\partial_\aa\Th^{(3)}}_{L^2}\lec \nm{\partial_\aa\frac1{Z_{,\aa}}}_{L^2}+\nm{\partial_\aa\frac1{Z_{,\aa}}}_{L^2}\nm{Z_{t,\aa}}_{L^2}^2\lec \nm{\partial_\aa\frac1{Z_{,\aa}}}_{L^2};
\end{equation}
we compute by \eqref{eq:c1-1} and the definition \eqref{Thj}:
\begin{equation}\label{157}
D_tD_\aa\Th^{(2)}=D_\aa \Th^{(3)}+D_\aa\bracket{\P_A, b}\partial_\aa \Th^{(2)}-D_\aa Z_t D_\aa\Th^{(2)},
\end{equation}
and using \eqref{eq:b10}, \eqref{eq:b23}, \eqref{hhalf44} and the earlier estimates we obtain
\begin{equation}\label{158}
\begin{aligned}
\nm{D_tD_\aa\Th^{(2)}}_{L^2}&\lec \nm{\partial_\aa\Th^{(3)}}_{L^2}+\nm{b_\aa}_{\dot H^{1/2}}\nm{\partial_\aa\Th^{(2)}}_{L^2}+\nm{Z_{t,\aa}}_{L^4}\nm{\partial_\aa\Th^{(2)}}_{L^4}\\&\lec \nm{\partial_\aa\frac1{Z_{,\aa}}}_{L^2}+\epsilon\nm{Z_{t,\aa}}_{\dot H^{1/2}}.
\end{aligned}
\end{equation}

Now by \eqref{ba}, \eqref{eq:b11}, \eqref{eq:b23},
\begin{equation}\label{140}
\nm{\partial_\aa(b_\aa-2\Re D_\aa Z_t)}_{L^2}\lec \paren{\nm{Z_{t,\aa}}_{\dot H^{1/2}}+\nm{Z_{t,\aa}}_{L^\infty}}\nm{\partial_\aa\frac1{Z_{,\aa}}}_{L^2}\lec \epsilon \nm{\partial_\aa\frac1{Z_{,\aa}}}_{L^2};
\end{equation}
therefore
\begin{equation}\label{142}
\nm{\partial_\aa b_\aa}_{L^2}\lec \nm{\partial_\aa(b_\aa-2\Re D_\aa Z_t)}_{L^2}+\nm{Z_{t,\aa\aa}}_{L^2}+\nm{Z_{t,\aa}}_{L^\infty}\nm{\partial_\aa\frac1{Z_{,\aa}}}_{L^2};
\end{equation}
and we compute $\partial_\aa D_t\frac1{Z_{,\aa}}$ by \eqref{eq:c26}, and use \eqref{140}, \eqref{98}, \eqref{76} to obtain
\begin{equation}\label{141}
\nm{\partial_\aa D_t\frac1{Z_{,\aa}}}_{L^2}\lec \nm{Z_{t,\aa\aa}}_{L^2}+\epsilon\nm{\partial_\aa\frac 1{Z_{,\aa}}}_{L^2}.
\end{equation}
Now by \eqref{A1b}, 
and 
\eqref{hhalf42},
\begin{equation}\label{147}
\nm{\partial_\aa A_1}_{\dot H^{1/2}}\lec \nm{Z_{t,\aa}}_{L^2}\nm{Z_{t,\aa\aa}}_{L^2}\lec \epsilon\nm{Z_{t,\aa\aa}}_{L^2};
\end{equation}
therefore by \eqref{2dinterface}, \eqref{hhalf-2},
\begin{equation}\label{148}
\begin{aligned}
\nm{Z_{tt,\aa}}_{\dot H^{1/2}}&\lec \nm{\partial_\aa A_1}_{\dot H^{1/2}}+\nm{\partial_\aa\frac 1{Z_{,\aa}}}_{L^2}\nm{\partial_\aa A_1}_{L^2}+\nm{A_1}_{L^\infty}\nm{\partial_\aa\frac 1{Z_{,\aa}}}_{\dot H^{1/2}}\\&\lec
\nm{\partial_\aa\frac 1{Z_{,\aa}}}_{\dot H^{1/2}}+\epsilon \nm{Z_{t,\aa\aa}}_{L^2}+\epsilon^2\nm{\partial_\aa\frac 1{Z_{,\aa}}}_{L^2}.
\end{aligned}
\end{equation}
Starting from \eqref{ba}, we use \eqref{eq:c14} to expand, and use the Sobolev inequality \eqref{eq:sobolev} to estimate the $\dot H^{1/2}$ norm of $\bracket{\frac1{Z_{,\aa}}, b; Z_{t,\aa}}$ and $\bracket{Z_{t}, b; \partial_\aa \frac1{Z_{,\aa}}}$,
we have, by \eqref{hhalf42}, \eqref{eq:b12}, \eqref{eq:b22}, \eqref{eq:b112}, \eqref{eq:b111} and \eqref{eq:sobolev},
\begin{equation}\label{149}
\begin{aligned}
\nm{D_t(b_\aa-2\Re D_\aa Z_t)}_{\dot H^{1/2}}&\lec \nm{\partial_\aa D_t\frac1{Z_{,\aa}}}_{L^2}\nm{Z_{t,\aa}}_{L^2}+\nm{\partial_\aa \frac1{Z_{,\aa}}}_{L^2}\nm{Z_{tt,\aa}}_{L^2}\\&+\epsilon^2 \paren{ \nm{\partial_\aa\frac 1{Z_{,\aa}}}_{\dot H^{1/2}}+\nm{Z_{t,\aa\aa}}_{L^2} +\nm{\partial_\aa\frac 1{Z_{,\aa}}}_{L^2}}\\&\lec 
\epsilon \paren{ \nm{\partial_\aa\frac 1{Z_{,\aa}}}_{\dot H^{1/2}}+\nm{Z_{t,\aa\aa}}_{L^2}} +\epsilon\nm{\partial_\aa\frac 1{Z_{,\aa}}}_{L^2};
\end{aligned}
\end{equation}
therefore by \eqref{eq:c1-1}, \eqref{eq:c7}, \eqref{hhalf-1}, \eqref{hhalf-2},
\begin{equation}\label{177}
\begin{aligned}
\nm{\partial_\aa D_t b}_{\dot H^{1/2}}&\lec \nm{D_t(b_\aa-2\Re D_\aa Z_t)}_{\dot H^{1/2}}+ \nm{(b_\aa)^2}_{\dot H^{1/2}}+\nm{D_\aa Z_{tt}}_{\dot H^{1/2}}+\nm{(D_\aa Z_t)^2}_{\dot H^{1/2}}\\&
\lec \nm{\partial_\aa\frac 1{Z_{,\aa}}}_{\dot H^{1/2}}+\epsilon \nm{Z_{t,\aa\aa}}_{L^2}+\epsilon\nm{\partial_\aa\frac 1{Z_{,\aa}}}_{L^2}+\epsilon\nm{Z_{t,\aa}}_{\dot H^{1/2}};
\end{aligned}
\end{equation}
we also have by \eqref{eq:b13}, \eqref{eq:b15} that 
\begin{equation}\label{149-1}
\begin{aligned}
\nm{D_t(b_\aa-2\Re D_\aa Z_t)}_{L^\infty}&\lec \nm{\partial_\aa D_t\frac1{Z_{,\aa}}}_{L^2}\nm{Z_{t,\aa}}_{L^2}+\nm{\partial_\aa \frac1{Z_{,\aa}}}_{L^2}\nm{Z_{tt,\aa}}_{L^2}\\&+\nm{b_\aa}_{L^\infty} \nm{\partial_\aa \frac1{Z_{,\aa}}}_{L^2}\nm{Z_{t,\aa}}_{L^2}\lec 
\epsilon^2.
\end{aligned}
\end{equation}
Compute $D_t^2\frac1{Z_{,\aa}}$ starting from \eqref{eq:c26},  and use \eqref{eq:c28},  \eqref{hhalf-1}, \eqref{hhalf-2}, \eqref{98},  \eqref{99},  \eqref{111},  \eqref{118}, \eqref{148} and \eqref{149}, and Sobolev embedding \eqref{eq:sobolev} to find
\begin{equation}\label{150}
\nm{D_t^2\frac1{Z_{,\aa}}}_{\dot H^{1/2}}\lec \nm{\partial_\aa\frac 1{Z_{,\aa}}}_{\dot H^{1/2}}+\epsilon\nm{Z_{t,\aa\aa}}_{L^2}+\epsilon \nm{\partial_\aa\frac 1{Z_{,\aa}}}_{L^2}.
\end{equation}
Now from \eqref{dta1}, using \eqref{eq:b11}, \eqref{eq:b23}, and \eqref{eq:b22}, \eqref{eq:b111}, \eqref{eq:b112}, we have
\begin{equation}\label{152}
\begin{aligned}
\nm{\partial_\aa D_tA_1}_{L^2}&\lec \nm{Z_{tt,\aa}}_{L^2}\paren{\nm{Z_{t,\aa}}_{L^\infty}+\nm{Z_{t,\aa}}_{\dot H^{1/2}}}\\&+\nm{b_\aa}_{L^\infty}\nm{Z_{t,\aa}}_{L^2}\nm{Z_{t,\aa}}_{\dot H^{1/2}}+\nm{b_\aa}_{\dot H^{1/2}}\nm{Z_{t,\aa}}_{L^2}\nm{Z_{t,\aa}}_{L^\infty}\\&\lec
\epsilon\paren{\nm{\partial_\aa\frac1{Z_{,\aa}}}_{L^2}+\nm{Z_{t,\aa}}_{\dot H^{1/2}}};
\end{aligned}
\end{equation}
therefore by \eqref{quasi}, 
\begin{equation}\label{153}
\nm{Z_{ttt,\aa}}_{L^2}\lec \nm{Z_{t,\aa\aa}}_{L^2}+\epsilon\nm{\partial_\aa\frac1{Z_{,\aa}}}_{\dot H^{1/2}}+\epsilon\nm{\partial_\aa\frac1{Z_{,\aa}}}_{L^2}+\epsilon\nm{Z_{t,\aa}}_{\dot H^{1/2}}.
\end{equation}
We begin with \eqref{ba}, and use \eqref{eq:c49} to expand. By \eqref{eq:b10}, \eqref{eq:b11}, \eqref{eq:b23}, \eqref{eq:b12}, \eqref{eq:b115}, \eqref{hhalf44}, \eqref{eq:b112}, and \eqref{eq:b111}, we have
\begin{equation}\label{151}
\begin{aligned}
&\nm{D_t^2(b_\aa-2\Re D_\aa Z_t)}_{L^2}\lec \nm{D_t^2\frac1{Z_{,\aa}}}_{L^2}\paren{\nm{Z_{t,\aa}}_{L^\infty}+\nm{Z_{t,\aa}}_{\dot H^{1/2}}}\\&+\nm{ D_t\frac1{Z_{,\aa}}}_{\dot H^{1/2}}\nm{Z_{t,\aa}}_{L^2}\nm{b_\aa}_{L^\infty}+
\nm{\partial_\aa \frac1{Z_{,\aa}}}_{L^2}\nm{Z_{t,\aa}}_{L^2}\nm{\partial_\aa D_tb}_{L^2}\\&+\nm{\partial_\aa \frac1{Z_{,\aa}}}_{L^2}\nm{Z_{t,\aa}}_{L^2}\nm{ b_\aa}_{L^4}^2+\nm{D_t\frac1{Z_{,\aa}}}_{\dot H^{1/2}}\nm{Z_{tt,\aa}}_{L^2}\\&+
\nm{\partial_\aa \frac1{Z_{,\aa}}}_{L^2}\nm{Z_{tt,\aa}}_{L^2}\nm{b_\aa}_{L^2}+\nm{\partial_\aa\frac1{Z_{,\aa}}}_{L^2}\nm{Z_{ttt}}_{\dot H^{1/2}}\\&\lec \epsilon\paren{\nm{\partial_\aa\frac 1{Z_{,\aa}}}_{\dot H^{1/2}}+\nm{Z_{t,\aa\aa}}_{L^2}+\nm{\partial_\aa\frac 1{Z_{,\aa}}}_{L^2}+\nm{Z_{t,\aa}}_{\dot H^{1/2}}};
\end{aligned}
\end{equation}
therefore by \eqref{eq:c7}, \eqref{eq:c1-1}, 
\begin{equation}\label{154}
\nm{\partial_\aa D_t^2 b}_{L^2}\lec \nm{Z_{t,\aa\aa}}_{L^2}+\epsilon\nm{\partial_\aa\frac1{Z_{,\aa}}}_{\dot H^{1/2}}+\epsilon\nm{\partial_\aa\frac1{Z_{,\aa}}}_{L^2}+\epsilon\nm{Z_{t,\aa}}_{\dot H^{1/2}}.
\end{equation} 
Now begin with \eqref{eq:c26}, and use \eqref{eq:c1-1}, we get
\begin{equation}\label{155}
\begin{aligned}
&\nm{D_t^3\frac1{Z_{,\aa}}}_{L^2}\lec \nm{b_\aa-2\Re D_\aa Z_t}_{L^\infty}\nm{D_t^2\frac1{Z_{,\aa}}}_{L^2}+\nm{D_t^2(b_\aa-2\Re D_\aa Z_t)}_{L^2}\\&+ \nm{D_t(b_\aa-2\Re D_\aa Z_t)}_{L^2}\nm{D_t\frac1{Z_{,\aa}}}_{L^\infty}+
\nm{D_\aa Z_t}_{L^\infty}\nm{D_t^2\frac1{Z_{,\aa}}}_{L^2}+\nm{D_t^2D_\aa Z_t}_{L^2}\\&+\nm{D_tD_\aa Z_t}_{L^2}\nm{D_t\frac1{Z_{,\aa}}}_{L^\infty}\lec \nm{Z_{t,\aa\aa}}_{L^2}+\epsilon\nm{\partial_\aa\frac1{Z_{,\aa}}}_{\dot H^{1/2}}+\epsilon\nm{\partial_\aa\frac1{Z_{,\aa}}}_{L^2}+\epsilon\nm{Z_{t,\aa}}_{\dot H^{1/2}}.
\end{aligned}
\end{equation}
We compute $D_t^2 A_1$ by \eqref{dta1}, \eqref{eq:c14}, \eqref{eq:c40}, estimating the $\dot H^{1/2}$ norms of 
$\bracket{Z_{tt}, b; \bar Z_{t,\aa}}$, $\bracket{Z_t, b; \bar Z_{tt,\aa}}$ and $D_t\bracket{Z_t, b; \bar Z_{t,\aa}}$ by the Sobolev inequality \eqref{eq:sobolev}, and estimating the  $\dot H^{1/2}$ norms of the remaining commutators in the expansion of $D_t^2 A_1$ by \eqref{hhalf42}. We get, by further using \eqref{eq:b12}, \eqref{eq:b22}, \eqref{eq:b112}, \eqref{eq:b111}, \eqref{eq:b115}, 
\begin{equation}\label{161}
\nm{D_t^2 A_1}_{\dot H^{1/2}}\lec \epsilon\paren{\nm{\partial_\aa\frac 1{Z_{,\aa}}}_{\dot H^{1/2}}+\nm{Z_{t,\aa\aa}}_{L^2}+\nm{\partial_\aa\frac 1{Z_{,\aa}}}_{L^2}+\nm{Z_{t,\aa}}_{\dot H^{1/2}}};
\end{equation}
this gives, by \eqref{2dinterface} and the estimates above in this section, 
\begin{equation}\label{162}
\nm{D_t^2 Z_{tt}}_{\dot H^{1/2}}\lec \nm{\partial_\aa\frac 1{Z_{,\aa}}}_{\dot H^{1/2}}+\epsilon\paren{\nm{Z_{t,\aa\aa}}_{L^2}+\nm{\partial_\aa\frac 1{Z_{,\aa}}}_{L^2}+\nm{Z_{t,\aa}}_{\dot H^{1/2}}}.
\end{equation}
We also have, by \eqref{eq:b13} and \eqref{eq:b115} that
\begin{equation}\label{161-1}
\nm{D_t^2 A_1}_{L^\infty}\lec \epsilon^2.
\end{equation}
We compute $D_t^3 A_1$ by \eqref{dta1}, \eqref{eq:c49}, \eqref{eq:c40}, and obtain, by using the inequalities in Appendix~\ref{ineq},
\begin{equation}\label{163}
\nm{D_t^3 A_1}_{L^2}\lec \epsilon\paren{\nm{\partial_\aa\frac 1{Z_{,\aa}}}_{\dot H^{1/2}}+\nm{Z_{t,\aa\aa}}_{L^2}+\nm{\partial_\aa\frac 1{Z_{,\aa}}}_{L^2}+\nm{Z_{t,\aa}}_{\dot H^{1/2}}};
\end{equation}
therefore by \eqref{quasi}, 
\begin{equation}\label{164}
\nm{D_t^3 Z_{tt}}_{L^2}\lec \nm{Z_{t,\aa\aa}}_{L^2}+\epsilon\paren{\nm{\partial_\aa\frac 1{Z_{,\aa}}}_{\dot H^{1/2}}+\nm{\partial_\aa\frac 1{Z_{,\aa}}}_{L^2}+\nm{Z_{t,\aa}}_{\dot H^{1/2}}}.
\end{equation}

We note that \eqref{151} also gives
\begin{equation}\label{165}
\nm{D_t^2(b_\aa-2\Re D_\aa Z_t)}_{L^2}\lec  \epsilon\paren{\nm{\partial_\aa\frac 1{Z_{,\aa}}}_{L^2}+\nm{Z_{t,\aa}}_{\dot H^{1/2}}};
\end{equation}
\eqref{165} is needed for the sake of estimating $\frac d{dt}\mathcal E_3(t)$ and proving \eqref{51}. We claim that a similar estimate also holds for $\nm{D_t^3 A_1}_{L^2}$, that is
\begin{equation}\label{166}
\nm{D_t^3 A_1}_{L^2}\lec  \epsilon\paren{\nm{\partial_\aa\frac 1{Z_{,\aa}}}_{L^2}+\nm{Z_{t,\aa}}_{\dot H^{1/2}}};
\end{equation}
 expanding $D_t^3 A_1$ via \eqref{dta1}, \eqref{eq:c49}, \eqref{eq:c40},  \eqref{166} can be checked straightforwardly using the inequalities in Appenddix~\ref{ineq}, the only term in the expansion that needs some extra explanation is  the term $\bracket{Z_t, D_t^2 b; \bar Z_{t,\aa}}$. Observe that by \eqref{eq:c1-1}, \eqref{eq:c7},
 \begin{equation}\label{167}
 \begin{aligned}
 \partial_\aa D_t^2 b&=D_t^2\paren{b_\aa-2\Re D_\aa Z_t}+3b_\aa \partial_\aa D_t b-2 (b_\aa)^3\\&-2\Re \paren{3D_\aa Z_t D_\aa Z_{tt}-2(D_\aa Z_t)^3}-2 \Re \paren{Z_{ttt}\partial_\aa\frac1{Z_{,\aa}}}+2\Re \partial_\aa\frac{Z_{ttt}}{Z_{,\aa}},
 \end{aligned}
 \end{equation}
so we have, by \eqref{165}, \eqref{120}, \eqref{116}, \eqref{99}, \eqref{77}, \eqref{hhalf44}, \eqref{76}, \eqref{111}, \eqref{171},
\begin{equation}\label{168}
\nm{\partial_\aa\paren{ D_t^2 b-2\Re \frac{Z_{ttt}}{Z_{,\aa}}}}_{L^2}\lec \epsilon\paren{\nm{\partial_\aa\frac 1{Z_{,\aa}}}_{L^2}+\nm{Z_{t,\aa}}_{\dot H^{1/2}}}.
\end{equation}
 We write
\begin{equation}\label{172}
\bracket{Z_t, D_t^2 b; \bar Z_{t,\aa}}=\bracket{Z_t, D_t^2 b-2\Re \frac{Z_{ttt}}{Z_{,\aa}}; \bar Z_{t,\aa}}+\bracket{Z_t, 2\Re \frac{Z_{ttt}}{Z_{,\aa}}; \bar Z_{t,\aa}}
\end{equation}
and apply \eqref{eq:b12} to the first term and \eqref{eq:b22} to the second term, we get, by further using \eqref{hhalf-2},
\begin{equation}\label{173}
\nm{\bracket{Z_t, D_t^2 b; \bar Z_{t,\aa}}}_{L^2}\lec \epsilon^2\paren{\nm{\partial_\aa\frac 1{Z_{,\aa}}}_{L^2}+\nm{Z_{t,\aa}}_{\dot H^{1/2}}}.
\end{equation}
This proves \eqref{166}. By \eqref{quasi}, \eqref{eq:c32}, \eqref{eq:c33}, \eqref{165}, \eqref{166} and the earlier estimates in this section, we conclude
\begin{equation}\label{174}
\nm{\mathcal P D_t^2 \bar Z_{t}}_{L^2}+\nm{\bracket{D_t, \mathcal P}\bar Z_{tt}}_{L^2}+\nm{\bracket{D_t^2, \mathcal P}\bar Z_{t}}_{L^2}\lec \epsilon \paren{\nm{\partial_\aa\frac 1{Z_{,\aa}}}_{L^2}+\nm{Z_{t,\aa}}_{\dot H^{1/2}}}.
\end{equation}
Finally we consider $\mathcal P D_t^3 \bar Z_t$. We write it as 
$$\mathcal P D_t^3 \bar Z_t=D_t^3\mathcal P \bar Z_t- \bracket{ D_t, \mathcal P} D_t^2\bar Z_t- D_t\bracket{ D_t, \mathcal P} D_t\bar Z_t-D_t^2 \bracket{ D_t, \mathcal P} \bar Z_t$$
and use \eqref{quasi}, \eqref{eq:c32} to compute. By the earlier estimates in this section, we have
\begin{equation}\label{213}
\nm{\bracket{ D_t, \mathcal P} \bar Z_{ttt}}_{L^2}+\nm{D_t\bracket{ D_t, \mathcal P} \bar Z_{tt}}_{L^2}+\nm{D_t^2 \bracket{ D_t, \mathcal P} \bar Z_t}_{L^2}\lec \epsilon^2;
\end{equation}
we compute  $D_t^3 (b_\aa-2\Re D_\aa Z_t)$ from \eqref{ba}, expanding by \eqref{eq:c49}, \eqref{eq:c40}. We have, by \eqref{eq:b10}, \eqref{eq:b11}, \eqref{eq:b23}, \eqref{eq:b12}, \eqref{eq:b22}, \eqref{eq:b121}, \eqref{eq:b111}, \eqref{eq:b115}, 
\begin{equation}\label{214}
\nm{D_t^3(b_\aa-2\Re D_\aa Z_t)}_{L^2}\lec \epsilon^2.
\end{equation}
we compute by  \eqref{eq:c7}, 
\begin{equation}\label{215} 
\partial_\aa D_t^3 b=D_t \partial_\aa D_t^2b+ b_\aa\partial_\aa D_t^2 b
 \end{equation}
and use \eqref{eq:c1-1}, \eqref{eq:c7}, and \eqref{167} to expand, by \eqref{hhalf44} and earlier estimates in this section, we have
\begin{equation}\label{216} 
\nm{\partial_\aa \paren{ D_t^3 b-2\Re\frac{D_t^3 Z_t}{Z_{,\aa}}}}_{L^2}\lec \epsilon^2;
 \end{equation}
 now we compute $D_t^4 A_1$ from \eqref{dta1}, expanding by \eqref{eq:c49}, \eqref{eq:c40}. Using a similar argument as that of \eqref{172} for the term $\bracket{Z_t, D_t^3 b; \bar Z_{t,\aa}}$ and use the inequalities in Appendix~\ref{ineq} to estimate the remaining terms in $D_t^4 A_1$ we get
 \begin{equation}\label{217}
 \nm{D_t^4 A_1}_{L^2}\lec \epsilon^2.
 \end{equation}
 This gives that
 \begin{equation}\label{218}
 \nm{\mathcal P D_t^3 \bar Z_t}_{L^2}\lec \epsilon^2.
 \end{equation}

\subsubsection{The estimates for $E_1(t)$, $\mathfrak E_1(t)$ and $E_j(t)$, $\mathcal E_j(t)$, $j\ge 2$}\label{step1-4} In this section we  prove the inequalities \eqref{46} and  \eqref{48}, \eqref{49}, \eqref{50} of Propositions~\ref{prop:4.3},~\ref{prop:4.4}.

{\sf Step 1}. We begin with $E_1(t)$. From \eqref{69}, \eqref{67}, 
$$E_1(t)=\nm{\Thc}_{\dot H^{1/2}}^2+\nm{1-\frac1{Z_{,\aa}}}_{L^2}^2+\int  i\,\partial_\aa\bar\Thb \P_H(G^{(1)})\,d\aa,
$$
and by Proposition~\ref{prop:G}, and \eqref{eq:b18}, 
\begin{equation}
\abs{\int  i\,\partial_\aa\bar\Thb \P_H(G^{(1)})\,d\aa}\lec \nm{1-\frac1{Z_{,\aa}}}_{L^2}^2\nm{Z_{t,\aa}}_{L^2}^2\lec \epsilon^2 \nm{1-\frac1{Z_{,\aa}}}_{L^2}^2,
\end{equation}
so there is $\epsilon_0=\epsilon_0(\delta)>0$, such that for all $0<\epsilon\le \epsilon_0$,  \eqref{48} holds. 
We note that by Lemma~\ref{lemma:4.1},
\begin{equation}\label{70}
\delta\nm{Z_t}_{\dot H^{1/2}}\le \nm{\Thc}_{\dot H^{1/2}}. 
\end{equation}
Now we consider $\mathfrak E_1(t)$. We estimate, by \eqref{eq:b18}, \eqref{eq:b19} and \eqref{71}, \eqref{78}, \eqref{70} that
\begin{equation*}
\begin{aligned}
|C_{1,1}(t)|+|C_{2,1}(t)|&\lec \nm{Z_{tt}}_{L^2}^2\nm{Z_{t,\aa}}_{L^2}^2+\nm{Z_{ttt}}_{L^2}\nm{Z_{t,\aa}}_{L^2}\nm{Z_t}_{\dot H^{1/2}}^2\\&\lec \frac{\epsilon^2}{\delta^2} \paren{\nm{\Thc}_{\dot H^{1/2}}^2+\nm{1-\frac1{Z_{,\aa}}}_{L^2}^2},
\end{aligned}
\end{equation*}
so there is $\epsilon_0=\epsilon_0(\delta)>0$, such that for all $0<\epsilon\le \epsilon_0$, 
\begin{equation}\label{80}
(1+\delta)^{-1/2} E_1(t)\le \mathfrak E_1(t)\le (1+\delta)^{1/2} E_1(t).
\end{equation}

{\sf Step 2}.  We next consider $E_2(t)$ and $\mathcal E_2(t)$. By   \eqref{69},
\begin{equation}\label{85}
E_2(t)=\nm{\Th^{(3)}}_{\dot H^{1/2}}^2+\nm{D_\aa\Thc}_{L^2}^2+\int  i\,\partial_\aa\bar\Thc \P_H(G^{(2)})\,d\aa.
\end{equation}
First we have by Lemma~\ref{lemma:4.1}, and \eqref{hhalfp}, \eqref{hhalf-1}
\begin{equation}\label{83}
 \delta\nm{1-\frac1{Z_{,\aa}}}_{\dot H^{1/2}}\le \nm{\P_H\paren{\frac1{\bar Z_{,\aa}}\paren{1-\frac1{Z_{,\aa}}}} }_{\dot H^{1/2}}\lec \nm{1-\frac1{Z_{,\aa}}}_{\dot H^{1/2}};
\end{equation}
and from the identity
$$\frac{\bar Z_{t,\aa}}{\bar Z_{,\aa}}=\partial_\aa\P_H\frac{\bar Z_t}{\bar Z_{,\aa}}-\bracket{\P_H,\bar Z_t}\partial_\aa\paren{\frac1 {\bar Z_{,\aa}}}+\bracket{\P_A, \frac1{\bar Z_{,\aa}}} {\bar Z_{t,\aa}},
$$
and \eqref{66} \eqref{eq:b10}, \eqref{eq:b11}, that
\begin{equation}\label{89}
  \nm{\frac{\bar Z_{t,\aa}}{\bar Z_{,\aa}}-i\partial_\aa\Thc}_{L^2}\lec\nm{\frac1{\bar Z_{,\aa}}}_{\dot H^{1/2}}\nm{\bar Z_{t,\aa}}_{L^2}\lec \epsilon \nm{\bar Z_{t,\aa}}_{L^2},\end{equation}
so by \eqref{75}, there is $\epsilon_0=\epsilon_0(\delta)>0$, such that for $0<\epsilon\le \epsilon_0$,
\begin{equation}\label{84}
\delta  \nm{\bar Z_{t,\aa}}_{L^2}\lec \nm{\partial_\aa\Thc}_{L^2}\lec \nm{\bar Z_{t,\aa}}_{L^2},\qquad \delta ^2 \nm{\bar Z_{t,\aa}}_{L^2}\lec \nm{D_\aa\Thc}_{L^2}\lec \nm{\bar Z_{t,\aa}}_{L^2}.
\end{equation}

Now by \eqref{69}, \eqref{66},
\begin{equation}\label{81}
\Theta^{(3)}=\P_H\paren{\frac1{\bar Z_{,\aa}}\paren{1-\frac1{Z_{,\aa}}}}+\P_H(G^{(1)});
\end{equation}
we estimate, by \eqref{eq:Gj}, \eqref{75}, \eqref{83} and \eqref{hhalf-1}, \eqref{eq:b20}, \eqref{eq:b18} to get
\begin{equation}\label{82}
\nm{\P_H(G^{(1)})}_{\dot H^{1/2}}\lec \nm{\frac1{Z_{,\aa}}}_{\dot H^{1/2}}\nm{Z_{t,\aa}}_{L^2}^2\lec \frac{\epsilon^2}{\delta} \nm{\P_H\paren{\frac1{\bar Z_{,\aa}}\paren{1-\frac1{Z_{,\aa}}}}}_{\dot H^{1/2}},
\end{equation}
and by \eqref{eq:Gj}-\eqref{17}, \eqref{eq:c46},  \eqref{3.21},  \eqref{eq:c37}, \eqref{eq:b18}, \eqref{eq:b19}, \eqref{eq:b115}, \eqref{75}   to yield
\begin{equation}\label{86}
\begin{aligned}
&\nm{\P_H(G^{(2)})}_{L^2}\lec \nm{b_\aa}_{L^2}\nm{Z_{t,\aa}}_{L^2}^2+\nm{Z_{t,\aa}}_{L^2}\nm{\frac1{Z_{,\aa}}}_{\dot H^{1/2}}\paren{\nm{Z_{tt}}_{\dot H^{1/2}}+\nm{\frac1{Z_{,\aa}}}_{\dot H^{1/2}}}\\&+\nm{Z_{t,\aa}}_{L^2}^2\nm{D_t\frac1{Z_{,\aa}}}_{L^2}
\lec \frac{\epsilon^2}{\delta^2} \nm{D_\aa\Thc}_{L^2},
\end{aligned}
\end{equation}
so there is $\epsilon_0=\epsilon_0(\delta)>0$, such that for $0<\epsilon\le \epsilon_0$,
\begin{equation}\label{87}
(1+\delta)^{-1}E_2(t)\le \nm{\P_H\paren{\frac1{\bar Z_{,\aa}}\paren{1-\frac1{Z_{,\aa}}}}  }_{\dot H^{1/2}}^2+\nm{D_\aa\Thc}_{L^2}^2\le (1+\delta)E_2(t).
\end{equation}
Now we  consider the  terms $C_{1,2}+C_{2,2}+F_2+H_2$ in $\mathcal E_2(t)$. Observe that
those terms with the factor $D_t^3 Z_t$ in $C_{1,2}$, $C_{2,2}$ can be combined with those in $F_2$ with the factor $\bar{\mathcal P D_t\bar Z_t}$, and we know
\begin{equation}\label{100}
-D_t^3 Z_t+\bar{\mathcal P D_t\bar Z_t}=-i\frac{A_1}{|Z_{,\aa}|^2}\partial_\aa Z_{tt}.
\end{equation}
So by \eqref{eq:b18}, \eqref{eq:b19}, \eqref{eq:b20}, \eqref{hhalf-1}, \eqref{eq:b115},  \eqref{eq:b48},  we have
\begin{equation}\label{88}
\begin{aligned}
&|C_{1,2}+C_{2,2}+F_2+H_2|\lec \nm{Z_{ttt}}_{L^2}^2\nm{Z_{t,\aa}}_{L^2}^2+\nm{Z_{ttt}}_{L^2}\nm{Z_{t,\aa}}_{L^2}\nm{Z_{tt}}^2_{\dot H^{1/2}}\\&+\nm{Z_{t,\aa}}_{L^2}^2\nm{Z_{tt}}_{\dot H^{1/2}}\paren{\nm{Z_{tt}}_{\dot H^{1/2}}+\nm{\frac{A_1}{|Z_{,\aa}|^2}}_{\dot H^{1/2}}}+\nm{b_\aa}_{L^2}\nm{Z_{ttt}}_{L^2}\nm{Z_{t,\aa}}_{L^2}^2\\&+ \nm{Z_{tt}}_{\dot H^{1/2}}^4+\nm{D_\aa\Thc}_{L^2}^2\nm{Z_{t,\aa}}_{L^2}^2\\&\lec\frac{\epsilon^2}{\delta^4}
\paren{\nm{D_\aa\Thc}_{L^2}^2+ \nm{\P_H\paren{\frac1{\bar Z_{,\aa}}\paren{1-\frac1{Z_{,\aa}}}}  }_{\dot H^{1/2}}^2};
\end{aligned}
\end{equation}
we also have by \eqref{eq:b47},
\begin{equation}\label{88-1}
|D_2(t)|\lec \nm{b_\aa}_{L^2}\nm{Z_{t,\aa}}_{L^2}\nm{Z_{tt}}_{\dot H^{1/2}}^2\lec \frac{\epsilon^2}{\delta^4}
\paren{\nm{D_\aa\Thc}_{L^2}^2+ \nm{\P_H\paren{\frac1{\bar Z_{,\aa}}\paren{1-\frac1{Z_{,\aa}}}}  }_{\dot H^{1/2}}^2}.
\end{equation}
By \eqref{83}, \eqref{84},  there is $\epsilon_0=\epsilon_0(\delta)>0$, such that for $0<\epsilon\le \epsilon_0$,
\begin{equation}\label{90}
 c_2(\delta)\paren{\nm{1-\frac1{Z_{,\aa}}}_{\dot H^{1/2}}^2+\nm{Z_{t,\aa}}_{L^2}^2}\le \mathcal E_2(t)\le c_1\paren{\nm{1-\frac1{Z_{,\aa}}}_{\dot H^{1/2}}^2+\nm{Z_{t,\aa}}_{L^2}^2}.
 \end{equation}
for some constants $c_1>0$ and $c_2(\delta)>0$.

{\sf Step 3}. We now study $E_3(t)$ and $\mathcal E_3(t)$. By \eqref{66}, \eqref{68},  \eqref{69}, we know
\begin{equation}\label{91}
E_3(t)=\nm{\Th^{(4)}}_{\dot H^{1/2}}^2+\nm{D_\aa\Th^{(3)}}_{L^2}^2+\int  i\,\partial_\aa\bar\Th^{(3)} \P_H(G^{(3)})\,d\aa,
\end{equation}
where
$$
\Th^{(4)}=-\P_H\paren{\frac1{|Z_{,\aa}|^2}\partial_\aa \P_H\frac{\bar Z_t}{\bar Z_{,\aa}}}+\P_H(G^{(2)}),
\quad D_\aa\Theta^{(3)}=-D_\aa\P_H\paren{\frac1{| Z_{,\aa}|^2}}+D_\aa\P_H(G^{(1)}).
$$
We compute
\begin{equation}\label{92}
\begin{aligned}
&D_\aa\P_H\paren{\frac1{| Z_{,\aa}|^2}}- \frac1{|Z_{,\aa}|^2}\partial_\aa \frac1{Z_{,\aa}} =     \\&
=\bracket{\frac1{Z_{,\aa}},\P_H}\partial_\aa\frac1{|Z_{,\aa}|^2}+\bracket{\P_H, \frac1{Z_{,\aa}^2}}\partial_\aa\frac1{\bar Z_{,\aa}}-\bracket{\P_A,\frac1{|Z_{,\aa}|^2}}\partial_\aa \frac1{Z_{,\aa}},
\end{aligned}
\end{equation}
so by \eqref{eq:b10}, \eqref{hhalf-1}, and \eqref{75},
\begin{equation}\label{93}
\nm{D_\aa\P_H\paren{\frac1{| Z_{,\aa}|^2}}- \frac1{|Z_{,\aa}|^2}\partial_\aa \frac1{Z_{,\aa}}}_{L^2}\lec \nm{\frac1{Z_{,\aa}}}_{\dot H^{1/2}}\nm{\partial_\aa\frac1{Z_{,\aa}} }_{L^2}\lec \frac{\epsilon}{\delta^2}\nm{\frac1{|Z_{,\aa}|^2}\partial_\aa \frac1{Z_{,\aa}}}_{L^2}.
\end{equation}
We estimate by \eqref{eq:G}, \eqref{eq:b21} and \eqref{eq:b18}, and get
\begin{equation}\label{126}
\nm{\partial_\aa \P_H(G^{(1)})}_{L^2}\lec \nm{\partial_\aa \frac1{Z_{,\aa}}}_{L^2}\nm{Z_{t,\aa}}_{L^2}^2\lec\frac{\epsilon^2}{\delta^2}\nm{\frac1{|Z_{,\aa}|^2}\partial_\aa \frac1{Z_{,\aa}}}_{L^2}.
\end{equation}
Now 
\begin{equation}\label{103}
 \P_H\frac{\bar Z_{t,\aa}}{\bar Z_{,\aa}}=\partial_\aa \P_H\frac{\bar Z_t}{\bar Z_{,\aa}}-\bracket{\P_H, \bar Z_t}\partial_\aa \frac1{\bar Z_{,\aa}},
\end{equation}
and by Lemma~\ref{lemma:4.1}, \eqref{hhalf42} and \eqref{Hhalf},
\begin{equation}\label{102}
\begin{aligned}
&\delta\nm{Z_{t,\aa}}_{\dot H^{1/2}}\le \nm{ \P_H\frac{\bar Z_{t,\aa}}{\bar Z_{,\aa}}}_{\dot H^{1/2}}\lec \nm{\partial_\aa \P_H\frac{\bar Z_t}{\bar Z_{,\aa}}}_{\dot H^{1/2}}+\nm{Z_{t,\aa}}_{L^2}\nm{\partial_\aa \frac1{Z_{,\aa}}}_{L^2}\\&
\lec\frac1{\delta}\paren{ \nm{D_\aa \P_H\frac{\bar Z_t}{\bar Z_{,\aa}}}_{\dot H^{1/2}}+ \nm{\partial_\aa \frac1{Z_{,\aa}}}_{L^2}\nm{\partial_\aa \P_H\frac{\bar Z_t}{\bar Z_{,\aa}}}_{L^2}}+\nm{Z_{t,\aa}}_{L^2}\nm{\partial_\aa \frac1{Z_{,\aa}}}_{L^2};
\end{aligned}
\end{equation}
using Lemma~\ref{lemma:4.1} again gives
\begin{equation}\label{101}
\delta\nm{D_\aa \P_H\frac{\bar Z_t}{\bar Z_{,\aa}}}_{\dot H^{1/2}}\le \nm{\P_H\paren{\frac1{|Z_{,\aa}|^2}\partial_\aa \P_H\frac{\bar Z_t}{\bar Z_{,\aa}}}}_{\dot H^{1/2}},
\end{equation}
therefore
\begin{equation}\label{104}
\delta^3\nm{Z_{t,\aa}}_{\dot H^{1/2}}\lec  \nm{\P_H\paren{\frac1{|Z_{,\aa}|^2}\partial_\aa \P_H\frac{\bar Z_t}{\bar Z_{,\aa}}}}_{\dot H^{1/2}}+\epsilon\delta \nm{\partial_\aa \frac1{Z_{,\aa}}}_{L^2};
\end{equation}
on the other hand, by \eqref{hhalf-2}, \eqref{78} ,\eqref{145} we  also have
\begin{equation}\label{105}
\nm{\P_H\paren{\frac1{|Z_{,\aa}|^2}\partial_\aa \P_H\frac{\bar Z_t}{\bar Z_{,\aa}}}}_{\dot H^{1/2}}\lec \epsilon \nm{\partial_\aa \frac1{Z_{,\aa}}}_{L^2}+\nm{Z_{t,\aa}}_{\dot H^{1/2}}.
\end{equation}
We next estimate $\nm{\P_H\paren{G^{(2)}}}_{\dot H^{1/2}}$. By \eqref{eq:Gj}-\eqref{17},  \eqref{hhalf-1}, \eqref{eq:b20}, and \eqref{eq:b18},
\begin{equation}\label{94}
\nm{\P_H\paren{G^{(2)}-D_t\P_H (G^{(1)})}}_{\dot H^{1/2}}\lec \nm{Z_{t,\aa}}_{L^2}\nm{\partial_\aa \frac1{Z_{,\aa}}}_{L^2}\nm{ \frac1{Z_{,\aa}}}_{\dot H^{1/2}}\lec \epsilon^2 \nm{\partial_\aa \frac1{Z_{,\aa}}}_{L^2},
\end{equation}
by \eqref{eq:c46}, \eqref{eq:c37}, \eqref{hhalf42}, \eqref{eq:b21},  \eqref{eq:b18},  \eqref{hhalf-2}, \eqref{eq:b19}, \eqref{eq:b115}, 
\begin{equation}\label{95}
\begin{aligned}
&\nm{\P_H D_t\P_H( G^{(1)})}_{\dot H^{1/2}}\lec \nm{b_\aa}_{L^2}\nm{Z_{t,\aa}}_{L^2}^2\nm{\partial_\aa \frac1{Z_{,\aa}}}_{L^2}+\nm{D_t\frac1{Z_{,\aa}}}_{\dot H^{1/2}}\nm{Z_{t,\aa}}_{L^2}^2\\&+\nm{D_t\frac1{Z_{,\aa}}}_{L^2}\nm{Z_{t,\aa}}_{L^2}^2\nm{\partial_\aa \frac1{Z_{,\aa}}}_{L^2}+\nm{Z_{t,\aa}}_{L^2}\nm{ Z_{tt}}_{\dot H^{1/2}}\nm{ \frac1{Z_{,\aa}}}_{\dot H^{1/2}}\nm{\partial_\aa \frac1{Z_{,\aa}}}_{L^2}\\&+
\nm{D_t< \bar Z_t, i\frac1{\bar Z_{,\aa}}, \bar Z_t>}_{\dot H^{1/2}}+\nm{D_t< Z_t, -i\frac1{ Z_{,\aa}}, \bar Z_t>}_{\dot H^{1/2}}.
\end{aligned}
\end{equation}
To estimate $\nm{D_t< \bar Z_t, i\frac1{\bar Z_{,\aa}}, \bar Z_t>}_{\dot H^{1/2}}$ and $\nm{D_t< Z_t, -i\frac1{ Z_{,\aa}}, \bar Z_t>}_{\dot H^{1/2}}$, we use \eqref{eq:c37} to expand. We use Sobolev inequality \eqref{eq:sobolev} to estimate the following $\dot H^{1/2}$ norm. By  \eqref{eq:b115},  \eqref{hhalf44}, we have
\begin{equation}\label{96}
\begin{aligned}
&\nm{\int\partial_\bb\mathfrak D_t\paren{\frac1{\aa-\bb}} \theta\theta \paren{\frac1{\bar Z_{,\aa}}-\frac1{\bar Z_{,\bb}}}\,d\bb}_{\dot H^{1/2}}^2\\&\lec \nm{b_\aa}_{L^2}\nm{Z_{t,\aa}}_{L^2}^2\nm{b_\aa}_{L^2}\nm{Z_{t,\aa}}^2_{L^4}\nm{\partial_\aa\frac1{Z_{,\aa}}}_{L^2}\lec \epsilon^5 \nm{Z_{t,\aa}}_{\dot H^{1/2}}\nm{\partial_\aa\frac1{Z_{,\aa}}}_{L^2}
\end{aligned}
\end{equation}
For the $\dot H^{1/2}$ norm of the remaining term in $D_t< \bar Z_t, i\frac1{\bar Z_{,\aa}}, \bar Z_t>$, we use \eqref{eq:b20}. We get
\begin{equation}\label{97}
\begin{aligned}
\nm{D_t< \bar Z_t, i\frac1{\bar Z_{,\aa}}, \bar Z_t>}_{\dot H^{1/2}}&\lec \epsilon^2 \nm{Z_{t,\aa}}_{\dot H^{1/2}}^{1/2}\nm{\partial_\aa\frac1{Z_{,\aa}}}_{L^2}^{1/2}\\&+\nm{Z_{t,\aa}}_{L^2}\nm{Z_{tt}}_{\dot H^{1/2}}\nm{\partial_\aa\frac1{Z_{,\aa}}}_{L^2}+\nm{Z_{t,\aa}}_{L^2}^2\nm{D_t\frac1{Z_{,\aa}}}_{\dot H^{1/2}}.
\end{aligned}
\end{equation}
Therefore
\begin{equation}\label{106}
\nm{\P_H\paren{G^{(2)}}}_{\dot H^{1/2}}\lec \epsilon^2\nm{\partial_\aa\frac1{Z_{,\aa}}}_{L^2}+\epsilon^2 \nm{Z_{t,\aa}}_{\dot H^{1/2}}.
\end{equation}

We now consider the term $\int  i\,\partial_\aa\bar\Th^{(3)} \P_H(G^{(3)})\,d\aa $ in \eqref{91}. 
The estimate for $\nm{\P_H(G^{(3)})}_{L^2}$ is as usual, by using the formula \eqref{eq:Gj}-\eqref{17}, the expansions \eqref{eq:c46} and \eqref{eq:c38}, \eqref{eq:c14},  and the inequalities \eqref{eq:b18}, \eqref{eq:b19}, \eqref{eq:b11}, \eqref{eq:b23}, \eqref{eq:b115} and \eqref{hhalf44}; observe that  by \eqref{eq:b11}, the estimates \eqref{94}-\eqref{97} for \eqref{106} can be used to treat the second term in the expansion by \eqref{eq:c46}.  We have
\begin{equation}\label{107}
\begin{aligned}
\nm{\P_H(G^{(3)})}_{L^2}&\lec \epsilon^2\nm{\partial_\aa\frac1{Z_{,\aa}}}_{L^2}+\epsilon^2 \nm{Z_{t,\aa}}_{\dot H^{1/2}}+\epsilon^2\nm{Z_{tt,\aa}}_{L^2}+\epsilon^2\nm{D_t\frac1{Z_{,\aa}}}_{\dot H^{1/2}}\\&+\epsilon^2\nm{D_t^2\frac1{Z_{,\aa}}}_{L^2}+\epsilon^2\nm{b_\aa}_{\dot H^{1/2}}+\epsilon^2\nm{\partial_\aa D_t b}_{L^2}+\epsilon^2\nm{Z_{ttt}}_{\dot H^{1/2}}\\&\lec \epsilon^2\nm{\partial_\aa\frac1{Z_{,\aa}}}_{L^2}+\epsilon^2 \nm{Z_{t,\aa}}_{\dot H^{1/2}};
\end{aligned}
\end{equation}
so there is a $\epsilon_0=\epsilon(\delta)>0$, such that for all $0<\epsilon\le\epsilon_0$, the inequality \eqref{49} in Proposition~\ref{prop:4.4} holds.
Finally we can estimate the correcting terms $C_{1,3}+C_{2,3}+F_3+H_3$ as in {\sf Step 2} by combining the terms in $C_{1,3}$ and $C_{2,3}$ with the factor $D_t^4Z_t$ with the terms in $F_3$ with the factor $\bar {\mathcal P D_t^2 \bar Z_t}$ and use \eqref{eq:b18},    \eqref{eq:b19}, \eqref{eq:b115}, \eqref{eq:b20}, \eqref{hhalf-2},  and \eqref{hhalf44} to obtain
\begin{equation}\label{108}
\begin{aligned}
&|C_{1,3}+C_{2,3}+F_3+H_3|\lec \nm{D_t^2 Z_t}_{L^2}\nm{Z_{t,\aa}}_{L^2}\nm{Z_{ttt}}_{\dot H^{1/2}}^2+\nm{D_t^2 Z_t}_{L^2}^2\nm{Z_{tt,\aa}}_{L^2}^2
\\&+ \nm{D_t^3 Z_t}_{L^2}\paren{\nm{Z_{t,\aa}}_{L^2}^2\nm{D_t^3 Z_t}_{L^2}+\nm{Z_{t,\aa}}_{L^2}\nm{Z_{tt,\aa}}_{L^2}\nm{D_t^2 Z_t}_{L^2}+\nm{Z_{tt,\aa}}_{L^2}\nm{Z_{tt}}_{\dot H^{1/2}}^2}\\&+
\nm{D_t^3 Z_t}_{L^2}\paren{\nm{b_\aa}_{L^4}\nm{D_t^2 Z_t}_{L^4}\nm{Z_{t,\aa}}_{L^2}^2+ \nm{b_\aa}_{L^2}\nm{Z_{t,\aa}}_{L^2}\nm{Z_{tt,\aa}}_{L^2}\nm{Z_{tt}}_{L^\infty}}\\&+\nm{Z_{t,\aa}}^2_{L^2}\nm{D_\aa\Th^{(3)}}_{L^2}^2+\nm{Z_{ttt}}_{\dot H^{1/2}}\paren{\nm{Z_{t,\aa}}_{L^2}^2\nm{Z_{ttt}}_{\dot H^{1/2}}+\nm{Z_{t,\aa}}_{L^2}\nm{Z_{tt,\aa}}_{L^2}\nm{Z_{tt}}_{\dot H^{1/2}}}\\&+\nm{Z_{ttt}}_{\dot H^{1/2}}\nm{\partial_\aa\frac{A_1}{|Z_{,\aa}|^2}}_{L^2}\paren{\nm{Z_{t,\aa}}_{L^2}^2\nm{Z_{ttt}}_{L^2}+\nm{Z_{t,\aa}}_{L^2}\nm{Z_{tt}}^2_{\dot H^{1/2}}}
\\&\lec \epsilon^2 \paren{  \nm{\partial_\aa\frac1{Z_{,\aa}}}_{L^2}^2+ \nm{Z_{t,\aa}}_{\dot H^{1/2}}^2+\nm{D_\aa\Th^{(3)}}_{L^2}^2}.
\end{aligned}
\end{equation}
So there is $\epsilon_0=\epsilon(\delta)>0$, such that for all $0<\epsilon\le\epsilon_0$, 
\begin{align}
\label{109}
(1+\delta)^{-1/2}  E_3(t)\le \mathcal E_3(t)\le (1+\delta)^{1/2} E_3(t).
\end{align}
This together with \eqref{80} proves the inequality \eqref{50} in Proposition~\ref{prop:4.4}. Moreover, there are constants $c_1>0$ and $c_2(\delta)>0$, such that for all $0<\epsilon\le\epsilon_0(\delta)$, 
\begin{equation}\label{110}
c_2(\delta)\paren{\nm{\partial_\aa\frac1{Z_{,\aa}}}_{L^2}^2+ \nm{Z_{t,\aa}}_{\dot H^{1/2}}^2} \le \mathcal E_3(t)\le c_1 \paren{\nm{\partial_\aa\frac1{Z_{,\aa}}}_{L^2}^2+ \nm{Z_{t,\aa}}_{\dot H^{1/2}}^2} .
\end{equation}

We also note that by  \eqref{105}, \eqref{106}, 
\begin{equation}\label{143}
\nm{\Th^{(4)}}_{\dot H^{1/2}}\lec  \nm{\partial_\aa\frac1{Z_{,\aa}}}_{L^2}^2+ \nm{Z_{t,\aa}}_{\dot H^{1/2}}\lec \epsilon.
\end{equation}
Because 
\begin{align}\label{291}
D_t\Th^{(3)}=\Th^{(4)}+\P_A D_t\Th^{(3)}=\Th^{(4)}+\bracket{\P_A, b}\partial_\aa\Th^{(3)},\\ \label{292}
D_t^2\Th^{(2)}
=\Th^{(4)}+\bracket{\P_A, b}\partial_\aa\Th^{(3)}+D_t\bracket{\P_A, b}\partial_\aa \Th^{(2)},
\end{align}
using the identity
\begin{equation}\label{284}
\bracket{b,b; \partial_\aa\Thc}=-<b,b,\partial_\aa\Thc>+\partial_\aa\Thc\bracket{b,b;1},
\end{equation}
 we have, by \eqref{hhalf42}, \eqref{eq:c14}, \eqref{eq:b20}, \eqref{hhalf-2}, \eqref{eq:b22}, \eqref{eq:b12}, 
\begin{equation}\label{293}
\nm{D_t\Th^{(3)}}_{\dot H^{1/2}}+\nm{D_t^2\Th^{(2)}}_{\dot H^{1/2}}\lec \nm{\partial_\aa\frac1{Z_{,\aa}}}_{L^2}^2+ \nm{Z_{t,\aa}}_{\dot H^{1/2}}\lec \epsilon.
\end{equation}
{\sf Step 4}. We now consider $E_4(t)$ and $\mathcal E_4(t)$. By \eqref{69},
\begin{equation}\label{124}
E_4(t)=\nm{\Th^{(5)}}_{\dot H^{1/2}}^2+\nm{D_\aa\Th^{(4)}}_{L^2}^2+\int  i\,\partial_\aa\bar\Th^{(4)} \P_H(G^{(4)})\,d\aa,
\end{equation}
and by \eqref{68}, \eqref{81}, 
\begin{equation}\label{125}
\begin{aligned}
\Theta^{(5)}&=-i\P_H\paren{\frac1{| Z_{,\aa}|^2}\partial_\aa\Theta^{(3)}}+\P_H(G^{(3)})\\&=
i\P_H\paren{  \frac1{| Z_{,\aa}|^2}\partial_\aa \P_H    \frac1{| Z_{,\aa}|^2}  }-i\P_H\paren{\frac1{| Z_{,\aa}|^2}\partial_\aa\P_H(G^{(1)})}+\P_H(G^{(3)}),
\end{aligned}
\end{equation}
\begin{equation}\label{127}
D_\aa\Th^{(4)}
=-D_\aa\P_H\paren{\frac1{|Z_{,\aa}|^2}\partial_\aa \P_H\frac{\bar Z_t}{\bar Z_{,\aa}}}+D_\aa\P_H(G^{(2)}).
\end{equation}
Since
\begin{equation}\label{128}
\begin{aligned}
&\P_H\paren{  \frac1{| Z_{,\aa}|^2}\partial_\aa \P_H    \frac1{| Z_{,\aa}|^2}  }- \frac1{| Z_{,\aa}|^2\bar Z_{,\aa}}\partial_\aa    \frac1{ Z_{,\aa}} \\& =-\bracket{\P_H, \frac1{| Z_{,\aa}|^2 }}\partial_\aa  \P_A  \frac1{ |Z_{,\aa}|^2}+\bracket{\P_H, \frac1{| Z_{,\aa}|^2 Z_{,\aa}}}\partial_\aa    \frac1{ \bar Z_{,\aa}} -\bracket{\P_A, \frac1{| Z_{,\aa}|^2 \bar Z_{,\aa}}}\partial_\aa    \frac1{  Z_{,\aa}}=:I,
\end{aligned}
\end{equation}
and by \eqref{hhalf42},
\begin{equation}\label{129}
\nm{I}_{\dot H^{1/2}}\lec \nm{\partial_\aa\frac1{Z_{,\aa}}}_{L^2}^2\lec \epsilon \nm{\partial_\aa\frac1{Z_{,\aa}}}_{L^2};
\end{equation}
 by \eqref{hhalf-2}, \eqref{Hhalf},
\begin{align}\label{130}
&\nm{\frac1{| Z_{,\aa}|^2\bar Z_{,\aa}}\partial_\aa    \frac1{ Z_{,\aa}}}_{\dot H^{1/2}}\lec \nm{\partial_\aa    \frac1{ Z_{,\aa}}}_{\dot H^{1/2}}+\nm{\partial_\aa\frac1{Z_{,\aa}}}_{L^2}^2,\\&\label{131}
\delta^3\nm{\partial_\aa    \frac1{ Z_{,\aa}}}_{\dot H^{1/2}}\lec \nm{\frac1{| Z_{,\aa}|^2\bar Z_{,\aa}}\partial_\aa    \frac1{ Z_{,\aa}}}_{\dot H^{1/2}}+\nm{\partial_\aa\frac1{Z_{,\aa}}}_{L^2}^2;
\end{align}
therefore
\begin{equation}\label{136}
\begin{aligned}
\delta^3\nm{\partial_\aa    \frac1{ Z_{,\aa}}}_{\dot H^{1/2}}-\epsilon \nm{\partial_\aa\frac1{Z_{,\aa}}}_{L^2}\lec  &\nm{\P_H\paren{  \frac1{| Z_{,\aa}|^2}\partial_\aa \P_H    \frac1{| Z_{,\aa}|^2}  }}_{\dot H^{1/2}}\\&
\lec \nm{\partial_\aa    \frac1{ Z_{,\aa}}}_{\dot H^{1/2}}+\epsilon \nm{\partial_\aa\frac1{Z_{,\aa}}}_{L^2}.
\end{aligned}
\end{equation}
Also, we know $\P_H b=\P_H\frac{\bar Z_t}{\bar Z_{,\aa}}=i\Th^{(2)}$, and 
\begin{equation}\label{132}
\begin{aligned}
&\partial_\aa\P_H\paren{\frac1{|Z_{,\aa}|^2}\partial_\aa \P_H\frac{\bar Z_t}{\bar Z_{,\aa}}}-\frac1{|Z_{,\aa}|^2}\partial_\aa ^2\P_H\frac{\bar Z_t}{\bar Z_{,\aa}}\\&=\P_H\paren{\partial_\aa\frac1{|Z_{,\aa}|^2}\partial_\aa \P_H\frac{\bar Z_t}{\bar Z_{,\aa}}}-\bracket{\P_A,\frac1{|Z_{,\aa}|^2}}\partial_\aa^2 \P_H\frac{\bar Z_t}{\bar Z_{,\aa}},
\end{aligned}
\end{equation}
applying \eqref{3.20} on the second term, we get
\begin{equation}\label{133}
\nm{\partial_\aa\P_H\paren{\frac1{|Z_{,\aa}|^2}\partial_\aa \P_H\frac{\bar Z_t}{\bar Z_{,\aa}}}-\frac1{|Z_{,\aa}|^2}\partial_\aa ^2\P_H\frac{\bar Z_t}{\bar Z_{,\aa}}}_{L^2}\lec \nm{b_\aa}_{L^\infty}\nm{\partial_\aa\frac1{Z_{,\aa}}}_{L^2}\lec\epsilon \nm{\partial_\aa\frac1{Z_{,\aa}}}_{L^2};
\end{equation}
now since
\begin{equation}\label{134}
\partial_\aa ^2\P_H\frac{\bar Z_t}{\bar Z_{,\aa}}-\frac{\bar Z_{t,\aa\aa}}{\bar Z_{,\aa}}=-\bracket{\P_A, \frac{1}{\bar Z_{,\aa}}}\partial_\aa \bar Z_{t,\aa}+2\bracket{\P_H, \bar Z_{t,\aa}}\partial_\aa \frac{1}{\bar Z_{,\aa}}+\bracket{\P_H, \bar Z_{t}}\partial_\aa^2 \frac{1}{\bar Z_{,\aa}},
\end{equation}
applying \eqref{eq:b10}, \eqref{eq:b11}, \eqref{eq:b23} yields
\begin{equation}\label{135}
\nm{\partial_\aa ^2\P_H\frac{\bar Z_t}{\bar Z_{,\aa}}-\frac{\bar Z_{t,\aa\aa}}{\bar Z_{,\aa}}}_{L^2}\lec \nm{\partial_\aa\frac1{Z_{,\aa}}}_{L^2}\nm{Z_{t,\aa}}_{\dot H^{1/2}}\lec \epsilon \nm{\partial_\aa\frac1{Z_{,\aa}}}_{L^2};
\end{equation}
therefore
\begin{equation}
\begin{aligned}\label{137}
\delta^4\nm{Z_{t,\aa\aa}}_{L^2}- \epsilon \nm{\partial_\aa\frac1{Z_{,\aa}}}_{L^2}\lec &\nm{D_\aa\P_H\paren{\frac1{|Z_{,\aa}|^2}\partial_\aa \P_H\frac{\bar Z_t}{\bar Z_{,\aa}}}}_{L^2}\\&\lec \nm{Z_{t,\aa\aa}}_{L^2}+\epsilon \nm{\partial_\aa\frac1{Z_{,\aa}}}_{L^2}.
\end{aligned}
\end{equation}
We next consider $\nm{\P_H\paren{\frac1{| Z_{,\aa}|^2}\partial_\aa\P_H(G^{(1)})}}_{\dot H^{1/2}}$. By \eqref{eq:G}, 
and applying \eqref{hhalf-2}, \eqref{126}, \eqref{eq:Gj},  \eqref{eq:b115}, \eqref{eq:b21}, \eqref{eq:b20} gives
\begin{equation}\label{138}
\begin{aligned}
&\nm{\P_H\paren{\frac1{| Z_{,\aa}|^2}\partial_\aa\P_H(G^{(1)})}}_{\dot H^{1/2}}\lec \nm{\partial_\aa\P_H(G^{(1)})}_{\dot H^{1/2}}+\nm{\partial_\aa\frac1{Z_{,\aa}}}_{L^2}^2\nm{Z_{t,\aa}}_{L^2}^2\\&\lec
\nm{\partial_\aa\frac1{Z_{,\aa}}}_{\dot H^{1/2}}\nm{Z_{t,\aa}}_{L^2}^2+\nm{\partial_\aa\frac1{Z_{,\aa}}}_{L^2}^2\nm{Z_{t,\aa}}_{L^2}^2+\nm{Z_{t,\aa}}_{\dot H^{1/2}}\nm{Z_{t,\aa}}_{L^2}\nm{\partial_\aa\frac1{Z_{,\aa}}}_{L^2}\\&\lec \epsilon^2 \nm{\partial_\aa\frac1{Z_{,\aa}}}_{\dot H^{1/2}}+\epsilon^2\nm{\partial_\aa\frac1{Z_{,\aa}}}_{L^2} .
\end{aligned}
\end{equation}
We now estimate $\nm{\partial_\aa\P_H \paren{G^{(2)}}}_{L^2}$. By \eqref{eq:Gj}-\eqref{17}, using \eqref{eq:c46}, \eqref{eq:c37} to expand, then applying  \eqref{eq:b18}, \eqref{eq:b21},  \eqref{eq:b10}, \eqref{eq:b23},  \eqref{eq:b115},  
we get
\begin{equation}\label{139}
\begin{aligned}
&\nm{\partial_\aa\P_H \paren{G^{(2)}}}_{L^2}\lec \nm{\partial_\aa\frac1{Z_{,\aa}}}_{L^2}^2\nm{Z_{t,\aa}}_{L^2}+\nm{b_\aa}_{\dot H^{1/2}} \nm{\partial_\aa\frac1{Z_{,\aa}}}_{L^2}\nm{Z_{t,\aa}}_{L^2}^2\\&+\nm{\partial_\aa D_t\frac1{Z_{,\aa}}}_{L^2}\nm{Z_{t,\aa}}_{L^2}^2+ \nm{ D_t\frac1{Z_{,\aa}}}_{L^\infty}\nm{\partial_\aa \frac1{Z_{,\aa}}}_{L^2}\nm{Z_{t,\aa}}_{L^2}^2\\&+\nm{\partial_\aa \frac1{Z_{,\aa}}}_{L^2}\nm{Z_{t,\aa}}_{L^2}\nm{Z_{tt,\aa}}_{L^2}+\nm{b_\aa}_{L^\infty} \nm{\partial_\aa\frac1{Z_{,\aa}}}_{L^2}\nm{Z_{t,\aa}}_{L^2}^2\\&\lec
\epsilon^2 \nm{Z_{t,\aa\aa}}_{L^2}+\epsilon^2\nm{\partial_\aa\frac1{Z_{,\aa}}}_{\dot H^{1/2}}+\epsilon^2\nm{\partial_\aa\frac1{Z_{,\aa}}}_{L^2}.
\end{aligned}
\end{equation}
And we estimate $\nm{\P_H(G^{(3)})}_{\dot H^{1/2}}$ by \eqref{eq:Gj}-\eqref{17}, expanding with \eqref{eq:c46}, \eqref{eq:c38} and \eqref{eq:c14}.
Again the estimate is straightforward  but tedious. We use \eqref{eq:b20} to estimate  $\nm{\P_H\paren{G^{(3)}-D_t\P_H G^{(2)}}}_{\dot H^{1/2}}$ and the $\dot H^{1/2}$ norm of the term 
$$\frac1{\pi }\int\frac{\mathfrak D_t^2\paren{(\bar Z_t(\aa)-\bar Z_t(\bb))^2\paren{\frac1{\bar Z_{,\aa}}-\frac1{\bar Z_{,\bb}}}}}{(\aa-\bb)^2}\,d\bb
$$
in the expansion of $D_t^2<\bar Z_t, i\frac1{\bar Z_{,\aa}}, \bar Z_t>$; 
we use  Sobolev inequality \eqref{eq:sobolev} to estimate the $\dot H^{1/2}$ norms of the remaining terms in the
expansion of $D_t^2<\bar Z_t, i\frac1{\bar Z_{,\aa}}, \bar Z_t>$, 
as well as 
$\nm{D_t<\bar Z_t, i\frac1{\bar Z_{,\aa}}, i\frac1{ Z_{,\aa}}  >}_{\dot H^{1/2}}$  
and $\nm{\bracket{b,\,b,\,\partial_\aa\braces{\frac1{\bar Z_{,\aa}}<\bar Z_t, i\frac1{\bar Z_{,\aa}}, \bar Z_t>}}
}_{\dot H^{1/2}}$. Observe that we have  \eqref{97}, and the estimate in \eqref{139} is useful for the estimate of the second term in the expansion of
$\P_H D_t\P_H G^{(2)}$ by \eqref{eq:c46}. We have
\begin{equation}\label{144}
\begin{aligned}
&\nm{\P_H(G^{(3)})}_{\dot H^{1/2}}\lec 
\epsilon^2 \nm{Z_{t,\aa\aa}}_{L^2}+\epsilon^2\nm{\partial_\aa\frac1{Z_{,\aa}}}_{\dot H^{1/2}}+\epsilon^2\nm{\partial_\aa D_t\frac1{Z_{,\aa}}}_{L^2}+\epsilon^2\nm{D_t^2\frac1{Z_{,\aa}}}_{\dot H^{1/2}}\\&+\epsilon^2\nm{\partial_\aa\frac1{Z_{,\aa}}}_{L^2}+\epsilon^2\nm{Z_{t,\aa}}_{\dot H^{1/2}}\\&\lec \epsilon^2 \nm{Z_{t,\aa\aa}}_{L^2}+\epsilon^2\nm{\partial_\aa\frac1{Z_{,\aa}}}_{\dot H^{1/2}}+\epsilon^2\nm{\partial_\aa\frac1{Z_{,\aa}}}_{L^2}+\epsilon^2\nm{Z_{t,\aa}}_{\dot H^{1/2}}.
\end{aligned}
\end{equation}
We estimate $\nm{\P_H(G^{(4)})}_{L^2}$ by \eqref{eq:Gj}-\eqref{17}, expanding with \eqref{eq:c46}-\eqref{eq:c35}, \eqref{eq:c38}, \eqref{eq:c49}. The estimate is routine, using the inequalities in Appendix~\ref{ineq} and the estimates in \S\ref{quan}.  Going through the terms carefully, we get 
\begin{equation}\label{159}
\nm{\P_H(G^{(4)})}_{L^2}\lec 
 \epsilon^2 \nm{Z_{t,\aa\aa}}_{L^2}+\epsilon^2\nm{\partial_\aa\frac1{Z_{,\aa}}}_{\dot H^{1/2}}+\epsilon^2\nm{\partial_\aa\frac1{Z_{,\aa}}}_{L^2}+\epsilon^2\nm{Z_{t,\aa}}_{\dot H^{1/2}}.
\end{equation}
Finally the correcting terms $C_{1,4}+C_{2,4}+F_4+H_4$ can be estimated similarly as in {\sf Step 2} by combining the terms in $C_{i, 4}$ with the factor $D_t^5 Z_t$ with those terms in $F_4$ with the factor $\bar {\mathcal P D_t^3 Z_t}$
and use the equation
\begin{equation}\label{160}
-D_t^5 Z_t+\bar {\mathcal P D_t^3 \bar Z_t}=-i\frac{A_1}{|Z_{,\aa}|^2}\partial_\aa D_t^3 Z_t
\end{equation}
and we have, by applying the inequalities in Appendix~\ref{ineq}, 
\begin{equation}\label{175}
\begin{aligned}
&\abs{C_{1,4}+C_{2,4}+F_4+H_4} \\&\lec\epsilon^2 \paren{\nm{Z_{t,\aa\aa}}_{L^2}+\nm{\partial_\aa\frac1{Z_{,\aa}}}_{\dot H^{1/2}}+\nm{\partial_\aa\frac1{Z_{,\aa}}}_{L^2}+\nm{Z_{t,\aa}}_{\dot H^{1/2}} +\nm{D_\aa\Th^{(4)}}_{L^2}  }^2.
\end{aligned}
\end{equation}
This together with \eqref{110}  shows that there is a $\epsilon_0=\epsilon_0(\delta)>0$, such that for all of $0<\epsilon\le \epsilon_0$,
\begin{equation}\label{176}
\mathcal E_3(t)+\mathcal E_4(t)\lec \nm{Z_{t,\aa\aa}}_{L^2}^2+\nm{\partial_\aa\frac1{Z_{,\aa}}}_{\dot H^{1/2}}^2+\nm{\partial_\aa\frac1{Z_{,\aa}}}_{L^2}^2+\nm{Z_{t,\aa}}_{\dot H^{1/2}}^2 \le c_2(\delta)^{-1}(\mathcal E_3(t)+ \mathcal E_4(t)),
\end{equation}
for some constant $c_2(\delta)>0$. 
Combining with \eqref{76}, \eqref{90} proves the inequality \eqref{46} in Proposition~\ref{prop:4.3}.
We also note that by \eqref{127}, \eqref{133}, \eqref{135}, \eqref{139}, \eqref{125}, \eqref{136}, \eqref{138}, \eqref{144}, \eqref{81},
\begin{equation}\label{179}
\nm{\partial_\aa \Th^{(4)}}_{L^2}+\nm{\Th^{(5)}}_{\dot H^{1/2}}+\nm{\partial_\aa\Th^{(3)}}_{\dot H^{1/2}} \lec \epsilon.
\end{equation}
Because
\begin{equation}\label{294}
\begin{aligned}
\partial_\aa D_t\Th^{(3)}&=\partial_\aa\Th^{(4)}+\partial_\aa\bracket{\P_A, b}\partial_\aa\Th^{(3)},\\
D_t^2\Th^{(3)}
&=\Th^{(5)}+\bracket{\P_A, b}\partial_\aa\Th^{(4)}+D_t\bracket{\P_A, b}\partial_\aa \Th^{(3)},\\
D_tD_\aa\Th^{(3)}&=D_\aa D_t\Th^{(3)}-D_\aa Z_t D_\aa \Th^{(3)},
\end{aligned}
\end{equation}
using the identity
\begin{equation}\label{285}
\bracket{b,b; \partial_\aa\Th^{(3)}}=-<b,b,\partial_\aa\Th^{(3)}>+\partial_\aa\Th^{(3)}\bracket{b,b;1},
\end{equation}
we have by \eqref{eq:b10}, \eqref{eq:b23}, \eqref{hhalf42}, \eqref{eq:c14}, \eqref{eq:b20}, \eqref{hhalf-2}, \eqref{eq:b22}, \eqref{eq:b12}, 
\begin{equation}\label{295}
\nm{\partial_\aa D_t\Th^{(3)}}_{L^2}+\nm{D_t^2\Th^{(3)}}_{\dot H^{1/2}}+\nm{ D_t D_\aa \Th^{(3)}}_{L^2} \lec \epsilon.
\end{equation}

\subsubsection{The estimates for $\frac d{dt}\mathfrak E_1(t)$, $\frac d{dt}\mathcal E_j(t)$, $j\ge 2$} In this section we prove the inequalities \eqref{47} and \eqref{51}. This requires us to estimate $\frac d{dt}\mathfrak E_1(t)$, $\frac d{dt}\mathcal E_j(t)$, $2\le j\le 4$. We will use Theorem~\ref{th:main1} for $\frac d{dt}\mathfrak E_1(t)$, and \eqref{34}-\eqref{33}-\eqref{remainder3} for $\frac d{dt}\mathcal E_j(t)$, $2\le j\le 4$. We refer the reader to Appendix~\ref{quantities} for a  list of the quantities controlled by $\epsilon$.

{\sf Step 5.} In this step we  want to use \eqref{eq:main1}-\eqref{remainder2} to show 
that for  $0<\epsilon\le \epsilon_0(\delta)$, where $\epsilon_0(\delta)$ is as in {\sf Steps 1-4}, 
\begin{equation}\label{180}
\frac d{dt}\mathfrak E_1(t)\lec \frac{\epsilon^3}{\delta^3} \mathfrak E_1(t),\qquad \text{for } t\in [0, T_0].
\end{equation}
The prove for \eqref{180} is quite straightforward. Observe that $D_\aa\Thb=i\paren{1-\frac1{Z_{,\aa}}}$ by \eqref{67}, and by \eqref{2dinterface}, 
\begin{equation}\label{182}
\bar Z_{tt}=i\paren{1-\frac1{Z_{,\aa}}}-i\,\frac{A_1-1}{Z_{,\aa}} =D_\aa\Thb-i\,\frac{A_1-1}{Z_{,\aa}};
\end{equation}
recall the notation \eqref{62}, \eqref{18}, \eqref{22}, we write
\begin{equation}\label{181}
\begin{aligned}
&\Re \int i\partial_\aa\bar{\Thb}\P_H\paren{G^{(2)}-D_t\P_H G^{(1)}}\,d\aa-I_{2,1}\\&=\frac1{2\pi}\Re \iint (\bar{D_\aa\Thb}(\aa,t)-Z_{tt}(\aa,t))\frac{\mathfrak D_t(\theta\,\bar\theta)\mathfrak D_t\theta}{(\aa-\bb)^2}\,d\bb\,d\aa\\&+
\frac1{2\pi}\Re \iint \bar{D_\aa\Thb}(\aa,t)\frac{\mathfrak D_t(\theta\,\bar\theta)(\lambda^1-\mathfrak D_t\theta)}{(\aa-\bb)^2}\,d\bb\,d\aa\\&+
\frac12\Re\int i \,  \bar{D_\aa\Thb}(\aa,t)\paren{<\bar Z_t, i\,\frac{1-A_1}{\bar Z_{,\aa}}, -i\frac1{Z_{,\aa}}>+  <i\,\frac{A_1-1}{ Z_{,\aa}}, Z_t,  -i\frac1{Z_{,\aa}}>  }  \,d\aa,
\end{aligned}
\end{equation}
so by \eqref{eq:b18}, and the estimates in \S\ref{quan}, 
\begin{equation}\label{184}
\begin{aligned}
&\abs{\Re \int i\partial_\aa\bar{\Thb}\P_H\paren{G^{(2)}-D_t\P_H G^{(1)}}\,d\aa-I_{2,1}}\lec \nm{A_1-1}_{L^2}\nm{Z_{t,\aa}}_{L^2}\nm{Z_{tt,\aa}}_{L^2}\nm{Z_{tt}}_{L^2}\\&+\nm{A_1-1}_{L^2}\nm{Z_{t,\aa}}_{L^2}\nm{1-\frac1{Z_{,\aa}}}_{L^2}\paren{\nm{Z_{tt,\aa}}_{L^2}+\nm{\partial_\aa\frac1{Z_{,\aa}}}_{L^2}}\\&\lec \epsilon^3\paren{\nm{Z_t}_{\dot H^{1/2}}+\nm{1-\frac1{Z_{,\aa}}}_{L^2}}^2;
\end{aligned}
\end{equation}
observe that $D_\aa\Tha=\bar Z_t$, we write
\begin{equation}\label{186}
\begin{aligned}
&2\Re \int i\, \partial_\aa\bar\Thb\P_H D_t\P_H\paren{G^{(1)}-D_t\P_H G^{(0)}}\,d\aa-I_{1,1}\\&=
\frac1{\pi}\Re\int Z_{tt} \paren{D_t\int\frac{\mathfrak D_t(\theta\,\bar\theta)\,\theta}{(\aa-\bb)^2}\,d\bb-\int\frac{\mathfrak D_t \paren{\mathfrak D_t(\theta\,\bar\theta)\,\theta}}{(\aa-\bb)^2}\,d\bb}\,d\aa\\&+
\frac1{\pi}\Re\int \partial_\aa\bar\Thb\paren{\P_H D_t\P_H\frac1{\bar Z_{,\aa}}-\frac1{\bar Z_{,\aa}}D_t}\int\frac{\mathfrak D_t(\theta\,\bar\theta)\,\theta}{(\aa-\bb)^2}\,d\bb\,d\aa\\&+
\frac1{\pi}\Re\int \paren{\bar{D_\aa\Thb}-Z_{tt}} D_t\int\frac{\mathfrak D_t(\theta\,\bar\theta)\,\theta}{(\aa-\bb)^2}\,d\bb\,d\aa
\\&+\Re \int i\, \partial_\aa\bar\Thb\P_H D_t \P_H\braces{\frac1{\bar Z_{,\aa}} \paren{<\bar Z_t, i\,\frac{1-A_1}{\bar Z_{,\aa}}, \bar Z_t >+  <i\,\frac{A_1-1}{ Z_{,\aa}}, Z_t,  \bar Z_t>  }  }\,d\aa;
\end{aligned}
\end{equation}
now 
\begin{equation}\label{187}
\P_H D_t\P_H\paren{\frac1{\bar Z_{,\aa}}f}-\P_H\paren{\frac1{\bar Z_{,\aa}}D_tf}
=\P_H \braces{D_t\paren{\frac1{\bar Z_{,\aa}}}f}-\P_H\bracket{b,\P_A}\partial_\aa\paren{\frac1{\bar Z_{,\aa}}f},
\end{equation}
so by \eqref{eq:c37}, \eqref{eq:c46}, \eqref{eq:b115}, \eqref{eq:b18}, \eqref{eq:b19}, \eqref{hhalf-1},
\begin{equation}\label{188}
\abs{2\Re \int i\, \partial_\aa\bar\Thb\P_H D_t\P_H\paren{G^{(1)}-D_t\P_H G^{(0)}}\,d\aa-I_{1,1}} \lec \frac{\epsilon^3}\delta\paren{\nm{Z_t}_{\dot H^{1/2}}^2+\nm{1-\frac1{Z_{,\aa}}}_{L^2}^2}.
\end{equation}
Now the estimates for all the terms in $R_{IC,1}$ are straightforward, using \eqref{eq:b115}, \eqref{eq:b18}, \eqref{eq:b19}, \eqref{eq:b46}, \eqref{eq:b47}, \eqref{eq:b24}, \eqref{eq:b28}, \eqref{eq:b37}  and the estimates in \S\ref{quan}, we have
\begin{equation}\label{189}
\abs{R_{IC,1}}\lec \epsilon^3\paren{\nm{Z_t}_{\dot H^{1/2}}+\nm{1-\frac1{Z_{,\aa}}}_{L^2}}^2.
\end{equation}
This together with \eqref{184}, \eqref{188}, \eqref{70}, \eqref{80}, \eqref{48} proves \eqref{180}. 

In the remainder of this paper we will show that for $0<\epsilon\le \epsilon_0(\delta)$, where $\epsilon_0(\delta)$ is as in Steps 2-4, 

\begin{equation}\label{191}
\frac d{dt} \mathcal E_2(t)\lec \epsilon^5, \quad \frac d{dt} \mathcal E_3(t)\lec \epsilon^3\paren{\nm{Z_{t,\aa}}_{\dot H^{1/2}}^2+\nm{\partial_\aa\frac1{Z_{,\aa}}}_{L^2}^2},\quad \frac d{dt} \mathcal E_4(t)\lec \epsilon^5.
\end{equation}
this together with \eqref{180}, \eqref{110} proves the inequalities \eqref{47}, \eqref{51} and finishes the proof for  Propositions~\ref{prop:4.3}, ~\ref{prop:4.4} as well as Theorem~\ref{thm:main2}. 

{\sf Step 6}. We begin with those terms in the remainder $\mathcal R_{IC, j}+\Re\frac d{dt} D_j(t)$, $j\ge 2$, (cf. \eqref{33}-\eqref{remainder3})  for which the desired estimates can be directly derived from the inequalities in Appendix~\ref{ineq}, keep in mind that   lower order norms such as  $\nm{1-\frac1{Z_{,\aa}}}_{L^2}$ or $\nm{Z_{t}}_{\dot H^{1/2}}$ are NOT allowed, as they are not controlled by $L(t)$; and since $\nm{\frac1{Z_{,\aa}}-1}_{L^\infty}\le 1$,  $\nm{Z_{tt}}_{L^\infty}\lec 1$, we can  involve 
$\nm{\frac1{Z_{,\aa}}-1}_{L^\infty}$ and $\nm{Z_{tt}}_{L^\infty}$ only in sextic or higher order terms, at most once.

We first consider the following type of terms
\begin{equation}\label{192}
M_{1,j}:=\iint\paren{b_\aa+b_\bb-2\frac{b(\aa)-b(\bb)}{\aa-\bb}}\frac{D_t^{j}Z_t\, \mathfrak D_t^l\theta\,\mathfrak D_t^i\bar\theta\,\mathfrak D_t^{k}\theta}{(\aa-\bb)^2}\,d\aa d\bb
\end{equation}
where $l+i+k=j$. This type of terms appears in \eqref{31}, \eqref{remainder1} and \eqref{25}, we use \eqref{eq:b46}, \eqref{eq:b47} and the estimates in \S\ref{quan} to handle $M_{1,j}$.

For $j=2$, $(l,i,k)=(0, 0,2), (0,1,1)$ and their permutations, so 
\begin{equation}\label{193}
\abs{M_{1,2}}\lec \nm{D_t^2 Z_t}_{L^2}\nm{b_\aa}_{L^\infty}\paren{\nm{Z_{t,\aa}}_{L^2}^2\nm{D_t^2 Z_t}_{L^2}+\nm{Z_{t,\aa}}_{L^2}\nm{Z_{tt}}_{\dot H^{1/2}}^2 }\lec \epsilon^5.
\end{equation}

For $j=3$, $(l,i,k)=(0,0, 3), (0, 1,2), (1,1,1)$ and their permutations, so 
\begin{equation}\label{194}
\begin{aligned}
\abs{M_{1,3}}&\lec \nm{D_t^3 Z_t}_{L^2}\nm{b_\aa}_{L^\infty}\paren{\nm{Z_{t,\aa}}_{L^2}^2\nm{D_t^3 Z_t}_{L^2}+\nm{Z_{t,\aa}}_{L^2} \nm{Z_{tt,\aa}}_{L^2} \nm{D_t^2 Z_t}_{L^2}}\\&+\nm{D_t^3 Z_t}_{L^2}\nm{b_\aa}_{L^\infty}\nm{Z_{tt,\aa}}_{L^2}\nm{Z_{tt}}_{\dot H^{1/2}}^2 \lec 
\epsilon^3\paren{\nm{Z_{t,\aa}}_{\dot H^{1/2}}^2+\nm{\partial_\aa\frac1{Z_{,\aa}}}_{L^2}^2}.
\end{aligned}
\end{equation}

For $j=4$, $(l,i,k)=(0,0, 4), (0, 1,3), (0,2,2), (1, 1,2)$ and their permutations, so a similar argument also gives
\begin{equation}\label{195}
\abs{M_{1,4}}\lec  \epsilon^5.
\end{equation}

We next consider terms of the type 
\begin{equation}\label{196}
M_{2,j}:=\iint \frac{ \bar{\mathcal P D_t^{m}\bar Z_t} \mathfrak D_t^l\theta\,\mathfrak D_t^i \bar\theta\,\mathfrak D_t^{k}\theta}{(\aa-\bb)^2}\,d\aa d\bb
\end{equation}
where $m+l+i+k=2j-1$. We discuss the following cases: 

1. $m=j-1$, $l+i+k=j$, as appeared in \eqref{31};

2. $i=j$, $m+l+k=j-1$, as appeared in \eqref{25}- \eqref{remainder1};

3. $i=0$, $m+l+k=2j-1$, $ 0\le m, l, k\le j-1$, as appeared in \eqref{25}-\eqref{remainder1};

4. $m=0$, $l+i+k=2j-1$, $ 0\le l,i, k\le j-1$, as appeared in \eqref{25}-\eqref{remainder1}.

Observe that we can use the symmetry \eqref{26} to rewrite the corresponding terms in \eqref{25} as \eqref{196}, and vice versa. 
We use \eqref{eq:b18}, \eqref{eq:b19} and the estimates in \S\ref{quan} to obtain the inequalities \eqref{197}, \eqref{198}, and \eqref{199} below. 

For $j=2$, we have the following cases: $m=1$, $(l,i,k)=(0, 0, 2), (0,1,1)$ and permutations; $i=2$, $(m, l, k)=(0,0, 1)$ and permutations; $i=0$, $(m, l, k)=(1,1, 1)$;  $m=0$, $( l,i, k)=(1,1, 1)$. It is clear that by \eqref{eq:b18}, \eqref{eq:b19} and the estimates in \S\ref{quan} 
we have 
\begin{equation}\label{197}
\abs{M_{2,2}}\lec \epsilon^5.
\end{equation}

For $j=3$, we have the following cases: $m=2$,  $(l,i,k)=(0, 0, 3), (0,1,2), (1,1,1)$ and permutations; $i=3$, $(m, l, k)=(0,0, 2),(0,1,1)$ and permutations; $i=0$, $(m, l, k)=(1,2, 2)$ and permutations; $m=0$, $(l, i, k)=(1,2, 2)$ and permutations. Observe that for the case where $i=3$, we need to use the symmetry \eqref{26} to rewrite $M_{2,3}$ as in \eqref{25}. 
We have  by \eqref{eq:b18}, \eqref{eq:b19} and the estimates in \S\ref{quan}, 
\begin{equation}\label{198}
\abs{M_{2,3}} \lec 
\epsilon^3\paren{\nm{Z_{t,\aa}}_{\dot H^{1/2}}^2+\nm{\partial_\aa\frac1{Z_{,\aa}}}_{L^2}^2}.
\end{equation}

For $j=4$, we have the following cases: $m=3$, $(l,i,k)=(0, 0, 4), (0,1,3), (0,2,2), (1, 1, 2)$ and permutations; $i=4$, $(m, l, k)=(0,0, 3),(0,1,2), (1,1,1)$ and permutations; $i=0$, $(m, l, k)=(3,3, 1), (3,2,2) $ and permutations; $m=0$, $( l, i,k)=(3,3, 1), (3,2,2) $ and permutations. Again we use the form in \eqref{25} for the case where $i=4$. We have
quite straightforwardly that
\begin{equation}\label{199}
\abs{M_{2,4}}\lec \epsilon^5.
\end{equation}

Observe that those terms  
$$\iint\frac{ D_t^jZ_t\,  \mathfrak D_t^{m}\braces{(\mathcal P\mathfrak D_t^l\theta)\,\mathfrak D_t^i\bar\theta\,\mathfrak D_t^k\theta - \mathfrak D_t^l\theta\, \overline{(\mathcal P\mathfrak D_t^i\theta)}\,\mathfrak D_t^k\theta +\mathfrak D_t^l\theta\,\mathfrak D_t^i\bar\theta\,(\mathcal P\mathfrak D_t^k\theta )}}{(\aa-\bb)^2}\,d\aa\,d\bb$$
in \eqref{25}, with $m+l+i+k=j-1$ can be treated exactly in the same way as $M_{2,j}$,  we do not specifically go over these terms.

Now we consider the terms $\iint \frac{\bar{\bracket{D_t,\mathcal P}D_t^{j-1}\bar Z_t }\mathfrak D_t^{m}(\mathfrak D_t^l\theta\,\mathfrak D_t^i \bar\theta\,\mathfrak D_t^{k}\theta)}{(\aa-\bb)^2}\,d\aa d\bb$ in \eqref{31}. We use product rules and complex conjugate to reduce it to the following form 
\begin{equation}\label{200}
M_{3,j}:=\iint \frac{\bracket{D_t,\mathcal P}D_t^{j-1}\bar Z_t\, \mathfrak D_t^l\bar\theta\,\mathfrak D_t^i \theta\,\mathfrak D_t^{k}\bar\theta}{(\aa-\bb)^2}\,d\aa d\bb,
\end{equation}
 where $l+i+k=j-1$. We know by \eqref{eq:c32},
 \begin{equation}\label{201}
 \bracket{D_t,\mathcal P}D_t^{j-1}\bar Z_t=
 i\,\paren{\frac{D_t A_1}{A_1}+b_\aa-2\Re D_\aa Z_t}\frac{A_1}{|Z_{,\aa}|^2}\partial_\aa D_t^{j-1}\bar Z_t,
 \end{equation}
 so $M_{3,j}$ is sextic; and we have, by \eqref{hhalf-1} and the estimates in \S\ref{quan},
\begin{equation}\label{202}
\begin{aligned}
&\abs{M_{3,j}}\lec \nm{D_t^{j-1} Z_t}_{\dot H^{1/2}} \nm{  \paren{\frac{D_t A_1}{A_1}+b_\aa-2\Re D_\aa Z_t}\frac{A_1}{|Z_{,\aa}|^2}<D_t^l Z_t, D_t^i\bar Z_t, D_t^k Z_t>}_{\dot H^{1/2}}\\&\lec
\nm{D_t^{j-1} Z_t}_{\dot H^{1/2}} \paren{\nm{ D_t A_1}_{L^\infty}+\nm{b_\aa-2\Re D_\aa Z_t}_{L^\infty}}\nm{<D_t^l Z_t, D_t^i\bar Z_t, D_t^k Z_t>}_{\dot H^{1/2}}\\&+
\nm{D_t^{j-1} Z_t}_{\dot H^{1/2}} \paren{\nm{ \frac{D_t A_1}{|Z_{,\aa}|^2}}_{\dot H^{1/2}}+\nm{\frac{(b_\aa-2\Re D_\aa Z_t)A_1}{|Z_{,\aa}|^2}}_{\dot H^{1/2}}}\nm{ <D_t^l Z_t, D_t^i\bar Z_t, D_t^k Z_t>}_{L^\infty};
\end{aligned}
\end{equation}
we can also use \eqref{hhalf-2} to get
\begin{equation}\label{203}
\begin{aligned}
&\abs{M_{3,j}} \lec
\nm{D_t^{j-1} Z_t}_{\dot H^{1/2}} \paren{\nm{ D_t A_1}_{L^\infty}+\nm{b_\aa-2\Re D_\aa Z_t}_{L^\infty}}\nm{<D_t^l Z_t, D_t^i\bar Z_t, D_t^k Z_t>}_{\dot H^{1/2}}\\&+
\nm{D_t^{j-1} Z_t}_{\dot H^{1/2}} \paren{\nm{ \frac{D_t A_1}{|Z_{,\aa}|^2}}_{\dot H^{1}}+\nm{\frac{(b_\aa-2\Re D_\aa Z_t)A_1}{|Z_{,\aa}|^2}}_{\dot H^{1}}}\nm{ <D_t^l Z_t, D_t^i\bar Z_t, D_t^k Z_t>}_{L^2}.
\end{aligned}
\end{equation}
For $j=2$, we have $(l ,i, k)=(0,0,1)$ and permutations; for $j=3$, we have $(l,i,k)=(0, 0, 2), (0,1,1)$ and permutations; for $j=4$, we have $(l,i,k)=(0,0,3), (0, 1,2), (1,1,1)$ and permutations.
We use \eqref{202} for $j=2, 3$ and \eqref{203} for $j=4$. By  \eqref{hhalf-1}, \eqref{eq:b20}, \eqref{eq:b18}, \eqref{eq:b19} and the estimates in \S\ref{quan} we get
\begin{equation}\label{204}
\abs{M_{3,2}}\lec \epsilon^5, \quad \abs{M_{3,3}} \lec \epsilon^3\paren{\nm{Z_{t,\aa}}_{\dot H^{1/2}}^2+\nm{\partial_\aa\frac1{Z_{,\aa}}}_{L^2}^2},\quad \abs{M_{3,4}}\lec \epsilon^6.
\end{equation}

Next we look at the terms of type 
\begin{equation}\label{205}
M_{4,j}:= \iint \paren{b_\aa+b_\bb-2\frac{b(\aa)-b(\bb)}{\aa-\bb}}\frac{ \bar{\mathcal P D_t^{j-1}\bar Z_t}  \mathfrak D_t^l\theta\,\mathfrak D_t^i \bar\theta\,\mathfrak D_t^{k}\theta}{(\aa-\bb)^2}\,d\aa d\bb
 \end{equation}
 and 
 \begin{equation}\label{206}
 M_{5,j}:=\iint
\paren{6\frac{(b(\aa)-b(\bb))^2}{(\aa-\bb)^2}-4(b_\aa+b_\bb)\frac {(b(\aa)-b(\bb))}{(\aa-\bb)} +2b_\aa b_\bb}\frac{D_t^{j}Z_t\,\mathfrak D_t^l\theta\,\mathfrak D_t^i\bar\theta\,\mathfrak D_t^{k}\theta}{(\aa-\bb)^2}\,d\aa d\bb
\end{equation}
in \eqref{31}, where $l+i+k=j-1$.
Observe that both terms are sextic, and we used product rules to reduce the terms in \eqref{31} to the forms of \eqref{205}, \eqref{206}. We take advantage of the fact that these terms are sextic and use \eqref{eq:b115}, \eqref{hhalf44} to deduce for $m=4,5$, 
\begin{equation}\label{207}
\abs{M_{m,2}}\lec \epsilon^5, \quad \abs{M_{m,3}} \lec \epsilon^3\paren{\nm{Z_{t,\aa}}_{\dot H^{1/2}}^2+\nm{\partial_\aa\frac1{Z_{,\aa}}}_{L^2}^2},\quad \abs{M_{m,4}}\lec \epsilon^5.
\end{equation}

Now we consider the term 
\begin{equation}\label{208}
M_{6,j}:=\iint\frac{ D_t^jZ_t\, \bracket{\mathcal P, \mathfrak D_t^{m}}\paren{\mathfrak D_t^l\theta\,\mathfrak D_t^i\bar\theta\,\mathfrak D_t^k\theta }}{(\aa-\bb)^2}\,d\aa\,d\bb=\iint\frac{ \bracket{\mathfrak P, D_t^{m}}\paren{\mathfrak D_t^l\theta\,\mathfrak D_t^i\bar\theta\,\mathfrak D_t^k\theta }\,  \mathfrak D_t^j\bar\theta}{(\aa-\bb)^2}\,d\aa\,d\bb
\end{equation}
in \eqref{25}, where $m+l+i+k=j-1$. Here we used the symmetry \eqref{26} to get the second equality above. We expand $\bracket{\mathfrak P, D_t^{m}}$ by \eqref{eq:c32}, \eqref{eq:c33}, and use \eqref{eq:b18} and the estimates in \S\ref{quan} to obtain 
\begin{equation}\label{209}
\abs{M_{6,2}}\lec \epsilon^6, \quad \abs{M_{6,3}} \lec \epsilon^4\paren{\nm{Z_{t,\aa}}_{\dot H^{1/2}}^2+\nm{\partial_\aa\frac1{Z_{,\aa}}}_{L^2}^2},\quad \abs{M_{6,4}}\lec \epsilon^6.
\end{equation}

Now we treat the terms
\begin{equation}\label{210}
M_{7,j}:=i\iint \paren{\partial_\bb\frac{A_1(\bb)}{|Z_{,\bb}|^2}-2\frac{\frac{A_1(\aa)}{|Z_{,\aa}|^2}-\frac{A_1(\bb)}{|Z_{,\bb}|^2}}{\aa-\bb}}\frac{D_t^j Z_t(\aa)\,\mathfrak D_t^l\theta\,\mathfrak D_t^i\bar\theta\,\mathfrak D_t^k\theta }{(\aa-\bb)^2}\,d\aa\,d\bb
\end{equation}
and 
\begin{equation}\label{211}
M_{8,j}:=\iint\paren{\partial_\bb D_tb(\bb)-2\frac{D_tb(\aa)-D_t b(\bb)}{\aa-\bb}}\frac{D_t^{j}Z_t(\aa)\,\mathfrak D_t^l\theta\,\mathfrak D_t^i\bar\theta\,\mathfrak D_t^{k}\theta}{(\aa-\bb)^2}\,d\aa d\bb
\end{equation}
where $l+i+k=j-1$ in \eqref{25}-\eqref{remainder1} and \eqref{31}. Here we used the product rules to convert the terms in \eqref{25} and \eqref{31} to the forms in \eqref{210}, \eqref{211}. We use \eqref{eq:b28} and \eqref{eq:b37} and the estimates in \S\ref{quan} to obtain for $m=7, 8$, 
\begin{equation}\label{212}
\abs{M_{m,2}}\lec \epsilon^5, \quad \abs{M_{m,3}} \lec \epsilon^3\paren{\nm{Z_{t,\aa}}_{\dot H^{1/2}}^2+\nm{\partial_\aa\frac1{Z_{,\aa}}}_{L^2}^2},\quad \abs{M_{m,4}}\lec \epsilon^5.
\end{equation}

We also have the following terms from $R_{\bar 0; l, \bar i, k}^{(0)}$, where $l+i+k=2j-1$, $0\le l,i,k\le j-1$,  in \eqref{25}-\eqref{remainder1},
\begin{equation}\label{219-1}
\iint\paren{b_\aa+b_\bb-2\frac{b(\aa)-b(\bb)}{\aa-\bb}}\frac{\paren{Z_t\mathfrak D_t- Z_{tt}}\paren{\mathfrak D_t^l\theta\,\mathfrak D_t^i\bar\theta\,\mathfrak D_t^k\theta }}{(\aa-\bb)^2}\,d\aa\,d\bb;
\end{equation}
we  use the symmetry \eqref{26} to rewrite it as 
\begin{equation}\label{219}
M_{9,j}:=\frac12\iint\paren{b_\aa+b_\bb-2\frac{b(\aa)-b(\bb)}{\aa-\bb}}\frac{\paren{\bar\theta\mathfrak D_t- \mathfrak D_t\bar\theta}\paren{\mathfrak D_t^l\theta\,\mathfrak D_t^i\bar\theta\,\mathfrak D_t^k\theta }}{(\aa-\bb)^2}\,d\aa\,d\bb;
\end{equation}
and use product rules to expand.  By \eqref{eq:b46}, \eqref{eq:b47} and the estimates in \S\ref{quan}, we have
\begin{equation}\label{220}
\begin{aligned}
\abs{M_{9,3}}&\lec \nm{b_\aa}_{L^\infty}\nm{Z_{t,\aa}}_{L^2}\paren{\nm{D_t^3Z_{t}}_{L^2}\nm{Z_{tt,\aa}}_{L^2}\nm{Z_{ttt}}_{L^2}+\nm{Z_{ttt}}_{\dot H^{1/2}}^2\nm{Z_{ttt}}_{L^2}}\\&+ \nm{b_\aa}_{L^\infty}\nm{Z_{tt,\aa}}_{L^2}^2\nm{Z_{ttt}}_{L^2}^2 \lec \epsilon^3\paren{\nm{Z_{t,\aa}}_{\dot H^{1/2}}^2+\nm{\partial_\aa\frac1{Z_{,\aa}}}_{L^2}^2},
\end{aligned}
\end{equation}
and 
 \begin{equation}\label{221}
\begin{aligned}
\abs{M_{9,4}}&\lec \epsilon^5.
\end{aligned}
\end{equation}
For $j=2$,  $(l, i, k)=(1,1,1)$,  we have, by \eqref{eq:b47}, 
\begin{equation}\label{222}
\begin{aligned}
&\abs{\iint\paren{b_\aa+b_\bb-2\frac{b(\aa)-b(\bb)}{\aa-\bb}}\frac{\bar\theta\mathfrak D_t \paren{\mathfrak D_t\theta\,\mathfrak D_t\bar\theta\,\mathfrak D_t\theta }}{(\aa-\bb)^2}\,d\aa\,d\bb}\\&\lec \nm{Z_{ttt}}_{L^2}\nm{Z_{t,\aa}}_{L^2}\nm{Z_{tt}}_{\dot H^{1/2}}^2\nm{b_\aa}_{L^\infty}\lec \epsilon^5;
\end{aligned}
\end{equation}
and by \eqref{eq:b48},
\begin{equation}\label{223}
\abs{\iint\paren{b_\aa+b_\bb-2\frac{b(\aa)-b(\bb)}{\aa-\bb}}\frac{\mathfrak D_t\bar\theta \mathfrak D_t\theta\,\mathfrak D_t\bar\theta\,\mathfrak D_t\theta }{(\aa-\bb)^2}\,d\aa\,d\bb}\lec \nm{b_\aa}_{L^\infty} \nm{Z_{tt}}_{\dot H^{1/2}}^4\lec \epsilon^5;
\end{equation}
so
\begin{equation}\label{224}
\abs{M_{9,2}}\lec \epsilon^5.
\end{equation}
We can estimate the second and third term on the right hand side of \eqref{226} similarly and obtain
\begin{equation}\label{229}
\begin{aligned}
&\abs{\iint  \mathbb H b_\aa 
\frac{\mathfrak D_t\paren{\bar\theta\,\mathfrak D_t\theta\mathfrak D_t\bar\theta\mathfrak D_t\theta}}{(\aa-\bb)^2}\,d\aa\,d\bb}\lec \nm{\mathbb H b_\aa}_{L^\infty} \nm{Z_{tt}}_{\dot H^{1/2}}^4\\ &+\nm{\mathbb H b_\aa}_{L^2} \nm{Z_{ttt}}_{L^\infty}\nm{Z_{tt}}_{\dot H^{1/2}}^2\nm{Z_{t,\aa}}_{L^2}\lec \epsilon^5;
\end{aligned}
\end{equation}
\begin{equation}\label{230} 
\begin{aligned} 
&\abs{\iint \paren{b_\bb-2\frac{b(\aa)-b(\bb)}{\aa-\bb}} \mathbb H b_\aa
\frac{\bar\theta\,\mathfrak D_t\theta\mathfrak D_t\bar\theta\mathfrak D_t\theta}{(\aa-\bb)^2}\,d\aa\,d\bb}\\&
\lec \nm{\mathbb H b_\aa}_{L^2} \nm{b_\aa}_{L^\infty}\nm{Z_{tt}}_{\dot H^{1/2}}^2\nm{Z_{t,\aa}}_{L^2}\nm{Z_{tt}}_{L^\infty}\lec \epsilon^5.
\end{aligned}
\end{equation}

Observe that the first two (non-zero) terms in $R_{\bar 0; l, \bar i, k}^{(0)}$, $l+i+k=2j-1$, $0\le l,i,k\le j-1$, cf. \eqref{25}-\eqref{remainder1}, have been covered in $M_{2,j}$, there is one more term left, which is 
\begin{equation}\label{227}
i\iint \paren{\partial_\aa\frac{A_1(\aa)}{|Z_{,\aa}|^2}+\partial_\bb\frac{A_1(\bb)}{|Z_{,\bb}|^2}-2\frac{\frac{A_1(\aa)}{|Z_{,\aa}|^2}-\frac{A_1(\bb)}{|Z_{,\bb}|^2}}{\aa-\bb}}\frac{ Z_t\, \mathfrak D_t^l\theta\,\mathfrak D_t^i\bar\theta\,\mathfrak D_t^k\theta }{(\aa-\bb)^2}\,d\aa\,d\bb.
\end{equation}
We use symmetry \eqref{26} to rewrite it as
\begin{equation}\label{228}
M_{10, j}:=\frac12 i\iint \paren{\partial_\aa\frac{A_1(\aa)}{|Z_{,\aa}|^2}+\partial_\bb\frac{A_1(\bb)}{|Z_{,\bb}|^2}-2\frac{\frac{A_1(\aa)}{|Z_{,\aa}|^2}-\frac{A_1(\bb)}{|Z_{,\bb}|^2}}{\aa-\bb}}\frac{ \bar\theta\, \mathfrak D_t^l\theta\,\mathfrak D_t^i\bar\theta\,\mathfrak D_t^k\theta }{(\aa-\bb)^2}\,d\aa\,d\bb.
\end{equation}
By \eqref{eq:b115}, \eqref{hhalf44} and the estimates in \S\ref{quan}, we have
\begin{equation}\label{231}
\abs{M_{10,3}}\lec \epsilon^3\paren{\nm{Z_{t,\aa}}_{\dot H^{1/2}}^2+\nm{\partial_\aa\frac1{Z_{,\aa}}}_{L^2}^2},\qquad \abs{M_{10,4}}\lec \epsilon^5.
\end{equation}
For $j=2$ and $(l,i,k)=(1,1,1)$, we have by \eqref{eq:b48} and H\"older's inequality,
\begin{equation}\label{232}
\abs{\iint \paren{\frac{\frac{A_1(\aa)}{|Z_{,\aa}|^2}-\frac{A_1(\bb)}{|Z_{,\bb}|^2}}{\aa-\bb}}\frac{ \bar\theta\, \mathfrak D_t\theta\,\mathfrak D_t\bar\theta\,\mathfrak D_t\theta }{(\aa-\bb)^2}\,d\aa\,d\bb}\lec \nm{Z_{tt}}_{\dot H^{1/2}}^3\nm{\frac{A_1}{|Z_{,\aa}|^2}}_{\dot H^{1/2}}\nm{Z_{t,\aa}}_{L^\infty}\lec \epsilon^5.
\end{equation}

{\sf Step 7}.
We have treated all the terms in $\mathcal R_{IC, j}+\Re\frac d{dt} D_j(t)$ but the following two: the first is 
\begin{equation}\label{233}
M_{11,j}:=\iint \partial_\aa\paren{i\,\frac{A_1(\aa)}{|Z_{,\aa}|^2}+ D_t b(\aa) } \frac{D_t^j Z_t(\aa)\,\mathfrak D_t^l\theta\,\mathfrak D_t^i\bar\theta\,\mathfrak D_t^k\theta }{(\aa-\bb)^2}\,d\aa\,d\bb
\end{equation}
where $l+i+k=j-1$, $2\le j\le 4$; the second is for $j=2$ only, from $\Re\paren{\frac 1{4\pi} R_{\bar 0; 1,\bar 1,1}^{(0)}+ \frac d{dt} D_2(t)}$:
\begin{equation}\label{234}
M_{12,2}:=\iint \partial_\aa\paren{i\,\frac{A_1(\aa)}{|Z_{,\aa}|^2}+ D_t\mathbb H  b(\aa) } \frac{\bar \theta\,\mathfrak D_t\theta\,\mathfrak D_t\bar\theta\,\mathfrak D_t\theta }{(\aa-\bb)^2}\,d\aa\,d\bb,
\end{equation}
here we used the symmetry \eqref{26} to rewrite the term from \eqref{25}, and used product rules to arrive at the term in \eqref{233}. 

By \eqref{eq:c45},
\begin{equation}\label{235}
\P_H \paren{D_t b+i\frac{A_1}{|Z_{,\aa}|^2}-i}=i\P_H\paren{\frac{A_1-1}{|Z_{,\aa}|^2}}+i\P_H(G^{(1)})+i\bracket{b,\P_H}\partial_\aa \bar {\Theta^{(2)}};
\end{equation}
using \eqref{eq:c21}, \eqref{eq:c44} gives
\begin{equation}\label{236}
\begin{aligned}
D_t \mathbb H b+i\frac{A_1}{|Z_{,\aa}|^2}-i&=\bracket {D_t,\mathbb H} b+ 2i\Im \P_H \paren{D_t b+i\frac{A_1}{|Z_{,\aa}|^2}-i}\\&=
i\, \frac{A_1-1}{|Z_{,\aa}|^2}+2i\Re \P_H(G^{(1)})+\frac12 \bracket{ b,\mathbb H }\partial_\aa b;
\end{aligned}
\end{equation}
 and we know for $j\ge 2$
\begin{equation}
\mathbb P_A D_t^j \bar Z_t =-i\bracket{\mathbb P_A, \frac{A_1}{|Z_{,\aa}|^2}}\partial_\aa D_t^{j-2}\bar Z_t+ \mathbb P_A \mathcal P D_t^{j-2}\bar Z_t;
\end{equation}
so by \eqref{eq:b10}, \eqref{eq:b11}, \eqref{eq:b23}, \eqref{126} and the estimates in \S\ref{quan}, we have for $2\le j\le 4$,
\begin{equation}\label{237}
\nm{ \partial_\aa\paren{D_t \mathbb H b+i\frac{A_1}{|Z_{,\aa}|^2}}    }_{L^2}+\nm{ \partial_\aa\P_H \paren{D_t b+i\frac{A_1}{|Z_{,\aa}|^2}}}_{L^2}+ \nm{\mathbb P_A D_t^j \bar Z_t}_{L^2}\lec \epsilon^2,
\end{equation}
in particular for $j=3$, 
\begin{equation}\label{242}
\nm{ \partial_\aa\P_H \paren{D_t b+i\frac{A_1}{|Z_{,\aa}|^2}}}_{L^2}+\nm{\mathbb P_A D_t^3 \bar Z_t}_{L^2}\lec \epsilon\paren{\nm{Z_{t,\aa}}_{\dot H^{1/2}}+\nm{\partial_\aa\frac1{Z_{,\aa}}}_{L^2}};
\end{equation}
by \eqref{hhalf42},  \eqref{hhalf-2}, \eqref{138}, and the estimates in \S\ref{quan}, the following also holds,
\begin{equation}\label{240}
\nm{ \partial_\aa\P_H \paren{D_t b+i\frac{A_1}{|Z_{,\aa}|^2}}}_{\dot H^{1/2}}\lec \epsilon^2.
\end{equation}
Observe that $M_{12,2}$ is sextic.  Applying \eqref{eq:b47} gives
\begin{equation}\label{239}
\abs{M_{12,2}}\lec \nm{ \partial_\aa\paren{D_t \mathbb H b+i\frac{A_1}{|Z_{,\aa}|^2}}    }_{L^2}\nm{Z_{t,\aa}}_{L^2}\nm{Z_{tt}}_{\dot H^{1/2}}^2\nm{Z_{tt}}_{L^\infty}\lec \epsilon^5.
\end{equation}

We rewrite $M_{11,j}$ as
\begin{equation}\label{238}
\begin{aligned}
M_{11,j}&=\iint \partial_\aa\P_H \paren{i\,\frac{A_1(\aa)}{|Z_{,\aa}|^2}+ D_t b(\aa) } \frac{D_t^j Z_t(\aa)\,\mathfrak D_t^l\theta\,\mathfrak D_t^i\bar\theta\,\mathfrak D_t^k\theta }{(\aa-\bb)^2}\,d\aa\,d\bb\\&+
\iint \partial_\aa\P_A \paren{i\,\frac{A_1(\aa)}{|Z_{,\aa}|^2}+ D_t b(\aa) } \frac{\P_H\paren{D_t^j Z_t}(\aa)\,\mathfrak D_t^l\theta\,\mathfrak D_t^i\bar\theta\,\mathfrak D_t^k\theta }{(\aa-\bb)^2}\,d\aa\,d\bb\\&+
\int \partial_\aa\P_A \paren{i\,\frac{A_1(\aa)}{|Z_{,\aa}|^2}+ D_t b(\aa) } \bracket{\P_H, <D_t^l\bar Z_t, D_t^i Z_t, D_t^k\bar Z_t> }\P_A\paren{D_t^j Z_t}(\aa)\,d\aa\\&=I_j+II_j+III_j,
\end{aligned}
\end{equation}
where in the last term we used the Cauchy integral formula to rewrite it as a commutator. We have for 
$j=2$, $(l,i,k)=(0,0,1)$ and permutations; $j=3$, $(l,i,k)=(0,0,2), (0,1,1)$ and permutations; $j=4$, $(l,i,k)=(0,0,3),(0,1,2),(1,1,1)$ and permutations. Observe that the first two terms in \eqref{238}, $I_j$, $II_j$, are sextic; we apply \eqref{eq:b115}, \eqref{hhalf44} to obtain
\begin{equation}\label{241}
\abs{I_2}+\abs{II_2}+\abs{I_4}+\abs{II_4}\lec \epsilon^5, \quad \abs{I_3}+\abs{II_3}\lec \epsilon^3\paren{\nm{Z_{t,\aa}}_{\dot H^{1/2}}^2+\nm{\partial_\aa\frac1{Z_{,\aa}}}_{L^2}^2};
\end{equation}
applying \eqref{eq:b10}, \eqref{eq:b20} on $III_j$ yields
\begin{equation}\label{243}
\abs{III_2}+\abs{III_4}\lec \epsilon^5, \quad \abs{III_3}\lec \epsilon^3\paren{\nm{Z_{t,\aa}}_{\dot H^{1/2}}^2+\nm{\partial_\aa\frac1{Z_{,\aa}}}_{L^2}^2},
\end{equation}
therefore 
$$\abs{M_{11,2}}+\abs{M_{11,4}}\lec \epsilon^5, \quad \abs{M_{11,3}}\lec \epsilon^3\paren{\nm{Z_{t,\aa}}_{\dot H^{1/2}}^2+\nm{\partial_\aa\frac1{Z_{,\aa}}}_{L^2}^2}.
$$
Sum up the estimates in {\sf Step 6} and {\sf Step 7} we have
\begin{equation}\label{244}
\abs{\mathcal R_{IC, 2}+\Re\frac d{dt} D_2(t)}+\abs{\mathcal R_{IC,4}}\lec \epsilon^5, \quad \abs{\mathcal R_{IC,3}}\lec \epsilon^3\paren{\nm{Z_{t,\aa}}_{\dot H^{1/2}}^2+\nm{\partial_\aa\frac1{Z_{,\aa}}}_{L^2}^2}.
\end{equation}

{\sf Step 8}. In this step we treat the remaining terms in \eqref{33}, namely 
\begin{align}
&2\Re\sum_{l=0}^{j-1}\int i\, \partial_\aa\bar\Thj(\P_H D_t)^{l+1}\P_H\paren{G^{(j-l)}-D_t\P_H G^{(j-1-l)}}\,d\aa-I_{1,j}-\frac d{dt}H_j,\\&
\Re\int i\,\partial_\aa \bar\Thj\P_H\paren{G^{(j+1)}-D_t\P_H G^{(j)}}\,d\aa-I_{2,j}.
\end{align}

Let  
\begin{equation}\label{190}
J_{l, j}=\frac1{2\pi}\iint D_t^j Z_t(\aa,t) \frac{\mathfrak D_t^{l+1}\braces{\mathfrak D_t\paren{\theta(\aa,\bb,t)\overline{\theta(\aa, \bb,t)}}\mathfrak D_t^{j-l-1}\theta(\aa,\bb,t)}}{(\aa-\bb)^2} \,d\bb\,d\aa,
\end{equation}
By \eqref{21}, \eqref{22}, 
$$I_{1,j}=2\Re \sum_{l=0}^{j-1} J_{l,j},\qquad I_{2,j}=\Re J_{-1, j}.$$
Because $$D_\aa\Thb=i(1-\frac 1{Z_{,\aa}})$$ by \eqref{eq:G}, \eqref{67} and the notations \eqref{18} and \eqref{62}\footnote{Observe that $\lambda^0=\theta$.} we can write 
\begin{equation}\label{248}
\P_H\paren{G^{(j-l)}-D_t\P_H G^{(j-1-l)}}=\frac1{2\pi i}\P_H\paren{\frac1{\bar Z_{,\aa}}\int\frac{(\theta\bar{\lambda^1}+\bar\theta\lambda^1)\lambda^{j-l-1}}{(\aa-\bb)^2}\,d\bb},
\end{equation}
and
\begin{equation}\label{245}
\begin{aligned}
&N_{l,j}:=2\pi\paren{\int i\,\partial_\aa \bar\Thj\paren{\P_H D_t}^{l+1}\P_H\paren{G^{(j-l)}-D_t\P_H G^{(j-1-l)}}\,d\aa-J_{l,j}}\\&=\int \,\partial_\aa \bar\Thj\braces{\paren{\P_H D_t}^{l+1}\P_H \frac1{\bar Z_{,\aa}}-\frac1{\bar Z_{,\aa}}D_t^{l+1}}\int\frac{(\theta\bar{\lambda^1}+\bar\theta\lambda^1)\lambda^{j-l-1}}{(\aa-\bb)^2}\,d\bb\,d\aa
\\&+ \int  \bar{D_\aa \Thj } \braces{D_t^{l+1} \int\frac{(\theta\bar{\lambda^1}+\bar\theta\lambda^1)\lambda^{j-l-1}}{(\aa-\bb)^2}\,d\bb-\int\frac{\mathfrak D_t^{l+1}\paren{(\theta\bar{\lambda^1}+\bar\theta\lambda^1)\lambda^{j-l-1}}}{(\aa-\bb)^2}\,d\bb}\,d\aa\\&+
 \int  \bar{D_\aa \Thj } \int\frac{\mathfrak D_t^{l+1}\paren{\theta(\bar{\lambda^1}-\mathfrak D_t\bar\theta)\lambda^{j-l-1}+\bar\theta(\lambda^1-\mathfrak D_t\theta)\lambda^{j-l-1}}}{(\aa-\bb)^2}\,d\bb\,d\aa\\&+
 \int  \bar{D_\aa \Thj } \int\frac{\mathfrak D_t^{l+1}\paren{\mathfrak D_t(\,\theta \,\bar\theta)(\lambda^{j-l-1}-\mathfrak D_t^{j-l-1}\theta)}}{(\aa-\bb)^2}\,d\bb\,d\aa\\&+\int  \paren{\bar{D_\aa \Thj }-D_t^j Z_t} \int\frac{\mathfrak D_t^{l+1}\paren{\mathfrak D_t(\,\theta \,\bar\theta)\mathfrak D_t^{j-l-1}\theta}}{(\aa-\bb)^2}\,d\bb\,d\aa
\\&= N_{l,j,1} +N_{l,j,2}+N_{l,j,3}+N_{l,j,4}+N_{l,j,5}.
\end{aligned}
\end{equation}
Observe that there is a derivative loss  in $N_{j-1,j,3}$ and $N_{j-1,j,5}$.\footnote{Namely $N_{j-1,j,3}$ and $N_{j-1,j,5}$ contain factors that cannot be controlled by $\mathcal E_j(t)$.} Because $\lambda^0=\theta$,  so $N_{j-1,j,4}=0$.  We combine $N_{j-1,j,3}+N_{j-1,j,5}$ with  $-\pi \frac d{dt} H_j$, 
cf. \eqref{64}, and write
\begin{equation}\label{246}
\begin{aligned}
&\Re (N_{j-1,j,3}+N_{j-1,j,5})-\pi \frac d{dt} H_j=\tilde N_{j,3}+\tilde N_{j, 4}+\tilde N_{j,5}-\pi R_{H,j}\\&:=\Re\int  \bar{D_\aa \Thj } \int\frac{\bar\theta \theta \paren{\mathfrak D_t^{j} \lambda^1-\mathfrak D_t \lambda^ j}+\theta \theta\paren{\mathfrak D_t^{j} \bar{\lambda^1}-\mathfrak D_t \bar{\lambda^j}} }{(\aa-\bb)^2}\,d\bb\,d\aa \\&+
 \Re \iint  
 D_t^j Z_t \frac{\mathfrak D_t^{j}\paren{\theta(\bar{\lambda^1}-\mathfrak D_t\bar\theta)\theta+\bar\theta(\lambda^1-\mathfrak D_t\theta)\theta}-\bar\theta \theta \mathfrak D_t^{j} (\lambda^1-\mathfrak D_t\theta)-\theta \theta\mathfrak D_t^{j} (\bar{\lambda^1}-\mathfrak D_t\bar\theta) }{(\aa-\bb)^2}\,d\bb\,d\aa\\&+
\Re\int  \paren{\bar{D_\aa \Thj }-D_t^j Z_t} \int\frac{\mathfrak D_t^{j}\paren{(\theta \,\bar{\lambda^1}+\bar\theta\lambda^1)\theta}- \bar\theta\,\theta\,\mathfrak D_t^{j}\lambda^1 -\theta\,\theta\,\mathfrak D_t^{j}\bar{\lambda^1} }{(\aa-\bb)^2}\,d\bb\,d\aa-\pi R_{H,j};
\end{aligned}
\end{equation}
this cancels out the derivative lossing terms for $j=4$, and enables us to get the desired estimates for $j=3$. 

We sum up the above decomposition:
\begin{equation}\label{247}
\begin{aligned}
&\pi\paren{2\Re\sum_{l=0}^{j-1}\int i\, \partial_\aa\bar\Thj(\P_H D_t)^{l+1}\P_H\paren{G^{(j-l)}-D_t\P_H G^{(j-1-l)}}\,d\aa-I_{1,j}-\frac d{dt} H_j(t)}\\&
=\Re\paren{\sum_{l=0}^{j-2} \sum_{k=1}^5N_{l,j, k}+\sum_{k=1}^2 N_{j-1,j, k}}+ \sum_{k=3}^5 \tilde N_{j,k}-\pi R_{H,j},
\end{aligned}
\end{equation}
and
\begin{align}\label{275}
&2\pi\paren{\Re\int i\,\partial_\aa \bar\Thj\P_H\paren{G^{(j+1)}-D_t\P_H G^{(j)}}\,d\aa-I_{2,j}}= \Re \sum_{k=1}^5N_{-1,j,k}.
\end{align}
Observe that all the terms in \eqref{245}, \eqref{246} are quintic.  

Now we use \eqref{eq:c35} to write
\begin{equation}\label{249}
\begin{aligned}
&\P_H\paren{(\P_H D_t)^{l+1}\P_H\frac 1{\bar Z_{,\aa}}- \frac1{\bar Z_{,\aa}}D_t^{l+1}}=  \paren{(\P_H D_t)^{l+1}\P_H-   \P_H D_t^{l+1}}\frac 1{\bar Z_{,\aa}} +\P_H \bracket{D_t^{l+1}, \frac1{\bar Z_{,\aa}}} \\&=
-\sum_{k=0}^l (\P_H D_t)^{k}\P_H \bracket{b,\P_A}\partial_\aa D_t^{l-k}\frac 1{\bar Z_{,\aa}}+\P_H\sum_{k=0}^l\binom{l+1}{k+1}\paren{D_t^{k+1}\frac1{\bar Z_{,\aa}}}D_t^{l-k};
\end{aligned}
\end{equation}
because
\begin{equation}\label{252}
D_\aa\Thb-\bar Z_{tt}=i\,\frac{A_1-1}{Z_{,\aa}},
\end{equation}
 by \eqref{eq:c48} we have
\begin{equation}\label{250}
\begin{aligned}
&D_\aa\Theta^{(l+1)}-D_t^{l+1}\bar Z_t=D_\aa\Theta^{(l+1)}-D_t^l D_\aa \Theta^{(1)}+D_t^l (D_\aa \Theta^{(1)}-\bar Z_{tt})\\&
=\sum_{k=0}^{l-1} (\P_HD_t)^k\bracket{\frac1{Z_{,\aa}},\P_H}\partial_\aa D_t\Theta^{(l-k)}+\sum_{k=0}^{l-1} (\P_H D_t)^k\bracket{\P_H, \frac1{Z_{,\aa}}D_\aa \Theta^{(l-k)}} Z_{t,\aa} \\&+i\,\sum_{k=0}^{l-1}D_t^k[\P_A, b]\partial_\aa (\P_H D_t)^{l-1-k}\frac1{Z_{,\aa}}+i\,D_t^l\paren{\frac{A_1-1}{Z_{,\aa}} },
\end{aligned}
\end{equation}
and
\begin{equation}\label{251}
\begin{aligned}
&D_tD_\aa\Theta^{(j)}-D_t^{j} D_\aa \Theta^{(1)}=\sum_{k=0}^{j-2} D_t(\P_HD_t)^k\bracket{\frac1{Z_{,\aa}},\P_H}\partial_\aa D_t\Theta^{(j-1-k)}\\&+\sum_{k=0}^{j-2} D_t(\P_H D_t)^k\bracket{\P_H, \frac1{Z_{,\aa}} D_\aa \Theta^{(j-1-k)}} Z_{t,\aa}+i\,\sum_{k=0}^{j-2}D_t^{k+1}[\P_A, b]\partial_\aa (\P_H D_t)^{j-2-k}\frac1{Z_{,\aa}}.
\end{aligned}
\end{equation}

We are now ready to do the estimates. We begin with $N_{l, j, 1}$. Observe that by the Cauchy integral formula, we can insert a $\P_H$  to write it as
\begin{equation}\label{257}
N_{l, j, 1}=\int \,\partial_\aa \bar\Thj\,\P_H\braces{\paren{\P_H D_t}^{l+1}\P_H \frac1{\bar Z_{,\aa}}-\frac1{\bar Z_{,\aa}}D_t^{l+1}}\int\frac{(\theta\bar{\lambda^1}+\bar\theta\lambda^1)\lambda^{j-l-1}}{(\aa-\bb)^2}\,d\bb\,d\aa
\end{equation}
 and it is clear that $N_{-1, j, 1}=0$. We estimate $\abs{N_{l, j,1}}$ for $0\le l\le j-1$, $2\le j\le 4$.  By \eqref{249} we need to estimate, for $0\le k\le l$, 
 \begin{align}\label{253}
 A_{k,l, j}:=\nm{(\P_H D_t)^{k}\P_H \bracket{b,\P_A}\partial_\aa D_t^{l-k}\paren{\frac 1{\bar Z_{,\aa}}\int\frac{(\theta\bar{\lambda^1}+\bar\theta\lambda^1)\lambda^{j-l-1}}{(\aa-\bb)^2}\,d\bb}}_{L^2},\\
 \label{254}
B_{k,l,j}:=\nm{\P_H\ \braces{\paren{D_t^{k+1}\frac1{\bar Z_{,\aa}}}D_t^{l-k} \int\frac{(\theta\bar{\lambda^1}+\bar\theta\lambda^1)\lambda^{j-l-1}}{(\aa-\bb)^2}\,d\bb} }_{L^2}.
\end{align}
By \eqref{eq:c38}, \eqref{eq:b18}, \eqref{eq:b19}, \eqref{eq:b115},  we have, for $k<l$ or $l<j-1$ and $2\le j\le 4$,
\begin{align}\label{258}
\nm{D_t^{l-k}\int\frac{(\theta\bar{\lambda^1}+\bar\theta\lambda^1)\lambda^{j-l-1}}{(\aa-\bb)^2}\,d\bb}_{L^2}\lec \epsilon^3,
\end{align}
and by  \eqref{97} and \eqref{eq:b20}, \eqref{eq:b21},
\begin{align}\label{255}
\nm{ \int\frac{(\theta\bar{\lambda^1}+\bar\theta\lambda^1)\lambda^{1}}{(\aa-\bb)^2}\,d\bb}_{\dot H^{1/2}}+\nm{D_t \int\frac{(\theta\bar{\lambda^1}+\bar\theta\lambda^1)\lambda^{0}}{(\aa-\bb)^2}\,d\bb}_{\dot H^{1/2}}\lec \epsilon^2\paren{\nm{\partial_\aa\frac1{Z_{,\aa}}}_{L^2}+\nm{Z_{t,\aa}}_{\dot H^{1/2}}},\\ \label{256}
\nm{\int\frac{(\theta\bar{\lambda^1}+\bar\theta\lambda^1)\lambda^{0}}{(\aa-\bb)^2}\,d\bb}_{\dot H^{1/2}}\lec \epsilon^3,\quad \nm{\partial_\aa \int\frac{(\theta\bar{\lambda^1}+\bar\theta\lambda^1)\lambda^{0}}{(\aa-\bb)^2}\,d\bb}_{L^2}\lec \epsilon^2\nm{\partial_\aa\frac1{Z_{,\aa}}}_{L^2},
\end{align}
therefore by \eqref{eq:c46}, \eqref{eq:c49},  \eqref{eq:b11}, \eqref{eq:b23}, \eqref{eq:b12}, \eqref{eq:b111}, \eqref{eq:b115}, we have, except for the cases where $l=k=j-1$ for $j=3,4$, 
\begin{equation}\label{259}
A_{k,l, 2}\lec \epsilon^4, \quad A_{k,l,3}\lec\epsilon^3 \paren{\nm{\partial_\aa\frac1{Z_{,\aa}}}_{L^2}+\nm{Z_{t,\aa}}_{\dot H^{1/2}}},\quad A_{k,l, 4}\lec \epsilon^4.
\end{equation}
We also have, by \eqref{eq:c38},  \eqref{eq:b18}, \eqref{eq:b115}, \eqref{eq:b24}, \eqref{hhalf44}, that except for the cases where $l=k=j-1$ for $2\le j\le 4$, 
\begin{equation}\label{270}
B_{k,l, 2}\lec \epsilon^4, \quad B_{k,l,3}\lec\epsilon^3 \paren{\nm{\partial_\aa\frac1{Z_{,\aa}}}_{L^2}+\nm{Z_{t,\aa}}_{\dot H^{1/2}}},\quad B_{k,l, 4}\lec \epsilon^4.
\end{equation}
Now all the terms in $A_{j-1,j-1, j}$, $j=3,4$, after expanding by \eqref{eq:c46}, \eqref{eq:c49},  can be handled similarly, except for one, namely
\begin{equation}\label{260}
\P_H\bracket{D_t^{j-1}b, \P_A}\partial_\aa \paren{\frac 1{\bar Z_{,\aa}}\int\frac{(\theta\bar{\lambda^1}+\bar\theta\lambda^1)\lambda^{0}}{(\aa-\bb)^2}\,d\bb};
\end{equation}
for this term, we write 
\begin{equation}\label{272}
D_t^{j-1}b=D_t^{j-1}b-2\Re \frac{D_t^{j-1} Z_t}{Z_{,\aa}}+ 2\Re \frac{D_t^{j-1} Z_t}{Z_{,\aa}}
\end{equation}
and use \eqref{168} and \eqref{216} and \eqref{eq:b11} to estimate the first term and \eqref{eq:b10} to estimate the second term, 
\begin{equation}\label{261}
\begin{aligned}
&\nm{\P_H\bracket{D_t^{j-1}b, \P_A}\partial_\aa \paren{\frac 1{\bar Z_{,\aa}}\int\frac{(\theta\bar{\lambda^1}+\bar\theta\lambda^1)\lambda^{0}}{(\aa-\bb)^2}\,d\bb}}_{L^2}\\&\lec \nm{\partial_\aa\paren{D_t^{j-1}b-2\Re \frac{D_t^{j-1} Z_t}{Z_{,\aa}}}}_{L^2}\nm{\frac 1{\bar Z_{,\aa}}\int\frac{(\theta\bar{\lambda^1}+\bar\theta\lambda^1)\lambda^{0}}{(\aa-\bb)^2}\,d\bb}_{\dot H^{1/2}}\\&+\nm{\frac{D_t^{j-1} Z_t}{Z_{,\aa}}}_{\dot H^{1/2}}\nm{\partial_\aa \paren{\frac 1{\bar Z_{,\aa}}\int\frac{(\theta\bar{\lambda^1}+\bar\theta\lambda^1)\lambda^{0}}{(\aa-\bb)^2}\,d\bb}}_{L^2},
\end{aligned}
\end{equation}
and we get
\begin{equation}\label{262}
A_{2,2,3} \lec\epsilon^3 \paren{\nm{\partial_\aa\frac1{Z_{,\aa}}}_{L^2}+\nm{Z_{t,\aa}}_{\dot H^{1/2}}},\qquad A_{3,3, 4}\lec \epsilon^4.
\end{equation}
To estimate $B_{j-1,j-1,j}$ for $2\le j\le 4$, we first compute using \eqref{eq:c26},
\begin{equation}\label{263}
\begin{aligned}
D_t^j\frac1{\bar Z_{,\aa}}&=D_t^{j-1}\paren{\frac1{\bar Z_{,\aa}}(b_\aa-2\Re D_\aa Z_t)}+\bracket{D_t^{j-1}, \frac1{|Z_{,\aa}|^2}\partial_\aa}Z_t\\&+\frac1{|Z_{,\aa}|^2}\partial_\aa\P_H D_t^{j-1}Z_t+\frac1{|Z_{,\aa}|^2}\partial_\aa\P_A D_t^{j-1}Z_t,
\end{aligned}
\end{equation}
and by \eqref{eq:c28}, we write
\begin{equation}\label{264}
\bracket{D_t^{j-1}, \frac1{|Z_{,\aa}|^2}\partial_\aa}Z_t=\sum_{k=0}^{j-2} D_t^k \paren{\frac{b_\aa-2\Re D_\aa Z_t}{|Z_{,\aa}|^2}\partial_\aa D_t^{j-2-k}Z_t},
\end{equation}
and using the fact $\P_H Z_t=0$ we write
\begin{equation}\label{265}
\P_H D_t^{j-1}Z_t=\sum_{k=0}^{j-2} D_t^k\bracket{\P_H, D_t} D_t^{j-2-k} Z_t=\sum_{k=0}^{j-2} D_t^k\bracket{\P_H, b} \partial_\aa D_t^{j-2-k} Z_t;
\end{equation}
using the expansions in \eqref{263}, \eqref{264} and \eqref{265} and the estimates in \S\ref{quan}, we have, 
\begin{align}\label{266}
\nm{ D_t^j\frac1{\bar Z_{,\aa}}-\frac1{|Z_{,\aa}|^2}\partial_\aa\P_A D_t^{j-1}Z_t}_{L^2}\lec \epsilon^2, \qquad j=2,4,\\ \label{269}\nm{ D_t^3\frac1{\bar Z_{,\aa}}-\frac1{|Z_{,\aa}|^2}\partial_\aa\P_A D_t^{2}Z_t}_{L^2}\lec \epsilon\paren{\nm{\partial_\aa\frac1{Z_{,\aa}}}_{L^2}+\nm{Z_{t,\aa}}_{\dot H^{1/2}}};
\end{align}
now we rewrite
\begin{equation}\label{267}
\begin{aligned}
&\P_H \braces{\paren{D_t^j\frac1{\bar Z_{,\aa}}}\int\frac{(\theta\bar{\lambda^1}+\bar\theta\lambda^1)\lambda^{0}}{(\aa-\bb)^2}\,d\bb}\\&=\P_H \braces{\paren{D_t^j\frac1{\bar Z_{,\aa}}-\frac1{|Z_{,\aa}|^2}\partial_\aa\P_A D_t^{j-1}Z_t}\int\frac{(\theta\bar{\lambda^1}+\bar\theta\lambda^1)\lambda^{0}}{(\aa-\bb)^2}\,d\bb}\\&+
\bracket{\P_H,\frac1{|Z_{,\aa}|^2}\int\frac{(\theta\bar{\lambda^1}+\bar\theta\lambda^1)\lambda^{0}}{(\aa-\bb)^2}\,d\bb}\partial_\aa\P_A D_t^{j-1}Z_t,
\end{aligned}
\end{equation}
by \eqref{eq:b18},  \eqref{eq:b21},
\begin{align}
\nm{\int\frac{(\theta\bar{\lambda^1}+\bar\theta\lambda^1)\lambda^{0}}{(\aa-\bb)^2}\,d\bb}_{L^\infty}&\lec \nm{Z_{t,\aa}}_{L^2}^2,\\
 \nm{\int\frac{(\theta\bar{\lambda^1}+\bar\theta\lambda^1)\lambda^{0}}{(\aa-\bb)^2}\,d\bb}_{\dot H^{1}}&\lec \nm{Z_{t,\aa}}_{L^2}^2\nm{\partial_\aa\frac1{Z_{,\aa}}}_{L^2};
\end{align}
therefore by  \eqref{eq:b11}, \eqref{266}, \eqref{269}, we have
\begin{equation}\label{268}
B_{2,2,3} \lec\epsilon^3 \paren{\nm{\partial_\aa\frac1{Z_{,\aa}}}_{L^2}+\nm{Z_{t,\aa}}_{\dot H^{1/2}}},\qquad B_{1,1,2}+B_{3,3, 4}\lec \epsilon^4. 
\end{equation}
Sum up \eqref{257}-\eqref{268} we conclude 
\begin{equation}\label{271}
\begin{aligned}
\abs{N_{l,j,1}}&\lec \epsilon^5, \qquad {-1\le l\le j-1,\ j=2,4}; \\
\abs{N_{l,3,1}}&\lec \epsilon^3\paren{\nm{\partial_\aa\frac1{Z_{,\aa}}}_{L^2}+\nm{Z_{t,\aa}}_{\dot H^{1/2}}}^2,\quad -1\le l\le 2.
\end{aligned}
\end{equation}

Now we handle $N_{l,j,2}$. Observe that $N_{-1, j, 2}=0$, so we work on the cases where $0\le l\le j-1$. We use \eqref{eq:c38} to expand. The estimates are routine for all the terms after expansion, using \eqref{eq:b28}, \eqref{eq:b37}, \eqref{eq:b115}, \eqref{hhalf44}, \eqref{eq:b46}, \eqref{eq:b43}, except that when $l=j-1$ for $j=3,4$, we need to again decompose $D_t^{j-1}b$ by \eqref{272}, and treat all the terms as usual, except 
\begin{equation}\label{273}
\int \partial_\bb\Re\frac{D_t^{j-1} Z_t(\bb)}{Z_{,\bb}}\frac{(\theta\bar{\lambda^1}+\bar\theta\lambda^1)\lambda^{0}}{(\aa-\bb)^2}\,d\bb,
\end{equation}
for which we first perform integration by parts, then apply \eqref{eq:b28}, \eqref{eq:b37} for $\rb= \Re\frac{D_t^{j-1} Z_t}{Z_{,\aa}}$. We have
\begin{equation}\label{274}
\begin{aligned}
\abs{N_{l,j,2}}&\lec \epsilon^5, \qquad {-1\le l\le j-1,\ j=2,4}; \\ 
 \abs{N_{l,3,2}}&\lec \epsilon^3\paren{\nm{\partial_\aa\frac1{Z_{,\aa}}}_{L^2}+\nm{Z_{t,\aa}}_{\dot H^{1/2}}}^2,\quad -1\le l\le 2.
\end{aligned}
\end{equation}

We consider  the remaining terms in \eqref{247}-\eqref{275}. Using \eqref{250} and the estimates in \S\ref{quan}, \S\ref{step1-4}, \eqref{eq:b10}, \eqref{eq:b23}, \eqref{eq:b11}, \eqref{eq:b12}, \eqref{eq:b115} we have
\begin{align}\label{276}
\nm{D_\aa\Thb-D_t\bar Z_t}_{\dot H^{1/2}}&\lec\epsilon^2,\\ \label{278}
 \nm{D_\aa\Thb-D_t\bar Z_t}_{\dot H^{1}}&\lec\epsilon \paren{\nm{\partial_\aa\frac1{Z_{,\aa}}}_{L^2}+\nm{Z_{t,\aa}}_{\dot H^{1/2}}},\\
\label{277}
\nm{D_t^l\paren{D_\aa\Th^{(j-l)}-D_t^{j-l}\bar Z_t}}_{L^2}&\lec\epsilon^2,\quad j=2,4, \ 0\le l\le j, \\ \label{279}
\nm{D_t^l\paren{D_\aa\Th^{(3-l)}-D_t^{3-l}\bar Z_t}}_{L^2}&\lec\epsilon \paren{\nm{\partial_\aa\frac1{Z_{,\aa}}}_{L^2}+\nm{Z_{t,\aa}}_{\dot H^{1/2}}},\quad 0\le l\le 3;
\end{align}
and using \eqref{251} we have\footnote{Again we use the decomposition \eqref{272} to treat the term $\bracket{\P_A, D_t^{j-1} b}\partial_\aa\frac1{Z_{,\aa}}$.}
\begin{equation}\label{280}
\begin{aligned}
\nm{D_tD_\aa\Theta^{(j)}-D_t^{j} D_\aa \Theta^{(1)}}_{L^2}&\lec \epsilon \paren{\nm{\partial_\aa\frac1{Z_{,\aa}}}_{L^2}+\nm{Z_{t,\aa}}_{\dot H^{1/2}}}, \quad \text{for }j=2,3,\\
\nm{D_tD_\aa\Theta^{(4)}-D_t^{4} D_\aa \Theta^{(1)}}_{L^2}&\lec \epsilon^2;
\end{aligned}
\end{equation}
this gives, by \eqref{eq:b18}, \eqref{eq:b19}, 
\begin{equation}\label{281}
\begin{aligned}
\sum_{l=-1}^{j-2} \sum_{k=3}^5\abs{N_{l,j, k}}+ \sum_{k=3}^5\abs{ \tilde N_{j,k}}+\abs{ R_{H,j}}&\lec \epsilon^5,\qquad \text{for }j=2,4;
\\
\sum_{l=-1}^{j-2} \sum_{k=3}^5\abs{N_{l,j, k}}+ \sum_{k=3}^5\abs{ \tilde N_{j,k}}+\abs{ R_{H,j}}&\lec \epsilon^3 \paren{\nm{\partial_\aa\frac1{Z_{,\aa}}}_{L^2}+\nm{Z_{t,\aa}}_{\dot H^{1/2}}}^2, \quad \text{for }j=3.
\end{aligned}
\end{equation}

Sum up the results in {\sf Steps 6-8}, we get
\begin{equation}\label{282}
\frac d{dt} \paren{\mathcal E_2(t)+\mathcal E_3(t)+\mathcal E_4(t)}\lec \epsilon^5,\qquad \frac d{dt} \mathcal E_3(t)\lec \epsilon^3 \paren{\nm{\partial_\aa\frac1{Z_{,\aa}}}_{L^2}+\nm{Z_{t,\aa}}_{\dot H^{1/2}}}^2.
\end{equation}
This together with \eqref{180}, \eqref{110} gives the inequalities \eqref{47} and \eqref{51}, and finishes the proof for  Propositions~\ref{prop:4.3},~\ref{prop:4.4} and Theorem~\ref{thm:main2}.

\begin{appendix}
\section{Notations and conventions}\label{notations}
We use the following notations and conventions throughout the paper: 
compositions are always in terms of the spatial variables and we write for $f=f(\cdot, t)$, $g=g(\cdot, t)$, $f(g(\cdot,t),t):=f\circ g(\cdot, t):=U_gf(\cdot,t)$. 
We identify $(x,y)$ with the complex number $x+iy$; $\Re z$, $\Im z$ are the real and imaginary parts of $z$; $\bar z=\Re z-i\Im z$ is the complex conjugate of $z$. $\overline \Omega$ is the closure of the domain $\Omega$, $\partial\Omega$ is the boundary of $\Omega$, ${\mathscr P}_-:=\{z\in \mathbb C: \Im z<0\}$ is the lower half plane.
$[A, B]:=AB-BA$ is the commutator of operators $A$ and $B$.

We use $z=x+iy=z(\alpha,t)$, $z_t=z_t(\alpha,t)$ and $z_{tt}(\alpha,t)$ to denote the position, velocity and acceleration of the interface in Lagrangian coordinate $\alpha$;  $Z=X+iY=Z(\aa,t)$, $Z_t=Z_t(\aa,t)$ and $Z_{tt}(\aa,t)$ denote the position, velocity and acceleration of the interface in the Riemann mapping variable $\aa$; $\rh(\a,t)=\aa$ is the coordinate change from the Lagrangian variable $\a$ to the Riemann mapping variable $\aa$;\footnote{ $\rh(\a,t)=\Phi(z(\a,t);t)$, where $\Phi(\cdot,t):\Omega\to\mathscr P_-$ is the Riemann mapping satisfying $\Phi(z(0,t);t)=0$ and $\lim_{z\to\infty} \Phi_z(z,t)=1$.} $b=\rh_t\circ \rh^{-1}$, and  the material derivative is $D_t=\partial_t+b\partial_\aa$. 
We write
$$Z_{,\aa}=\partial_\aa Z(\aa,t),\qquad Z_{t,\aa}=\partial_\aa Z_t,\qquad Z_{tt,\aa}=\partial_\aa Z_{tt},\qquad\text{etc.}$$

 Let $\mathbb H$ be the Hilbert transform associated with the lower half plane ${\mathscr P}_-$:
\begin{equation}\label{ht}
\mathbb H f(\alpha')=\frac1{\pi i}\text{pv.}\int\frac1{\alpha'-\beta'}\,f(\beta')\,d\beta'.
\end{equation}
We know $\HH^2=I$, and a function $f\in L^p(\mathbb R)$, $1\le p<\infty$, is the boundary value of a holomorphic function in $\mathscr P_-$ if and only if $f=\HH f$.  We define 
the projections to the space of holomorphic, and respectively, anti-holomorphic functions in the lower half plane  by
\begin{equation}\label{proj}
\mathbb P_H :=\frac12(I+\mathbb H),\qquad\text{and }\quad \mathbb P_A:=\frac12(I-\mathbb H).
\end{equation}
It is clear that the decomposition identity 
\begin{equation}\label{paph}
\P_H+\P_A=I
\end{equation}
and the projection identity
\begin{equation}\label{projid}
\P_H\P_A=\P_A\P_H=0
\end{equation}
hold. We will often call a function $f\in L^p$, $1\le p<\infty$, that is the boundary value of a holomorphic function in $\mathscr P_-$ simply by "holomorphic".

We define 
\begin{equation}\label{da-daa}
 D_\aa = \dfrac { 1}{Z_{,\aa}}\partial_\aa,
\end{equation}
\begin{equation}\label{eq:comm}
[f,g; h]:=\frac1{\pi i}\int\frac{(f(x)-f(y))(g(x)-g(y))}{(x-y)^2}h(y)\,dy,
\end{equation}
and
\begin{equation}\label{tri}
<f,g,h>:=\frac1{\pi i}\int\frac{(f(x)-f(y))(g(x)-g(y))(h(x)-h(y))}{(x-y)^2}\,dy.
\end{equation}

We use the following notations for functional spaces:    $H^s=H^s(\mathbb R)$ is the Sobolev space with norm $\|f\|_{H^s}:=(\int (1+|\xi|^2)^s|\hat f(\xi)|^2\,d\xi)^{1/2}$, $\dot H^{s}=\dot H^{s}(\mathbb R)$ is the homogeneous Sobolev space  with norm 
$\|f\|_{\dot H^{s}}= c(\int |\xi|^{2s} |\hat f(\xi)|^2\,d\xi)^{1/2}$, and we define
\begin{equation}\label{def-hhalf}
\|f\|_{\dot H^{1/2}}^2=\|f\|_{\dot H^{1/2}(\mathbb R)}^2:= \int i\mathbb H \partial_x f(x) \bar f(x)\,dx=\frac1{2\pi}\iint\frac{|f(x)-f(y)|^2}{(x-y)^2}\,dx\,dy.
\end{equation}
$L^p=L^p(\mathbb R)$ is the $L^p$ space with $\|f\|_{L^p}:=(\int|f(x)|^p\,dx)^{1/p}$ for $1\le p<\infty$, and $f\in L^\infty$ if $\|f\|_{L^\infty}:=\text{ ess sup }|f(x)|<\infty$. When not specified, all the 
 norms $\|f\|_{H^s}$, $\|f\|_{\dot H^{s}}$, $\|f\|_{L^p}$, $1\le p\le\infty$ are in terms of the spatial variable only, and $\|f\|_{H^s(\mathbb R)}$, $\|f\|_{\dot H^{s}(\mathbb R)}$, $\|f\|_{L^p(\mathbb R)}$,  $1\le p\le\infty$ are in terms of the spatial variable. 
 $C^j(X)$ is the space of $j$-times continuously differentiable functions on the set $X$; $C^j_0(\mathbb R)$ is the space of $j$-times continuously differentiable functions that decays at the infinity.

We use $c$, $C$  to denote universal constants. $c(a_1,  \dots )$, $C(a_1, \dots)$, $M(a_1, \dots)$  are constants depending on $a_1, \dots $; constants appearing in different contexts need not be the same. We write $f\lec g$ if there is a universal constant $c$, such that $f\le cg$.  

The following are some additional notations used in this paper:

$Q:=(I+\HH)\psi\circ \rh^{-1}$, 
where $\psi\circ \rh^{-1}$ is the trace of the velocity potential on the interface; 
$\Theta^{(0)}:=Q$, $\Thj:=(\P_HD_t)^jQ$, and 
$G^{(j)}:=D_t\mathbb P_H D_t \Thj+i\,\frac1{|Z_{,\aa}|^2}\partial_\aa \Thj$.  

We define
$\theta:=\bar Z_t(\aa,t)-\bar Z_t(\bb, t)$; $\lambda^j=D_\aa\Theta^{(j)}(\aa,t)-D_\bb\Theta^{(j)}(\bb,t)$; $\mathfrak D_t:=\partial_t+b(\aa,t)\partial_\aa+b(\bb,t)\partial_\bb$, and 
$\mathcal P:=\mathfrak D_t^2+i\,\frac{A_1(\aa,t)}{|Z_{,\aa}|^2}\partial_\aa+i\,\frac{A_1(\bb,t)}{|Z_{,\bb}|^2}\partial_\bb$. Observe that when acting on a function independent of $\bb$, i.e. $f=f(\aa,t)$, $\mathcal P f=( D_t^2+i\,\frac{A_1}{|Z_{,\aa}|^2} \partial_\aa)f$. 

We denote by $M(f)$ the Hardy-Littlewood maximum function of $f$.

\section{Equations and Identities}\label{iden}
Here we give some basic equations and identities that will be used in this paper. 
First we recall some of the equations and formulas derived in our earlier work,  see \cite{wu1, wu6} or \S2.2, \S2.3 and \S2.7 of \cite{wu8}. 

\subsection{Interface equations}\label{interface-eq}
We know that the interface equations for the 2d water waves is given by
 \begin{equation}\label{2dinterface}
 \begin{cases}
 \bar Z_{tt}-i=-i \dfrac {A_1}{Z_{,\aa}}\\
 \bar Z_t=\mathbb H \bar Z_t,\quad \frac1{Z_{,\aa}}-1=\mathbb H \paren{\frac1{Z_{,\aa}}-1},
 \end{cases}
 \end{equation} 
where the quantities $A_1$ and $b$ satisfy
\begin{equation}\label{A1b}
A_1=1-\Im \bracket{Z_t, \mathbb H}\bar Z_{t,\aa}=1-\frac12\Im[Z_t, \bar Z_t;1],\qquad b=\Re (I-\mathbb H)\paren{\frac{Z_t}{Z_{,\aa}}}.
\end{equation}
Let \begin{equation}\label{op}
\mathfrak P:=D_t^2+i\,\frac{A_1}{|Z_{,\aa}|^2} \partial_\aa.
\end{equation}
The quasi-linear equation for the water waves is
\begin{equation}\label{quasi}
 \mathcal P {\bar Z}_t=\mathfrak P {\bar Z}_{t}=\dfrac{\frak a_t}{\frak a}\circ \rh^{-1} ({\bar Z}_{tt}-i) 
\end{equation}
where 
\begin{equation}\label{at}
\dfrac{\frak a_t}{\frak a}\circ \rh^{-1}= \dfrac{D_t A_1}{A_1}+b_\aa -2\Re D_\aa Z_t,
\end{equation}
with
\begin{equation}\label{dta1}
D_t A_1= -\Im \paren{\bracket{Z_{tt},\mathbb H}\bar Z_{t,\alpha'}+\bracket{Z_t,\mathbb H}\partial_\aa \bar Z_{tt}-[Z_t, b; \bar Z_{t,\aa}]}=-\Im\paren{\bracket{\bar Z_t, Z_{tt};1}-\bracket{Z_t, b; \bar Z_{t,\aa}}},
\end{equation}
and\footnote{The second equality in \eqref{dta1} and \eqref{ba} are obtained by integration by parts.}
\begin{equation}\label{ba}
b_\aa-2\Re D_\aa Z_t=\Re \paren{\bracket{ \frac1{Z_{,\aa}}, \mathbb H}  Z_{t,\alpha'}+ \bracket{Z_t, \mathbb H}\partial_\aa \frac1{Z_{,\aa}}  }=\frac12\bracket{\frac1{Z_{,\aa}}, Z_t; 1}-\frac12\bracket{\bar Z_t, \frac1{\bar Z_{,\aa}}; 1}.
\end{equation}
We also have
\begin{align}\label{eq:c26}
D_t\paren{\frac{1}{Z_{,\alpha'}}}=\frac{1}{Z_{,\alpha'}}(b_\aa-D_\aa Z_t)=\frac{1}{Z_{,\alpha'}}(b_\aa-2\Re D_\aa Z_t)+\frac{\bar Z_{t,\aa}}{|Z_{,\alpha'}|^2}.
\end{align}

\subsection{Basic identities}

We  give some basic identities that will be used in our calculations. We begin with Proposition~\ref{prop:comm-hilbe}, which  is a consequence of the fact that the product of holomorphic functions is holomorphic and \eqref{projid}.

\begin{proposition}\label{prop:comm-hilbe}
Assume that $f,\ g \in L^2(\mathbb R)$. 

1. Assume either both $f$, $g$ are holomorphic: $f=\mathbb H f$, $g=\mathbb H g$, or both are anti-holomorphic: $f=-\mathbb H f$, $g=-\mathbb H g$. Then
\begin{equation}\label{comm-hilbe}
[f, \mathbb H]g=0.
\end{equation}

2. If  $\P_A f=\P_A g=0$, then $\P_A( fg)=0$; and if $\, \P_H f=\P_H g=0$, then $\P_H( fg)=0$.
\end{proposition}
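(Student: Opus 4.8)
\textbf{Proof proposal for Proposition~\ref{prop:comm-hilbe}.}

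The plan is to derive both statements from the single structural fact that the boundary value of a holomorphic function on $\mathscr P_-$ is characterized by $f=\HH f$, equivalently $\P_A f=0$, together with the projection identities $\P_H\P_A=\P_A\P_H=0$ and $\P_H+\P_A=I$ from \eqref{paph}--\eqref{projid}, and the fact that a pointwise product of two boundary values of functions holomorphic in $\mathscr P_-$ is again the boundary value of a function holomorphic in $\mathscr P_-$ (this is the only ``analytic'' input: the product of two $H^\infty$-type functions on the lower half plane extends holomorphically, and one checks the product of the boundary traces is the trace of that product). First I would record this last fact carefully in the $L^2$ setting we need: if $\P_A f=\P_A g=0$ then $fg$ is the trace of a holomorphic function on $\mathscr P_-$, hence $\P_A(fg)=0$; taking complex conjugates gives the anti-holomorphic version. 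This is precisely part 2, so I would prove part 2 first.

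For part 1, suppose first that $f$ and $g$ are both holomorphic, i.e. $\HH f=f$ and $\HH g=g$. Then by part 2 the product $fg$ is holomorphic, so $\HH(fg)=fg$. Now expand the commutator: $[f,\HH]g = f\,\HH g - \HH(fg) = fg - fg = 0$. The anti-holomorphic case is identical with $\HH$ replaced by $-\HH$ throughout: if $\HH f=-f$ and $\HH g=-g$ then $\overline f,\overline g$ are holomorphic, so $\overline{fg}$ is holomorphic, hence $\HH(fg) = -fg$, and $[f,\HH]g = f\,\HH g - \HH(fg) = -fg - (-fg) = 0$. Equivalently one can phrase the whole argument through the projections: $[f,\HH]g = (I-2\P_A)(fg) $ corrected by $f$ acting, and when $\P_A f=\P_A g=0$ one gets $\P_A(fg)=0$ and the commutator collapses; I would likely present the projection version since it makes the bookkeeping transparent and matches the notation used in the rest of the paper.

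I do not expect a genuine obstacle here; the content is entirely the algebra of the Szeg\H{o} projection and the multiplicativity of Hardy-space boundary values. The one point that deserves a line of care is the justification that $fg\in L^2$-theory still makes sense and that ``product of holomorphic boundary values is a holomorphic boundary value'' applies --- in the paper's usage the relevant $f,g$ always lie in $L^2\cap L^\infty$ (or one factor is bounded), so the product is in $L^2$ and the Hardy-space multiplicativity is standard; I would simply invoke this and cite the standard Hardy space theory rather than reprove it. Thus the proof reduces to the two short computations above once part 2 is in hand.
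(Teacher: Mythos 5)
Your proposal is correct and follows essentially the same route as the paper: the paper dispatches Proposition~\ref{prop:comm-hilbe} in one line as a consequence of the multiplicativity of Hardy-space boundary values together with the projection identity \eqref{projid}, which is precisely your argument (prove part 2 via the product-of-holomorphics fact, then derive part 1 by expanding the commutator). Your remark about integrability of the product is a fair point of care but not a gap in the context of the paper.
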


\begin{proposition}\label{prop:tri-comm-iden}
1. We have
\begin{equation}\label{eq:c29}
[f,g;h]=[f,\HH]\partial_{\aa}(gh)+[g,\HH]\partial_\aa(fh)-[fg, \HH]\partial_\aa h.
\end{equation}
2. If $h$ is holomorphic, i.e. $\P_A h=0$, then 
\begin{equation}\label{eq:c30}
\P_H [f,g; h]=-2\P_H (f\partial_\aa \P_A(g h))-2\P_H (g\partial_\aa \P_A(f h)).
\end{equation}
\end{proposition}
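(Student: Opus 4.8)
The plan is to obtain both identities from the elementary algebra of the commutator $[\,\cdot\,,\HH]$ together with the projection identity \eqref{projid}; no analytic input beyond an integration by parts is needed, so throughout I assume $f,g,h$ smooth and decaying at infinity, which is the setting in which these formulas are applied.

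For part~1, I would start from the integral representation $[f,\HH]v(x)=\frac1{\pi i}\int\frac{f(x)-f(y)}{x-y}\,v(y)\,dy$, so that
$$[f,\HH]\partial_\aa(gh)(x)=\frac1{\pi i}\int\frac{f(x)-f(y)}{x-y}\,\partial_y\bigl(g(y)h(y)\bigr)\,dy,$$
and integrate by parts in $y$. The boundary terms vanish, and applying the quotient rule to $\partial_y\!\bigl(\tfrac{f(x)-f(y)}{x-y}\bigr)$ splits the result into a piece carrying $f'(y)$ over $x-y$ and a piece carrying $\tfrac{f(x)-f(y)}{(x-y)^2}$, both integrated against $g(y)h(y)$. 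Doing the same to $[g,\HH]\partial_\aa(fh)$ and to $-[fg,\HH]\partial_\aa h$, one sees that the three contributions involving the first derivatives $f'$, $g'$, $(fg)'$ cancel identically, since $f'g+fg'-(fg)'=0$, each being paired with the same kernel $\tfrac{h(y)}{x-y}$. The three remaining $(x-y)^{-2}$ contributions combine into a single integral whose numerator is $-(f(x)-f(y))g(y)-(g(x)-g(y))f(y)+\bigl(f(x)g(x)-f(y)g(y)\bigr)$, which simplifies algebraically to $(f(x)-f(y))(g(x)-g(y))$; against $\tfrac{h(y)}{(x-y)^2}$ this is exactly $[f,g;h](x)$, giving \eqref{eq:c29}.

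For part~2, the key observation is a clean reduction formula for $\P_H$ applied to a Hilbert commutator. Using $\HH=\P_H-\P_A$, $\P_H+\P_A=I$ and $\HH^2=I$, one checks directly that for all $f,v$,
$$[f,\HH]v=f\,\HH v-\HH(f v)=2\,\P_A\bigl(f\,\P_H v\bigr)-2\,\P_H\bigl(f\,\P_A v\bigr),$$
whence, by the projection identity \eqref{projid}, $\P_H[f,\HH]v=-2\,\P_H\bigl(f\,\P_A v\bigr)$. I then apply this termwise to the right-hand side of \eqref{eq:c29}. Since the multiplier $\HH$ commutes with $\partial_\aa$, so do $\P_H$ and $\P_A$; hence the first term yields $\P_H[f,\HH]\partial_\aa(gh)=-2\P_H\bigl(f\,\partial_\aa\P_A(gh)\bigr)$ and the second yields $-2\P_H\bigl(g\,\partial_\aa\P_A(fh)\bigr)$. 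For the third term, $h$ holomorphic means $\P_A h=0$, so $\P_A\partial_\aa h=\partial_\aa\P_A h=0$, i.e. $\partial_\aa h$ is again holomorphic, and therefore $\P_H[fg,\HH]\partial_\aa h=-2\P_H\bigl(fg\,\partial_\aa\P_A h\bigr)=0$. Summing the three contributions gives exactly \eqref{eq:c30}.

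I do not anticipate a real obstacle: both statements are purely formal. The only points needing care are (a) justifying the integration by parts in part~1 so that boundary terms and the principal-value structure cause no difficulty under the stated regularity and decay, and (b) tracking the projections correctly in part~2 — in particular exploiting that $\partial_\aa$ commutes with $\HH$, which is precisely what makes holomorphy of $h$ propagate to $\partial_\aa h$ and eliminate the third term.
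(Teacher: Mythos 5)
Your proposal is correct and follows essentially the same route as the paper: \eqref{eq:c29} by integration by parts in the commutator kernels (with the first-derivative terms cancelling and the $(x-y)^{-2}$ numerators recombining into $(f(x)-f(y))(g(x)-g(y))$), and \eqref{eq:c30} from \eqref{eq:c29} together with \eqref{paph} and \eqref{projid}, your identity $\P_H[f,\HH]v=-2\P_H(f\,\P_A v)$ being exactly the consequence of those two identities that the paper invokes. No gaps.
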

\eqref{eq:c29} is obtained by integration by parts. \eqref{eq:c30} follows from \eqref{eq:c29}, \eqref{projid} and \eqref{paph} . 

We often use the following equalities to calculate the time and spatial derivatives to our energy functionals,  and the following version of the Cauchy integral formula to derive our equations and formulas. 

\begin{proposition}\label{prop:dte}
For $f$, $g$ smooth and decay fast at infinity, \begin{align}\label{dte}
\frac d{dt}\int f(\aa,t)\,d\aa&=\int (D_t+b_\aa)f(\aa,t)\,d\aa;\\
\label{dteab}
\frac d{dt}\iint g(\aa,\bb, t)\,d\aa d\bb&=\iint (\mathfrak D_t+b_\aa+b_\bb)g(\aa,\bb,t)\,d\aa d\bb;\\
\label{dta}
D_t\int g(\aa,\bb, t)\,d\bb&=\int (\mathfrak D_t+b_\bb)g(\aa,\bb,t)\, d\bb;\\
\label{da}
\partial_\aa \int g(\aa,\bb, t)\,d\bb&=\int (\partial_\aa+\partial_\bb) g(\aa,\bb,t)\, d\bb.
\end{align}
\end{proposition}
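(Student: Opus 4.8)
The plan is to obtain all four identities from a single mechanism: one may differentiate (in $t$ or in $\aa$) under the integral sign — justified by the smoothness and fast-decay hypotheses via dominated convergence — and any perfect spatial derivative of a fast-decaying function integrates to zero over $\mathbb R$. The four displayed formulas are then just the product rule for $D_t=\partial_t+b\partial_\aa$ and $\mathfrak D_t=\partial_t+b(\aa,t)\partial_\aa+b(\bb,t)\partial_\bb$, rearranged so that the excess terms become total spatial derivatives.

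First I would prove \eqref{dte}. Differentiating under the integral gives $\frac d{dt}\int f(\aa,t)\,d\aa=\int \partial_t f\,d\aa$, and the product rule yields $\partial_t f=(D_t+b_\aa)f-\partial_\aa(bf)$. Since $bf$ decays at infinity, $\int\partial_\aa(bf)\,d\aa=0$, which is \eqref{dte}. The identity \eqref{dteab} is structurally identical: $\frac d{dt}\iint g\,d\aa\,d\bb=\iint\partial_t g\,d\aa\,d\bb$, and $\partial_t g=(\mathfrak D_t+b_\aa+b_\bb)g-\partial_\aa\!\big(b(\aa,t)g\big)-\partial_\bb\!\big(b(\bb,t)g\big)$; integrating in $\aa$ kills the first total derivative and integrating in $\bb$ kills the second. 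For \eqref{dta} I would first move $D_t=\partial_t+b(\aa,t)\partial_\aa$ past the $\bb$-integral (legitimate because $b(\aa,t)$ does not depend on $\bb$), obtaining $\int\big(\partial_t+b(\aa,t)\partial_\aa\big)g\,d\bb$, then rewrite the integrand as $(\mathfrak D_t+b_\bb)g-\partial_\bb\!\big(b(\bb,t)g\big)$ and discard the total $\bb$-derivative by decay. Finally \eqref{da} is immediate: $\partial_\aa\int g\,d\bb=\int\partial_\aa g\,d\bb=\int(\partial_\aa+\partial_\bb)g\,d\bb-\int\partial_\bb g\,d\bb$, and the last integral vanishes.

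There is no genuine obstacle here; the only point that deserves a word of justification is the interchange of $\frac d{dt}$ (respectively $\partial_\aa$) with the integral, which is the standard Leibniz rule under the stated regularity and decay assumptions. This is essentially the content already signalled in the text by the remark that "the second equalities are obtained by integration by parts," so I would state the justification in one line and present the four computations above as brief displays rather than labor over them.
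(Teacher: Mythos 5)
Your proof is correct and is essentially the paper's own argument: the paper proves all four identities by differentiating under the integral and noting that $\int(b\partial_\aa+b_\aa)f\,d\aa=\int\partial_\aa(bf)\,d\aa=0$, $\int(b\partial_\bb+b_\bb)g\,d\bb=0$, and $\int\partial_\bb g\,d\bb=0$, exactly as you do. (One peripheral slip: the footnote about ``the second equality ... obtained by integration by parts'' refers to \eqref{dta1} and \eqref{ba}, not to this proposition, but this does not affect your argument.)
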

\eqref{dte}, \eqref{dteab}, \eqref{dta}, \eqref{da} follow from the simple fact that $\int (b\partial_\aa+b_\aa)f\,d\aa=\int\partial_\aa(bf)\,d\aa=0$,  $\int (b\partial_\bb+b_\bb)g\,d\bb=\int\partial_\bb(bg)\,d\bb=0$,
and $\int\partial_\bb g \,d\bb=0$.

\begin{proposition}[Cauchy integral formula]\label{prop:cif}
 For any  $\Th\in L^1(\mathbb R)$,  satisfying $\P_A\Th=0$ or $\P_H\Th=0$, 
\begin{equation}\label{eq:c31}
\int\Theta(\aa)\,d\aa=0.
\end{equation}
\end{proposition}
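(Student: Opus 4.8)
The plan is to reduce the statement to the single assertion $\widehat{\Theta}(0)=0$, since $\int_{\R}\Theta(\aa)\,d\aa$ equals $\widehat{\Theta}(0)$ up to a fixed nonzero normalization constant. First I would dispose of the two hypotheses symmetrically: using the elementary identity $\overline{\HH f}=-\HH\bar f$, the assumption $\P_H\Theta=0$ (equivalently $\HH\Theta=-\Theta$) is the same as $\P_A\bar\Theta=0$ (equivalently $\HH\bar\Theta=\bar\Theta$), while $\int\Theta=\overline{\int\bar\Theta}$; so it suffices to handle the case $\P_A\Theta=0$, i.e. $\HH\Theta=\Theta$.

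Next I would invoke the Fourier description of $\HH$: with the normalization in \eqref{ht}, $\HH$ is the Fourier multiplier by $-\sgn\xi$ (this is precisely the normalization under which $f=\HH f$ characterizes the boundary values of functions holomorphic in $\mathscr P_-$, so the multiplier can be read off directly from that characterization). Hence $\HH\Theta=\Theta$ forces $(1+\sgn\xi)\widehat\Theta(\xi)=0$ for a.e.\ $\xi$, so $\widehat\Theta$ vanishes a.e.\ on $\{\xi>0\}$. Since $\Theta\in L^1(\R)$, the Riemann--Lebesgue lemma gives that $\widehat\Theta$ is continuous on $\R$; therefore $\widehat\Theta\equiv 0$ on the closed half-line $\{\xi\ge 0\}$, and in particular $\widehat\Theta(0)=0$, which is the claim.

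An alternative route, which I would mention for completeness, is purely complex-analytic: $\HH\Theta=\Theta$ with $\Theta\in L^1(\R)$ says $\Theta$ is the nontangential boundary value of a function $F$ in the Hardy space $H^1(\mathscr P_-)$; applying Cauchy's theorem to the contour $[-R,R]\cup C_R$ with $C_R$ a semicircle in $\mathscr P_-$, and using the decay of $H^1$ functions (reducing to the $H^2$ case by a factorization $F=F_1F_2$ with $F_j\in H^2(\mathscr P_-)$, if one wants to sidestep the $H^1$ estimate directly) to kill $\int_{C_R}F$ as $R\to\infty$, yields $\int_{\R}\Theta=\lim_{R\to\infty}\int_{-R}^{R}F=0$.

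The only genuinely non-formal point is passing from ``$\widehat\Theta=0$ a.e.\ on the open half-line'' to ``$\widehat\Theta(0)=0$'': this is exactly where the hypothesis $\Theta\in L^1$ (rather than merely $L^2$) is used, through continuity of $\widehat\Theta$. In the contour-integral version the analogous subtlety is the vanishing of the semicircular integral for an $H^1$ (as opposed to $H^2$) function, handled by the factorization above. Everything else is routine bookkeeping, so I expect the writeup to be short.
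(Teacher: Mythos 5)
The paper gives no proof of Proposition \ref{prop:cif} at all: it is recorded as a classical fact, and its very name (``Cauchy integral formula'') indicates that the intended justification is precisely your second, complex-analytic route ($\Theta$ is the boundary value of a Hardy-class holomorphic function on $\mathscr P_-$, and Cauchy's theorem together with the decay of $H^1$ functions kills the semicircular contribution). Your primary, Fourier-analytic argument is correct and is a genuinely different, arguably cleaner, route: the reduction of the $\P_H$ case to the $\P_A$ case via $\overline{\HH f}=-\HH\bar f$ is right; in the normalization \eqref{ht} the multiplier of $\HH$ is indeed $-\sgn\xi$, consistent with the paper's statement that $f=\HH f$ characterizes boundary values of functions holomorphic in $\mathscr P_-$; and you correctly isolate the one place where $\Theta\in L^1$ (rather than $L^2$) is essential, namely the continuity of $\widehat\Theta$ at $\xi=0$. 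The only step you pass over quickly is the validity of the multiplier identity $(\HH\Theta)^{\wedge}=-\sgn(\xi)\,\widehat\Theta$ for $\Theta$ merely in $L^1$; since by hypothesis $\HH\Theta=\Theta\in L^1$, this is standard (test against Schwartz functions whose Fourier transforms are compactly supported in $(0,\infty)$, using Fubini and the antisymmetry of the kernel, or invoke the $H^1(\mathscr P_-)$ boundary-value characterization), but a complete writeup should devote a sentence to it, just as the contour version needs the $H^1=H^2\cdot H^2$ factorization you mention to control the arc. With that caveat, both of your routes are sound: the Fourier one buys a short, self-contained proof whose only analytic input is continuity of the Fourier transform of an $L^1$ function, while the contour one matches the viewpoint the paper implicitly adopts.
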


\subsection{Commutator identities}\label{comm-iden}

We include here various commutator identities that are necessary for our proofs. Some have already appeared in Appendix B.5 of \cite{kw} and Appendix B of \cite{wu8}. We have 
\begin{align}
  \label{eq:c1-1}
  [D_t,D_\aa] &= - (D_\aa Z_t) D_\aa,\\ 
  \label{eq:c7}
[D_t, \partial_\aa] &=-b_\aa\partial_\aa .
   \end{align}
By product rules, 
 \begin{equation}\label{eq:c34}
[D_t, \frac{1}{Z_{,\alpha'}}]f=D_t\paren{\frac{1}{Z_{,\alpha'}}}f.
\end{equation}
From \eqref{eq:c26} and \eqref{eq:c7} it implies that 
\begin{align}\label{eq:c28}
\bracket{D_t,\, \frac1{|Z_{,\aa}|^2}\partial_\aa}f&=\frac{b_\aa-2\Re D_\aa Z_t}{|Z_{,\aa}|^2}\partial_\aa f,\\
\label{eq:c32}
\bracket{D_t,\mathfrak P}f=
\bracket{D_t,\, i\,\frac{A_1}{|Z_{,\aa}|^2}\partial_\aa}f&=i\,\paren{\frac{D_t A_1}{A_1}+b_\aa-2\Re D_\aa Z_t}\frac{A_1}{|Z_{,\aa}|^2}\partial_\aa f.
\end{align}
We also have
\begin{equation}\label{eq:c21}
[D_t, \mathbb H]=2[D_t,\P_H]=[b,\mathbb H]\partial_{\alpha'}=\bracket{\frac{Z_t}{Z_{,\aa}},\HH}\partial_\aa\P_H+ \bracket{\frac{\bar Z_t}{\bar Z_{,\aa}},\HH}\partial_\aa\P_A,
\end{equation}
where the last equality  in \eqref{eq:c21} is a consequence of \eqref{b}, \eqref{paph} and \eqref{comm-hilbe}. 
In general,  for operators $A, B$ and $C$,
\begin{equation}\label{eq:c12}
[A, BC]=[A, B]C+ B[A, C].
\end{equation}

We use the following identities in our computations. We have
\begin{align}
\label{eq:c33}
\bracket{\mathcal P, \mathfrak D_t^m}&=\sum_{k=0}^{m-1} \mathfrak D_t^k \bracket{\mathcal P, \mathfrak D_t}\mathfrak D_t^{m-k-1},\\
\label{eq:c46}
\P_HD_t\P_H&=\P_H D_t-\P_H D_t\P_A=\P_H D_t-\P_H  \bracket{b, \P_A}\partial_\aa,
\end{align}
and we compute 
$$\begin{aligned}
&(\P_H D_t)^{l+1}\P_H=\P_H D_t^{l+1} +\sum_{k=0}^l \paren{(\P_H D_t)^{k+1}\P_H D_t^{l-k}-(\P_H D_t)^k \P_H D_t^{l+1-k}}\\&=\P_H D_t^{l+1} -\sum_{k=0}^l (\P_H D_t)^{k}\P_H D_t\P_A D_t^{l-k}=
\P_H D_t^{l+1} -\sum_{k=0}^l (\P_H D_t)^{k}\P_H \bracket{b,\P_A}\partial_\aa D_t^{l-k},
\end{aligned}
$$
so
\begin{equation}\label{eq:c35}
(\P_H D_t)^{l+1}\P_H=
\P_H D_t^{l+1} -\sum_{k=0}^l (\P_H D_t)^{k}\P_H \bracket{b,\P_A}\partial_\aa D_t^{l-k}.
\end{equation}
A similar computation  also gives 
\begin{equation}\label{eq:c36}
(\P_H D_t)^{l+1}\P_H=
D_t^{l+1}\P_H -\sum_{k=0}^l D_t^k\bracket{\P_A, b}\partial_\aa(\P_HD_t)^{l-k}\P_H.
\end{equation}

Let $\mathfrak f:=f(\aa)-f(\bb)$, $\mathfrak g:=g(\aa)-g(\bb)$ and $\mathfrak h:=h(\aa)-h(\bb)$.  We use \eqref{dta} and \eqref{eq:c7} to get
\begin{equation}\label{eq:c37}
D_t<f, \,g, \,h>=\frac1{\pi i} \int\frac{\mathfrak D_t (\mathfrak f\,\mathfrak g\,\mathfrak h)}{(\aa-\bb)^2}\,d\bb+\frac1{\pi i} \int \partial_\bb\mathfrak D_t\paren{\frac1{\aa-\bb}}  \mathfrak f\,\mathfrak g\,\mathfrak h\,d\bb;
\end{equation}
and by induction,
\begin{equation}\label{eq:c38}
D_t^n<f, \, g, \, h>=\frac1{\pi i}\sum_{k=0}^n \binom{n}{k}  \int \partial_\bb\mathfrak D_t^k\paren{\frac1{\aa-\bb}}  \mathfrak D_t^{n-k} (\mathfrak f\,\mathfrak g\,\mathfrak h)\,d\bb,
\end{equation}
Similarly,
\begin{align}\label{eq:c39}
D_t^n[f,\, g;\, h]&=\frac1{\pi i}\sum_{k=0}^n \binom{n}{k}  \int \partial_\bb\mathfrak D_t^k\paren{\frac1{\aa-\bb}}  \mathfrak D_t^{n-k} \paren{(f(\aa)-f(\bb))( g(\aa)-g(\bb)) h(\bb)}\,d\bb;\\ \label{eq:c40}
D_t^n[f,\, g;\, \partial_\aa h]&=\frac1{\pi i}\sum_{k=0}^n \binom{n}{k}  \int  \mathfrak D_t^{k} \paren{\frac{(f(\aa)-f(\bb))( g(\aa)-g(\bb))}{(\aa-\bb)^2}} \partial_\bb D_t^{n-k} h(\bb) \,d\bb;
\\ \label{eq:c49}
D_t^n[f,\mathbb H]\partial_\aa g&=\frac1{\pi i}\sum_{k=0}^n \binom{n}{k}  \int  \mathfrak D_t^{k} \paren{\frac{f(\aa)-f(\bb)}{\aa-\bb}} \partial_\bb D_t^{n-k} g(\bb) \,d\bb;
\end{align}
where
\begin{align}\label{eq:c41}
\mathfrak D_t\paren{\frac1{\aa-\bb}}=-\frac{b(\aa)-b(\bb)}{(\aa-\bb)^2},\ \  \mathfrak D_t^2\paren{\frac1{\aa-\bb}}=-\frac{D_tb(\aa)-D_tb(\bb)}{(\aa-\bb)^2}+2\frac{(b(\aa)-b(\bb))^2}{(\aa-\bb)^3}, \ \text{etc;} 
\end{align}
in particular, 
\begin{align}\label{eq:c14}
D_t [f,\mathbb H]\partial_{\alpha'}g=
[D_tf,\mathbb H]\partial_{\alpha'}g+ [f,\mathbb H]\partial_{\alpha'}D_t g-[f, b; \partial_{\alpha'}g].
\end{align}

\subsection{Some additional equations}
We know by definition \eqref{Thj},  \eqref{Gj} and equation \eqref{q1} that
\begin{align}\label{eq:c42}
\Theta^{(0)}=Q, \quad \Theta^{(1)}=i(Z-\aa),\quad  \Theta^{(2)}=-i\P_H b=-i\P_H\frac{\bar Z_t}{\bar Z_{,\aa}},
\\ \label{eq:c43}
\Theta^{(j+2)}=-i\P_H(\frac1{| Z_{,\aa}|^2}\partial_\aa\Theta^{(j)})+\P_H(G^{(j)}), \qquad j\ge 1.
\end{align}
This implies
\begin{equation}\label{eq:c50}
D_\aa\Theta^{(0)}=\bar Z_t,\qquad D_\aa\Theta^{(1)}=i\paren{1-\frac1{Z_{,\aa}}};
\end{equation}
\begin{equation}\label{eq:c44}
b=i\,\Theta^{(2)}-i\,\bar {\Theta^{(2)}}, \qquad D_t b=i\,D_t\Theta^{(2)}-i\,\bar {D_t\Theta^{(2)}}, \qquad\text{and}
\end{equation}
\begin{equation}\label{eq:c45}
\P_H \paren{D_t b+i\frac{A_1}{|Z_{,\aa}|^2}-i}=i\P_H\paren{\frac{A_1-1}{|Z_{,\aa}|^2}}+i\P_H(G^{(1)})+i\bracket{b,\P_H}\partial_\aa \bar {\Theta^{(2)}}.
\end{equation}

We compute, by \eqref{eq:c33}, \eqref{eq:c32} that 
\begin{equation}\label{eq:c47}
\begin{aligned}
\bracket{\mathcal P, D_t^l} \bar Z_t &=\sum_{k=0}^{l-1}D_t^{l-1-k}\bracket{\mathcal P, D_t}D_t^k \bar Z_t
\\&=-i\sum_{k=0}^{l-1}D_t^{l-1-k}\braces{
\paren{\frac{D_t A_1}{A_1}+b_\aa-2\Re D_\aa Z_t}\frac{A_1}{|Z_{,\aa}|^2}\partial_\aa D_t^k \bar Z_t}.
\end{aligned}
\end{equation}
and we have, by \eqref{eq:c12}, \eqref{eq:c1-1} and \eqref{eq:c36}, that
\begin{equation}\label{eq:c48}
\begin{aligned}
D_\aa\Theta^{(l+1)}&-D_t^l D_\aa \Theta^{(1)}=\sum_{k=0}^{l-1} (\P_HD_t)^k\bracket{\frac1{Z_{,\aa}},\P_H}\partial_\aa D_t\Theta^{(l-k)}\\&+\sum_{k=0}^{l-1} (\P_H D_t)^k\P_H(D_\aa Z_t D_\aa \Theta^{(l-k)})-\sum_{k=0}^{l-1}D_t^k[\P_A, b]\partial_\aa (\P_H D_t)^{l-1-k}D_\aa\Theta^{(1)}.
\end{aligned}
\end{equation}

\section{Basic inequalities}\label{ineq}
We will use the following equalities or inequalities in this paper. The first set: Lemma~\ref{hhalf1} through  Proposition~\ref{prop:hhalf-2}  are either classical results, or simple consequences of definitions and classical results, and some have already appeared in Appendix A of \cite{wu8}. 
Lemma~\ref{hhalf2} and Proposition~\ref{hhalf4} are from  Section 5.1 of \cite{wu8}.

\begin{lemma}\label{hhalf1}
For any function $f\in \dot H^{1/2}(\mathbb R)$,
\begin{align}
\nm{f}_{\dot H^{1/2}}^2&=\nm{\mathbb P_H f}_{\dot H^{1/2}}^2+\nm{\mathbb P_A f}_{\dot H^{1/2}}^2;\label{hhalfp}\\
\int i\partial_\aa f \, \bar f\,d\aa&=\nm{\mathbb P_H f}_{\dot H^{1/2}}^2-\nm{\mathbb P_A f}_{\dot H^{1/2}}^2.\label{hhalfn}
\end{align}

\end{lemma}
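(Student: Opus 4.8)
The statement to prove is Lemma~\ref{hhalf1}: for $f \in \dot H^{1/2}(\mathbb R)$ we have the Pythagorean identity $\nm{f}_{\dot H^{1/2}}^2 = \nm{\mathbb P_H f}_{\dot H^{1/2}}^2 + \nm{\mathbb P_A f}_{\dot H^{1/2}}^2$ and the signed identity $\int i\,\partial_\aa f\,\bar f\,d\aa = \nm{\mathbb P_H f}_{\dot H^{1/2}}^2 - \nm{\mathbb P_A f}_{\dot H^{1/2}}^2$. The cleanest route is through the Fourier transform, using the multiplier representations of $\mathbb H$, $\mathbb P_H$, $\mathbb P_A$ and the definition \eqref{def-hhalf} of the $\dot H^{1/2}$ norm.

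First I would record the symbols: with the convention in \eqref{ht}, $\mathbb H$ associated with the lower half plane $\mathscr P_-$ has Fourier multiplier $\sgn(\xi)$ (or $-\sgn\xi$, depending on the Fourier sign convention fixed in the paper — I would state it so the two cases $\widehat{\mathbb P_H f}(\xi) = \mathbf 1_{\{\xi>0\}}\hat f(\xi)$, $\widehat{\mathbb P_A f}(\xi) = \mathbf 1_{\{\xi<0\}}\hat f(\xi)$ hold; by $\mathbb H^2 = I$ and \eqref{paph}, \eqref{projid} these are complementary orthogonal projections on $L^2$ with disjoint frequency supports). Next, from \eqref{def-hhalf} the quantity $\nm{f}_{\dot H^{1/2}}^2 = \int i\mathbb H\partial_x f(x)\,\bar f(x)\,dx$ equals, by Plancherel, $\int |\xi|\,|\hat f(\xi)|^2\,d\xi$ up to the fixed normalizing constant $c$ in the definition of $\dot H^{s}$; I would absorb that constant once and for all. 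Then $\nm{\mathbb P_H f}_{\dot H^{1/2}}^2 = \int_{\xi>0} |\xi|\,|\hat f(\xi)|^2\,d\xi$ and $\nm{\mathbb P_A f}_{\dot H^{1/2}}^2 = \int_{\xi<0} |\xi|\,|\hat f(\xi)|^2\,d\xi$, since the frequency cutoffs commute with the multiplier $|\xi|$ and $|\widehat{\mathbb P_H f}|^2 + |\widehat{\mathbb P_A f}|^2 = |\hat f|^2$ pointwise (disjoint supports). Adding these two gives \eqref{hhalfp} immediately. For \eqref{hhalfn}, I would compute $\int i\,\partial_\aa f\,\bar f\,d\aa = \int \xi\,|\hat f(\xi)|^2\,d\xi = \int_{\xi>0}|\xi||\hat f|^2\,d\xi - \int_{\xi<0}|\xi||\hat f|^2\,d\xi$, which is exactly $\nm{\mathbb P_H f}_{\dot H^{1/2}}^2 - \nm{\mathbb P_A f}_{\dot H^{1/2}}^2$. (Note \eqref{hhalfn} also follows formally by replacing $\mathbb H$ in \eqref{def-hhalf} with $I$ and using $\mathbb H = \mathbb P_H - \mathbb P_A$ together with \eqref{hhalfp}, which is a good consistency check and perhaps the slicker way to present it: $\int i\partial_\aa f\,\bar f = \int i\mathbb H^2 \partial_\aa f\,\bar f = \int i\mathbb H\partial_\aa(\mathbb P_H f)\,\overline{\mathbb P_H f} - \int i\mathbb H\partial_\aa(\mathbb P_A f)\,\overline{\mathbb P_A f}$ after checking the cross terms vanish because $\mathbb H\partial_\aa(\mathbb P_H f)$ is holomorphic, $\overline{\mathbb P_A f}$ is holomorphic, and the product of holomorphic functions integrates to zero by Proposition~\ref{prop:cif}.)

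Alternatively, and perhaps more in the spirit of the rest of the paper, I would give the argument purely in physical space using the double-integral form $\nm{f}_{\dot H^{1/2}}^2 = \frac1{2\pi}\iint \frac{|f(\aa)-f(\bb)|^2}{(\aa-\bb)^2}\,d\aa\,d\bb$ from \eqref{def-hhalf}. Writing $f = \mathbb P_H f + \mathbb P_A f$ and expanding the square produces the two diagonal terms plus $\frac1{\pi}\Re\iint \frac{(\mathbb P_H f(\aa)-\mathbb P_H f(\bb))\overline{(\mathbb P_A f(\aa)-\mathbb P_A f(\bb))}}{(\aa-\bb)^2}\,d\aa\,d\bb$; this cross term is $2\Re\ip{i\partial_\aa \mathbb P_H f,\ \mathbb P_A f}$-type and vanishes because $\overline{\mathbb P_A f}$ is holomorphic, $i\mathbb H\partial_\aa \mathbb P_H f = i\partial_\aa \mathbb P_H f$ is holomorphic, and the integral of $i\partial_\aa(\mathbb P_H f)\,\overline{\mathbb P_A f}$ over $\mathbb R$ is zero by the Cauchy integral formula, Proposition~\ref{prop:cif} (the integrand is a product of holomorphic functions, hence holomorphic and of appropriate decay). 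That gives \eqref{hhalfp}; \eqref{hhalfn} is then the same computation with the sign in front of $\mathbb H$ flipped on one of the pieces, i.e. it follows from \eqref{hhalfp} and $\mathbb H = \mathbb P_H - \mathbb P_A$ together with the vanishing of the same cross terms.

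There is no real obstacle here; the only point requiring a little care is the \emph{vanishing of the cross terms}, where one must justify that $\int i\partial_\aa(\mathbb P_H f)\,\overline{\mathbb P_A f}\,d\aa = 0$ — this needs $f$ smooth and decaying (or a density argument from such $f$ to general $f\in\dot H^{1/2}$), and invokes Proposition~\ref{prop:comm-hilbe}/Proposition~\ref{prop:cif} for the fact that a product of holomorphic $L^p$ functions integrates to zero. I would state the lemma for $f$ in a dense class and extend by continuity of both sides in the $\dot H^{1/2}$ norm, which is immediate since $\mathbb P_H,\mathbb P_A$ are bounded on $\dot H^{1/2}$ (multipliers of modulus $\le 1$) and the bilinear form $\int i\partial_\aa f\,\bar f$ is continuous on $\dot H^{1/2}$.
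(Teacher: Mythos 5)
Your argument is correct. For this lemma the paper supplies no proof at all: it is listed, together with the rest of the first block of Appendix C, among results that are ``either classical results, or simple consequences of definitions and classical results,'' some taken from Appendix A of \cite{wu8}. So there is nothing to match against line by line; what you have done is fill in the omitted classical argument, and both of your routes are valid. The Fourier route is the standard one: once you fix the sign convention so that $\P_H$, $\P_A$ are the frequency cut-offs to the two half-lines (you rightly flag this ambiguity), \eqref{hhalfp} and \eqref{hhalfn} are immediate from Plancherel and the fact that $i\HH\partial_\aa$ has symbol $|\xi|$ while $i\partial_\aa$ has symbol $\pm\xi$. Your second, physical-space argument is closer to the toolkit the paper actually uses: write $f=\P_H f+\P_A f$, note $\HH\partial_\aa\P_H f=\partial_\aa\P_H f$ and $\HH\partial_\aa\P_A f=-\partial_\aa\P_A f$, and kill the cross terms $\int i\,\partial_\aa(\P_H f)\,\overline{\P_A f}\,d\aa$ and $\int i\,\partial_\aa(\P_A f)\,\overline{\P_H f}\,d\aa$ by Proposition~\ref{prop:comm-hilbe} and the Cauchy integral formula, Proposition~\ref{prop:cif}, since each integrand is a product of two holomorphic (respectively two anti-holomorphic) functions; this yields both identities at once, with \eqref{hhalfp} and \eqref{hhalfn} differing only in whether the two diagonal terms carry the same or opposite signs. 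Your closing remark about proving it first on a dense class of smooth, decaying $f$ (so that the $L^1$ hypothesis of Proposition~\ref{prop:cif} holds) and extending by the $\dot H^{1/2}$-boundedness of $\P_H,\P_A$ and continuity of both quadratic forms is exactly the right care point, and is the only place where anything beyond bookkeeping is needed.
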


\begin{proposition}\label{prop:Hhalf}
Let $f,\ g\in C^1(\mathbb R)$. Then
\begin{align}
\nm{fg}_{\dot H^{1/2}}&\lec \|f\|_{L^\infty}\|g\|_{\dot H^{1/2}}+\|g\|_{L^\infty}\|f\|_{\dot H^{1/2}};\label{hhalf-1}\\
\|fg\|_{\dot H^{1/2}}&\lec \|f\|_{L^\infty}\|g\|_{\dot H^{1/2}}+\|f'\|_{L^2}\|g\|_{L^2};\label{hhalf-2}
\\ \|g\|_{\dot H^{1/2}}&\lesssim \|f^{-1}\|_{L^\infty}(\|fg\|_{\dot H^{1/2}}+\|f'\|_{L^2}\|g\|_{L^2}).\label{Hhalf}
\end{align}

\end{proposition}
\eqref{hhalf-2} is straightforward from the definition of $\dot H^{1/2}$ and Hardy's inequality. The remaining two are from Appendix A of \cite{wu8}.

\begin{proposition}[Sobolev inequality]\label{sobolev}
Let $f\in C^1_0(\mathbb R)$. Then
\begin{equation}\label{eq:sobolev}
\|f\|_{L^\infty}^2\le 2\|f\|_{L^2}\|f'\|_{L^2},\qquad \|f\|_{\dot H^{1/2}}^2\le \|f\|_{L^2}\|f'\|_{L^2}.
\end{equation}
\end{proposition}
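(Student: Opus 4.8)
The plan is to prove the two inequalities separately by elementary means, treating $f\in C^1_0(\mathbb R)$ throughout. For the first, $\|f\|_{L^\infty}^2\le 2\|f\|_{L^2}\|f'\|_{L^2}$, I would invoke the fundamental theorem of calculus: for every $x\in\mathbb R$ one has $f(x)^2=\int_{-\infty}^x \partial_t\big(f(t)^2\big)\,dt=2\int_{-\infty}^x f(t)f'(t)\,dt$, so $|f(x)|^2\le 2\int_{\mathbb R}|f(t)|\,|f'(t)|\,dt$, and Cauchy–Schwarz gives $|f(x)|^2\le 2\|f\|_{L^2}\|f'\|_{L^2}$; taking the supremum over $x$ finishes it. (One could sharpen the constant to $1$ by averaging the bound obtained by integrating from $-\infty$ with the one obtained by integrating from $+\infty$, but the stated constant is all that is needed below.)

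For the second inequality, $\|f\|_{\dot H^{1/2}}^2\le\|f\|_{L^2}\|f'\|_{L^2}$, I would pass to the Fourier side. Fixing the convention $\hat f(\xi)=\int f(x)e^{-ix\xi}\,dx$, so that $\|g\|_{L^2}^2=\tfrac1{2\pi}\int|\hat g|^2\,d\xi$, a short computation starting from the definition \eqref{def-hhalf} identifies $\|f\|_{\dot H^{1/2}}^2=\tfrac1{2\pi}\int_{\mathbb R}|\xi|\,|\hat f(\xi)|^2\,d\xi$, while $\|f\|_{L^2}^2=\tfrac1{2\pi}\int|\hat f|^2$ and $\|f'\|_{L^2}^2=\tfrac1{2\pi}\int\xi^2|\hat f|^2$. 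The key elementary step is the AM–GM pointwise bound $|\xi|\le\tfrac12\big(R+\xi^2/R\big)$, valid for every $R>0$; integrating it against $|\hat f(\xi)|^2$ yields $\|f\|_{\dot H^{1/2}}^2\le\tfrac12 R\,\|f\|_{L^2}^2+\tfrac1{2R}\|f'\|_{L^2}^2$, and choosing $R=\|f'\|_{L^2}/\|f\|_{L^2}$ gives the claim (the degenerate case $\|f\|_{L^2}\|f'\|_{L^2}=0$, i.e. $f\equiv 0$, being trivial).

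There is essentially no real obstacle; the only points requiring a line of care are fixing the Fourier normalization consistently with \eqref{def-hhalf} and dismissing the degenerate case. As an alternative that avoids the Fourier transform entirely, one can argue directly from the double-integral formula in \eqref{def-hhalf}: split the region of integration into $|x-y|<\delta$ and $|x-y|\ge\delta$, bound $|f(x)-f(y)|^2\le |x-y|\int_{\mathbb R}|f'|^2$ on the first part (Cauchy–Schwarz on $\int_y^x f'$) and $|f(x)-f(y)|^2\le 2|f(x)|^2+2|f(y)|^2$ on the second, then optimize over $\delta$; this reproduces the same inequality up to a harmless constant, which would already be enough for the applications in the paper.
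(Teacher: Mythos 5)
Your proof is correct. Note that the paper itself offers no argument for this proposition: in Appendix~C it is listed among the facts that ``are either classical results, or simple consequences of definitions and classical results,'' so there is no in-paper proof to compare against. Your route is the standard one and it goes through: the fundamental theorem of calculus plus Cauchy--Schwarz gives $|f(x)|^2\le 2\|f\|_{L^2}\|f'\|_{L^2}$ (for complex-valued $f$, which is the relevant case in the paper, write $|f(x)|^2=2\Re\int_{-\infty}^x \bar f f'\,dt$ rather than $f(x)^2=2\int f f'$ --- a cosmetic adjustment), and your Fourier identification $\|f\|_{\dot H^{1/2}}^2=\tfrac1{2\pi}\int|\xi|\,|\hat f(\xi)|^2\,d\xi$ is indeed consistent with the paper's definition \eqref{def-hhalf}, after which either your AM--GM step or a direct Cauchy--Schwarz in $\xi$, namely $\int|\xi||\hat f|^2\le\bigl(\int|\hat f|^2\bigr)^{1/2}\bigl(\int\xi^2|\hat f|^2\bigr)^{1/2}$, yields the second inequality with constant $1$. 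The degenerate cases are handled as you say, and your non-Fourier alternative is fine as a remark but unnecessary since the sharp-constant argument already closes the proof.
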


\begin{proposition}[Maximum inequality]\label{maximum}
Let $1<p\le \infty$, 
Then for all $f\in L^p$,
\begin{equation}\label{eq:maximum}
\nm{M(f)}_{L^p}\lec \nm{f}_{L^p}.
\end{equation}
\end{proposition}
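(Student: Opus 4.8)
The plan is to prove the Hardy--Littlewood maximal inequality \eqref{eq:maximum} by the classical three-step route: a weak-type $(1,1)$ estimate, the trivial $L^\infty$ bound, and Marcinkiewicz interpolation between them. Nothing from the body of the paper is needed; this is a standard fact about the one-dimensional maximal operator.

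First I would establish the weak-type $(1,1)$ bound $\abs{\braces{x\in\R:M(f)(x)>\lambda}}\le C\lambda^{-1}\nm{f}_{L^1}$ for all $\lambda>0$. Fixing $\lambda>0$ and a compact subset $K$ of the sublevel set $E_\lambda:=\braces{M(f)>\lambda}$, for each $x\in K$ one selects an interval $I_x\ni x$ with $\int_{I_x}\abs{f}>\lambda\abs{I_x}$. Applying the Vitali covering lemma to the cover $\braces{I_x}_{x\in K}$ produces finitely many pairwise disjoint intervals $I_1,\dots,I_N$ from this family with $K\subset\bigcup_j 5I_j$, whence $\abs{K}\le 5\sum_j\abs{I_j}\le 5\lambda^{-1}\sum_j\int_{I_j}\abs{f}\le 5\lambda^{-1}\nm{f}_{L^1}$ by disjointness; taking the supremum over all such compact $K\subset E_\lambda$ (using inner regularity of Lebesgue measure) gives the bound with $C=5$. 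The endpoint $p=\infty$ is immediate, since every interval average of $\abs{f}$ is at most $\nm{f}_{L^\infty}$, so $M(f)(x)\le\nm{f}_{L^\infty}$ pointwise and $\nm{M(f)}_{L^\infty}\le\nm{f}_{L^\infty}$.

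Finally, since $M$ is sublinear and is bounded $L^1\to L^{1,\infty}$ by the first step and $L^\infty\to L^\infty$ by the second, the Marcinkiewicz interpolation theorem yields $\nm{M(f)}_{L^p}\lec\nm{f}_{L^p}$ for every $1<p<\infty$, with implied constant depending only on $p$ (and degenerating as $p\to 1$); combined with the $p=\infty$ case this is exactly \eqref{eq:maximum}. The only mildly technical ingredient is the Vitali covering argument underlying the weak-type bound, but it is completely routine, so there is no genuine obstacle here; indeed \eqref{eq:maximum} is classical and could simply be cited (e.g.\ Stein, \emph{Singular Integrals and Differentiability Properties of Functions}, Ch.~I), the sketch above being recorded only for completeness.
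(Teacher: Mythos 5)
Your proof is correct and is the standard argument (weak-type $(1,1)$ via Vitali covering, trivial $L^\infty$ bound, Marcinkiewicz interpolation). The paper itself offers no proof of this proposition, listing it among classical facts in Appendix~C and pointing to the literature (e.g.\ Stein's book), so your sketch is simply the textbook proof being cited; there is no discrepancy.
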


\begin{proposition}[Hardy's inequalities]
\label{hardy-inequality}  
Let $1<p<\infty$, $f \in C^1(\mathbb R)$, 
with $f' \in L^p(\mathbb R)$. Then 
\begin{equation}
  \label{eq:77}
\sup_{x\in\mathbb R}\int \f{|f(x) - f(y)|^p}{|x-y|^p} dy\lec \nm{f'}_{L^p}^p;
\end{equation}
and
\begin{equation}
  \label{eq:771}
\iint \f{|f(x) - f(y)|^{2p}}{|x-y|^{2p}} \,dx dy \lec \nm{f'}_{L^p}^{2p}.
\end{equation}
\end{proposition}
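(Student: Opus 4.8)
The plan is to prove the two inequalities separately, as both are essentially classical one‑dimensional facts; the hypothesis $1<p<\infty$ enters at exactly the two places where it is needed.

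For \eqref{eq:77} I would first use translation invariance to reduce to the case $x=0$, so that the goal becomes $\int_{\mathbb R}|f(y)-f(0)|^p/|y|^p\,dy\lec\nm{f'}_{L^p}^p$ with an implied constant independent of the (now explicit) base point. Splitting the integral into the regions $\{y>0\}$ and $\{y<0\}$ and writing $f(y)-f(0)=\int_0^y f'(s)\,ds$, each half takes the form $\int_0^\infty\big(t^{-1}\int_0^t g(s)\,ds\big)^p\,dt$ with $g=|f'|$ on $(0,\infty)$ (respectively $g(s)=|f'(-s)|$). The classical Hardy inequality $\big\|t^{-1}\int_0^t g\big\|_{L^p(0,\infty)}\le\frac{p}{p-1}\nm{g}_{L^p(0,\infty)}$, valid precisely because $p>1$, bounds each half by $\big(\frac{p}{p-1}\big)^p\int_0^\infty g^p$, and summing gives $\big(\frac{p}{p-1}\big)^p\nm{f'}_{L^p}^p$ uniformly in $x$; taking the supremum over $x$ yields \eqref{eq:77}.

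For \eqref{eq:771} the key pointwise estimate is
\[
\frac{|f(x)-f(y)|}{|x-y|}\le \min\{M(|f'|)(x),\,M(|f'|)(y)\},\qquad x\ne y,
\]
where $M$ is the (uncentered) Hardy--Littlewood maximal operator: the interval $I$ with endpoints $x$ and $y$ contains $x$, has $|I|=|x-y|$, and $|f(x)-f(y)|=\big|\int_I f'\big|\le\int_I|f'|$, so the quotient is at most $M(|f'|)(x)$, and symmetrically at most $M(|f'|)(y)$. Raising to the power $2p$ gives $|f(x)-f(y)|^{2p}/|x-y|^{2p}\le M(|f'|)(x)^p\,M(|f'|)(y)^p$; integrating in $x,y$ the double integral factors as $\nm{M(|f'|)}_{L^p}^{2p}$, and Proposition~\ref{maximum} (i.e. \eqref{eq:maximum}), applicable since $p>1$, bounds this by $\nm{f'}_{L^p}^{2p}$. (Alternatively one can deduce \eqref{eq:771} from \eqref{eq:77}: for fixed $x$, $\int |f(x)-f(y)|^{2p}/|x-y|^{2p}\,dy\le\big(\sup_y|f(x)-f(y)|^p/|x-y|^p\big)\int|f(x)-f(y)|^p/|x-y|^p\,dy\lec M(|f'|)(x)^p\nm{f'}_{L^p}^p$, and integrating in $x$ and using \eqref{eq:maximum} finishes.)

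There is no genuine obstacle here. The only points to watch are that $p>1$ is used twice — in the classical Hardy inequality and in the maximal inequality — and that one should work with the uncentered maximal function (or simply absorb the harmless constant factor) so that the interval joining $x$ and $y$ is an admissible averaging set. Should one wish to avoid citing the classical Hardy inequality, \eqref{eq:77} can instead be obtained directly from the dual/integration‑by‑parts computation, but invoking it is cleaner.
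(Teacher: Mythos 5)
Your proof is correct. The paper cites these Hardy inequalities as classical and does not spell out a proof (they appear in the block of ``classical results or simple consequences'' preceding the technical estimates), so there is no paper argument to compare against; your two-step argument — translation-reduce to $x=0$, split $y\gtrless 0$, invoke the one-sided Hardy inequality $\|t^{-1}\int_0^t g\|_{L^p(0,\infty)}\le\frac{p}{p-1}\|g\|_{L^p(0,\infty)}$ for \eqref{eq:77}, then use the pointwise bound $|f(x)-f(y)|/|x-y|\le\min\{M(f')(x),M(f')(y)\}$ together with Proposition~\ref{maximum} for \eqref{eq:771} — is exactly the standard route and is the one consistent with the tools the paper already deploys (the same pointwise maximal bound is used in the paper's remark after \eqref{eq:b115}). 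Both of your two alternatives for \eqref{eq:771} are valid; the direct factorization $M(|f'|)(x)^p M(|f'|)(y)^p$ is the cleaner of the two.
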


Let $ H\in C^1(\mathbb R; \mathbb R^d)$, $A_i\in C^1(\mathbb R)$, $i=1,\dots m$, and $F\in C^\infty(\mathbb R)$. Define
\begin{equation}\label{3.15}
C_1(A_1,\dots, A_m, f)(x)=\text{pv.}\int F\paren{\frac{H(x)-H(y)}{x-y}} \frac{\Pi_{i=1}^m(A_i(x)-A_i(y))}{(x-y)^{m+1}}f(y)\,dy.
\end{equation}

\begin{proposition}\label{B1} There exist  constants $c_1=c_1(F, \|H'\|_{L^\infty})$, $c_2=c_2(F, \|H'\|_{L^\infty})$, such that 

1. For any $f\in L^2,\ A_i'\in L^\infty, \ 1\le i\le m, $
\begin{equation}\label{3.16}
\|C_1(A_1,\dots, A_m, f)\|_{L^2}\le c_1\|A_1'\|_{L^\infty}\dots\|A_m'\|_{L^\infty}\|f\|_{L^2}. 
\end{equation}
2. For any $ f\in L^\infty, \ A_i'\in L^\infty, \ 2\le i\le m,\ A_1'\in L^2$, 
\begin{equation}\label{3.17}
\|C_1(A_1,\dots, A_m, f)\|_{L^2}\le c_2\|A_1'\|_{L^2}\|A'_2\|_{L^\infty}\dots\|A_m'\|_{L^\infty}\|f\|_{L^\infty}.
\end{equation}
\end{proposition}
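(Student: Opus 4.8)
The plan is to recognize $C_1(A_1,\dots,A_m,\cdot)$ as a singular integral operator of Calder\'on type and to deduce its $L^2$ bounds from the Coifman--McIntosh--Meyer (CMM) theorem on Calder\'on commutators, after first disposing of the smooth nonlinearity $F(\cdot)$. First I would note that $\frac{H(x)-H(y)}{x-y}=\int_0^1 H'(y+t(x-y))\,dt$ always lies in the closed ball $\overline{B}(0,\nm{H'}_{L^\infty})$, so only $F$ restricted to this fixed ball is seen by the operator; replacing $F$ by $\chi F$ with $\chi\in C_c^\infty$ equal to $1$ near that ball, I may assume $F\in C_c^\infty(\mathbb{R}^d)$, hence $F=\int\widehat F(\zeta)e^{2\pi i\zeta\cdot\xi}\,d\zeta$ with $\widehat F$ Schwartz.

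The heart of the argument is the interplay of the factor $F\left(\frac{H(x)-H(y)}{x-y}\right)$ with the fact that the data are only $C^1$. The key structural remark is that, thanks to the divided-difference form of the kernel, differentiating it in $x$ or in $y$ never brings down more than first derivatives of $H$ and of the $A_i$ --- e.g.\ $\partial_x\frac{A_i(x)-A_i(y)}{x-y}=\frac{A_i'(x)}{x-y}-\frac{A_i(x)-A_i(y)}{(x-y)^2}$, and the first term is in fact subordinate to $|x-y|^{-2}$ near the diagonal --- so the kernel obeys the standard Calder\'on--Zygmund size and regularity estimates with constant $\lec (m+1)\nm{F}_{C^1}\prod_i\nm{A_i'}_{L^\infty}$ \emph{using only $A_i,H\in C^1$}. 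From here there are two routes. One is to ``freeze'' $F$ on the diagonal: writing $F\left(\frac{H(x)-H(y)}{x-y}\right)=F(H'(x))+\left[F\left(\frac{H(x)-H(y)}{x-y}\right)-F(H'(x))\right]$, the principal term gives the composition of pointwise multiplication by $F(H'(\cdot))$ with the $m$-linear Calder\'on commutator with kernel $\frac{\prod_i(A_i(x)-A_i(y))}{(x-y)^{m+1}}$, whose $L^2$-norm is $\lec\nm{F}_{L^\infty}\prod_i\nm{A_i'}_{L^\infty}$ by CMM --- exactly the bound of part~1 --- while the remainder has a kernel that gains the modulus of continuity of $H'$ and can be fed back into the scheme. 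The other, cleaner, route is to use the Fourier representation of $F$ above, which reduces everything to bounding, by a fixed power of $|\zeta|$ times $\prod_i\nm{A_i'}_{L^\infty}$, the oscillatory multilinear commutators with kernel $e^{2\pi i\zeta\cdot\frac{H(x)-H(y)}{x-y}}\frac{\prod_i(A_i(x)-A_i(y))}{(x-y)^{m+1}}$; this in turn follows from the $C^1$-robust Calder\'on--Zygmund estimates just mentioned together with a $T(1)/Tb$-type argument for these specific operators, after which the $\zeta$-integral converges because $\widehat F$ is Schwartz.

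Part~2 is reduced to part~1 by peeling off the low-regularity coefficient: write $\frac{A_1(x)-A_1(y)}{x-y}=\int_0^1 A_1'(y+t(x-y))\,dt$, so that for each fixed $t$ what remains is an operator of part~1 type (built from $F$ and $A_2,\dots,A_m$, with $m-1$ commutator factors) applied against $A_1'(y+t(x-y))f(y)$; a Schur/maximal-function estimate together with $\nm{A_1'(\,\cdot+t(x-\cdot))}_{L^2}=(1-t)^{-1/2}\nm{A_1'}_{L^2}$, $f\in L^\infty$, and $\int_0^1(1-t)^{-1/2}\,dt<\infty$ then produces the asserted $\nm{A_1'}_{L^2}\nm{A_2'}_{L^\infty}\cdots\nm{A_m'}_{L^\infty}\nm{f}_{L^\infty}$. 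The step I expect to be the main obstacle is precisely this marriage of the general smooth $F$ with the low regularity: the $C^1$ (in part~2, sub-Lipschitz) hypotheses put the operator outside elementary Calder\'on--Zygmund theory, so one genuinely needs the CMM circle of ideas for boundedness at all, and $F$ has to be handled so that no exponential dependence on $\zeta$ is generated --- which is why the freezing (or Fourier) reduction, rather than a naive power-series expansion of $F$, is forced.
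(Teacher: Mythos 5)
Part 1 of your proposal matches the paper, which simply invokes the Coifman--McIntosh--Meyer theorem (the general form of CMM, as in Journ\'e's lecture notes, already handles an arbitrary smooth $F$ of the divided difference $\tfrac{H(x)-H(y)}{x-y}$, so the preliminary ``freezing'' or Fourier-in-$F$ reduction you sketch is an optional extra layer, not a gap).

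Part 2 is where there is a genuine gap. The paper states that \eqref{3.17} is a consequence of the David--Journ\'e--Semmes $Tb$ theorem, with the proof given in \cite{wu3}, and that choice of tool is not incidental. Your reduction writes $\frac{A_1(x)-A_1(y)}{x-y}=\int_0^1A_1'((1-t)y+tx)\,dt$ and then tries to view the inner operator, for each fixed $t$, as a CMM operator of part~1 type (with $m-1$ commutator factors and kernel $F(\cdot)\frac{\prod_{i\ge 2}(A_i(x)-A_i(y))}{(x-y)^m}$) applied to the function $A_1'((1-t)y+tx)\,f(y)$, and then to close with a Schur/maximal-function bound and $\nm{A_1'((1-t)\cdot+tx)}_{L^2(dy)}=(1-t)^{-1/2}\nm{A_1'}_{L^2}$. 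This does not work for two linked reasons. First, after extracting $A_1'$ the remaining kernel still carries a full $|x-y|^{-1}$ singularity, so the absolute-value Schur kernel $\int|x-y|^{-1}|A_1'((1-t)y+tx)|\,dy$ diverges; the cancellation in the singular integral is essential and cannot be discarded. Second, and more fundamentally, the input $A_1'((1-t)y+tx)f(y)$ depends on the output variable $x$, so the CMM $L^2\to L^2$ bound (which is for a fixed input function) does not apply: you are evaluating a diagonal slice of a two-variable object, not composing a bounded operator with multiplication. Any attempt to disentangle $x$ and $y$ (e.g.\ Fourier-decomposing $A_1'$) trades the $L^2$ norm of $A_1'$ for $\nm{\widehat{A_1'}}_{L^1}$, which is not controlled. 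To reach the $\nm{A_1'}_{L^2}\nm{f}_{L^\infty}$-type bound one needs to test the kernel itself on suitable para-accretive functions, i.e.\ run a $Tb$-theorem argument as in \cite{wu3}; the elementary Schur/maximal route you propose is not sufficient.
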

\eqref{3.16} is a result of Coifman, McIntosh and Meyer \cite{cmm}. \eqref{3.17} is a consequence of the Tb Theorem, a proof  is given in \cite{wu3}.

Let  $H$, $A_i$ $F$ satisfy the same assumptions as in \eqref{3.15}. Define
\begin{equation}\label{3.19}
C_2(A, f)(x)=\int F\paren{\frac{H(x)-H(y)}{x-y}}\frac{\Pi_{i=1}^m(A_i(x)-A_i(y))}{(x-y)^m}\partial_y f(y)\,dy.
\end{equation}
The following are consequences of Proposition~\ref{B1} and integration by parts.
\begin{proposition}\label{B2} There exist constants $c_3$, $c_4$ and $c_5$, depending on $F$ and $\|H'\|_{L^\infty}$, such that 

1. For any $f\in L^2,\ A_i'\in L^\infty, \ 1\le i\le m, $
\begin{equation}\label{3.20}
\|C_2(A, f)\|_{L^2}\le c_3\|A_1'\|_{L^\infty}\dots\|A_m'\|_{L^\infty}\|f\|_{L^2}.
\end{equation}

2. For any $ f\in L^\infty, \ A_i'\in L^\infty, \ 2\le i\le m,\ A_1'\in L^2$,
\begin{equation}\label{3.21}
\|C_2(A, f)\|_{L^2}\le c_4\|A_1'\|_{L^2}\|A'_2\|_{L^\infty}\dots\|A_m'\|_{L^\infty}\|f\|_{L^\infty}.\end{equation}

3. For any $f'\in L^2, \ A_1\in L^\infty,\ \ A_i'\in L^\infty, \ 2\le i\le m, $
\begin{equation}\label{3.22}
\|C_2(A, f)\|_{L^2}\le c_5\|A_1\|_{L^\infty}\|A'_2\|_{L^\infty}\dots\|A_m'\|_{L^\infty}\|f'\|_{L^2}.\end{equation}

\end{proposition}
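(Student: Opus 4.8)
The plan is to deduce all three estimates from Proposition~\ref{B1}, using one integration by parts for \eqref{3.20}--\eqref{3.21} and one algebraic splitting for \eqref{3.22}. As usual I would first prove the bounds for smooth, compactly supported $A_i$ and $f$ (so that all boundary terms in the integrations by parts vanish) and then pass to the general case by density. I will also adopt the convention that $C_1$ with no commutator factor is the Calder\'on-type operator $g\mapsto\mathrm{pv}\int F\paren{\frac{H(x)-H(y)}{x-y}}\frac{g(y)}{x-y}\,dy$, whose $L^2$-boundedness is the $m=0$ instance of \eqref{3.16} (Coifman--McIntosh--Meyer); this covers the degenerate $m=1$ case below.

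For \eqref{3.20} and \eqref{3.21}, I would integrate by parts in $y$ in \eqref{3.19}, moving $\partial_y$ off $f$ onto the kernel $K(x,y)$, and distribute the derivative over the three factors of $K$. This produces four families of terms: (i) $\partial_y$ hits $(x-y)^{-m}$, producing a multiple of $C_1(A_1,\dots,A_m,f)$; (ii) $\partial_y$ hits $\Pi_i(A_i(x)-A_i(y))$, producing $\sum_k C_1(\{A_i\}_{i\ne k},\,A_k'f)$, i.e. $C_1$ with $m-1$ commutator factors and scalar input $A_k'f$; (iii)--(iv) $\partial_y$ hits $F\paren{\frac{H(x)-H(y)}{x-y}}$, giving, by the chain rule, $\nabla F(\cdot)\cdot\frac1{x-y}\bigl(\frac{H(x)-H(y)}{x-y}-H'(y)\bigr)$ times the remaining factors of $K$. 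Using $H(x)-H(y)=(x-y)\frac{H(x)-H(y)}{x-y}$, the $\frac{H(x)-H(y)}{x-y}$ piece becomes $C_1$ with the smooth symbol $v\mapsto\nabla F(v)\cdot v$, and the $H'(y)$ piece becomes $\sum_\ell C_1(A_1,\dots,A_m,\,H_\ell'f)$ with symbol $\partial_\ell F$. All these symbols are smooth and evaluated only on $\{|v|\le\|H'\|_{L^\infty}\}$, so Proposition~\ref{B1} applies with constants depending only on $F$ and $\|H'\|_{L^\infty}$. Then \eqref{3.20} follows by \eqref{3.16} applied to each family (with $\|A_k'f\|_{L^2}\le\|A_k'\|_{L^\infty}\|f\|_{L^2}$ and $\|H_\ell'f\|_{L^2}\le\|H'\|_{L^\infty}\|f\|_{L^2}$), and \eqref{3.21} follows by \eqref{3.17} applied to families (i), (iii), (iv) and to the summands of (ii) with $k\ge2$ (keeping $A_1$ in the distinguished first slot, input $A_k'f\in L^\infty$), together with \eqref{3.16} with $m-1$ factors for the $k=1$ summand of (ii), whose input $A_1'f$ lies in $L^2$ with $\|A_1'f\|_{L^2}\le\|A_1'\|_{L^2}\|f\|_{L^\infty}$.

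For \eqref{3.22}, where $A_1$ is only bounded and integration by parts is unavailable, I would instead split the single difference factor $A_1(x)-A_1(y)$ in \eqref{3.19} into the term $A_1(x)$ (which comes out of the $y$-integral) and the term $A_1(y)$ (which multiplies $f'$), obtaining $C_2(A,f)(x)=A_1(x)\,C_1(A_2,\dots,A_m,f')(x)-C_1(A_2,\dots,A_m,A_1f')(x)$; here I use that $C_1$ with $m-1$ commutator factors carries $(x-y)^m$ in its denominator, exactly matching \eqref{3.19} (and for $m=1$ these are the $m=0$ Calder\'on-type operators above). Then \eqref{3.16} with $m-1$ factors, together with the pointwise bound $\|A_1h\|_{L^2}\le\|A_1\|_{L^\infty}\|h\|_{L^2}$ applied to $h=f'$ and to $h=C_1(A_2,\dots,A_m,f')$, yields \eqref{3.22}.

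There is no deep analytic obstacle here: the hard input, namely the $L^2$ bounds for Calder\'on-type commutators, is already packaged in Proposition~\ref{B1}, and the rest is derivative distribution and norm bookkeeping. The one point that genuinely needs care is reorganizing the term in which $\partial_y$ falls on the symbol $F$: the factor $\frac{H(x)-H(y)}{x-y}-H'(y)$ is not a difference quotient of a single function, so it must be split, with $\frac{H(x)-H(y)}{x-y}$ absorbed into the modified smooth symbol $\nabla F(v)\cdot v$ and $H'(y)$ absorbed into the scalar input slot, so that every resulting operator is genuinely of the form \eqref{3.15}. A secondary point is correctly matching the $L^2$ versus $L^\infty$ placement of the hypotheses to the two cases \eqref{3.16}, \eqref{3.17} of Proposition~\ref{B1}, and treating the $m=1$ case, where the auxiliary $C_1$'s have $m-1=0$ commutator factors and reduce to classical Calder\'on--Zygmund operators.
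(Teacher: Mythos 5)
Your argument is correct and is essentially the paper's intended route: the paper proves Proposition~\ref{B2} simply by invoking Proposition~\ref{B1} together with integration by parts, which is exactly what you carry out for \eqref{3.20}--\eqref{3.21} (distributing $\partial_y$ over the three kernel factors and absorbing the symbol derivative into the modified smooth symbols $\nabla F(v)\cdot v$ and $\partial_\ell F$), while your splitting $A_1(x)-A_1(y)$ for \eqref{3.22} is the standard way to realize the same reduction when only $A_1\in L^\infty$ is available. The only detail worth recording when you integrate by parts is that the resulting kernels are principal-value kernels, so one should check that the boundary contributions at $y=x\pm\varepsilon$ cancel in the limit (they do, since $K(x,x\pm\varepsilon)$ tend to the same finite limit for smooth data), after which your bookkeeping with \eqref{3.16}--\eqref{3.17} is exactly right.
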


\begin{proposition}\label{prop:half-dir} Assume that $f, g, h$ are smooth and decay fast at infinity. Then
\begin{align} 
 &\nm{[f,\HH]  g}_{L^2} \lec \nm{f}_{\dot{H}^{1/2}}\nm{g}_{L^2};\label{eq:b10}\\&
    \nm{[f,\HH] g}_{L^\infty} \lec \nm{f'}_{L^2} \nm{g}_{L^2};  \label{eq:b13}\\&
  \nm{[f,\HH] \partial_\aa g}_{L^2} \lec \nm{f'}_{L^2} \nm{g}_{\dot{H}^{1/2}};\label{eq:b11}\\&
 \nm{[f,\HH]\partial_\aa g}_{L^2}\lec \nm{f'}_{\dot H^{1/2}}\nm{g}_{L^2};\label{eq:b23} \\&
  \nm{[f, h; \partial_\aa g]}_{L^2}\lec \nm{f'}_{L^2} \nm{h'}_{L^\infty}\nm{g}_{\dot{H}^{1/2}},\label{eq:b111}\\&
   \nm{[f, h; \partial_\aa g]}_{L^2}\lec \nm{f'}_{L^2} \nm{h'}_{\dot H^{1/2}}\nm{g}_{L^\infty}+\nm{f'}_{\dot H^{1/2}} \nm{h'}_{L^2}\nm{g}_{L^\infty}.\label{eq:b121}
  \end{align}
\end{proposition}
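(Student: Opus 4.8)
\textbf{Proof proposal for Proposition~\ref{prop:half-dir}.}

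The plan is to deduce all six estimates from the Coifman--McIntosh--Meyer type bounds in Propositions~\ref{B1} and~\ref{B2}, after rewriting each commutator in the kernel form \eqref{3.15} or \eqref{3.19}. First I would record the elementary observation that $[f,\HH]g$ has kernel $\frac1{\pi i}\frac{f(\aa)-f(\bb)}{\aa-\bb}g(\bb)$, which is $C_1(f,g)$ with $F\equiv 1$, $H\equiv 0$, $m=1$; likewise $[f,\HH]\partial_\aa g = C_2(f,g)$ and $[f,h;\partial_\aa g] = C_2(f,h;g)$ with $m=2$ after integrating by parts once in $\bb$ (picking up the extra factor $(\aa-\bb)$ in the denominator that the triple commutator with $\partial_\aa$ carries). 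With these identifications in place, \eqref{eq:b10} is immediate from \eqref{3.16} applied with the role of the "$L^\infty$ derivative" slot played by $f$: indeed $\nm{[f,\HH]g}_{L^2}\lec \nm{f'}_{L^\infty}\nm{g}_{L^2}$ is the $m=1$ case, but to get $\dot H^{1/2}$ instead of $L^\infty$ one should instead note $[f,\HH]g$ depends on $f$ only through differences $f(\aa)-f(\bb)$, expand via Plancherel/the $\dot H^{1/2}$ characterization \eqref{def-hhalf}, and bound directly; this is the standard fact that $\nm{[f,\HH]g}_{L^2}\lec \nm{f}_{\dot H^{1/2}}\nm{g}_{L^2}$ and I would cite Appendix A of \cite{wu8} rather than reprove it. Similarly \eqref{eq:b13} is the $L^\infty$ endpoint, following from \eqref{3.17} with $m=1$ (the single factor $f$ goes in the $L^2$-derivative slot, $g$ in the $L^\infty$ slot, after writing $g$ itself as a difference is not needed — rather one uses the $Tb$-theorem bound directly).

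Next, \eqref{eq:b11} and \eqref{eq:b23}: both are $\nm{[f,\HH]\partial_\aa g}_{L^2}=\nm{C_2(f,g)}_{L^2}$ with $m=1$. For \eqref{eq:b11} I would apply \eqref{3.22} with $A_1=g$ (so $\nm{A_1}_{L^\infty}$ is replaced by moving a derivative: actually integrate by parts to land $\partial_\aa$ on $f$, giving kernel $\frac{f(\aa)-f(\bb)}{(\aa-\bb)^2}\,(\text{diff of }g)$, i.e. $C_2$ with the two "difference" factors being $f'$-type and $g$) and use \eqref{3.20}/\eqref{3.22}; the clean statement is $\nm{[f,\HH]\partial_\aa g}_{L^2}\lec\nm{f'}_{L^2}\nm{g}_{\dot H^{1/2}}$, which follows by writing $[f,\HH]\partial_\aa g = \partial_\aa[f,\HH]g - [f',\HH]g$ (using \eqref{eq:c7}-type product rule: $[f,\HH]\partial_\aa g=\partial_\aa([f,\HH]g)-[\partial_\aa f,\HH]g$), then bounding $\partial_\aa[f,\HH]g$ in $L^2$ by duality against $\dot H^{1/2}$ using \eqref{eq:b10}, and $[f',\HH]g$ directly by \eqref{3.16}. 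Estimate \eqref{eq:b23} is the symmetric version with the roles of regularity on $f$ and $g$ swapped: $\partial_\aa[f,\HH]g$ is paired in $L^2$ by putting $f'\in\dot H^{1/2}$ and $g\in L^2$, again via \eqref{eq:b10} applied to $[f',\HH]$, plus the commutator $[f',\HH]g$ bounded by $\nm{f'}_{\dot H^{1/2}}\nm{g}_{L^2}$.

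Finally, the triple-commutator bounds \eqref{eq:b111} and \eqref{eq:b121}: here $[f,h;\partial_\aa g]$ has, after one integration by parts in $\bb$, kernel $\frac{(f(\aa)-f(\bb))(h(\aa)-h(\bb))}{(\aa-\bb)^2}\partial_\bb g(\bb)$, which is exactly $C_2(f,h;g)$ in the notation \eqref{3.19} with $m=2$, $F\equiv 1$, $H\equiv 0$. For \eqref{eq:b111} I would use \eqref{3.20} (or better \eqref{3.22}) with the two difference-slots carrying $f'$ and $h'$ and the "function" slot carrying $g$: this gives $\nm{C_2}_{L^2}\lec \nm{f'}_{L^2}\nm{h'}_{L^\infty}\nm{g'}_{L^2}$, and then $\nm{g'}_{L^2}$ is replaced by $\nm{g}_{\dot H^{1/2}}$ by writing $[f,h;\partial_\aa g]=\partial_\aa[f,h;g]-[f';h;g]-[f;h';g]$ (product rule for $\partial_\aa$ acting under the integral, cf.\ \eqref{eq:c29}) and estimating $\partial_\aa[f,h;g]$ in $L^2$ by duality against $\dot H^{1/2}$: pairing with $\phi\in\dot H^{1/2}$ and integrating by parts moves $\partial_\aa$ onto $\phi$, and one invokes \eqref{3.16}-type bounds together with $\nm{\phi'}$ hidden in $\nm{\phi}_{\dot H^{1/2}}$ via \eqref{def-hhalf}. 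For \eqref{eq:b121} the same scheme applies but one distributes the regularity as $\nm{f'}_{L^2}\nm{h'}_{\dot H^{1/2}}\nm{g}_{L^\infty}$ in one term and $\nm{f'}_{\dot H^{1/2}}\nm{h'}_{L^2}\nm{g}_{L^\infty}$ in the other, using \eqref{3.17} (the $Tb$-endpoint, which allows one $L^2$-derivative and one $L^\infty$-function slot) after placing $g\in L^\infty$ and using \eqref{eq:b10} to absorb the $\dot H^{1/2}$ factor.

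The main obstacle I anticipate is purely bookkeeping: correctly tracking which slot in $C_1$/$C_2$ receives which norm after each integration by parts, and making sure that when I reduce a $\dot H^{1/2}$ norm to an $L^2$ norm of a derivative (or vice versa) via duality against $\dot H^{1/2}$, the derivative genuinely lands in a place where Propositions~\ref{B1}--\ref{B2} apply. There is no deep analytic difficulty — everything reduces to Coifman--McIntosh--Meyer, the $Tb$ theorem, Hardy's inequality, and product rules — so I would keep the exposition brief, stating the kernel identifications and citing Appendix A of \cite{wu8} for the handful of $\dot H^{1/2}$-commutator bounds that are by now standard, and only spelling out the integration-by-parts reductions for \eqref{eq:b11}, \eqref{eq:b23} and the two triple-commutator estimates.
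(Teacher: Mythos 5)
Your plan diverges from the paper's proof, and several of its key steps do not work. In the paper, four of the six estimates, namely \eqref{eq:b10}, \eqref{eq:b13}, \eqref{eq:b11}, \eqref{eq:b111}, are simply quoted from Appendix A of \cite{wu8}; only \eqref{eq:b23} and \eqref{eq:b121} are proved here, and not via Propositions~\ref{B1}--\ref{B2} at all, but by a direct kernel estimate: integrate by parts so that the derivative falls on the kernel, write $\partial_\bb\paren{\frac{f(\aa)-f(\bb)}{\aa-\bb}}=\frac1{(\aa-\bb)^2}\int_\bb^\aa\paren{f'(s)-f'(\bb)}\,ds$, bound this by Cauchy--Schwarz by $|\aa-\bb|^{-3/2}\paren{\int_\bb^\aa|f'(s)-f'(\bb)|^2ds}^{1/2}$, and conclude with a Schur/Hilbert--Schmidt bound whose double integral is exactly $\nm{f'}_{\dot H^{1/2}}^2$ by \eqref{def-hhalf}; for \eqref{eq:b121} the analogous computation is done on the kernel $\frac{(f(\aa)-f(\bb))(h(\aa)-h(\bb))}{(\aa-\bb)^2}$, with Hardy's inequality supplying the remaining factor, which is how the two symmetric terms on the right of \eqref{eq:b121} arise.

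The concrete gaps: (i) \eqref{3.17} is an $L^2$ bound with $\nm{g}_{L^\infty}$ on the right, so it cannot yield the $L^\infty$ estimate \eqref{eq:b13} with $\nm{g}_{L^2}$; that estimate is just Cauchy--Schwarz in $\bb$ together with Hardy's inequality \eqref{eq:77}. (ii) ``Bounding $\partial_\aa[f,\HH]g$ in $L^2$ by duality against $\dot H^{1/2}$'' is not a valid mechanism ($L^2$ is dual to $L^2$, not to $\dot H^{1/2}$), and since $\partial_\aa([f,\HH]g)=[f',\HH]g+[f,\HH]\partial_\aa g$, controlling that term in $L^2$ is equivalent to the very inequalities \eqref{eq:b11}, \eqref{eq:b23} you are trying to prove, so your arguments for these two are circular. (iii) Your intermediate claim $\nm{[f,h;\partial_\aa g]}_{L^2}\lec\nm{f'}_{L^2}\nm{h'}_{L^\infty}\nm{g'}_{L^2}$ is false: the two sides scale differently under $x\mapsto x/\lambda$ (the left side scales like $\lambda^{-3/2}$, the right like $\lambda^{-2}$), and it is not an instance of \eqref{3.20}--\eqref{3.22}. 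More generally, Propositions~\ref{B1}--\ref{B2} only ever produce $L^2$ or $L^\infty$ norms of derivatives; they cannot by bookkeeping alone produce the $\dot H^{1/2}$ norms appearing in \eqref{eq:b10}, \eqref{eq:b11}, \eqref{eq:b23}, \eqref{eq:b111}, \eqref{eq:b121}, which is precisely why the paper either quotes \cite{wu8} or runs the direct kernel computation above. (A smaller slip: $[f,\HH]g$ has kernel $\frac{f(\aa)-f(\bb)}{\aa-\bb}\,g(\bb)$, which is not of the form $C_1$ with $m=1$ in \eqref{3.15}; that form carries $(\aa-\bb)^{-2}$.)
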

\eqref{eq:b23} follows from integration by parts, Cauchy-Schwarz inequality and the definition \eqref{def-hhalf}, and \eqref{eq:b121} follows from integration by parts, Cauchy-Schwarz inequality, Hardy's inequality and the definition \eqref{def-hhalf}. The remaining inequalities are from Appendix A of \cite{wu8}.

\begin{proposition}\label{prop:double-comm} For any $f, \ g, \ h$ smooth and decay fast at spatial infinity, we have 
  \begin{align}
    \label{eq:b12}
    \nm{[f,g;h]}_{L^p} &
    \lec \nm{f'}_{L^2} \nm{g'}_{L^2} \nm{h}_{L^p}, \qquad{1\le p\le \infty};
    \\ \label{eq:b22}
    \nm{[f,g;h]}_{L^2} &
    \lec \nm{f'}_{L^2} \nm{g}_{\dot H^{1/2}}\nm{h}_{L^\infty};
    \\  \label{eq:b112} 
  \nm{[f, g; h]}_{L^2}&\lec \nm{f}_{\dot H^{1/2}} \nm{g'}_{L^\infty}\nm{h}_{L^2};
     \\ \label{eq:b15}
     \nm{[f,g;h]}_{L^\infty}&\lec \nm{f'}_{L^2} \nm{g'}_{L^\infty} \nm{h}_{L^2}.
  \end{align}
\end{proposition}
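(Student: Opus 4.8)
\emph{Proof plan.} The plan is to read off all four inequalities from one elementary device --- the fundamental theorem of calculus applied to the divided differences appearing in \eqref{eq:comm} --- together with the defining square-function identity for $\dot H^{1/2}$. Writing the kernel of $h\mapsto[f,g;h]$ as $\mathcal K(x,y)=\frac1{\pi i}\frac{(f(x)-f(y))(g(x)-g(y))}{(x-y)^2}$, I would first record that $\frac{f(x)-f(y)}{x-y}=\int_0^1 f'((1-s)y+sx)\,ds$, and similarly for $g$ with a parameter $t$, so that
\[
|\mathcal K(x,y)|\le \frac1\pi\int_0^1\!\!\int_0^1\bigl|f'((1-s)y+sx)\bigr|\,\bigl|g'((1-t)y+tx)\bigr|\,ds\,dt.
\]
Two facts will be used repeatedly: the $L^2$ dilation identity $\int_{\mathbb R}|\psi((1-s)y+sx)|^2\,dy=s^{-1}\nm{\psi}_{L^2}^2$ (and $(1-s)^{-1}\nm{\psi}_{L^2}^2$ when the integral is in $x$), and the identity $\int_{\mathbb R}D(g)(x)\,dx=2\pi\nm{g}_{\dot H^{1/2}}^2$, where $D(g)(x):=\int\frac{|g(x)-g(y)|^2}{(x-y)^2}\,dy$, which is just \eqref{def-hhalf}. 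The decay hypotheses make all the integrals below absolutely convergent.

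For \eqref{eq:b12} I would apply Minkowski's integral inequality in $(s,t)$ to the pointwise bound on $\mathcal K$, which reduces the claim to bounding $\nm{U_{s,t}}_{L^p\to L^p}$ by an $(s,t)$-integrable factor times $\nm{f'}_{L^2}\nm{g'}_{L^2}$, where $U_{s,t}$ is the positive integral operator with kernel $|f'((1-s)y+sx)||g'((1-t)y+tx)|$. Schur's test does it: the change of variables $v=(1-s)y+sx$ with $x$ the integration variable, followed by Cauchy--Schwarz, gives $\sup_y\int|f'||g'|\,dx\lec (st)^{-1/2}\nm{f'}_{L^2}\nm{g'}_{L^2}$, and the same change of variables with $y$ the integration variable gives $\sup_x\int|f'||g'|\,dy\lec ((1-s)(1-t))^{-1/2}\nm{f'}_{L^2}\nm{g'}_{L^2}$; interpolating these two bounds yields $\nm{U_{s,t}}_{L^p\to L^p}\lec (st)^{-1/(2p)}((1-s)(1-t))^{-1/(2p')}\nm{f'}_{L^2}\nm{g'}_{L^2}$. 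The $(s,t)$-integral of this kernel factor is a product of Beta integrals, and it is finite because all the exponents $\tfrac1{2p},\tfrac1{2p'}$ are $\le\tfrac12<1$ for every $1\le p\le\infty$; this proves \eqref{eq:b12}.

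The other three estimates are shorter. For \eqref{eq:b15} I would bound $\bigl|\tfrac{g(x)-g(y)}{x-y}\bigr|\le\nm{g'}_{L^\infty}$, so that $|[f,g;h](x)|\lec\nm{g'}_{L^\infty}\int_0^1\int|f'((1-s)y+sx)|\,|h(y)|\,dy\,ds$; Cauchy--Schwarz in $y$ with the dilation identity bounds the inner integral by $(1-s)^{-1/2}\nm{f'}_{L^2}\nm{h}_{L^2}$, which is already uniform in $x$, and $\int_0^1(1-s)^{-1/2}\,ds<\infty$, giving \eqref{eq:b15}. For \eqref{eq:b22} I would instead pull out $\nm{h}_{L^\infty}$ and keep the $g$-difference intact: for each fixed $s$, Cauchy--Schwarz in $y$ --- the dilation identity on the $f'$-factor and the square function $\sqrt{D(g)(x)}$ on the $g$-factor --- gives, after the finite $s$-integral, the pointwise bound $|[f,g;h](x)|\lec\nm{h}_{L^\infty}\nm{f'}_{L^2}\sqrt{D(g)(x)}$; taking the $L^2_x$ norm and invoking $\int D(g)=2\pi\nm{g}_{\dot H^{1/2}}^2$ gives \eqref{eq:b22}. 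It is essential here \emph{not} to bound $D(g)(x)$ pointwise by $\nm{g'}_{L^2}^2$ --- that quantity is not pointwise finite --- which is exactly why the fundamental-theorem-of-calculus representation, rather than a naive Cauchy--Schwarz, is used on the $f'$-factor.

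For \eqref{eq:b112} I would once more use $\bigl|\tfrac{g(x)-g(y)}{x-y}\bigr|\le\nm{g'}_{L^\infty}$, which reduces the claim to the estimate $\nm{S_f\phi}_{L^2}\lec\nm{f}_{\dot H^{1/2}}\nm{\phi}_{L^2}$ for the positive operator $S_f\phi(x):=\int\frac{|f(x)-f(y)|}{|x-y|}\phi(y)\,dy$. This I would prove by a $TT^*$-type computation: expanding $\nm{S_f\phi}_{L^2}^2$ and integrating in $x$ first, Cauchy--Schwarz in $x$ bounds the inner integral by $\sqrt{D(f)(y)\,D(f)(z)}$, the remaining $(y,z)$-integral then factors as $\bigl(\int\phi\,\sqrt{D(f)}\bigr)^2$, and a last Cauchy--Schwarz gives $\nm{S_f\phi}_{L^2}^2\le\nm{\phi}_{L^2}^2\,\nm{\sqrt{D(f)}}_{L^2}^2=2\pi\,\nm{\phi}_{L^2}^2\,\nm{f}_{\dot H^{1/2}}^2$. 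I do not expect a real obstacle in any of this; the only places needing care are the change-of-variables bookkeeping for the two Schur bounds in \eqref{eq:b12}, where one must check that the row- and column-sum exponents stay below $1$ so the $(s,t)$-integral converges, and the recognition that the triple integral in the $TT^*$ step factors through the square function $D(f)$.
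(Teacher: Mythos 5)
Your proof is correct, but at several points it reconstructs from scratch what the paper's one-line proofs obtain directly from Hardy's inequality \eqref{eq:77} plus Cauchy--Schwarz, without any need for the fundamental-theorem-of-calculus parametrization, the $(s,t)$-integrals, or $TT^*$.

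For \eqref{eq:b12}, the simpler and intended route is: split the kernel as $\frac{|f(x)-f(y)|}{|x-y|}\cdot\frac{|g(x)-g(y)|}{|x-y|}$, Cauchy--Schwarz in $y$ (respectively $x$) and Hardy's inequality \eqref{eq:77} with $p=2$ give at once
$\sup_x\int|\mathcal K(x,y)|\,dy\lec\nm{f'}_{L^2}\nm{g'}_{L^2}$ and $\sup_y\int|\mathcal K(x,y)|\,dx\lec\nm{f'}_{L^2}\nm{g'}_{L^2}$,
and the Schur test then yields $L^p\to L^p$ boundedness for every $1\le p\le\infty$ in one stroke. Your FTC/Minkowski/Beta-integral bookkeeping does also work (and the exponents $\tfrac1{2p}$, $\tfrac1{2p'}$ do stay below $1$), but it buys nothing over the Schur argument since the Schur row and column sums are already uniform in $y$ and $x$.

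For \eqref{eq:b22}, the sentence ``that quantity is not pointwise finite'' is not correct under the hypotheses here: Hardy's inequality \eqref{eq:77} with $p=2$ says precisely that $D(f)(x)=\int\frac{|f(x)-f(y)|^2}{|x-y|^2}\,dy\lec\nm{f'}_{L^2}^2$ uniformly in $x$. So the ``naive'' Cauchy--Schwarz you were trying to avoid on the $f$-factor works without the FTC layer: $|[f,g;h](x)|\lec\nm{h}_{L^\infty}\sqrt{D(f)(x)}\sqrt{D(g)(x)}\lec\nm{h}_{L^\infty}\nm{f'}_{L^2}\sqrt{D(g)(x)}$, and then $L^2_x$ plus \eqref{def-hhalf} gives the answer. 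The real reason you must keep $\sqrt{D(g)(x)}$ rather than bounding it pointwise is simply that you want the $\dot H^{1/2}$ norm rather than the stronger $\nm{g'}_{L^2}$ --- not because $D(g)$ blows up.

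For \eqref{eq:b112}, the $TT^*$ step is again more than is needed: after pulling out $\nm{g'}_{L^\infty}$, a single Cauchy--Schwarz in $y$ gives $|[f,g;h](x)|\lec\nm{g'}_{L^\infty}\sqrt{D(f)(x)}\,\nm{h}_{L^2}$, and taking the $L^2_x$ norm and applying \eqref{def-hhalf} finishes. Your $TT^*$ computation is fine (and symmetric and appealing in its own right), but the Schur/Cauchy--Schwarz route is what the paper is pointing to when it says ``direct consequence of Cauchy--Schwarz and the definition \eqref{def-hhalf}.''

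Your treatment of \eqref{eq:b15} is fine and essentially coincides with the direct argument; again, the FTC parametrization can be dropped in favor of Cauchy--Schwarz in $y$ plus Hardy's inequality on the $f$-factor. In short: correct throughout, but the FTC decomposition is dead weight and the remark about pointwise finiteness of $D(g)$ should be fixed.
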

\eqref{eq:b12} directly follows from H\"older's inequality and Hardy's inequality. \eqref{eq:b22} and \eqref{eq:b112} are direct consequences of Cauchy-Schwarz inequality and the definition \eqref{def-hhalf}. 
 \eqref{eq:b15} is from Appendix A of \cite{wu8}.

\begin{proposition} Assume that $f_i\in C^1(\mathbb R)$, with $f_i'\in L^{p_i}$, $g\in L^q$, where 
      $$ \sum_{i=1}^n\frac1{p_i}+\frac1q=\frac1p+1, \qquad 1<p_i\le\infty, \ 1\le q\le\infty, \ p>0.$$ Then
\begin{align}
     \label{eq:b115}
      \nm{\int\frac{\Pi_{i=1}^n(f_i(x)-f_i(y))}{(x-y)^n}g(y)\,dy}_{L^p}\lec \Pi_{i=1}^n\nm{f'_i}_{L^{p_i}} \nm{g}_{L^q}.     \end{align}
      
\end{proposition}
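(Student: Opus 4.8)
\textbf{Proof proposal for \eqref{eq:b115}.}
The plan is to establish the estimate by a single application of the $Tb$/Calderón–Commutator machinery of Proposition~\ref{B1}, after reducing to the case $g\in L^q$ via interpolation and a duality/extrapolation argument that produces the full off-diagonal Young-type exponent range. First I would normalize: since each factor $f_i(x)-f_i(y)=\int_y^x f_i'(s)\,ds$, the kernel
$$K(x,y)=\frac{\Pi_{i=1}^n(f_i(x)-f_i(y))}{(x-y)^n}$$
is a (generalized) multiple Calderón commutator kernel; it is of the form $F\paren{\tfrac{H(x)-H(y)}{x-y}}\tfrac{\Pi(A_i(x)-A_i(y))}{(x-y)^n}$ with $F\equiv 1$, $H=0$, $A_i=f_i$, so the operator $g\mapsto \int K(x,y)g(y)\,dy$ is exactly $C_2(A,\cdot)$ from \eqref{3.19} with $\partial_y$ absent — equivalently $C_1$ from \eqref{3.15} with $m=n-1$ after writing one difference as $(x-y)\cdot\tfrac{f_n(x)-f_n(y)}{x-y}$ and absorbing the remaining $\tfrac1{x-y}$. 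Actually the cleanest route is: keep all $n$ differences, divide by $(x-y)^n$, and note this is $C_1(f_1,\dots,f_{n-1},\cdot)$ applied to $g$ after pulling out $\tfrac{f_n(x)-f_n(y)}{x-y}$ as a bounded multiplier — but that multiplier is not bounded pointwise, so instead I would treat it as one of the $A_i$'s in an $(n-1)$-fold commutator with the kernel $\tfrac{\Pi_{i=1}^{n-1}(f_i(x)-f_i(y))}{(x-y)^{n-1}}\cdot\tfrac1{x-y}$, i.e. precisely $C_1(f_1,\dots,f_{n-1},\cdot)$, and let the $n$-th difference be carried by writing $g(y)$ together with $f_n(y)$ — no: the honest statement is that $g\mapsto \int \tfrac{\Pi_{i=1}^{n}(f_i(x)-f_i(y))}{(x-y)^{n}}g(y)\,dy$ is an $n$-fold Calderón commutator and is bounded on $L^2$ with norm $\lesssim \Pi\nm{f_i'}_{L^\infty}$ by \eqref{3.16} (Coifman–McIntosh–Meyer), and bounded $L^\infty\to L^2$-type with one $L^2$ derivative by \eqref{3.17}.

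The key steps, in order: (1) reduce to the endpoint cases. The two extreme configurations are $(p_i,q)=(\infty,\dots,\infty,2)$, giving $p=2$, handled by \eqref{3.16}; and $(p_1,q)=(2,\infty)$ with the rest $\infty$, giving $p=2$ again, handled by \eqref{3.17}; and symmetrically any single $f_i$ in $L^2$. (2) For the general exponent tuple, use multilinear real interpolation (Marcinkiewicz/Riesz–Thorin in the multilinear setting, or simply Hölder combined with the two endpoint bounds) to interpolate between these configurations and the trivial $L^\infty$ pointwise bound $\nm{K(x,\cdot)}$-type control. (3) Alternatively, and more elementarily, prove it directly by iterating Hardy's inequality: bound pointwise
$$\abs{\int\frac{\Pi_{i=1}^n(f_i(x)-f_i(y))}{(x-y)^n}g(y)\,dy}\le \int \Pi_{i=1}^n\frac{\abs{f_i(x)-f_i(y)}}{\abs{x-y}}\cdot\frac{\abs{g(y)}}{1}\,dy,$$
then apply the generalized Hölder inequality in $y$ with exponents $p_1,\dots,p_n,q$ after recognizing each $\tfrac{\abs{f_i(x)-f_i(y)}}{\abs{x-y}}$ as controlled, uniformly in $x$, by $M(f_i')(x)+M(f_i')(y)$ (this is the standard pointwise bound for the difference quotient by the maximal function of the derivative), and then use the maximal inequality Proposition~\ref{maximum} to pass from $M(f_i')$ back to $f_i'$ in $L^{p_i}$. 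The symmetric maximal-function split handles the $x$-dependence and $y$-dependence simultaneously; one expands the product $\Pi(M(f_i')(x)+M(f_i')(y))$ into $2^n$ terms, and in each term the $x$-factors come out of the $y$-integral and the $y$-factors are integrated against $\abs{g}$ via Hölder, with the remaining $\tfrac1{\abs{x-y}}$ reduced to fewer copies; but $\tfrac1{\abs{x-y}^k}$ with $k<n$ is not locally integrable once $k\ge 1$, so this naive split fails and one really does need the oscillation/cancellation of the Calderón commutator, not just size. Hence step (3) is only a heuristic and the genuine proof goes through (1)–(2).

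The main obstacle is therefore the sub-endpoint range where $1<p<2$: here one cannot simply cite \eqref{3.16}–\eqref{3.17} plus Hölder, because those give $L^2$-valued (or $L^\infty\to L^2$) bounds only, and the target $L^p$ with $p<2$ requires either a weak-type $(1,1)$-style endpoint for the multilinear commutator (true, by Calderón–Zygmund theory for these kernels, since the kernel satisfies standard size and smoothness estimates once the $f_i'$ are in $L^\infty$) followed by multilinear interpolation, or a direct $T1$-type argument. I would resolve this by: first proving the result when all $f_i'\in L^\infty$ and $g\in L^q$ for $1<q<\infty$ — the operator $g\mapsto C_1(f,g)$ is then a Calderón–Zygmund operator (its kernel obeys $\abs{K(x,y)}\lesssim \Pi\nm{f_i'}_\infty\,\abs{x-y}^{-1}$ and the Hörmander condition), so it is bounded $L^q\to L^q$ for all $1<q<\infty$ by CZ theory, with norm $\lesssim\Pi\nm{f_i'}_{L^\infty}$; this already covers the case $p_1=\dots=p_n=\infty$, any $1<q=p<\infty$. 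Then I interpolate (multilinearly, viewing the map $(f_1,\dots,f_n,g)\mapsto C_1(f,g)$ as multilinear) between this family and the endpoint \eqref{3.17} where one $f_i'$ sits in $L^2$, to lower the relevant $p_i$ from $\infty$ to any value $>1$ and land on the stated Young-type exponent relation $\sum 1/p_i + 1/q = 1/p + 1$. Care is needed that the condition $p>0$ (rather than $p\ge 1$) is genuinely attainable: when several $f_i'$ are in $L^2$ and $g\in L^2$, e.g. $n=2$, $p_1=p_2=q=2$, one gets $p=2/3<1$, and the corresponding bound is the bilinear Calderón commutator estimate $\nm{\int \tfrac{(f_1(x)-f_1(y))(f_2(x)-f_2(y))}{(x-y)^2}g(y)dy}_{L^{2/3}}\lesssim \nm{f_1'}_{L^2}\nm{f_2'}_{L^2}\nm{g}_{L^2}$, which follows from \eqref{3.17} (giving the $L^\infty\times L^\infty\times$-type, i.e.\ two factors absorbed) combined with Hölder on the output, or from the weak-$L^1$ endpoint. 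I would isolate this as the one genuinely delicate point and reduce it to the already-cited \eqref{3.16}–\eqref{3.17} by the standard device of writing $g = \sum g_k$ with $g_k$ normalized in $L^2$ and summing the $L^2$ bounds against an $\ell^{2}$ sequence — the details are routine once the endpoint inputs are in hand.
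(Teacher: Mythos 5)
There is a genuine gap, and it occurs in both halves of your plan. Your main route (1)--(2) misidentifies the operator. The kernel here is $\frac{\Pi_{i=1}^n(f_i(x)-f_i(y))}{(x-y)^n}=\Pi_{i=1}^n\frac{f_i(x)-f_i(y)}{x-y}$: all $n$ powers of $(x-y)$ are consumed by the $n$ difference quotients, whereas $C_1$ in \eqref{3.15} carries $(x-y)^{m+1}$. So this is not a Calder\'on commutator and not a Calder\'on--Zygmund operator at all: when $f_i'\in L^\infty$ the kernel is merely bounded, with no decay in $|x-y|$ (your assertion $\abs{K(x,y)}\lec \Pi\nm{f_i'}_{L^\infty}|x-y|^{-1}$ is false), and the endpoint bounds you want to feed into interpolation fail for it --- for $n=1$, $f_1(x)=x$ the operator is $g\mapsto\int g\,dy$, which is certainly not bounded on $L^2$, and the same example with all $f_i(x)=x$ kills the claimed $n$-linear $L^2$ bound "by \eqref{3.16}". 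Consistently, the configurations you propose as endpoints, e.g.\ all $p_i=\infty$, $q=2$, $p=2$, or all $p_i=\infty$ with $p=q$, do not satisfy the stated relation $\sum 1/p_i+1/q=1/p+1$ (they would force $1/p=1/q-1\le 0$), and your worked example $n=2$, $p_1=p_2=q=2$ gives $p=2$, not $p=2/3$. So the CMM/Tb/CZ-plus-interpolation scheme cannot be repaired into a proof of \eqref{eq:b115}.

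Second, you discard the elementary route (your step (3)) for a spurious reason: there is no leftover factor $1/|x-y|^{k}$ after the difference quotients are bounded, so the "non-integrable kernel" objection is vacuous; what actually breaks in your version is only the exponent bookkeeping, because you used the sum $M(f_i')(x)+M(f_i')(y)$. The proof the paper intends (and which works in the full range, including $p<1$) uses the stronger pointwise bound $\abs{\frac{f_i(x)-f_i(y)}{x-y}}\lec\min\{Mf_i'(x),Mf_i'(y)\}\le \paren{Mf_i'(x)}^{\theta_i}\paren{Mf_i'(y)}^{1-\theta_i}$ with $\theta_i\in[0,1]$ chosen so that $\sum_i\theta_i/p_i=1/p$; this is possible since $0\le 1/p\le\sum_i 1/p_i$ by the exponent relation and $1/q\le 1$ (the degenerate case $p_i\equiv\infty$ forces $q=1$, $p=\infty$ and is trivial). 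Then generalized H\"older in $y$ with exponents $p_i/(1-\theta_i)$ and $q$, whose reciprocals sum to $1$ exactly because of the stated relation, bounds the inner integral by $\Pi_i\nm{Mf_i'}_{L^{p_i}}^{1-\theta_i}\nm{g}_{L^q}$ uniformly in $x$, while H\"older in $x$ places $\Pi_i\paren{Mf_i'}^{\theta_i}$ in $L^p$ with norm $\Pi_i\nm{Mf_i'}_{L^{p_i}}^{\theta_i}$; finally the maximal inequality \eqref{eq:maximum} (this is where $p_i>1$ enters) replaces $\nm{Mf_i'}_{L^{p_i}}$ by $\nm{f_i'}_{L^{p_i}}$. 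No cancellation, singular-integral theory, or interpolation is needed.
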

Observe that $\abs{\frac{f(x)-f(y)}{x-y}}\le \min\{M(f')(x), M(f')(y)\}$. \eqref{eq:b115} is a direct consequence of the Maximum inequality \eqref{eq:maximum} and H\"older's inequality.

We also have the following inequalities for the cubic form $<\cdot,\cdot,\cdot>$.

\begin{proposition}\label{prop:triple} For any $f, \ g, \ h$ smooth and decay fast at spatial infinity, we have
  \begin{align}
    \label{eq:b17}
    \nm{<f,g,h>}_{L^2} &
    \lec \nm{f'}_{L^2} \nm{g}_{L^\infty} \nm{h}_{\dot H^{1/2}};\\ \label{eq:b18}
     \nm{<f,g,h>}_{L^p}&\lec \nm{f'}_{L^2} \nm{g'}_{L^2} \nm{h}_{L^p},\qquad{1\le p\le \infty};   \\
     \label{eq:b19}
     \nm{<f,g,h>}_{L^2}&\lec \nm{f'}_{L^2} \nm{g}_{\dot H^{1/2}} \nm{h}_{\dot H^{1/2}};
     \\ \label{eq:b20}
     \nm{<f,g,h>}_{\dot H^{1/2}}&\lec \nm{f'}_{L^2} \nm{g'}_{L^2} \nm{h}_{\dot H^{1/2}};
     \\
     \label{eq:b21}
     \nm{<f,g,h>}_{\dot H^1}&\lec \nm{f'}_{L^2} \nm{g'}_{L^2} \nm{h'}_{L^2};\\
     \label{eq:b24}
     \nm{<f,g,h>}_{L^\infty}&\lec \nm{f'}_{L^2} \paren{\nm{g'}_{L^2} \nm{h}_{\dot H^{1/2}}+\nm{h'}_{L^2} \nm{g}_{\dot H^{1/2}}}.
       \end{align}

\end{proposition}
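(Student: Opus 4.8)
\textbf{Proof proposal for Proposition~\ref{prop:triple}.}

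The plan is to establish the seven inequalities \eqref{eq:b17}--\eqref{eq:b24} for the trilinear form $<f,g,h>$ by reducing each of them to the already-stated estimates for the commutator forms $[\cdot,\cdot;\cdot]$ and for the generalized multilinear singular integral in \eqref{eq:b115}, together with the classical tools in Propositions~\ref{prop:Hhalf}--\ref{prop:double-comm} and Appendix~\ref{ineq}. The starting point is the identity \eqref{eq:c29},
$$
<f,g,h>=[f,g;h]+[f,h;g]\quad\text{(up to relabeling)},
$$
more precisely one writes $<f,g,h>$ by distributing the factor $h(\aa)-h(\bb)$: pulling $h(\aa)$ out in one copy and $-h(\bb)$ out in another gives $<f,g,h>=h\,[f,g;1]-\tfrac{1}{\pi i}\int\frac{(f(\aa)-f(\bb))(g(\aa)-g(\bb))}{(\aa-\bb)^2}h(\bb)\,d\bb = h\,[f,g;1]-[f,g;h]$, so that $<f,g,h>$ is a linear combination of $[f,g;h]$-type terms and products of $h$ with $[f,g;1]$-type terms. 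This is the mechanism by which all the $L^2$ and $L^p$ bounds descend directly from Proposition~\ref{prop:double-comm}: \eqref{eq:b18} from \eqref{eq:b12}, \eqref{eq:b19} from \eqref{eq:b22} (after symmetrizing in the two ``$\dot H^{1/2}$'' slots using the symmetry of $<\cdot,\cdot,\cdot>$ under permutation), \eqref{eq:b17} again from \eqref{eq:b22} (placing the $\dot H^{1/2}$ factor in the $h$-slot and the $L^\infty$ factor in the $g$-slot), and \eqref{eq:b24}, the $L^\infty$ bound, from \eqref{eq:b15} after the same distribution, noting one must symmetrize so that whichever of $g,h$ carries the $\dot H^{1/2}$ norm sits in the right slot — this is why the right-hand side of \eqref{eq:b24} is itself symmetric in $g,h$.

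For \eqref{eq:b21}, the $\dot H^1$ bound, I would instead differentiate under the integral sign: $\partial_\aa<f,g,h>$ is, by \eqref{da}, a sum of terms of the form $\tfrac{1}{\pi i}\int\frac{(\cdots)(\cdots)}{(\aa-\bb)^2}(\text{third factor differentiated})\,d\bb$ plus terms where the kernel $\partial_\aa(\aa-\bb)^{-2}=-2(\aa-\bb)^{-3}$ appears; the latter are exactly of the form treated by \eqref{eq:b115} with three factors $f'(\bb)$, $g'(\bb)$, $h'(\bb)$ distributed as divided differences and $n=3$, while the former reduce to $\|<f',g,h>\|$-type expressions already controlled. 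Applying \eqref{eq:b115} with $p=2$, $p_1=p_2=p_3=6$ and $q=6$ is not quite the right bookkeeping; rather one uses the form with $g$ replaced by $h'\in L^2$ and two factors $f',g'\in L^2$, giving $\|f'\|_{L^2}\|g'\|_{L^2}\|h'\|_{L^2}$ directly (the exponent condition $\tfrac12+\tfrac12+1=\tfrac12+... $ needs the Maximum-function argument indicated right after \eqref{eq:b115}, since $\tfrac{f(x)-f(y)}{x-y}$ is bounded pointwise by $\min\{M(f')(x),M(f')(y)\}$, which lets one trade two of the difference quotients for $L^\infty$-in-one-variable bounds and keep the last factor in $L^2$). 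This same Maximum-function device is the engine behind \eqref{eq:b18} for $p=\infty$ as well.

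The one genuinely nonroutine inequality is \eqref{eq:b20}, the $\dot H^{1/2}$ bound $\|<f,g,h>\|_{\dot H^{1/2}}\lec\|f'\|_{L^2}\|g'\|_{L^2}\|h\|_{\dot H^{1/2}}$. The naive approach — distribute to $[f,g;h]$ and use \eqref{hhalf-1} with an $L^\infty$ bound on $[f,g;1]$ — loses a derivative or forces $\|h\|_{L^\infty}$, which is not allowed. Instead I expect the right route is to use the difference-quotient structure more carefully: write $\|<f,g,h>\|_{\dot H^{1/2}}^2=\tfrac1{2\pi}\iint\frac{|<f,g,h>(\aa)-<f,g,h>(\aa')|^2}{(\aa-\aa')^2}\,d\aa\,d\aa'$ and split the difference $<f,g,h>(\aa)-<f,g,h>(\aa')$ according to which of the three factors changes; each piece where $h$ is the one being differenced produces, after an application of Cauchy--Schwarz in the $\bb$-integral and Hardy's inequality in the $(\aa,\aa')$ double integral, the factor $\|h\|_{\dot H^{1/2}}$ together with $\|f'\|_{L^2}\|g'\|_{L^2}$ via the pointwise $M(f'),M(g')$ bounds; the pieces where $f$ or $g$ is differenced are handled symmetrically, the extra $\aa$-vs-$\aa'$ difference quotient on $f$ or $g$ being absorbed by Hardy's inequality \eqref{eq:77}--\eqref{eq:771} at the cost of an $\|f'\|_{L^2}$ or $\|g'\|_{L^2}$ and leaving $h$ merely in $L^2$-in-one-variable, which is bounded by $\|h\|_{\dot H^{1/2}}$ after using the kernel. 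This is the ``main obstacle'' step; the remaining six follow quickly from the distribution identity plus Propositions~\ref{prop:double-comm} and the $\dot H^{1/2}$ product rules. Since all of these estimates appear (per the text) in Section 5.1 and Appendix A of \cite{wu8}, I would cite those for the routine ones and spell out only the $\dot H^{1/2}$ bound \eqref{eq:b20} and the $\dot H^1$ bound \eqref{eq:b21} in detail.
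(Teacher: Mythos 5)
Your reductions for \eqref{eq:b17}, \eqref{eq:b18} and \eqref{eq:b21} are fine (and essentially equivalent to the paper's one-line ``H\"older plus Hardy'' proof), but the claimed derivations of \eqref{eq:b19} and \eqref{eq:b24} do not work. The splitting $<f,g,h>=h\,[f,g;1]-[f,g;h]$ (or $g\,[f,h;1]-[f,h;g]$) always leaves one of $g,h$ as an undifferenced external factor, so every bound obtained this way carries an $L^\infty$ (or $L^p$) norm of $g$ or $h$; in \eqref{eq:b19} neither $g$ nor $h$ comes with any Lebesgue norm, and $\dot H^{1/2}$ controls neither $L^2$ nor $L^\infty$, so ``symmetrizing'' the two slots cannot remove that norm --- \eqref{eq:b22} simply cannot yield \eqref{eq:b19}. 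The paper gets \eqref{eq:b19} (and \eqref{eq:b20}) by interpolation: for fixed $f$ the form is bilinear in $(g,h)$ and bounded on $\dot H^1\times L^2$ and $L^2\times\dot H^1$ into $L^2$ by \eqref{eq:b18} with $p=2$, which interpolates to $\dot H^{1/2}\times\dot H^{1/2}\to L^2$; likewise \eqref{eq:b20} interpolates, in the $h$-slot, between \eqref{eq:b18} ($p=2$) and \eqref{eq:b21}. Your direct double-difference argument for \eqref{eq:b20} might be made to work, but as written it is only a sketch, and the step ``leaving $h$ merely in $L^2$-in-one-variable, which is bounded by $\nm{h}_{\dot H^{1/2}}$'' is not a legitimate bound.

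The proposal for \eqref{eq:b24} fails for the same structural reason, plus a second one: \eqref{eq:b15} has $\nm{g'}_{L^\infty}$ on its right-hand side, a norm that never appears in \eqref{eq:b24}, and the split-off term $h\,[f,g;1]$ would require $\nm{h}_{L^\infty}$, again not controlled by $\dot H^{1/2}$. The paper's proof is different and is the missing idea: since $<f,g,h>$ decays, $\nm{<f,g,h>}_{L^\infty}\le\nm{\partial_\aa<f,g,h>}_{L^1}$, and using \eqref{da} the derivative falls on $f$, $g$ or $h$ (the kernel term cancels because $(\partial_\aa+\partial_\bb)(\aa-\bb)^{-2}=0$); estimating each resulting term by Cauchy--Schwarz in $\bb$ together with Hardy's inequality \eqref{eq:77} and the double-integral characterization \eqref{def-hhalf} gives $\nm{f'}_{L^2}\nm{g'}_{L^2}\nm{h}_{\dot H^{1/2}}$ when the derivative hits $f$ or $g$, and $\nm{f'}_{L^2}\nm{h'}_{L^2}\nm{g}_{\dot H^{1/2}}$ when it hits $h$ --- which is exactly why the right-hand side of \eqref{eq:b24} has its symmetric form. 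You should replace your arguments for \eqref{eq:b19}, \eqref{eq:b20} and \eqref{eq:b24} accordingly.
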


\eqref{eq:b17}, \eqref{eq:b18}, and \eqref{eq:b21} are easy consequences of  H\"older's inequality, Hardy's inequality,  and the definition \eqref{def-hhalf}. \eqref{eq:b19} and \eqref{eq:b20} follow from interpolation; \eqref{eq:b24} follows from the inequality  $$\nm{\partial_\aa<f,g,h>}_{L^1}\le \nm{f'}_{L^2} \paren{\nm{g'}_{L^2} \nm{h}_{\dot H^{1/2}}+\nm{h'}_{L^2} \nm{g}_{\dot H^{1/2}}},$$ which in turn follows from Cauchy-Schwarz inequality and Hardy's inequality.

\begin{proposition}\label{prop:b10}
 Assume that $f$, $g$, $h$ are holomorphic, i.e. $f=\P_H f$, $g=\P_H g$ and $h=\P_H h$. Then
\begin{equation}\label{eq:b39}
\nm{<f,g,h>}_{L^\infty}\lec \|f'\|_{L^2}\|g'\|_{L^2}\|h\|_{\dot H^{1/2}}.
\end{equation}

\end{proposition}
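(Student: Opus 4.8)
\textbf{Proposal for the proof of Proposition~\ref{prop:b10}.}

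The statement is an $L^\infty$ bound for the cubic form $<f,g,h>$ when all three inputs are boundary values of holomorphic functions in $\mathscr P_-$. The general $L^\infty$ bound \eqref{eq:b24} already controls $\nm{<f,g,h>}_{L^\infty}$ by $\nm{f'}_{L^2}(\nm{g'}_{L^2}\nm{h}_{\dot H^{1/2}}+\nm{h'}_{L^2}\nm{g}_{\dot H^{1/2}})$, which is symmetric and puts a derivative on two of the three factors; here we want the sharper, asymmetric estimate that only asks for $f'$, $g'\in L^2$ and $h\in\dot H^{1/2}$, exploiting holomorphy to avoid the second half-derivative term. The plan is to go back to the proof scheme of \eqref{eq:b24}, namely to bound $\nm{\partial_\aa<f,g,h>}_{L^1}$ and use that $<f,g,h>$ decays at infinity, but to use the algebraic structure of holomorphic functions to improve the $L^1$ estimate of the derivative.

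First I would write out $\partial_\aa<f,g,h>$ using \eqref{da}: differentiating under the integral produces one term where $\partial_\aa$ hits the kernel together with its $\partial_\bb$ companion (which, after an integration by parts in $\bb$, redistributes onto the differences $\mathfrak f,\mathfrak g,\mathfrak h$), plus the three terms where $\partial_\aa$ lands on $f(\aa)$, $g(\aa)$, $h(\aa)$ respectively. The terms where the extra derivative lands on $f$ or $g$ are harmless: they are of the form $\int \frac{f'(\aa)(g(\aa)-g(\bb))(h(\aa)-h(\bb))}{(\aa-\bb)^2}\,d\bb$ and its analogue, which by H\"older and Hardy's inequality \eqref{eq:771} are bounded in $L^1$ by $\nm{f'}_{L^2}\nm{g'}_{L^2}\nm{h}_{\dot H^{1/2}}$ (using $\abs{\frac{h(\aa)-h(\bb)}{\aa-\bb}}$ paired in $L^2\,d\bb$ to produce an $\dot H^{1/2}$-type factor). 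The genuinely problematic piece is the term carrying $h'(\aa)$, i.e. $\int \frac{(f(\aa)-f(\bb))(g(\aa)-g(\bb))}{(\aa-\bb)^2}\,d\bb\;h'(\aa)$, since we do not wish to assume $h'\in L^2$. This is exactly where holomorphy of $f$, $g$, $h$ must be used.

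The key step is therefore to rewrite, using $\P_H f=f$, $\P_H g=g$, $\P_H h=h$ and Proposition~\ref{prop:comm-hilbe} (products of holomorphic functions are holomorphic), the inner integral $\int \frac{(f(\aa)-f(\bb))(g(\aa)-g(\bb))}{(\aa-\bb)^2}\,d\bb$ in terms of Hilbert-transform commutators: by \eqref{eq:c29} and the commutator identities in \S\ref{comm-iden}, $[f,g;1]$ and hence $\int\frac{(f(\aa)-f(\bb))(g(\aa)-g(\bb))}{(\aa-\bb)^2}\,d\bb$ equals a sum of terms of the form $[f,\HH]\partial_\aa g + [g,\HH]\partial_\aa f - [fg,\HH]\partial_\aa 1$, and since $fg$ is holomorphic and $\HH(fg)=fg$, the commutator structure collapses; one checks that the inner integral is in fact (a constant multiple of) $\partial_\aa(fg) - f'g - fg' = 0$ pointwise away from coincidence plus boundary terms, i.e. the holomorphic cancellation forces the would-be dangerous inner integral to equal something of the schematic form $\partial_\aa\bigl([f,\HH]g\bigr)$-type expression whose $L^2$ norm is $\lec\nm{f'}_{L^2}\nm{g'}_{L^2}$ by \eqref{eq:b11}. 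Then $h'(\aa)$ times an $L^2$ function pairs against $h\in\dot H^{1/2}$ after one more integration by parts moving $\partial_\aa$ off $h$, landing back on an $L^2$ object; alternatively, one recognizes the whole $h'(\aa)$-term, after integration by parts in $\aa$, as $-\int h(\aa)\,\partial_\aa\{\cdots\}\,d\aa$-type, estimated by $\nm{h}_{\dot H^{1/2}}$ against the $\dot H^{1/2}$-dual of the remaining factor, which is controlled by $\nm{f'}_{L^2}\nm{g'}_{L^2}$ using \eqref{eq:b20}.

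I expect the main obstacle to be the bookkeeping in the second paragraph's holomorphic reduction: making precise the claim that the $h'(\aa)$-term can be re-expressed, via \eqref{eq:c29}–\eqref{eq:c30} and the projection identities \eqref{paph}, \eqref{projid}, so that no $L^2$ norm of $h'$ survives and only $\nm{h}_{\dot H^{1/2}}$ appears. Concretely one should argue that $\nm{<f,g,h>}_{L^\infty}\le \nm{\partial_\aa<f,g,h>}_{L^1}$ and that the only term in $\partial_\aa<f,g,h>$ not already covered by the two "easy" estimates can be written as $\P_H$ or $\P_A$ applied to a product of holomorphic/anti-holomorphic pieces whose $L^1$ norm is $\lec \nm{f'}_{L^2}\nm{g'}_{L^2}\nm{h}_{\dot H^{1/2}}$; the holomorphy is what kills the term $\nm{h'}_{L^2}\nm{g}_{\dot H^{1/2}}$ present in \eqref{eq:b24}. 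Once that reduction is in place the estimate itself is immediate from \eqref{eq:b11}, \eqref{eq:b20} and Cauchy–Schwarz, so I would spend the bulk of the write-up on that algebraic cancellation and keep the analytic estimates terse.
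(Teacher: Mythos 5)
You have correctly located the decisive mechanism: the $h'$-term in $\partial_{\aa}\langle f,g,h\rangle$ is $h'(\aa)\,[f,g;1](\aa)$, and holomorphy of $f,g$ must be what removes $\nm{h'}_{L^2}$ from the right-hand side of \eqref{eq:b24}. But you should not hedge between ``the inner integral is zero'' and ``it is something of schematic commutator form with an $L^2$ bound $\lec \nm{f'}_{L^2}\nm{g'}_{L^2}$''; only the first alternative makes the argument work, and it is exactly true. The paper's observation is simply that $[f,g;1]\equiv 0$ for holomorphic $f,g$. This is not seen by formally invoking $\partial_\aa(fg)-f'g-fg'=0$, but by the following one line: specializing \eqref{eq:c29} to the constant function $h\equiv 1$, the last term drops because it contains $\partial_\aa 1=0$, leaving $[f,g;1]=[f,\HH]\partial_\aa g+[g,\HH]\partial_\aa f$; since $f$ and $g'$ are both holomorphic, $[f,\HH]g'=0$ by Proposition~\ref{prop:comm-hilbe}, and likewise $[g,\HH]f'=0$. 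With the $h'$-term gone, the remaining terms in $\partial_\aa\langle f,g,h\rangle$ (the pieces carrying $f'(\aa)$, $g'(\aa)$, and the kernel piece) are bounded exactly as in the proof of \eqref{eq:b24}, by Cauchy--Schwarz and Hardy's inequality, giving $\nm{\partial_\aa\langle f,g,h\rangle}_{L^1}\lec\nm{f'}_{L^2}\nm{g'}_{L^2}\nm{h}_{\dot H^{1/2}}$ and hence \eqref{eq:b39}. This is precisely the paper's proof, and your write-up should commit to it.

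The fallback you sketch does not close, and recognizing that is important. If $[f,g;1]$ were a nonzero quantity with only an $L^2$ bound $\lec\nm{f'}_{L^2}\nm{g'}_{L^2}$, then $h'(\aa)[f,g;1](\aa)$ could only be placed in $L^1$ via $\nm{h'}_{L^2}$, the very norm the proposition is designed to avoid. The proposed remedy of ``integration by parts in $\aa$'' is not available here: you are estimating the $L^1$ norm of the pointwise function $\partial_\aa\langle f,g,h\rangle$, not pairing it against a test function; and if instead you represent $\langle f,g,h\rangle(\aa_0)$ as a half-line integral of $\partial_\aa\langle f,g,h\rangle$ and try to move a derivative off $h'$, you generate boundary terms and quantities like $\nm{h}_{L^2}$ or $\nm{h}_{L^\infty}$, none of which is controlled by $\nm{h}_{\dot H^{1/2}}$ alone. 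The estimate really does hinge on the cancellation being exact, not merely small.
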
 
Observe that for $f$, $g$ holomorphic, $[f,g;1]=0$. The same argument for \eqref{eq:b24} gives  \eqref{eq:b39}.

\begin{proposition}\label{prop:b13}
Assume that $f,\ g,\ h$ are smooth and decay fast at infinity, ${\rm b}={\rm b}(x,y)$ is bounded. Then
\begin{align}
\nm{\int {\rm b}(x,y)\frac{(f(x)-f(y))(g(x)-g(y))(h(x)-h(y))}{(x-y)^2}\,dy}_{L^2}&\lec \nm{\rm b}_{L^\infty(\mathbb R^2)}\nm{f'}_{L^2}\nm{g'}_{L^2}\nm{h}_{L^2}; \label{eq:b46}\\
\nm{\int {\rm b}(x,y)\frac{(f(x)-f(y))(g(x)-g(y))(h(x)-h(y))}{(x-y)^2}\,dy}_{L^2}&\lec \nm{\rm b}_{L^\infty(\mathbb R^2)}\nm{f'}_{L^2}\nm{g}_{ \dot H^{1/2}  }\nm{h}_{\dot H^{1/2}}. \label{eq:b47}
\end{align}
\end{proposition}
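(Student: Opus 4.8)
The plan is to establish \eqref{eq:b46} by a direct maximal--function argument and then to derive \eqref{eq:b47} from \eqref{eq:b46} by bilinear interpolation.

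For \eqref{eq:b46} I would argue exactly as in the proof of \eqref{eq:b18} with $p=2$: using $\abs{\frac{f(x)-f(y)}{x-y}}\le \min\{M(f')(x),M(f')(y)\}$ (and the analogous bound for $g$) together with $|h(x)-h(y)|\le |h(x)|+|h(y)|$, the integrand is bounded pointwise in $x$ by
\[
\nm{{\rm b}}_{L^\infty(\mathbb R^2)}\paren{|h(x)|\int M(f')(y)\,M(g')(y)\,dy+M(f')(x)\int M(g')(y)\,|h(y)|\,dy}.
\]
Taking the $L^2_x$ norm, applying Cauchy--Schwarz in $y$ to each term, and then the maximal inequality \eqref{eq:maximum}, gives \eqref{eq:b46}; the bounded factor ${\rm b}$ plays no role beyond producing $\nm{{\rm b}}_{L^\infty}$, since only absolute values are used.

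For \eqref{eq:b47}, write $T(f,g,h)$ for the left-hand side. The kernel ${\rm b}(x,y)(x-y)^{-2}(f(x)-f(y))(g(x)-g(y))(h(x)-h(y))$ is symmetric under permutations of $f,g,h$, so \eqref{eq:b46} says that, for fixed $f$ and ${\rm b}$, the bilinear map $(g,h)\mapsto T(f,g,h)$ is bounded $\dot H^1\times L^2\to L^2$ and $L^2\times\dot H^1\to L^2$, with norm $\lec \nm{{\rm b}}_{L^\infty}\nm{f'}_{L^2}$ in each case. By bilinear complex interpolation, and since $[\dot H^1,L^2]_{1/2}=[L^2,\dot H^1]_{1/2}=\dot H^{1/2}$, the same map is bounded $\dot H^{1/2}\times\dot H^{1/2}\to L^2$ with the same norm bound, which is \eqref{eq:b47}. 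Alternatively one can proceed directly: Cauchy--Schwarz in $y$ separates off the factor $\big(\int\abs{\frac{f(x)-f(y)}{x-y}}^2\,dy\big)^{1/2}\lec\nm{f'}_{L^2}$ and leaves $\big(\int\frac{|g(x)-g(y)|^2|h(x)-h(y)|^2}{(x-y)^2}\,dy\big)^{1/2}$, the square of whose $L^2_x$ norm is $\iint\frac{|g(x)-g(y)|^2|h(x)-h(y)|^2}{(x-y)^2}\,dx\,dy\lec\nm{g}_{\dot H^{1/2}}^2\nm{h}_{\dot H^{1/2}}^2$ by the Besov characterization of $\dot H^{1/2}$ and the Sobolev-type embedding $\dot B^{1/2}_{2,2}\hookrightarrow\dot B^{1/4}_{4,4}$.

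The only real obstacle is \eqref{eq:b47}: a naive pointwise estimate of the absolute-value kernel by Cauchy--Schwarz and Hardy's inequality does not close by itself and leaves a genuinely bilinear quantity, so one cannot avoid invoking either the interpolation step or the above product inequality for $\dot H^{1/2}$. Setting up the bilinear interpolation needs only the usual care with the realizations of the homogeneous Sobolev spaces, which is harmless here because $f,g,h$ are assumed to decay fast at infinity.
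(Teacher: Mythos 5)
Your proof is correct and follows essentially the same route as the paper: the paper's proof of \eqref{eq:b46} is the one-line ``Cauchy--Schwarz plus Hardy's inequality,'' and your maximal-function splitting of $|h(x)-h(y)|\le |h(x)|+|h(y)|$ together with $\abs{\tfrac{f(x)-f(y)}{x-y}}\le\min\{M(f')(x),M(f')(y)\}$ is the same pointwise bound written with $M$ in place of the $\sup_x$ form \eqref{eq:77}. The paper's proof of \eqref{eq:b47} is ``interpolation,'' which is exactly your bilinear complex interpolation between the $\dot H^1\times L^2\to L^2$ and $L^2\times\dot H^1\to L^2$ bounds coming from \eqref{eq:b46} and the symmetry of the kernel in $g,h$; your alternative direct argument reduces \eqref{eq:b47} to the inequality \eqref{eq:b48}, which the paper likewise proves by interpolation.
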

\eqref{eq:b46} follows from Cauchy-Schwarz inequality and Hardy's inequality;  \eqref{eq:b47} follows from  interpolation. 
 
 \begin{proposition}\label{prop:hhalf-2} Assume that $f_k\in \dot H^{1/2}(\mathbb R)$, $k=1,\, 2,\, 3,\, 4$. Then
\begin{equation}\label{eq:b48}
\abs{\int \frac{\Pi_{k=1}^4 (f_k(x)-f_k(y))}{(x-y)^2}\,dx\,dy}\lec \Pi_{k=1}^4 \nm{f_k}_{\dot H^{1/2}}.
\end{equation}

\end{proposition}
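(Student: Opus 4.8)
The plan is to distribute the singular weight evenly among the four factors, apply the generalized Hölder inequality to reduce \eqref{eq:b48} to a single–function inequality, and then invoke a classical fractional Sobolev embedding. First, pointwise,
\[
\abs{\frac{\Pi_{k=1}^4\paren{f_k(x)-f_k(y)}}{(x-y)^2}}=\Pi_{k=1}^4\frac{\abs{f_k(x)-f_k(y)}}{\abs{x-y}^{1/2}},
\]
since $\abs{x-y}^{1/2}$ raised to the fourth power is $(x-y)^2$. Applying Hölder's inequality on $\mathbb R^2$ with all four exponents equal to $4$ then gives
\[
\abs{\iint \frac{\Pi_{k=1}^4\paren{f_k(x)-f_k(y)}}{(x-y)^2}\,dx\,dy}\le \Pi_{k=1}^4\paren{\iint\frac{\abs{f_k(x)-f_k(y)}^4}{(x-y)^2}\,dx\,dy}^{1/4}.
\]
So it suffices to prove, for a single $f$, the estimate $\iint\frac{\abs{f(x)-f(y)}^4}{(x-y)^2}\,dx\,dy\lec \nm{f}_{\dot H^{1/2}(\mathbb R)}^4$. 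The left side is exactly the fourth power of the Gagliardo seminorm $[f]_{\dot W^{1/4,4}(\mathbb R)}$ (note $1+4\cdot\tfrac14=2$), and, like $\nm{f}_{\dot H^{1/2}}^4$, it is invariant under $f\mapsto f(\lambda\cdot)$, consistent with the scaling of \eqref{eq:b48}.

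This reduced inequality is the classical fractional Sobolev (Gagliardo–Nirenberg) embedding $\dot H^{1/2}(\mathbb R)\hookrightarrow \dot W^{1/4,4}(\mathbb R)$, the two spaces having the same scaling. For a self-contained argument I would use a Littlewood–Paley decomposition $f=\sum_j\Delta_j f$: Bernstein's inequality gives $\nm{\Delta_j f}_{L^4}\lec 2^{j/4}\nm{\Delta_j f}_{L^2}$, hence $2^{j/4}\nm{\Delta_j f}_{L^4}\lec 2^{j/2}\nm{\Delta_j f}_{L^2}$; summing in $j$, using $\ell^2\hookrightarrow\ell^4$ together with the square–function characterization $\nm{f}_{\dot H^{1/2}}^2\sim\sum_j 2^{j}\nm{\Delta_j f}_{L^2}^2$, yields control of the homogeneous Besov norm $\dot B^{1/4}_{4,4}$ by $\nm{f}_{\dot H^{1/2}}$, and one then invokes the standard identification of $\dot B^{1/4}_{4,4}(\mathbb R)$ with $\dot W^{1/4,4}(\mathbb R)$ (equality of the Besov norm and the Gagliardo seminorm for exponent $s\in(0,1)$). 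An equivalent route is via the Riesz potential representation $f=\Lambda^{-1/2}g$ with $g\in L^2$, whose kernel behaves like $\abs{x}^{-1/2}$ in one dimension, giving a pointwise bound of $\abs{f(x)-f(y)}$ by maximal functions of $g$ and then an $L^4$ estimate.

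\textbf{Main obstacle.} The only genuinely substantive ingredient is this fractional Sobolev embedding; everything else is the pointwise rearrangement of the kernel and one application of Hölder's inequality. I expect the embedding step to be the sole real obstacle, and in the paper it can simply be cited as a standard fact from the theory of Besov/Triebel–Lizorkin spaces, so that the proof of \eqref{eq:b48} itself is a one–line deduction from Hölder's inequality.
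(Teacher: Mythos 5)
Your proof is correct, but it takes a genuinely different route from the paper's. The paper first exploits the $x\leftrightarrow y$ antisymmetry (each of the three remaining factors $f_k(x)-f_k(y)$, $k=2,3,4$, flips sign under the swap, giving an overall $-1$) to rewrite the integral as $2\int f_1(x)\cdot\pi i\langle f_2,f_3,f_4\rangle(x)\,dx$; it then applies Cauchy–Schwarz together with its cubic-form estimate \eqref{eq:b19}, $\nm{<f,g,h>}_{L^2}\lec\nm{f'}_{L^2}\nm{g}_{\dot H^{1/2}}\nm{h}_{\dot H^{1/2}}$, in two ways (derivative on $f_2$, then on $f_1$), and closes by bilinear complex interpolation in the pair $(f_1,f_2)$ to land both factors in $\dot H^{1/2}$. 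Your argument bypasses the symmetry reduction, the cubic-form lemma, and the interpolation step entirely: after the pointwise rearrangement $\abs{(x-y)}^{-2}=\prod_{k=1}^4\abs{x-y}^{-1/2}$, a single application of H\"older with four exponents $4$ reduces \eqref{eq:b48} to the diagonal inequality $\iint\abs{f(x)-f(y)}^4(x-y)^{-2}\,dx\,dy\lec\nm{f}_{\dot H^{1/2}}^4$, which is the embedding $\dot H^{1/2}(\mathbb R)=\dot B^{1/2}_{2,2}\hookrightarrow\dot B^{1/4}_{4,4}\cong\dot W^{1/4,4}(\mathbb R)$ (your Bernstein plus $\ell^2\hookrightarrow\ell^4$ derivation of it is fine, as is the Besov/Gagliardo identification for $s\in(0,1)$). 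The trade-off is clear: your route is shorter and manifestly symmetric in the four factors, but imports the Besov machinery as a black box; the paper's route is longer and asymmetric, but stays internal to the toolkit it has already built (Hardy, Cauchy–Schwarz, and a commutator-type estimate), at the modest cost of one interpolation step. Both are valid proofs.
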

\begin{proof}
Observe that by symmetry, we can write 
$$\int \frac{\Pi_{k=1}^4 (f_k(x)-f_k(y))}{(x-y)^2}\,dx\,dy=2\int f_1(x)\int \frac{\Pi_{k=2}^4 (f_k(x)-f_k(y))}{(x-y)^2}\,dy\,dx.$$
By \eqref{eq:b19}, we have, for  $f_k\in H^1(\mathbb R)$, $k=1,\, 2$,
$$\abs{\int \frac{\Pi_{k=1}^4 (f_k(x)-f_k(y))}{(x-y)^2}\,dx\,dy}\lec \nm{f_1}_{L^2}\nm{f_2'}_{L^2}\nm{f_3}_{\dot H^{1/2}}\nm{f_4}_{\dot H^{1/2}};$$
similarly we also have
$$\abs{\int \frac{\Pi_{k=1}^4 (f_k(x)-f_k(y))}{(x-y)^2}\,dx\,dy}\lec \nm{f_1'}_{L^2}\nm{f_2}_{L^2}\nm{f_3}_{\dot H^{1/2}}\nm{f_4}_{\dot H^{1/2}}.$$
\eqref{eq:b48} then follows by interpolation. 
\end{proof}

The equality and inequalities in Lemma~\ref{hhalf2} and Proposition~\ref{hhalf4} are from  Section 5.1 and Lemma 6.3 of \cite{wu8}.

\begin{lemma}[cf. Lemma 5.4, \cite{wu8}]\label{hhalf2} 
 Assume that $f, \ g,\ f_1,\ g_1 \in H^1(\mathbb R)$ are the boundary values of some holomorphic functions on $\mathscr P_-$. Then
\begin{equation}\label{halfholo}
\int \partial_\aa \mathbb P_A (\bar f g)(\aa)f_1(\aa) \bar g_1(\aa)\,d\aa=-\frac1{2\pi i}\iint \frac{(\bar f(\aa)-\bar f(\bb))( f_1(\aa)- f_1(\bb))}{(\aa-\bb)^2}g(\bb)\bar g_1(\aa)\,d\aa d\bb.
\end{equation}

\end{lemma}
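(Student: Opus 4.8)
The final statement to prove is Lemma~\ref{hhalf2}, equation \eqref{halfholo}: for $f,g,f_1,g_1 \in H^1(\mathbb R)$ that are boundary values of holomorphic functions on $\mathscr P_-$,
$$\int \partial_\aa \mathbb P_A (\bar f g)(\aa)f_1(\aa) \bar g_1(\aa)\,d\aa=-\frac1{2\pi i}\iint \frac{(\bar f(\aa)-\bar f(\bb))( f_1(\aa)- f_1(\bb))}{(\aa-\bb)^2}g(\bb)\bar g_1(\aa)\,d\aa d\bb.$$

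\medskip

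The plan is to compute $\partial_\aa \mathbb P_A(\bar f g)$ explicitly as a singular integral and then reorganize the resulting double integral. First I would write $\mathbb P_A (\bar f g)(\aa) = \frac1{2}(\bar f g)(\aa) - \frac1{2\pi i}\,\mathrm{pv}\!\int \frac{(\bar f g)(\bb)}{\aa-\bb}\,d\bb$, and use the holomorphicity of $g$ (so $\mathbb P_A g = 0$, equivalently $g = \mathbb P_H g$, $\HH g = g$) together with the anti-holomorphicity of $\bar f$ ($\HH\bar f = -\bar f$) to see that only the ``mixed'' part survives in a useful form. Concretely, using $\mathbb P_A(\bar f g) = [\mathbb P_A,\bar f]g + \bar f \mathbb P_A g = [\mathbb P_A,\bar f]g$ (since $\mathbb P_A g=0$) — or directly the commutator identity — one gets
$$\mathbb P_A(\bar f g)(\aa) = -\frac1{2\pi i}\int \frac{(\bar f(\aa)-\bar f(\bb))}{\aa-\bb}\,g(\bb)\,d\bb,$$
because the term with $\bar f(\aa)$ pulled out is $\bar f(\aa)\mathbb P_A g(\aa)=0$. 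Differentiating in $\aa$ gives $\partial_\aa\mathbb P_A(\bar f g)(\aa) = -\frac1{2\pi i}\int \partial_\aa\!\left(\frac{\bar f(\aa)-\bar f(\bb)}{\aa-\bb}\right)g(\bb)\,d\bb$ (the boundary term at $\bb=\aa$ is harmless since the integrand is bounded there when $f\in H^1$).

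\medskip

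Next I would multiply by $f_1(\aa)\bar g_1(\aa)$ and integrate in $\aa$, obtaining a double integral with kernel $\partial_\aa\!\left(\frac{\bar f(\aa)-\bar f(\bb)}{\aa-\bb}\right)$ against $g(\bb)f_1(\aa)\bar g_1(\aa)$. The goal is to produce the symmetric factor $(\bar f(\aa)-\bar f(\bb))(f_1(\aa)-f_1(\bb))$. The mechanism is an integration by parts in $\aa$ moving $\partial_\aa$ off the kernel onto $f_1(\aa)$, combined with the identity $\partial_\aa\frac{1}{\aa-\bb} = -\partial_\bb\frac{1}{\aa-\bb}$ and an integration by parts in $\bb$; the terms involving $f_1'$ or $g$-derivatives that do not fit the desired pattern must be shown to cancel or vanish using holomorphicity. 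The cleanest route: write $\partial_\aa\!\left(\frac{\bar f(\aa)-\bar f(\bb)}{\aa-\bb}\right) = \frac{\bar f'(\aa)}{\aa-\bb} - \frac{\bar f(\aa)-\bar f(\bb)}{(\aa-\bb)^2}$; the first term, when integrated against $g(\bb)$ in $\bb$, is $\bar f'(\aa)\cdot(\text{something involving }\mathbb P_A g)$ up to constants — and since $\mathbb P_A g = 0$ this piece drops out entirely after accounting for the principal value. That leaves exactly $\frac1{2\pi i}\iint \frac{(\bar f(\aa)-\bar f(\bb))}{(\aa-\bb)^2}g(\bb)f_1(\aa)\bar g_1(\aa)\,d\bb\,d\aa$, and then one uses the holomorphicity of $f_1$ (so that $\int \frac{f_1(\bb)}{\aa-\bb}\,d\bb$ contributes only through $\mathbb P_H$, and $\mathbb P_A(\overline{g_1}\cdots)$-type terms cancel) to symmetrize $f_1(\aa) \to f_1(\aa)-f_1(\bb)$: the extra term with $f_1(\bb)$ pairs with a vanishing anti-holomorphic projection.

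\medskip

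The main obstacle I anticipate is bookkeeping of the principal-value / boundary contributions and making precise which ``leftover'' terms vanish by holomorphicity — i.e., carefully justifying that the pieces of the form $\bar f'(\aa)\,\mathbb P_A g(\aa)$ and $f_1(\bb)\,\mathbb P_A(\text{holomorphic})$ are genuinely zero rather than just formally zero, and that all integrations by parts are legitimate under the $H^1$ hypothesis (decay at infinity, absolute convergence of the double integral, which follows from \eqref{eq:b19} or \eqref{eq:b48}-type bounds). Since the identity is stated as a clean algebraic equality, I expect no analytic subtlety beyond this; the structure is the standard ``the product of a holomorphic and an anti-holomorphic function, projected, telescopes into a commutator kernel'' phenomenon already used repeatedly in the paper (cf. \eqref{eq:c30}, Proposition~\ref{prop:comm-hilbe}), so one could alternatively derive \eqref{halfholo} directly from \eqref{eq:c30} applied with suitable choices and then unwinding the cubic form, which may in fact be the shortest path.
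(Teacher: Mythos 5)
Your main line of argument has a genuine gap at the step where you claim that the contribution from $\bar f'(\aa)/(\aa-\bb)$ drops out because $\P_A g = 0$. The $\bb$-integral $\frac1{2\pi i}\,\text{pv}\!\int\frac{g(\bb)}{\aa-\bb}\,d\bb$ equals $\frac12\HH g(\aa)$, not $\P_A g(\aa)$; since $g$ is holomorphic, $\HH g = g$, so this is $\frac12 g(\aa)$, which does not vanish. (There is also a sign slip: $\P_A(\bar fg)=[\P_A,\bar f]g=\frac12[\bar f,\HH]g$, whose integral representation carries a $+\frac1{2\pi i}$ prefactor, not $-\frac1{2\pi i}$.) The correct intermediate identity is therefore $\partial_\aa\P_A(\bar fg)(\aa)=\frac12\bar f'(\aa)g(\aa)-\frac1{2\pi i}\,\text{pv}\!\int\frac{(\bar f(\aa)-\bar f(\bb))g(\bb)}{(\aa-\bb)^2}\,d\bb$, and the extra term $\frac12\bar f'g$ is not a nuisance but exactly the counterweight you need: after multiplying by $f_1\bar g_1$, integrating, and subtracting the RHS (using $f_1(\aa)-(f_1(\aa)-f_1(\bb))=f_1(\bb)$), the difference $\text{LHS}-\text{RHS}$ becomes $\frac12\int\bar f'\,(f_1g)\,\bar g_1\,d\aa-\frac1{2\pi i}\iint\frac{(\bar f(\aa)-\bar f(\bb))(f_1g)(\bb)\bar g_1(\aa)}{(\aa-\bb)^2}\,d\bb\,d\aa$, which, by the very same intermediate identity applied to the holomorphic function $f_1g$, is $\int\partial_\aa\P_A\!\bigl(\bar f\,f_1g\bigr)\bar g_1\,d\aa$. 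That integral vanishes by Proposition~\ref{prop:cif}, since both factors are anti-holomorphic. In short, the "leftover term pairing with a vanishing anti-holomorphic projection" is real, but the mechanism requires the $\bar f'g$ piece you discarded.

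Your closing remark is on target: \eqref{eq:c30} gives a short, correct proof, and it is probably the cleanest route. Taking $f\to\bar f$, $g\to f_1$, $h\to g$ in \eqref{eq:c30}, the first term on the right vanishes because $\P_A(f_1g)=0$, giving $\P_H[\bar f,f_1;g]=-2\P_H\!\bigl(f_1\partial_\aa\P_A(\bar fg)\bigr)$. Pairing both sides with $\bar g_1$ and discarding the $\P_A$ pieces (anti-holomorphic against anti-holomorphic integrates to zero, Proposition~\ref{prop:cif}) yields $\int[\bar f,f_1;g]\bar g_1\,d\aa=-2\int\partial_\aa\P_A(\bar fg)f_1\bar g_1\,d\aa$, which is precisely \eqref{halfholo} once \eqref{eq:comm} is unwound.
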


\begin{proposition}[cf. Proposition 5.6, Lemma 6.3, \cite{wu8}]\label{hhalf4}
Assume  that $f, \ g \in H^1(\mathbb R)$. We have
\begin{align}
\nm{\bracket{f, \mathbb H} g}_{\dot H^{1/2}}&\lec \|f\|_{\dot H^{1/2}}(\|g\|_{L^\infty} +\|\mathbb H g\|_{L^\infty});\label{hhalf41}\\
\nm{ \bracket{f, \mathbb H} g }_{\dot H^{1/2}}&\lec \|\partial_\aa f\|_{L^2}\|g\|_{L^2};\label{hhalf42}\\
\nm{\bracket{f, \mathbb H} \partial_\aa g}_{\dot H^{1/2}}&\lec \|g\|_{\dot H^{1/2}}(\|\partial_\aa f\|_{L^\infty}+\|\partial_\aa \mathbb H f\|_{L^\infty});\label{hhalf43}\\
\nm{f}^2_{L^4}&\lec \nm{f}_{L^2}\nm{f}_{\dot H^{1/2}}.\label{hhalf44}
\end{align}
\end{proposition}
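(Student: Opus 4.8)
\emph{Proof strategy.} I would treat \eqref{hhalf44} first and separately, since it is just the one–dimensional Gagliardo--Nirenberg--Sobolev inequality: by Plancherel and Cauchy--Schwarz in the frequency variable, $\nm{f}_{\dot H^{1/4}}^2=\int|\xi|^{1/2}|\hat f(\xi)|^2\,d\xi\le\left(\int|\hat f|^2\right)^{1/2}\left(\int|\xi|\,|\hat f|^2\right)^{1/2}=\nm{f}_{L^2}\nm{f}_{\dot H^{1/2}}$, and the Sobolev embedding $\dot H^{1/4}(\mathbb R)\hookrightarrow L^4(\mathbb R)$ (admissible since $\tfrac14=\tfrac12-\tfrac14$) gives $\nm{f}_{L^4}^2\lec\nm{f}_{\dot H^{1/4}}^2\lec\nm{f}_{L^2}\nm{f}_{\dot H^{1/2}}$. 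This embedding is then a convenient auxiliary tool in the proofs of the remaining three inequalities.

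\emph{The commutator estimates \eqref{hhalf41}--\eqref{hhalf43}} are established in \cite{wu8} (cf.\ Proposition~5.6 and Lemma~6.3 there); the mechanism I would reproduce is the frequency-space structure of the commutator $g\mapsto[f,\mathbb H]g$. On the Fourier side its bilinear symbol, in the variables $(\xi,\eta)$ with $\xi$ the frequency of $f$ and $\eta$ that of $g$, is $\sigma(\xi,\eta)=\sgn(\xi+\eta)-\sgn(\eta)$, which vanishes unless $\xi+\eta$ and $\eta$ have opposite signs; on its support $|\xi|\ge\max(|\eta|,|\xi+\eta|)$, so the commutator always carries the top frequency on $f$, and the output frequency $\zeta=\xi+\eta$ satisfies $|\zeta|\le|\xi|$ there (a half-derivative of smoothing). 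Concretely, a Bony paraproduct decomposition $[f,\mathbb H]g=\sum_{j}[f_j,\mathbb H](g_{<j})+(\text{comparable-frequency remainder})$, with $f_j$ the dyadic piece of $f$ at frequency $\sim 2^j$, lets one bound $\nm{|D|^{1/2}([f_j,\mathbb H]g_{<j})}_{L^2}$ by the $L^2\to L^2$ estimate for $[f_j,\mathbb H]$ (which costs $\nm{f_j}_{\dot H^{1/2}}$, cf.\ \eqref{eq:b10}) together with Bernstein on $g_{<j}$, and sum in $j$: in \eqref{hhalf41} the low-frequency factor contributes $\lec\nm{g}_{L^\infty}$, while the comparable-frequency remainder is precisely what forces the appearance of $\nm{\mathbb H g}_{L^\infty}$ (equivalently, control of the holomorphic and anti-holomorphic parts of $g$ separately in $L^\infty$); for \eqref{hhalf42}, because each paraproduct piece has output at frequency $\sim|\xi|$, one may place a full derivative on $f$, summing via $\ell^2$-orthogonality (or Cauchy--Schwarz in $j$) to get $\nm{f'}_{L^2}$ in place of $\nm{f}_{\dot H^{1/2}}$ with $g$ merely in $L^2$. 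For \eqref{hhalf43} I would first integrate by parts in the kernel to write $[f,\mathbb H]\partial_\aa g=\mathbb H(f'g)-C_1(f)g$, where $C_1(f)g(x)=\f1{\pi i}\,\pv\!\int\frac{f(x)-f(y)}{(x-y)^2}g(y)\,dy$ is a first Calder\'on commutator of the form \eqref{3.15}; the hypothesis $\nm{f'}_{L^\infty}+\nm{\mathbb H f'}_{L^\infty}$ is exactly what the Coifman--McIntosh--Meyer / $T(1)$ bounds of Propositions~\ref{B1}--\ref{B2} want on the kernel slots, and the $\dot H^{1/2}$ norm of the output, with $\nm{g}_{\dot H^{1/2}}$ carried through untouched, again follows from the same frequency-support/paraproduct analysis (or by interpolating the $L^2$ and $\dot H^1$ bounds of $\mathbb H(f'\,\cdot)$ and $C_1(f)$), using \eqref{hhalf44} and \eqref{hhalf42} to dispose of the lower-order error terms.

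\emph{Main obstacle.} The genuinely delicate point is landing the output in $\dot H^{1/2}$ rather than in $L^2$ or $\dot H^1$: the commutator gains a full derivative only in a frequency-localized sense, so one cannot simply differentiate $[f,\mathbb H]g$ and apply the Leibniz rule, as a derivative would fall on $g$. Converting the support property of $\sigma$ into the three stated $\dot H^{1/2}$ bounds — in particular the bookkeeping in the regime where the frequencies of $f$ and $g$ (resp.\ of $f'$ and $g$) are comparable, which is exactly where $\nm{\mathbb H g}_{L^\infty}$, resp.\ $\nm{\mathbb H f'}_{L^\infty}$, enters — is where essentially all the work lies; everything else is Cauchy--Schwarz, Hardy's inequality, and the embeddings $\dot H^{1/2}(\mathbb R)\hookrightarrow\mathrm{BMO}(\mathbb R)$ and $\dot H^{1/2}(\mathbb R)\hookrightarrow L^4(\mathbb R)$.
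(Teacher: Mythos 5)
The paper does not actually prove Proposition~\ref{hhalf4}: as stated just above it, \eqref{hhalf41}--\eqref{hhalf44} are quoted from Section 5.1 and Lemma 6.3 of \cite{wu8}, where the arguments are real-variable — built on the double-integral characterization \eqref{def-hhalf}, the holomorphic/anti-holomorphic splitting, and Calder\'on-commutator-type kernel bounds — rather than on Littlewood--Paley theory. Your proposal is therefore a genuinely different route. Your proof of \eqref{hhalf44} (Cauchy--Schwarz in frequency plus $\dot H^{1/4}\hookrightarrow L^4$) is complete and correct. For the commutator bounds, your structural observation is the right one: the bilinear symbol $\sgn(\eta)-\sgn(\xi+\eta)$ forces $|\xi|=|\eta|+|\xi+\eta|$, so $f$ always carries the dominant frequency and there is no low-high interaction. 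With that, the high-low pieces are handled exactly as you say (output localized at the frequency of $f_j$, $L^2$ commutator bound, almost orthogonality), and the comparable-frequency pieces — whose outputs fill a ball $|\zeta|\lec 2^j$ rather than an annulus — are summed by the standard lemma that for $s>0$, $\nm{\sum_j h_j}_{\dot H^s}\lec \bigl(\sum_j 2^{2js}\nm{h_j}_{L^2}^2\bigr)^{1/2}$ whenever $\supp\widehat{h_j}\subset\{|\zeta|\lec 2^j\}$; the factor $\nm{g}_{L^\infty}+\nm{\mathbb H g}_{L^\infty}$ in \eqref{hhalf41} enters precisely through the crude bound $\nm{[f_j,\mathbb H]g_j}_{L^2}\le\nm{f_j}_{L^2}(\nm{g_j}_{L^\infty}+\nm{\mathbb H g_j}_{L^\infty})$. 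Carried out this way your plan yields \eqref{hhalf41}--\eqref{hhalf43}, and in fact gives \eqref{hhalf43} with only $\nm{\partial_\aa f}_{L^\infty}$ on the right; what the Fourier route buys is transparency of where each norm enters, what Wu's kernel route buys is a self-contained argument in the same real-variable toolkit (Hardy, \eqref{eq:b10}, the $[f,\mathbb H]$ identities) used throughout the paper, with no Littlewood--Paley machinery.

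Two cautions about your write-up as it stands. First, it is a plan rather than a proof: the comparable-frequency summation, which you correctly identify as the crux, is asserted but not executed, and without the $s>0$ summation lemma above the naive bookkeeping there does not close. Second, your fallback suggestions for \eqref{hhalf43} are wrong as literally stated: after writing $[f,\mathbb H]\partial_\aa g=\mathbb H(f'g)-C_1(f)g$ you cannot estimate the two terms separately in $\dot H^{1/2}$ (multiplication by an $L^\infty$ function is not bounded on $\dot H^{1/2}$), nor can you interpolate the $L^2$ and $\dot H^1$ bounds of $\mathbb H(f'\,\cdot)$ and $C_1(f)$ individually, since their separate $\dot H^1$ bounds would require $f''$. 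The repair is to interpolate the full commutator $T_f=[f,\mathbb H]\partial_\aa$: it is bounded on $L^2$ with norm $\lec\nm{f'}_{L^\infty}$ (this is where Proposition~\ref{B1} with $m=1$ enters), and on $\dot H^1$ via $\partial_\aa T_fg=[f',\mathbb H]\partial_\aa g+T_f(\partial_\aa g)$, where $\nm{[f',\mathbb H]\partial_\aa g}_{L^2}\le 2\nm{f'}_{L^\infty}\nm{\partial_\aa g}_{L^2}$ term by term; complex interpolation then gives the $\dot H^{1/2}$ bound. Since the paper itself only cites \cite{wu8} here, deferring to that citation is legitimate; but if you intend your sketch as a substitute proof, these two points are what must be filled in.
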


Finally, we need the following inequalities for our proof. Let $\avg_x^y \rb=\frac{\int_y^x\rb(\a)\,d\a}{x-y}$.   We have

\begin{proposition}\label{quartic-inq} Let $\rb\in BMO(\mathbb R)$, $f_1,\dots,\ f_n\in C^1(\mathbb R)$ such that $f_1',\dots,\ f_n'\in L^2(\mathbb R)$, and $g\in L^2(\mathbb R)$. Then

\begin{align}
\nm{\int\paren{\rb(y)-\avg_x^y \rb }\frac{\Pi_{i=1}^2(f_i(x)-f_i(y))}{(x-y)^2}\,dy}_{L^\infty}&\lec \nm{\rb}_{BMO}\nm{f'_1}_{L^2}\nm{f'_2}_{L^2}; \label{eq:b25}\\
\nm{\int\paren{\rb(y)-\avg_x^y \rb }^2\frac{ \Pi_{i=1}^2(f_i(x)-f_i(y)) }{(x-y)^2}\,dy}_{L^\infty}&\lec \nm{\rb}_{BMO}^2\nm{f'_1}_{L^2}\nm{f'_2}_{L^2}; \label{eq:b26}\\
\nm{\int\paren{\rb(x)+\rb(y)-2\avg_x^y\rb}\frac{\Pi_{i=1}^2(f_i(x)-f_i(y))}{(x-y)^2}g(y)\,dy}_{L^2}&\lec \nm{\rb}_{BMO}\nm{f'_1}_{L^2}\nm{f'_2}_{L^2}\nm{g}_{L^2}; \label{eq:b27}
\\
\nm{\int\frac{(\rb(x)-\rb(y))\Pi_{i=1}^2(f_i(x)-f_i(y))}{(x-y)^2}g(y)\,dy}_{L^2}&\lec \nm{\rb}_{BMO}\nm{f'_1}_{L^2}\nm{f'_2}_{L^2}\nm{g}_{L^2}; \label{eq:b28}\\
\nm{\int\paren{\rb(x)+\rb(y)-2\avg_x^y\rb}\frac{\Pi_{i=1}^3(f_i(x)-f_i(y))}{(x-y)^3}\,dy}_{L^2}&\lec \nm{\rb}_{BMO}\nm{f'_1}_{L^2}\nm{f'_2}_{L^2}\nm{f'_3}_{L^2}; \label{eq:b36}\\
\nm{\int\frac{(\rb(x)-\rb(y))\Pi_{i=1}^3(f_i(x)-f_i(y))}{(x-y)^3}\,dy}_{L^2}&\lec \nm{\rb}_{BMO}\nm{f'_1}_{L^2}\nm{f'_2}_{L^2}\nm{f_3'}_{L^2};\label{eq:b37}
\end{align}
and for $ n\ge 3$, 
\begin{equation}\label{eq:b43}
\nm{\int\frac{(\rb(x)-\rb(y))\Pi_{i=1}^n(f_i(x)-f_i(y))}{(x-y)^n}g(y)\,dy}_{L^2}\lec \nm{\rb}_{BMO}\nm{f'_1}_{L^2}\nm{f'_2}_{L^2}\nm{g\Pi_{i=3}^nM(f'_i)}_{L^2},
\end{equation}
where $M(f)$ is the Hardy-Littlewood maximum function of $f$.

\end{proposition}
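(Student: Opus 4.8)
The plan is to reduce all of \eqref{eq:b25}--\eqref{eq:b43} to two ingredients: the Coifman--McIntosh--Meyer and $Tb$ bounds of Proposition~\ref{B1}, i.e. \eqref{3.16}--\eqref{3.17}, and the John--Nirenberg inequality, which gives, for $\rb\in BMO$, any interval $I$ and any $1\le p<\infty$, $\big(\tfrac1{|I|}\int_I|\rb-\avg_I\rb|^p\big)^{1/p}\lec \nm{\rb}_{BMO}$, together with $|\avg_J\rb-\avg_I\rb|\lec \big(1+\log\tfrac{|I|}{|J|}\big)\nm{\rb}_{BMO}$ for $J\subset I$. The elementary identities $\rb(x)+\rb(y)-2\avg_x^y\rb=(\rb(x)-\avg_x^y\rb)+(\rb(y)-\avg_x^y\rb)$ and $\rb(x)-\rb(y)=(\rb(x)-\avg_x^y\rb)-(\rb(y)-\avg_x^y\rb)$ let us reduce every kernel to one carrying a single oscillation factor of the form $\rb(x)-\avg_x^y\rb$ or $\rb(y)-\avg_x^y\rb$; throughout we also use the pointwise bound $\big|\tfrac{f(x)-f(y)}{x-y}\big|\le\min\{M(f')(x),M(f')(y)\}$ together with the maximal inequality \eqref{eq:maximum} and Hardy's inequality.

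For the estimates with an $L^2$ output --- \eqref{eq:b27}, \eqref{eq:b28}, \eqref{eq:b36}, \eqref{eq:b37} --- I would decompose the kernel in Calder\'on--Zygmund fashion into pieces supported where $|x-y|\sim 2^k$, and inside the $k$-th piece replace $\avg_x^y\rb$ by $\avg_{I_k}\rb$, where $I_k$ is the dyadic interval of length $\sim 2^k$ containing $x$; this replacement costs an $O(\nm{\rb}_{BMO})$ error by John--Nirenberg. For the main term, $\rb-\avg_{I_k}\rb$ restricted to $3I_k$ satisfies $\nm{\rb-\avg_{I_k}\rb}_{L^p(3I_k)}\lec|I_k|^{1/p}\nm{\rb}_{BMO}$ for every $p$, so treating $\rb-\avg_{I_k}\rb$ as one of the functions $A_i$ in \eqref{3.15}--\eqref{3.19} (with its control supplied by this local $L^p$ bound rather than an $L^\infty$ or $\dot H^{1/2}$ bound) places each localized operator within the scope of \eqref{3.16}--\eqref{3.17}, after routine integration by parts; the localized pieces are almost orthogonal in $k$ (a Cotlar--Stein or Schur-test argument, using the $k$-decay of the kernel and the smoothing from the vanishing of the difference factors on the diagonal), so the sum over $k$ converges to $\nm{\rb}_{BMO}$ times the appropriate product of the $L^2$-norms of the $f_i'$ and, in \eqref{eq:b27} and \eqref{eq:b28}, of $g$. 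The error terms carrying $\avg_{I_k}\rb-\avg_x^y\rb$, bounded on $I_k$ by $O(\nm{\rb}_{BMO})$, are controlled directly by the difference-quotient bound, H\"older's inequality, and the finite-overlap summation $\sum_k\nm{f_i'}_{L^2(3I_k)}^2\lec\nm{f_i'}_{L^2}^2$. Estimate \eqref{eq:b43} for $n\ge3$ then follows from \eqref{eq:b28}: bound $\big|\tfrac{f_i(x)-f_i(y)}{x-y}\big|\le M(f_i')(y)$ for $3\le i\le n$, absorb these factors into $g$, and apply \eqref{eq:b28} with $g$ replaced by $g\,\Pi_{i=3}^n M(f_i')$, whose $L^2$-norm is exactly the claimed right-hand side.

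For the estimates with an $L^\infty$ output, \eqref{eq:b25} and \eqref{eq:b26}, I would fix $x$, split the $y$-integral over the dyadic shells $|x-y|\sim 2^k$, bound each $|f_i(x)-f_i(y)|\le|x-y|^{1/2}\nm{f_i'}_{L^2}$ (or its localized version), and use the oscillation bounds $\nm{\rb-\avg_x^y\rb}_{L^p(|x-y|\sim 2^k)}\lec 2^{k/p}\nm{\rb}_{BMO}$ --- for \eqref{eq:b26}, the $L^2$ version of the squared oscillation; the resulting series in $k$ converges, the scale-critical contribution being closed by combining the logarithmic John--Nirenberg gain against the extra $|x-y|$-decay of these kernels with the finite-overlap summation of the localized $L^2$-norms of the $f_i'$. \emph{The main obstacle} I anticipate is the bookkeeping in the $L^2$-output cases: one must check that, after the John--Nirenberg localization, the BMO symbol genuinely fits the bilinear framework of Proposition~\ref{B1} with constants uniform in the dyadic scale $k$, so that the almost-orthogonal summation over $k$ leaves no residual logarithmic loss; the $L^\infty$-output cases are easier but demand the same care at the critical scaling.
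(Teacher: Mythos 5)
Your plan is size-based from start to finish, and the two devices you invoke to make it rigorous do not work as stated. First, Proposition~\ref{B1} bounds operators in which the functions $A_i$ enter through difference quotients controlled by $\nm{A_i'}_{L^\infty}$ (or one of them by $\nm{A_1'}_{L^2}$ with $f\in L^\infty$); in \eqref{eq:b25}--\eqref{eq:b43} the BMO function $\rb$ enters through the raw difference $\rb(x)-\rb(y)$ with no compensating power of $(x-y)$, and the John--Nirenberg bound $\nm{\rb-\avg_{I_k}\rb}_{L^p(3I_k)}\lec |I_k|^{1/p}\nm{\rb}_{BMO}$ does not turn $\rb-\avg_{I_k}\rb$ into an admissible $A_i$: no integration by parts converts a local $L^p$ oscillation bound into the Lipschitz or $L^2$-derivative input that \eqref{3.16}--\eqref{3.17} require, so even at a single dyadic scale the localized operator is not within the scope of Proposition~\ref{B1}. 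Second, the summation over scales fails: the intervals $3I_k\ni x$ over which $f_i(x)-f_i(y)$ must be measured are nested with $|3I_k|\to\infty$, so $\sum_k\nm{f_i'}_{L^2(3I_k)}^2$ diverges (the terms tend to $\nm{f_i'}_{L^2}^2$); only the shells $\{|x-y|\sim 2^k\}$ are disjoint, and they do not carry the local norms you need. This is exactly the critical-scale logarithmic loss you yourself flag as the main obstacle, and nothing in the proposal closes it. A smaller but genuine further gap: deducing \eqref{eq:b43} from \eqref{eq:b28} by ``absorbing'' $\Pi_{i\ge3}M(f_i')$ into $g$ is not legitimate, since the factors $\frac{f_i(x)-f_i(y)}{x-y}$ depend on both variables, while \eqref{eq:b28} bounds one specific signed kernel, not that kernel multiplied by an arbitrary $x$-dependent bounded factor.

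For comparison, the paper's proof never sums over scales; it captures the cancellation your absolute-value argument discards by an analyticity trick. One sets $h_z(x)=\int_0^x e^{z\rb(\a)}\,d\a$ for $z$ in a small disk, so that $e^{\Re z\,\rb}$ is an $A_2$ weight and $h_z$ is chord-arc, and proves via a weighted Schur test (Lemmas~\ref{Schur} and~\ref{Schur-assump}) that the conjugated operators $T_1,T_2$ (and $T_3,T_4$) with kernels built from $\bigl(\frac{f(x)-f(y)}{h_z(x)-h_z(y)}\bigr)^2$ are bounded uniformly in $z$. Pairing with $q_1\in L^1$, $q_2\in L^2$ gives bounded holomorphic functions $F_1,F_2$ on the disk, and Cauchy's estimates for $F_1'(0)$, $F_1''(0)$, $F_2'(0)$ yield precisely \eqref{eq:b25}, \eqref{eq:b26}, \eqref{eq:b27}, the factor $\nm{\rb}_{BMO}$ coming from the radius of the disk of analyticity; \eqref{eq:b28} then follows from \eqref{eq:b27} together with \eqref{eq:b26}, Hardy's inequality and a Schur test, and \eqref{eq:b36}--\eqref{eq:b43} follow from the same scheme with the extra difference quotients kept inside the operator and dominated by $M(f_i')(y)$ only at the Schur-test stage. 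If you want a real-variable proof you would need a different mechanism adapted to the fact that the underlying operator is only $L^2$-normalized in $f_1',f_2'$ (sharp maximal function or paraproduct arguments), not Proposition~\ref{B1} plus John--Nirenberg localization.
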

\begin{remark}
Observe that $ \dot H^{1/2}(\mathbb R)\subset BMO(\mathbb R)$, and $\nm{\rb}_{BMO}\lec\nm{\rb}_{\dot H^{1/2}}$ for all $\rb\in \dot H^{1/2}$. We will primarily use Proposition~\ref{quartic-inq} for $\rb\in\dot H^{1/2}(\mathbb R)$. We sometimes also use Proposition~\ref{quartic-inq} for $\rb\in L^\infty(\mathbb R)$, with the inequality $\nm{\rb}_{BMO}\lec\nm{\rb}_{L^\infty}$.
\end{remark}
\begin{proof}
It suffices to prove for 
the case that $f_1=f_2$. 
We begin with the following lemmas.
\begin{lemma}[Schur test]\label{Schur}
Let $$Tg(x)=\int e^{\rb(x)} K(x,y) e^{\rb(y)} g(y)\,dy.$$ 
where $\rb$ is a real valued measurable function, $K$ is measurable on $\mathbb R^2$.  Assume that
$$\max\{\,\sup_{x}\int e^{2\rb(y)} |K(x,y)|\,dy, \,\sup_{y}\int e^{2\rb(x)} |K(x,y)|\,dx\,\}:=M<\infty.$$
Then for any $g\in L^2$, 
$$\nm{Tg}_{L^2}\le M \nm{g}_{L^2}.$$
\end{lemma}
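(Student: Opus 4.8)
The plan is to prove the Schur test lemma above by the standard weighted–Schur argument, with the weight dictated by the exponential factors already present in the kernel. Write $w(x):=e^{\rb(x)}$, a positive measurable function, finite almost everywhere, and set $\tilde K(x,y):=e^{\rb(x)}K(x,y)e^{\rb(y)}$, so that $Tg(x)=\int \tilde K(x,y)g(y)\,dy$. In this notation the hypothesis is exactly that
\[
\int \abs{\tilde K(x,y)}\,w(y)\,dy=e^{\rb(x)}\int e^{2\rb(y)}\abs{K(x,y)}\,dy\le M\,w(x)\quad\text{for a.e. }x,
\]
and symmetrically $\int \abs{\tilde K(x,y)}\,w(x)\,dx\le M\,w(y)$ for a.e. $y$.

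For $g\in L^2(\R)$ I would then decompose, for a.e. $x$, the integrand of $Tg(x)$ as $\abs{\tilde K(x,y)}^{1/2}w(y)^{1/2}\cdot \abs{\tilde K(x,y)}^{1/2}w(y)^{-1/2}\abs{g(y)}$ and apply Cauchy--Schwarz in $y$; combined with the first half of the hypothesis this gives
\[
\abs{Tg(x)}^2\le\Big(\int\abs{\tilde K(x,y)}w(y)\,dy\Big)\Big(\int\abs{\tilde K(x,y)}w(y)^{-1}\abs{g(y)}^2\,dy\Big)\le M\,w(x)\int\abs{\tilde K(x,y)}w(y)^{-1}\abs{g(y)}^2\,dy.
\]
Integrating in $x$, interchanging the order of integration by Tonelli (the integrand being nonnegative), and invoking the second half of the hypothesis yields
\[
\nm{Tg}_{L^2}^2\le M\int\abs{g(y)}^2 w(y)^{-1}\Big(\int\abs{\tilde K(x,y)}w(x)\,dx\Big)dy\le M^2\int\abs{g(y)}^2\,dy=M^2\nm{g}_{L^2}^2,
\]
which is the assertion.

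The points needing a word of justification are all routine: $w^{\pm1/2}=e^{\pm\rb/2}$ are well defined because $\rb$ is real-valued and measurable; the Fubini--Tonelli interchange is legitimate since every integrand in sight is nonnegative; and $Tg(x)$ is absolutely convergent for a.e.\ $x$ a posteriori, since the displayed bound shows $\int\abs{\tilde K(x,y)}w(y)^{-1}\abs{g(y)}^2\,dy<\infty$ for a.e.\ $x$, whence the Cauchy--Schwarz step already controls $\int\abs{\tilde K(x,y)g(y)}\,dy$ for a.e.\ $x$. I therefore do not expect a genuine obstacle here; the only ``idea'' is the choice of weight $w=e^{\rb}$, which is forced by matching the two halves of the hypothesis, after which the estimate is a two-line application of Cauchy--Schwarz and Tonelli.
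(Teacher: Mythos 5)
Your proof is correct, and it is the standard weighted Schur test argument with weight $w = e^{\rb}$; the paper states Lemma~\ref{Schur} without proof (treating it as a known result), so there is no competing argument to compare against, and the Cauchy--Schwarz/Tonelli derivation you give is exactly the one that would be expected.
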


Let $\rb \in BMO(\mathbb R)$ be real valued, and $h(x)=\int_0^x e^{\rb(y)}\,dy$.
We know there is a constant $\gamma_0>0$, such that for any $\rb\in BMO(\mathbb R)$ satisfying $\nm{\rb}_{BMO}\le \gamma_0$, $e^{\rb}$ is a $A_2$ weight and $U_{h^{-1}}$ is a bounded map from $BMO(\mathbb R)$ to $BMO(\mathbb R)$, with $\nm{f\circ h^{-1}}_{BMO}\le 2\nm{f}_{BMO}$ for all $f\in BMO(\mathbb R)$.\footnote{These are consequences of John-Nirenberg's inequality and the theory of $A_p$ weights, cf. \cite{cm, cm1}.}
And for any real valued $BMO$ function $\rb$ with $\|\rb\|_{BMO}\le \gamma<1$, $z(x)=\int_0^x e^{i\rb(\a)}\,d\a$ defines a chord-arc curve, cf. \cite{cm, cm1}, with 
\begin{equation}\label{eq:b29}
(1-\gamma)|x-y|\le|z(x)-z(y)|\le |x-y|,\qquad\text{for all }x,y\in\mathbb R.
\end{equation}
We have 

\begin{lemma}\label{Schur-assump}
Let $\rb \in BMO(\mathbb R)$ be real valued with $\nm{\rb}_{BMO}\le \gamma_0/2$, and $h'(x)=e^{\rb(x)}$.
For any  $f$ such that $f'\in L^2(\mathbb R)$, there is a constant $c(\gamma_0)>0$, such that
\begin{equation}
\sup_{x}\int e^{2\rb(y)}\abs{\frac{f(x)-f(y)}{h(x)-h(y)}}^2\,dy\le c(\gamma_0) \nm{f'}_{L^2}^2.
\end{equation}

\end{lemma}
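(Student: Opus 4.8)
\textbf{Proof plan for Lemma~\ref{Schur-assump}.}
The plan is to reduce the claimed bound to a standard weighted Hardy inequality by changing variables via $h$, exploiting the fact that $e^{\rb}$ is an $A_2$ weight when $\nm{\rb}_{BMO}$ is small. First I would write the quantity to be estimated as an integral in the $h$-variable: setting $u=h(x)$, $v=h(y)$, and $g=f\circ h^{-1}$, one has $f(x)-f(y)=g(u)-g(v)$ and $e^{2\rb(y)}\,dy=e^{\rb(y)}\,dh(y)=e^{\rb(h^{-1}(v))}\,dv$, so
\[
\int e^{2\rb(y)}\abs{\frac{f(x)-f(y)}{h(x)-h(y)}}^2\,dy=\int \abs{\frac{g(u)-g(v)}{u-v}}^2 w(v)\,dv,
\]
where $w=e^{\rb}\circ h^{-1}$. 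The point is that $h$ is bi-Lipschitz onto $\mathbb R$ (this follows from $\nm{\rb}_{BMO}\le\gamma_0/2$ and the reverse-H\"older/chord-arc estimates quoted just before the lemma, which give $e^{\rb}\in A_2$ and in particular $e^{\rb}\in L^1_{\loc}$ with $h$ a homeomorphism), and that $w$ is again an $A_2$ weight on $\mathbb R$ with $A_2$-constant controlled by $\gamma_0$, since $U_{h^{-1}}$ is bounded $BMO\to BMO$ with norm $\le 2$ and the $A_2$ condition for $e^{\rb}$ transfers to $w$ under this bi-Lipschitz change of variables (the $A_2$ characteristic is, up to the bi-Lipschitz constants, invariant).

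Next I would invoke the weighted Hardy / fractional-integral inequality: for an $A_2$ weight $w$ with characteristic $[w]_{A_2}\le C(\gamma_0)$,
\[
\sup_{u\in\mathbb R}\int_{\mathbb R}\abs{\frac{g(u)-g(v)}{u-v}}^2 w(v)\,dv\lec C(\gamma_0)\int_{\mathbb R}|g'(v)|^2 w(v)^{-1}\,dv,
\]
or more simply one can bound the left-hand side by the weighted square function of $g'$ and then use the $A_2$-boundedness of the relevant maximal/singular operators; either way the right-hand side is $\int|g'(v)|^2 w(v)^{-1}\,dv$. Undoing the substitution, $g'(v)=f'(h^{-1}(v))\,(h^{-1})'(v)=f'(h^{-1}(v))\,e^{-\rb(h^{-1}(v))}=f'(h^{-1}(v))\,w(v)^{-1}$, and $dv=e^{\rb(y)}\,dy=w(\cdot)\,dy$ under $v=h(y)$, so
\[
\int_{\mathbb R}|g'(v)|^2 w(v)^{-1}\,dv=\int_{\mathbb R}|f'(y)|^2 w(h(y))^{-2}w(h(y))^{-1}\,w(h(y))\,dy=\int_{\mathbb R}|f'(y)|^2\,dy=\nm{f'}_{L^2}^2,
\]
which is exactly the claimed bound with $c(\gamma_0)=C(\gamma_0)$.

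The main obstacle, and the step that needs care rather than routine bookkeeping, is justifying the weighted one-dimensional inequality with the constant depending only on $[w]_{A_2}$ (equivalently only on $\gamma_0$) — i.e. pinning down which classical result is being used and checking its hypotheses. One clean route is: the operator $g\mapsto \sup_{u}\big(\int |\frac{g(u)-g(v)}{u-v}|^2 w(v)\,dv\big)^{1/2}$ is dominated pointwise by (a constant times) the square-function/maximal version of the Hilbert transform applied to $g'$ after writing $g(u)-g(v)=\int_v^u g'$, and then Muckenhoupt's theorem (boundedness of the Hilbert transform and the Hardy–Littlewood maximal function on $L^2(w)$ for $w\in A_2$, with norm bounded in terms of $[w]_{A_2}$) closes it; alternatively one can cite the weighted Hardy inequality directly from the $A_p$-weight theory referenced in \cite{cm, cm1}. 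A secondary point to verify is that the change of variables $v=h(y)$ is legitimate, i.e.\ that $h$ is absolutely continuous with $h'=e^{\rb}$ a.e.\ and is a bijection of $\mathbb R$; this is immediate from $e^{\rb}\in L^1_{\loc}$, $e^{\rb}>0$ a.e., and $\int_0^{\pm\infty}e^{\rb}=\pm\infty$ (the last because $e^{\rb}$, being an $A_2$ weight, cannot be integrable near $\pm\infty$). Once these two ingredients are in place the rest is the substitution computation displayed above.
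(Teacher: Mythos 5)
Your strategy—change variables via $h$, reduce to the $A_2$-boundedness of the Hardy--Littlewood maximal function on $L^2(w)$—is essentially the paper's proof, which writes
\[
\sup_x\abs{\frac{f(x)-f(y)}{h(x)-h(y)}}\le M\!\paren{(f'\circ h^{-1})\,(h^{-1})'}\!\paren{h(y)},
\]
substitutes $y\mapsto h(y)$, and applies the $A_2$ maximal inequality for $e^{\rb\circ h^{-1}}$. However, there are two linked errors in your write-up that together produce a wrong intermediate computation even though the final answer comes out right by accident.

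First, the displayed ``weighted Hardy inequality'' has the wrong weight on the right-hand side: the correct statement, and what the pointwise bound by $M(g')$ plus Muckenhoupt actually yields, is
\[
\sup_{u}\int\abs{\frac{g(u)-g(v)}{u-v}}^2 w(v)\,dv\ \lec\ C(\gamma_0)\int |g'(v)|^2\,w(v)\,dv,
\]
with the \emph{same} weight $w$ (not $w^{-1}$) on both sides; an estimate with $w^{-1}$ on the right is not what the $A_2$ theory gives, and your own ``clean route'' paragraph, which correctly invokes $\|M g'\|_{L^2(w)}\lec\|g'\|_{L^2(w)}$, contradicts the displayed inequality. Second, your final substitution asserts $w(h(y))^{-2}\,w(h(y))^{-1}\,w(h(y))=1$, but in fact it equals $w(h(y))^{-2}=e^{-2\rb(y)}$, so with the stated inequality you would only obtain $\int|f'|^2 e^{-2\rb}$, not $\nm{f'}_{L^2}^2$. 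Once the weight is corrected to $w$ on the right, the bookkeeping becomes $|g'(v)|^2 w(v)\,dv = |f'(y)|^2 w(v)^{-2}\cdot w(v)\cdot w(v)\,dy = |f'(y)|^2\,dy$, and the argument closes. The remaining ingredients you flag (absolute continuity of $h$, $h$ a bijection of $\mathbb R$, $e^{\rb\circ h^{-1}}\in A_2$ with characteristic controlled by $\gamma_0$) are handled exactly as you describe and match the references the paper cites.
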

\begin{proof}
We know
$$f(x)-f(y)=\int_y^x f'(\a)\,d\a=\int_{h(y)}^{h(x)} f'\circ h^{-1}(\b) (h^{-1})'(\b)\,d\b,$$
hence
$$\sup_{x}\abs{\frac{f(x)-f(y)}{h(x)-h(y)}}\le M\paren{f'\circ h^{-1} (h^{-1})'}\paren{h(y)}$$
where $M(g)$ is the Hardy-Littlewood Maximal function of $g$. Because $e^{\rb\circ h^{-1}}$ is a $A_2$ weight, we have
\begin{equation}
\begin{aligned}
&\sup_{x}\int e^{2\rb(y)}\abs{\frac{f(x)-f(y)}{h(x)-h(y)}}^2\,dy\le \int e^{2\rb(y)}M^2\paren{f'\circ h^{-1} (h^{-1})'}\paren{h(y)}\,dy\\&
=
\int e^{\rb\circ h^{-1}(y)}M^2\paren{f'\circ h^{-1} (h^{-1})'}(y)\,dy\lec \int e^{\rb\circ h^{-1}(y)}\abs{f'\circ h^{-1} (h^{-1})'}^2(y)\,dy
= \nm{f'}_{L^2}^2.
\end{aligned}
\end{equation}
This proves Lemma~\ref{Schur-assump}.
\end{proof}
Now let $\rb$ be a $BMO$ function satisfying $\nm{\rb}_{BMO}\le 1$, and $z$ be a complex number in  $D:=\braces{|z|< \min\{\frac14,\frac{\gamma_0}2\}}$. 
Let $h_z(x)=\int_0^x e^{z\rb(\a)}\,d\a$ and 
\begin{align}
T_1(f,\rb; z)(x):&=\int e^{2z\rb(y)}\paren{\frac{f(x)-f(y)}{h_z(x)-h_z(y)}}^2\,dy,\\
T_2(f,g,\rb; z)(x):&=\int e^{z\rb(x)}e^{z\rb(y)}\paren{\frac{f(x)-f(y)}{h_z(x)-h_z(y)}}^2g(y)\,dy.
\end{align}
By Lemma ~\ref{Schur-assump}, and \eqref{eq:b29}, we have for all $f$ satisfying $f'\in L^2$,
\begin{equation}
\nm{T_1(f,\rb; z)}_{L^\infty}\le c(\gamma_0)\|f'\|_{L^2}^2,
\end{equation}
and by Lemmas~\ref{Schur}, ~\ref{Schur-assump}, and \eqref{eq:b29}, we have for all $f$, $g$, satisfying $f',\ g \in L^2$, 
\begin{equation}
\nm{T(f,g, \rb; z)}_{L^2(\mathbb R)}\le c(\delta_o)\|f'\|_{L^2}^2\|g\|_{L^2},
\end{equation}
where $c(\gamma_0)>0$ is a constant depending only on $\gamma_0$. Let $q_1\in L^1(\mathbb R)$,  $q_2\in L^2(\mathbb R)$, and 
\begin{align}
F_1(z)&:=\int q_1(x) T_1(f,\rb; z)(x)\,dx,\\
F_2(z)&:=\int q_2(x) T_2(f,g,\rb; z)(x)\,dx.
\end{align}
Then $F_1$, $F_2$ are holomorphic functions in the domain $D$, satisfying 
\begin{equation}\label{eq:b30}
|F_1(z)|\le c(\gamma_0)\|q_1\|_{L^1}\|f'\|_{L^2}^2, \quad |F_2(z)|\le c(\gamma_0)\|q_2\|_{L^2}\|f'\|_{L^2}^2\|g\|_{L^2},\qquad\text{for all }z\in D.
\end{equation}
And by Cauchy integral theorem, we have 
\begin{align}|F_1'(0)|\lec c(\gamma_0)\|q_1\|_{L^1}\|f'\|_{L^2}^2,\quad |F_1''(0)|\lec c(\gamma_0)\|q_1\|_{L^1}\|f'\|_{L^2}^2,\label{eq:b31}\\
\quad \text{and } \quad |F_2'(0)|\lec c(\gamma_0)\|q_2\|_{L^2}\|f'\|_{L^2}^2\|g\|_{L^2}\label{eq:b32}.
\end{align}
We compute $F_1'(0)$, $F_1''(0)$, and obtain
\begin{equation}\label{eq:b33}
\begin{aligned}
F_1'(0)&=2\iint q_1(x)\paren{\rb(y)-\avg_x^y \rb }\frac{(f(x)-f(y))^2}{(x-y)^2}\,dy\,dx\\
F_1''(0)&=\iint\frac{q_1(x)(f(x)-f(y))^2}{(x-y)^2}\paren{4\paren{\rb (y)-\avg_x^y \rb}^2-2\avg_x^y(\rb(\a)-\avg_x^y\rb)^2\,d\a }\,dxdy.
\end{aligned}
\end{equation}
Because $\sup_{x,y}\avg_x^y\abs{\rb(\a)-\avg_x^y\rb}^2\,d\a\lec \|\rb\|_{BMO}^2\le 1$ and by Hardy's inequality, 
$$ \iint\abs{q_1(x)}\abs{\frac{(f(x)-f(y))^2}{(x-y)^2}}\,dx\,dy\lec \|q_1\|_{L^1}\|f'\|_{L^2}^2,$$
so
$$\abs{\iint\frac{q_1(x)(f(x)-f(y))^2}{(x-y)^2}\paren{\avg_x^y(\rb(\a)-\avg_x^y\rb)^2\,d\a }\,dxdy}\lec \|q_1\|_{L^1}\|f'\|_{L^2}^2,$$
\eqref{eq:b25} and \eqref{eq:b26} then follow from \eqref{eq:b31} and \eqref{eq:b33}. We compute
\begin{equation}\label{eq:b34}
F_2'(0)=\iint q_2(x)\paren{\rb(x)+\rb(y)-2\avg_x^y \rb }\frac{(f(x)-f(y))^2}{(x-y)^2}g(y)\,dy\,dx.
\end{equation}
By \eqref{eq:b32} this gives us \eqref{eq:b27}. 

Now the Schur test, \eqref{eq:b26} and Hardy's inequality yields
\begin{equation}\label{eq:b35}
\nm{\int\paren{\rb(y)-\avg_x^y\rb}\frac{(f(x)-f(y))^2}{(x-y)^2}g(y)\,dy}_{L^2} \lec \nm{\rb}_{BMO}\nm{f'}_{L^2}^2\nm{g}_{L^2}.
\end{equation}
This together with \eqref{eq:b27} gives \eqref{eq:b28}. 

\eqref{eq:b36}, \eqref{eq:b37}, and \eqref{eq:b43} can be proved similarly by considering
\begin{equation}\label{eq:b38}
T_3(f, f_3, \rb; z):=\int e^{z\rb(x)}e^{z\rb(y)}\paren{\frac{f(x)-f(y)}{h_z(x)-h_z(y)}}^2\frac{f_3(x)-f_3(y)}{x-y}\,dy,
\end{equation}
\begin{equation}\label{eq:b45}
T_4(f, f_3,\dots f_n, g, \rb; z):=\int e^{z\rb(x)}e^{z\rb(y)}\paren{\frac{f(x)-f(y)}{h_z(x)-h_z(y)}}^2\frac{\Pi_{i=3}^n(f_i(x)-f_i(y))}{(x-y)^{n-2}}g(y)\,dy,
\end{equation}
and using the fact that 
$$
\abs{\frac{f_i(x)-f_i(y)}{x-y}}\le M(f_i')(y),\qquad 3\le i\le n.$$
We omit the details.

\end{proof}

\section{Main quantities controlled by $L(t)$ and $\nm{\partial_\aa\frac1{Z_{,\aa}}}_{L^2}+\nm{Z_{t,\aa}}_{\dot H^{1/2}}$} \label{quantities}
Assume that $$L(t)=\nm{Z_{t,\aa}}_{L^2}+\nm{\frac1{Z_{,\aa}}}_{\dot H^{1/2}}+\nm{Z_{t,\aa\aa}}_{L^2}+ \nm{\partial_\aa\frac1{Z_{,\aa}}}_{\dot H^{1/2}} \le 2\epsilon, \quad \nm{\frac1{Z_{,\aa}}-1}_{L^\infty}\le 1-\delta<1.$$
We have shown in \S\ref{quan} and \S\ref{step1-4} that the following quantities are controlled by $\epsilon$:
\begin{equation}\label{2020-1}
\begin{aligned}
& \nm{Z_{t,\aa}}_{L^\infty},\ \|Z_{t,\aa}\|_{\dot H^{1/2}},\ \nm{\partial_\aa\frac1{Z_{,\aa}}}_{L^2},\  \nm{Z_{tt}}_{\dot H^{1/2}}, \ \nm{b_\aa}_{L^2},\ \nm{\partial_\aa\Th^{(2)}}_{L^2}, \ \nm{D_t^2 Z_t}_{L^2},\ \nm{b_\aa}_{L^\infty},\\& \nm{D_t\frac1{Z_{,\aa}}}_{L^\infty}, \ \nm{Z_{ttt}}_{L^\infty},\ \nm{\partial_\aa b_\aa}_{L^2},\ \nm{\partial_\aa D_t\frac1{Z_{,\aa}}}_{L^2},\ \nm{Z_{tt,\aa}}_{\dot H^{1/2}},\ \nm{\partial_\aa D_t b}_{\dot H^{1/2}}, \\& \nm{D_t^2\frac1{Z_{,\aa}}}_{\dot H^{1/2}},\ \nm{Z_{ttt,\aa}}_{L^2},\ \nm{\partial_\aa D_t^2 b}_{L^2},\ \nm{D_t^3\frac1{Z_{,\aa}}}_{L^2},\ \nm{D_t^2 Z_{tt}}_{\dot H^{1/2}},\ \nm{D_t^3 Z_{tt}}_{L^2},\\& \nm{\partial_\aa\Th^{(4)}}_{L^2},\ \nm{\Th^{(5)}}_{\dot H^{1/2}},\ \nm{\partial_\aa\Th^{(3)}}_{\dot H^{1/2}},\ \nm{\partial_\aa D_t\Th^{(3)}}_{L^2},\ \nm{D_t^2\Th^{(3)}}_{\dot H^{1/2}},\ \nm{ D_t D_\aa \Th^{(3)}}_{L^2}; 
\end{aligned}
\end{equation}
the following quantities are controlled by $\epsilon^2$:
\begin{equation}\label{2020-2}
\begin{aligned}
& \nm{A_1-1}_{L^\infty}, \ \nm{A_1}_{\dot H^{1/2}}, \  \nm{D_t A_1}_{L^2}, \ \nm{b_\aa-2\Re D_\aa Z_t}_{L^2}, \ \nm{\mathcal P \bar Z_t}_{L^2}, \ \\& \nm{\partial_\aa A_1}_{\dot H^{1/2}},\
\nm{D_t(b_\aa-2\Re D_\aa Z_t)}_{\dot H^{1/2}},\  \nm{D_t(b_\aa-2\Re D_\aa Z_t)}_{L^\infty},\ \\& \nm{D_t^2 A_1}_{\dot H^{1/2}},\ \nm{D_t^2 A_1}_{L^\infty},\ \nm{\bracket{ D_t, \mathcal P} \bar Z_{ttt}}_{L^2},\ \nm{D_t\bracket{ D_t, \mathcal P} \bar Z_{tt}}_{L^2},\ \nm{D_t^2 \bracket{ D_t, \mathcal P} \bar Z_t}_{L^2},\\& \nm{D_t^3(b_\aa-2\Re D_\aa Z_t)}_{L^2},\ \nm{\partial_\aa \paren{ D_t^3 b-2\Re\frac{D_t^3 Z_t}{Z_{,\aa}}}}_{L^2},\  \nm{D_t^4 A_1}_{L^2},\ \nm{\mathcal P D_t^3 \bar Z_t}_{L^2};
\end{aligned}
\end{equation}
the following quantities are controlled by $\nm{\partial_\aa\frac1{Z_{,\aa}}}_{L^2}+\nm{Z_{t,\aa}}_{\dot H^{1/2}}$:
\begin{equation}\label{2020-3}
\begin{aligned}
&\nm{b_\aa}_{\dot H^{1/2}},\ \nm{D_t\frac1{Z_{,\aa}}}_{\dot H^{1/2}},\ \nm{\partial_\aa\Th^{(2)}}_{\dot H^{1/2}},\  \nm{D_\aa\Th^{(2)}}_{\dot H^{1/2}},\ \nm{Z_{tt,\aa}}_{L^2},\ \nm{Z_{ttt}}_{\dot H^{1/2}},\\& \nm{\partial_\aa D_t b}_{L^2},\ \nm{D_t D_\aa Z_t}_{L^2},\ \nm{D_t^3 Z_t}_{L^2},\ \nm{D_t^2\frac1{Z_{,\aa}}}_{L^2},\ \\&  \nm{\partial_\aa\Th^{(3)}}_{L^2},\ \nm{D_tD_\aa\Th^{(2)}}_{L^2},\ 
\nm{\Th^{(4)}}_{\dot H^{1/2}},\  \nm{D_t\Th^{(3)}}_{\dot H^{1/2}},\ \nm{D_t^2\Th^{(2)}}_{\dot H^{1/2}};
\end{aligned}
\end{equation}
the following quantities are controlled by $\epsilon\paren{\nm{\partial_\aa\frac1{Z_{,\aa}}}_{L^2}+\nm{Z_{t,\aa}}_{\dot H^{1/2}}}$:
\begin{equation}\label{2020-4}
\begin{aligned}
& \nm{b_\aa-2\Re D_\aa Z_t}_{\dot H^{1/2}},\ \nm{b_\aa-2\Re D_\aa Z_t}_{L^\infty},\ \nm{\partial_\aa A_1}_{L^2}, \ \nm{D_t A_1}_{\dot H^{1/2}},\ \nm{\mathcal P\bar Z_t}_{\dot H^{1/2}},\\& \nm{D_t A_1}_{L^\infty},\ \nm{\mathcal P\bar Z_t}_{L^\infty},\
\nm{D_t(b_\aa-2\Re D_\aa Z_t)}_{L^2},\  \nm{D_t^2A_1}_{L^2},\ \nm{\mathcal P\bar Z_{tt}}_{L^2},\ \nm{\bracket{\mathcal P, D_t}\bar Z_{t}}_{L^2},\\& \nm{D_t^2(b_\aa-2\Re D_\aa Z_t)}_{L^2},\ \nm{D_t^3 A_1}_{L^2},\ \nm{\partial_\aa\paren{ D_t^2 b-2\Re \frac{Z_{ttt}}{Z_{,\aa}}}}_{L^2},\ \nm{\mathcal P D_t^2 \bar Z_{t}}_{L^2},\\& \nm{\bracket{D_t, \mathcal P}\bar Z_{tt}}_{L^2},\ \nm{\bracket{D_t^2, \mathcal P}\bar Z_{t}}_{L^2},\ \nm{\partial_\aa(b_\aa-2\Re D_\aa Z_t)}_{L^2},\ \nm{\partial_\aa D_tA_1}_{L^2}.
\end{aligned}
\end{equation}

\end{appendix}

\end{document}